\pgfplotsset{compat=1.17}
\numberwithin{equation}{section}
\theoremstyle{plain}
\newtheorem{lemma}{Lemma}[section]
\newtheorem{proposition}[lemma]{Proposition}
\newtheorem{proposition/definition}[lemma]{Proposition/Definition}
\newtheorem{theorem}[lemma]{Theorem}
\newtheorem{corollary}[lemma]{Corollary}
\theoremstyle{definition}
\newtheorem{definition}[lemma]{Definition}
\newtheorem{example}[lemma]{Example}
\theoremstyle{remark}  
\newtheorem{remark}[lemma]{Remark}
\DeclareMathOperator{\id}{id}
\DeclareMathOperator{\im}{im}
\DeclareMathOperator{\rank}{rank}
\DeclareMathOperator{\gr}{Gr}
\DeclareMathAlphabet{\mathpzc}{OT1}{pzc}{m}{it}
\newcommand{\good}{\text{$\calF$}}
\newcommand{\Fol}{\text{$\mathpzc{Fol}$}}
\newcommand{\SymplFol}{\text{$\mathpzc{SymplFol}$}}
\newcommand{\RegPoiss}{\text{$\mathpzc{RegPoiss}$}}
\newcommand{\calA}{\mathcal{A}}
\newcommand{\calF}{\mathcal{F}}
\newcommand{\calI}{\mathcal{I}}
\newcommand{\calL}{\mathcal{L}}
\newcommand{\calQ}{\mathcal{Q}}
\newcommand{\calR}{\mathcal{R}}
\newcommand{\bbN}{\mathbb{N}}
\newcommand{\bbR}{\mathbb{R}}
\newcommand{\bbT}{\mathbb{T}}
\newcommand{\frakX}{\mathfrak{X}}
\newcommand{\frakl}{\mathfrak{l}}
\newcommand{\frakn}{\mathfrak{n}}
\newcommand{\frakv}{\mathfrak{v}}
\newcommand{\rmd}{\mathrm{d}}
\renewcommand{\phi}{\varphi}
\newcommand{\ldsb}{[\![}
\newcommand{\rdsb}{]\!]}
\newcommand{\ldab}{\langle\!\langle}
\newcommand{\rdab}{\rangle\!\rangle}
\renewcommand{\theta}{\vartheta}
\newcommand{\pr}{\operatorname{pr}}
\title[]{Deformations of Symplectic Foliations}
\author{Stephane Geudens}
\address{Geometry Section, Department of Mathematics, KU Leuven, Celestijnenlaan 200B - 3001 Leuven, Belgium
\newline
Current address: Max-Planck-Institut für Mathematik, Vivatsgasse 7, 53111 Bonn, Germany}
\email{\href{mailto:stephane\_geudens@hotmail.com}{\underline{\smash{stephane\_geudens@hotmail.com}}}}
\author{Alfonso G.~Tortorella}
\address{Geometry Section, Department of Mathematics, KU Leuven, Celestijnenlaan 200B - 3001 Leuven, Belgium
\newline
Current address: Centro de Matemática da Universidade do Porto, Rua do Campo Alegre 687, 4169-007 Porto, Portugal
}
\email{\href{mailto:alfonso.tortorella@fc.up.pt}{\underline{\smash{alfonso.tortorella@fc.up.pt}}}}
\author{Marco Zambon}
\address{Geometry Section, Department of Mathematics, KU Leuven, Celestijnenlaan 200B - 3001 Leuven, Belgium}
\email{\href{mailto:marco.zambon@kuleuven.be}{\underline{\smash{marco.zambon@kuleuven.be}}}}
\keywords{symplectic geometry, Poisson geometry, deformation complex, $L_\infty$-algebra, Dirac geometry, foliation.}
\subjclass[2020]{53D05, 53D17, 53C12, 58H15}
\begin{document}

\begin{abstract}
	We develop the deformation theory of symplectic foliations, i.e.~regular foliations equipped with a leafwise symplectic form.
	The main result of this paper is that each symplectic foliation has an attached    $L_\infty$-algebra controlling its deformation problem.
	Indeed, viewing symplectic foliations as regular Poisson structures, we establish a one-to-one correspondence between the small deformations of a given symplectic foliation and the Maurer-Cartan elements of the associated $L_\infty$-algebra. 
	Using this, we show that infinitesimal deformations of symplectic foliations can be obstructed. 
	Further, we relate symplectic foliations with foliations on one side  and  with (arbitrary) Poisson structures on the other, showing that obstructed  infinitesimal deformations of the former may give rise  to unobstructed deformations of the latter. 
\end{abstract}

\maketitle

\setcounter{tocdepth}{1}

\tableofcontents

\section*{Introduction}

This paper addresses the deformation theory of symplectic foliations, i.e. foliations equipped with a leafwise two-form that is closed and non-degenerate. Equivalently, a symplectic foliation can be viewed as a regular Poisson structure; this is a constant rank bivector field $\Pi$ whose Schouten-Nijenhuis bracket $[\Pi,\Pi]_{\sf SN}$ vanishes.

These objects have received and still receive a lot of attention. An important question concerning them is that of existence, and a key result in this respect is the h-principle for symplectic foliations due to Fernandes-Frejlich~\cite{hsymplfol}. It guarantees their existence on a given manifold under certain assumptions; more specifically, it shows that a regular bivector field $\Pi$ on an open manifold $M$ is homotopic, {through} regular bivector fields, to a regular Poisson structure iff the distribution $\im\Pi^{\sharp}$ is homotopic to an involutive distribution. Other interesting results about symplectic foliations concern those of codimension-one (e.g.~the existence problem on $S^{5}$~\cite{Mit}) and normal form statements~\cite{CM}, to name a few.
  
\bigskip
\paragraph{\bf {A local parametrization of  symplectic foliations}}

Taking the Poisson geometry point of view, our aim is to describe small deformations of a given regular Poisson structure $\Pi$. Following the common strategy in deformation theory, this is done by constructing a suitable {$L_{\infty}[1]$-algebra} whose Maurer-Cartan elements parametrize small deformations of $\Pi$. {(Recall  that $L_{\infty}[1]$-algebras are higher generalizations of Lie algebras, up to a degree shift~\cite{LadaMarkl}.)
}
Clearly, this requires us to take care of the regularity condition and the Poisson integrability condition simultaneously, hence the main difficulty lies in finding a local parametrization for the space {of} bivector fields
 which behaves well with respect to both conditions. 

\bigskip
 
{
We now describe such a parametrization. Recall first the following fact from linear algebra, which underlies the correspondence between Poisson structures and symplectic {foliations}:
given a  vector space $V$,  the data of a bivector $\pi\in \wedge^2V$  is equivalent to a pair $(W,\omega)$, where $W\subset V$ is a subspace and $\omega\in \wedge^2W^*$ a non-degenerate skew-symmetric bilinear form on it.
For any sufficiently small $\gamma\in \wedge^2W^*$, one defines the gauge-transformed  bivector $\pi^{\gamma}$ to be the bivector corresponding to the pair $(W,\omega+\gamma)$. Now let $\Pi$ be a regular Poisson structure of rank $2k$. Pick a complement $G$ to the characteristic distribution $T\calF$ of $\Pi$ and let $\gamma\in\Omega^{2}(M)$ be the two-form defined by extending the leafwise symplectic form $\omega\in\Omega^{2}(\calF)$ by zero on $G$. 
We define  the \emph{Dirac exponential map}
\begin{equation*}
\exp_G:(\text{neighborhood of zero section in}\ \wedge^{2}TM)\rightarrow\wedge^{2}TM,\;\;\;\;\;\exp_G(Z)=\Pi+Z^{\gamma}.
\end{equation*} 
The key point is that this parametrization is compatible both with the constant rank condition and the integrability condition of Poisson structures (see Thm.~\ref{theor:nearby_regular_bivectors} and Prop.~\ref{prop:Koszul_algebra:MC_elements}):
\begin{enumerate}
    \item It linearizes the constant rank condition. Namely, for small enough $Z\in\mathfrak{X}^{2}(M)$, the bivector field $\exp_G(Z)$ has constant rank $2k$ iff $Z$ belongs to a distinguished \emph{linear} subspace $\frakX_\good^2(M)\subset\frakX^{2}(M)$.
    \item It turns the integrability condition of Poisson structures into a certain cubic equation. Namely, for any small enough bivector field $Z\in\mathfrak{X}^{2}(M)$, one has that $\exp_G(Z)$ self-commutes exactly when
    \[
    \frakl^G_1(Z)+\frac{1}{2}\frakl^G_2(Z,Z)+\frac{1}{6}{\frakl}^G_3(Z,Z,Z)=0.
    \]
Here $\frakl^G_1$ is the Poisson differential $[\Pi,-]_{SN}$, while  $\frakl^G_2$ and $\frakl^G_3$ depend also on $\gamma$ (see  Prop.~\ref{prop:Koszul_algebra}).
\end{enumerate}
Algebraically, the above equation is the Maurer-Cartan equation of an $L_{\infty}[1]$-algebra $(\frakX^\bullet(M)[2],\{\frakl^G_k\})$, as we explain now.
}

\bigskip
{
The above parametrization of regular Poisson structures by means of $\exp_G$ arises naturally when, instead of working in the Poisson category, one considers the larger category of Dirac structures.
A Dirac structure on a manifold $M$ is a maximally isotropic, involutive subbundle of the Courant algebroid $TM\oplus T^{*}M$, as we recall in Appendix~\ref{app:Courant_algebroids}. In particular, the graph of the rank $2k$ Poisson structure $\Pi$ is a Dirac structure $\gr\Pi\subset TM\oplus T^{*}M$, and deforming $\gr\Pi$ as a Dirac structure is tantamount to deforming $\Pi$ as a Poisson structure. Hence we can rely on results about the deformation theory of Dirac structures, which we recall in Appendix~\ref{app:Deformations_Dirac_Structures}.
}

Given a Dirac structure $A\subset TM\oplus T^{*}M$, a choice of complementary almost Dirac structure $B$ endows the graded vector space $\Omega^{\bullet}(A)[2]$ with an $L_{\infty}[1]$-algebra structure whose Maurer-Cartan elements parametrize the Dirac structures transverse to $B$. Moreover, different choices for $B$ produce isomorphic $L_{\infty}[1]$-algebras~\cite{GMS}. With this in mind, it seems natural to deform the Dirac structure $\gr\Pi$ making use of the complement $TM$, but this approach is not well-suited to single out regular deformations of $\Pi$. Indeed, it parametrizes the Poisson structures on $M$ by means of Maurer-Cartan elements of the familiar Koszul dgLa, via the correspondence
\begin{align*}
&\Big\{Z\in\mathfrak{X}^{2}(M):\ d_{\Pi}Z+\frac{1}{2}[Z,Z]_{\sf SN}=0\Big\}\longrightarrow\{P\in\mathfrak{X}^{2}(M):\ [P,P]_{\sf SN}=0\}:Z\mapsto \Pi+Z.
\end{align*}
The inconvenience of this approach is that the space of bivector fields $Z$ for which $\Pi+Z$ is of constant rank $2k$ does not have a vector space structure. Instead, we proceed as follows: if $G$ is a complement to the characteristic distribution $T\calF$ of $\Pi$, then the almost Dirac structure $G\oplus G^{0}$ is complementary to $\gr\Pi$. 
{Upon the canonical identification $\gr\Pi\cong T^*M$, this yields the}  
 $L_{\infty}[1]$-algebra $(\frakX^\bullet(M)[2],\{\frakl^G_k\})$ mentioned above.

At last, we combine step (1) and (2) above. We notice that the space of \emph{good multivector fields}
\[
\frakX^\bullet_\good(M)=\{W\in\frakX^\bullet(M):\iota_\alpha\iota_\beta W=0\ \text{for all}\ \alpha,\beta\in\Gamma(T^\circ\calF)\}
\]
is closed under the multibrackets $\frakl^G_k$, so  $\frakX^\bullet_\good(M)[2]\subset(\frakX^\bullet(M)[2],\{\frakl^G_k\})$ is an $L_{\infty}[1]$-subalgebra. Its Maurer-Cartan elements parametrize bivector fields near $\Pi$ that are both regular and Poisson (see Thm.~\ref{theor:deformation_theory regular_Poisson}).
\bigskip

\noindent
\textbf{Main Theorem.} \emph{There is a bijection
\begin{align*}
\big\{\text{small Maurer-Cartan elements of}\ &(\frakX^\bullet_\good(M)[2],\{\frakl^G_k\})\big\}\rightarrow \big\{\widetilde{\Pi}\in\RegPoiss^{2k}(M):\ \im\widetilde{\Pi}^{\sharp}\pitchfork G\big\}:\\
&\hspace{2.5cm}Z\mapsto\exp_{G}(Z).
\end{align*}
}

\bigskip
\paragraph{\bf {Relation to deformations of Poisson structures and of foliations}}
When dealing with 
regular Poisson structures, one has two obvious forgetful maps
\begin{equation}\label{eq:diag}
\xymatrix{
& \text{Regular Poisson structures}\ar[rd]\ar[ld]&  \\
 \text{Poisson structures}&  &  \text{Foliations}}
\end{equation}
{We ask whether these maps can be lifted to morphisms between the
algebraic structures governing the respective deformations. More precisely, we ask whether there are
$L_{\infty}[1]$-algebra morphisms whose induced maps of Maurer-Cartan elements are exactly those appearing in the above diagram. For both maps, the answer is positive, as we now explain.}

\begin{itemize}
\item[i)] For the arrow on the left in diagram~\eqref{eq:diag}, 
{this follows from} a result in~\cite{GMS}. Briefly, {as seen in the Main Theorem,} the deformation problem of a regular Poisson structure $\Pi$ is governed by the $L_{\infty}[1]$-subalgebra $(\frakX^\bullet_\good(M)[2],\{\frakl^G_k\})$ of $(\frakX^\bullet(M)[2],\{\frakl^G_k\})$, and the latter is essentially obtained deforming the Dirac structure $\gr\Pi$ using $G\oplus G^{0}$ as a complement. On the other hand, deforming $\Pi$ as a Poisson structure using the Koszul dgLa amounts to choosing $TM$ as a complement to $\gr\Pi$. By~\cite{GMS}, the $L_{\infty}[1]$-algebras obtained choosing different complements to $\gr\Pi$ are isomorphic; hence there is an $L_{\infty}[1]$-morphism relating $(\frakX^\bullet_\good(M)[2],\{\frakl^G_k\})$ with the $L_{\infty}[1]$-algebra obtained shifting degrees in the Koszul dgLa.

\item[ii)]
For the arrow on the right in diagram~\eqref{eq:diag}, we provide 
{the answer} 
ourselves. Denote by $\calF$   the foliation underlying the regular Poisson structure $\Pi$. Using deformation theory of Dirac structures, we first reconstruct an $L_{\infty}[1]$-algebra $(\Omega^\bullet(\calF;N\calF)[1],\{\frakv_k\})$ governing the deformations of $\calF$ (see Prop.~\ref{lem:G_infty_algebra:foliation}). This recovers a result already obtained 
{in~\cite{Huebsch},~\cite{Ji} and~\cite{vitagliano2014LieRinehart}}
 by different means, {and in \cite{GMS} by similar means}. We then proceed by constructing a strict $L_{\infty}[1]$-morphism 
\begin{equation}\label{eq:mor}
\phi:(\frakX_\good^\bullet(M)[2],\{\frakl_k^{G}\})\longrightarrow(\Omega^\bullet(\calF;N\calF)[1],\{\frakv_k\})
\end{equation}
which corresponds with the right arrow in diagram~\eqref{eq:diag} (see Prop.~\ref{prop:strict_L_infty-algebra_morphism} and Prop.~\ref{prop:strict_morphism:MC_elements}). 
\end{itemize}

The morphism~\eqref{eq:mor} fits in a short exact sequence of $L_{\infty}[1]$-algebras and strict morphisms
\begin{equation*}
 \{0\} \to
  (\frakX^\bullet(\calF)[2],\rmd_\Pi) \to
  (\frakX_\good^\bullet(M)[2],\{\frakl^G_k\})\overset{\phi}{\to}(\Omega^\bullet(\calF;N\calF)[1],\{\frakv_k\})
 \to  \{0\},
\end{equation*}
reflecting the fact that one obtains deformations of regular Poisson structures by deforming both the leafwise symplectic form (see Lemma~\ref{lem:gaugeB}) and the underlying foliation.

\bigskip
\paragraph{\bf {Geometric consequences: infinitesimal deformations}}
At last,  we  draw some geometric consequences. Most results can be stated without making reference to the full algebraic formalism developed above (which however is needed in the proofs).  We address infinitesimal deformations of a regular Poisson structure $\Pi$; these are the $2$-cocycles of the complex $(\frakX_\good^\bullet(M),d_{\Pi})$. Recall that an infinitesimal deformation is called \emph{unobstructed} if it is tangent to a path of deformations, otherwise it is called \emph{obstructed}. We show that the deformation problem of a regular Poisson structure is obstructed in general. We do so applying  the classical Kuranishi criterion to a simple example (see Ex.~\ref{ex:obstructed_example}).

{Refining this}, we also study obstructedness in relation to the diagram~\eqref{eq:diag}. {We ask whether a regular Poisson structure $\Pi$ can have an infinitesimal deformation $Z$ which is obstructed, but such that the corresponding infinitesimal deformation as a Poisson structure is unobstructed,  or the corresponding infinitesimal deformation of the foliation is  unobstructed. In both cases the answer is positive. In other words, even though $Z$ can not be prolonged to a   path of regular Poisson structures, it may be prolonged to a path of  Poisson structures, and it may be prolonged to a path {of regular bivector} fields spanning an integrable distribution. In detail:}

\begin{itemize}
    \item [i)]
First, we investigate whether a regular Poisson structure $\Pi$ can admit obstructed infinitesimal deformations that are unobstructed when deforming $\Pi$ just as a Poisson structure {(without conditions on the rank)}. We display some examples, showing that the answer is positive (see Ex.~\ref{ex:StephaneobstructedReg} and Ex.~\ref{ex:genexStephane}). Our examples involve regular Poisson structures whose foliated symplectic form $\omega$ has a lot of leafwise variation; this is due to the fact that the primary obstructions to extending an infinitesimal deformation of $\Pi$, either to a path of regular Poisson structures or just to a path of Poisson structures, are equivalent when $\omega$ admits a closed extension (see Cor.~\ref{cor:Kuranisheq}).

\item[ii)]
Second, we relate obstructedness of infinitesimal deformations of $\Pi$ with features of the underlying foliation $\calF$. Using the morphism~\eqref{eq:mor}, an infinitesimal deformation of $\Pi$ gives rise to an infinitesimal deformation of $\calF$; when the latter is obstructed, so is the former. There are however obstructed infinitesimal deformations of $\Pi$ that do not arise in this way (see Ex.~\ref{projobs}).
In fact, it is even possible that $\Pi$ has obstructed infinitesimal deformations, while the deformation problem of the foliation $\calF$ is completely unobstructed (see Ex.~\ref{ex:obs-unobs}). We also give some conditions on the underlying foliation $\calF$ which do imply unobstructedness of infinitesimal deformations of $\Pi$, {see Prop.~\ref{dim2} and Prop.~\ref{prop:infrig}.}
\end{itemize}

We finish this introduction by mentioning that in a companion paper~\cite{DefSFalg}, we will present some results which  are not needed for this paper but which complete the theory presented here. Most notably, we will show that the $L_{\infty}[1]$-algebra $(\frakX^\bullet_\good(M)[2],\{\frakl^G_k\})$ constructed here does not depend on auxiliary data (up to $L_{\infty}[1]$-isomorphism), and we will prove that its gauge equivalence relation corresponds with the geometric notion of equivalence given by isotopies. We also discuss in more detail its relation with the Koszul dgLa.

\bigskip
\paragraph{\bf Relation to the literature.}
{Our approach to the deformations of regular Poisson structures, by means of Dirac geometry, is analogous to the one adopted in~\cite{SZDirac,SZPre}  in the setting of presymplectic forms, i.e. closed 2-forms with kernel of constant rank. We remark that in that setting, it was not investigated whether the obstructedness of infinitesimal deformation{s} is exclusively due to the obstructedness of the underlying foliation, whereas here we address this question. 
}

{
There is a relation between our approach to deformations and
previous results about horizontally non-degenerate Poisson and Dirac structures, which can be used to recover a semi-local version of our main result. We explain this  in 
\S\ref{rem:Rui}.
}

\bigskip
\paragraph{\bf Structure of the paper.}
In Section~\ref{sec:one}, we set up the stage by introducing symplectic foliations and regular Poisson structures. We describe the infinitesimal deformations of regular Poisson structures.

In Section~\ref{sec:regular_bivector_fields}, we deal with the constant rank condition. We parametrize regular bivector fields close to a given regular Poisson structure by means of the Dirac exponential map.

In Section~\ref{sec:L_infty-algebra_regular_Poisson}, we also take the Poisson integrability condition into account. This section contains our main result. We construct the $L_{\infty}[1]$-algebra $(\frakX_\good^\bullet(M)[2],\{\frakl_k^{G}\})$ introduced above, and we show that its Maurer-Cartan elements correspond with small deformations of $\Pi$ under the Dirac exponential map.

In Section~\ref{sec:four}, we explore the relation between deformations of the regular Poisson structure $\Pi$ and those of its underlying foliation $\calF$. We reconstruct the $L_{\infty}[1]$-algebra governing the deformation problem of $\calF$, and we relate it with $(\frakX_\good^\bullet(M)[2],\{\frakl_k^{G}\})$ by means of a strict $L_{\infty}[1]$-morphism.

In Section~\ref{sec:obstructedness}, we show that infinitesimal deformations of a regular Poisson structure can be obstructed. 

{In Section~\ref{sec:relating} we} also relate obstructedness in the realm of regular Poisson structures with obstructedness in the realm of Poisson structures, and we show how properties of the underlying foliation allow us to draw conclusions about (un)obstructedness of infinitesimal deformations of the regular Poisson structure.

Appendices \ref{app:Courant_algebroids} and 
\ref{app:Deformations_Dirac_Structures} are a recollection of Dirac structures and their deformations, while Appendix
\ref{app:proofstrict} is devoted to a proof.

 \bigskip
  \paragraph{\bf Acknowledgements.} 

S.G.~and M.Z.~acknowledge partial support by
the FWO and FNRS under EOS project G0H4518N. {S.G. would also like to thank the Max Planck Institute for Mathematics in Bonn for its hospitality and financial support.}
M.Z.~acknowledges partial support by the long term structural funding -- Methusalem grant of the Flemish Government,  and the FWO research project G083118N (Belgium).
A.T.~has been supported by the FWO postdoctoral fellowship 1204019N during the preparation of this paper.
Further he is member of the National Group for Algebraic and Geometric Structures, and their Applications (GNSAGA – INdAM) and is currently partially supported by CMUP, which is financed by national funds through FCT – Fundação para a Ciência e a Tecnologia, I.P., under the project with reference UIDB/00144/2020.

\section{\textsf{Symplectic Foliations}}\label{sec:one}
{In this section, we set up the stage for the deformation problem of a symplectic foliation. We introduce the objects under consideration, and we describe their infinitesimal deformations.}

\subsection{Basic definitions}

\begin{definition}
	\label{def:symplectic_foliation}
	A \emph{symplectic foliation} on a manifold $M$ is a (regular) foliation $\calF$ of $M$ endowed with a \emph{leafwise symplectic structure} $\omega$, i.e.~a non-degenerate $2$-cocycle $\omega$ in the leafwise de Rham complex $(\Omega^\bullet(\calF),\rmd_\calF)$.
\end{definition}

In the following, we will denote by $\SymplFol(M)$ the space of all symplectic foliations on $M$ and by $\SymplFol^{2k}(M)\subset\SymplFol(M)$ the subspace of those having rank $2k$, so that
\begin{equation*}
\SymplFol(M)=\sqcup_k\SymplFol^{2k}(M).
\end{equation*}

Actually, a symplectic foliation is the same thing as a regular Poisson structure; let us briefly recall this well-known fact.

\begin{definition}
	A Poisson structure $\Pi\in\frakX^2(M)$ on a manifold $M$ is \emph{regular} if the associated VB-morphism $\Pi^\sharp:T^\ast M\to TM,\ \eta\mapsto\iota_\eta\Pi$, has constant rank.
	In this case, we will refer to the rank of $\Pi^\sharp$ as the \emph{rank} of $\Pi$.
\end{definition}
Above, ``VB'' stands for vector bundle.
In the following, we will denote by $\RegPoiss(M)$ the space of all regular Poisson structures on $M$ and by $\RegPoiss^{2k}(M)\subset\RegPoiss(M)$ the subspace of those having rank $2k$, so that
\begin{equation*}
\RegPoiss(M)=\sqcup_k\RegPoiss^{2k}(M).
\end{equation*}

Let $\calF$ be a foliation on a manifold $M$, {and denote $\mathfrak{X}^{\bullet}(\calF):=\Gamma(\wedge^{\bullet}T\calF)$}.
Notice that if $\Pi$ is a regular Poisson structure on $M$ with $T\calF=\im\Pi^\sharp$, then $T^\circ\calF=\ker\Pi^\sharp$ and so $\Pi\in\frakX^2(\calF)$.
In other words, a regular Poisson structure with characteristic foliation $\calF$ is the same thing as a non-degenerate Maurer-Cartan (MC) element of $(\frakX^\bullet(\calF)[1],[-,-]_{\sf SN})$, where $[-,-]_{\sf SN}$ denotes the Schouten-Nijenhuis bracket.
Further, the relation $\Pi=-\omega^{-1}$ establishes a one-to-one correspondence between non-degenerate MC elements $\Pi$ of $(\frakX^\bullet(\calF)[1],[-,-]_{\sf SN})$ and non-degenerate $2$-cocycles $\omega$ in $(\Omega^\bullet(\calF),\rmd_\calF)$.
This proves the following.

\begin{proposition}
	\label{prop:regular_symplectic_foliation}
	For any manifold $M$, there is a canonical rank-preserving bijection
	\begin{equation*}
	\SymplFol^\bullet(M)\longrightarrow\RegPoiss^\bullet(M),\ \ (\calF,\omega)\longmapsto {-}\omega^{-1}.
	\end{equation*}
\end{proposition}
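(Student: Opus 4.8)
The plan is to fix the underlying foliation and reduce the statement to a fibrewise linear-algebra duality together with a compatibility of the two integrability conditions. First I would organize both sides according to the characteristic foliation. On the Poisson side, given $\Pi\in\RegPoiss^{2k}(M)$, the distribution $\im\Pi^\sharp$ has constant rank $2k$, and I would recall from the standard theory of Poisson manifolds (using that $\im\Pi^\sharp$ is spanned by Hamiltonian vector fields, which are closed under the Lie bracket thanks to $[\Pi,\Pi]_{\sf SN}=0$) that it is involutive, hence integrates to a rank-$2k$ foliation $\calF$ with $T\calF=\im\Pi^\sharp$. As noted just before the statement, this forces $T^\circ\calF=\ker\Pi^\sharp$ and therefore $\Pi\in\frakX^2(\calF)$. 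Thus both $\SymplFol^{2k}(M)$ and $\RegPoiss^{2k}(M)$ decompose as disjoint unions indexed by rank-$2k$ foliations $\calF$, and it suffices to establish, for each fixed $\calF$, a bijection between leafwise symplectic forms on $\calF$ and regular Poisson structures whose characteristic foliation is $\calF$.

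Next I would carry out the fibrewise duality. For fixed $\calF$ and each point $x$, the vector space $W=T_x\calF$ carries the elementary correspondence between non-degenerate bivectors $\pi\in\wedge^2 W$ and non-degenerate two-forms $\omega\in\wedge^2 W^*$: regarding $\omega$ as an isomorphism $\omega^\flat\colon W\to W^*$, the assignment $\pi^\sharp=-(\omega^\flat)^{-1}$ is a bijection with inverse $\omega^\flat=-(\pi^\sharp)^{-1}$. Performing this fibrewise over $M$ and checking smoothness, one obtains a bijection between non-degenerate elements of $\frakX^2(\calF)$ and non-degenerate elements of $\Omega^2(\calF)$; this is precisely the map $\Pi=-\omega^{-1}$ of the statement, and it is manifestly rank-preserving, since both ranks equal $\dim\calF$.

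The main work is to show that this fibrewise bijection intertwines the two integrability conditions, namely that for $\Pi=-\omega^{-1}$ one has $[\Pi,\Pi]_{\sf SN}=0$ if and only if $\rmd_\calF\omega=0$; I expect this to be the crux of the argument. I would reduce it to the classical symplectic-versus-Poisson statement by restricting to a single leaf $L$: there $\omega|_L$ is an ordinary non-degenerate two-form and $\Pi|_L$ its associated bivector, and classically $\omega|_L$ is closed exactly when $\Pi|_L$ is Poisson. Since both the Schouten--Nijenhuis bracket of leafwise multivector fields and the leafwise differential $\rmd_\calF$ are computed tangentially to the leaves, vanishing of $[\Pi,\Pi]_{\sf SN}$ (resp.\ of $\rmd_\calF\omega$) is equivalent to its vanishing on each leaf, so the global foliated equivalence follows leafwise. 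Alternatively one can derive it directly from the identity expressing $[\Pi,\Pi]_{\sf SN}$ in terms of $\rmd_\calF\omega$ transported through the isomorphisms $\Pi^\sharp$ and $\omega^\flat$.

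Finally I would assemble the pieces: combining the fibrewise duality with the equivalence of integrability conditions yields, for each rank-$2k$ foliation $\calF$, a bijection between its leafwise symplectic structures and the regular Poisson structures with characteristic foliation $\calF$. Taking the disjoint union over all $\calF$ and all $k$ then produces the desired bijection $\SymplFol^\bullet(M)\to\RegPoiss^\bullet(M)$, which is canonical (the formula $(\calF,\omega)\mapsto-\omega^{-1}$ involves no auxiliary choices) and rank-preserving by construction.
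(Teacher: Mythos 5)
Your proposal is correct and follows essentially the same route as the paper: fix the characteristic foliation $\calF$, use the fibrewise duality $\Pi=-\omega^{-1}$ between non-degenerate elements of $\frakX^2(\calF)$ and $\Omega^2(\calF)$, and match the integrability conditions $[\Pi,\Pi]_{\sf SN}=0$ and $\rmd_\calF\omega=0$. The paper states this last equivalence as a known fact about non-degenerate Maurer--Cartan elements of $(\frakX^\bullet(\calF)[1],[-,-]_{\sf SN})$, whereas you supply the leafwise-reduction argument for it; this is a fleshing out, not a different approach.
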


\subsection{The formal tangent space}
We now identify the formal tangent space to $\RegPoiss^{2k}(M)$ at a point $\Pi$. To this end, for any foliation $\calF$ on $M$, we introduce a suitable space of multivector fields on $M$, which we call \emph{good multivector fields} w.r.t. $\calF$.

\begin{definition}
	\label{def:good_multi-vector_fields}
	For any foliation $\calF$ on a manifold $M$, the graded space
	 of \emph{good multivector fields} is defined by
	\begin{equation*}
	\label{eq:good_k-vectorfields}
	\frakX^\bullet_\good(M)=\{W\in\frakX^\bullet(M):\iota_\alpha\iota_\beta W=0\ \text{for all}\ \alpha,\beta\in\Gamma(T^\circ\calF)\}.
	\end{equation*}
	Here $T^\circ\calF$ denotes the annihilator of $T\calF$.
\end{definition}

\begin{remark}\label{rem:good}
	Notice that $\frakX^0_\good(M)\!=\!C^\infty(M)$ and $\frakX^1_\good(M)\!=\!\frakX(M)$.
	Further, $\frakX^\bullet_\calF(M)\!=\!\frakX^\bullet(M)$, if $\calF$ has corank $1$.	
Using a choice of distribution $G$ complementary to $T\calF$, we have $\frakX^\bullet_\good(M)=\Gamma(\wedge^\bullet T\calF)\oplus \Gamma(\wedge^{\bullet-1} T\calF\otimes G)$.

\end{remark}

\begin{lemma}
	\label{lem:good_multivector_fields}
	For any foliation $\calF$ on a manifold $M$, the graded space $\frakX^\bullet_\good(M)$ is the graded $\frakX^\bullet(\calF)$-submodule of $\frakX^\bullet(M)$ generated by $C^\infty(M)\oplus\frakX(M)$.
	
	Further, if $T\calF=\im\Pi^\sharp$  for some $\Pi\in\RegPoiss(M)$, then $\Pi$ is a good bivector field, i.e.~$\Pi\in\frakX_\good^2(M)$, and $\frakX_\good^\bullet(M)$ is a subcomplex of $(\frakX^\bullet(M),\rmd_\Pi)$, i.e.
	\begin{equation}
	\label{eq:lem:good_multivector_fields}
	\rmd_\Pi\frakX_\good^\bullet(M)\subset\frakX_\good^{\bullet+1}(M).
	\end{equation}
\end{lemma}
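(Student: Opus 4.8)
The plan is to establish the three assertions of Lemma~\ref{lem:good_multivector_fields} in order, working locally with a chosen complement $G$ to $T\calF$ so as to leverage the decomposition from Remark~\ref{rem:good}.

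First I would prove the module-theoretic characterization. One inclusion is immediate: since $\frakX^\bullet(\calF)=\Gamma(\wedge^\bullet T\calF)$ consists of multivector fields all of whose contractions with conormals vanish, wedging an element of $C^\infty(M)\oplus\frakX(M)$ with such a foliated field produces at most one ``transverse'' leg, so any $\alpha,\beta\in\Gamma(T^\circ\calF)$ annihilate the double contraction; hence the submodule generated by $C^\infty(M)\oplus\frakX(M)$ lies in $\frakX^\bullet_\good(M)$. For the reverse inclusion I would use the splitting $\frakX^\bullet_\good(M)=\Gamma(\wedge^\bullet T\calF)\oplus\Gamma(\wedge^{\bullet-1}T\calF\otimes G)$ of Remark~\ref{rem:good}: the first summand is generated over $\frakX^\bullet(\calF)$ by $C^\infty(M)$, and the second is generated by vector fields spanning $G$ (together with $\frakX(\calF)$, which already sits inside $\frakX^1(M)$). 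Thus every good multivector field is an $\frakX^\bullet(\calF)$-linear combination of functions and vector fields, giving the claimed generation.

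Next, that $\Pi\in\frakX^2_\good(M)$ when $T\calF=\im\Pi^\sharp$: as already noted in the excerpt, $T^\circ\calF=\ker\Pi^\sharp$, so for $\alpha,\beta\in\Gamma(T^\circ\calF)$ we have $\iota_\alpha\iota_\beta\Pi=\langle\beta,\Pi^\sharp\alpha\rangle=0$ since $\Pi^\sharp\alpha=0$. In fact $\Pi\in\frakX^2(\calF)\subset\frakX^2_\good(M)$ directly.

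For the final and most substantive claim, stability of $\frakX^\bullet_\good(M)$ under $\rmd_\Pi=[\Pi,-]_{\sf SN}$, I would combine the two previous parts. Because $\frakX^\bullet_\good(M)$ is the $\frakX^\bullet(\calF)$-submodule generated by $C^\infty(M)\oplus\frakX(M)$, and $[\Pi,-]_{\sf SN}$ is a derivation of the wedge product, it suffices to check that $\rmd_\Pi$ sends the generators into $\frakX^\bullet_\good(M)$ and that it preserves the module structure appropriately. Concretely, the Leibniz rule gives, for $W\in\frakX^\bullet(\calF)$ and a generator $U\in C^\infty(M)\oplus\frakX(M)$,
\begin{equation*}
\rmd_\Pi(W\wedge U)=(\rmd_\Pi W)\wedge U\pm W\wedge(\rmd_\Pi U).
\end{equation*}
Here I would use that $\frakX^\bullet(\calF)$ is itself $\rmd_\Pi$-closed (since $\Pi$ is a Poisson bivector tangent to $\calF$, $[\Pi,-]_{\sf SN}$ preserves $\Gamma(\wedge^\bullet T\calF)$), so $(\rmd_\Pi W)\wedge U$ is good; and that on the generators $\rmd_\Pi U$ lands in $\frakX^\bullet_\good(M)$. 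The latter reduces to two checks: for $f\in C^\infty(M)$, $\rmd_\Pi f=[\Pi,f]_{\sf SN}=-\Pi^\sharp(\rmd f)\in\frakX(\calF)\subset\frakX_\good(M)$; and for a vector field $X$, one verifies $\rmd_\Pi X=[\Pi,X]_{\sf SN}$ is good by contracting twice with conormals and using that $\Pi$ is good and Poisson. I expect this last verification to be the main obstacle, since $[\Pi,X]_{\sf SN}$ need not be tangent to $\calF$; the key is that its failure to be foliated is controlled by the transverse component, so that $\iota_\alpha\iota_\beta[\Pi,X]_{\sf SN}$ still vanishes for conormal $\alpha,\beta$. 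I would handle this by a direct contraction computation, invoking $\iota_\alpha\Pi=0$ for $\alpha\in\Gamma(T^\circ\calF)$ together with the Cartan-type formula relating $\iota_\alpha[\Pi,X]_{\sf SN}$ to $[\iota_\alpha\Pi,X]$ and $\iota_{\calL_X\alpha}\Pi$, and then observing that $\calL_X\alpha$ pairs trivially in the double contraction because the outer contraction with $\beta$ annihilates the surviving term. This reduces the whole lemma to the single generator-level identity for vector fields.
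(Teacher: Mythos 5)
Your proposal is correct and follows essentially the same route as the paper's proof: both reduce, via the derivation property of $\rmd_\Pi$ and the fact that $C^\infty(M)\oplus\frakX(M)$ generates $\frakX_\good^\bullet(M)$ over the $\rmd_\Pi$-closed algebra $\frakX^\bullet(\calF)$, to the single generator-level check $\rmd_\Pi\frakX(M)\subset\frakX^2_\good(M)$, which is then settled by the same contraction computation (the paper writes it as $(\rmd_\Pi X)(\alpha,\beta)=-(\calL_X\Pi)(\alpha,\beta)=0$ using $T^\circ\calF=\ker\Pi^\sharp$, equivalent to your Cartan-type identity argument). The only difference is cosmetic: you spell out the module characterization and the goodness of $\Pi$, which the paper dismisses as clear.
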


{Above, $d_{\Pi}:=[\Pi,-]_{SN}$ denotes the Lichnerowicz differential on the space of multivector fields $\frakX^\bullet(M)$.}
\begin{proof}
Since the rest is clear, we only prove Equation~\eqref{eq:lem:good_multivector_fields}.
The hypothesis $T\calF=\im\Pi^\sharp$ implies that $\Pi\in\frakX^2(\calF)$ and $\rmd_\Pi\frakX^\bullet(\calF)\subset\frakX^{\bullet+1}(\calF)$.
Since $\rmd_\Pi$ is a graded algebra derivation and $C^\infty(M)\oplus\frakX(M)$ generates $\frakX_\calF^\bullet(M)$ over $\frakX^\bullet(\calF)$,
it remains to prove that $d_\Pi\mathfrak{X}(M)\subset\mathfrak{X}^{2}_\good(M)$.
For any $X\in\frakX(M)$ and $\alpha,\beta\in \Gamma(T^0\calF)$, one gets
\[
(d_{\Pi}X)(\alpha,\beta)=-(\calL_{X}\Pi)(\alpha,\beta)=\Pi(\calL_{X}\alpha,\beta)+\Pi(\alpha,\calL_{X}\beta)-\calL_{X}(\Pi(\alpha,\beta))=0,
\]
{using that $T^0\calF=\ker\Pi^\sharp$}.
It follows that $d_{\Pi}X\in\mathfrak{X}^{2}_\good(M)$, and this completes the proof.
\end{proof}

\begin{lemma}
	\label{lem:formal_tangent_space}
	Let $\Pi$ be a rank $2k$ regular Poisson structure on $M$, with $\im\Pi^\sharp=T\calF$.
	For any smooth path $(\Pi_t)_{t\in(-\epsilon,\epsilon)}$ in $\RegPoiss^{2k}(M)$ with $\Pi_0=\Pi$, one gets that $\dot\Pi_0:=\left.\frac{\rmd}{\rmd t}\right|_{t=0}\Pi_t$ is a $2$-cocycle in $(\frakX^\bullet_\good(M),\rmd_\Pi)$, i.e.:
		\begin{equation*}
		\dot\Pi_0\in\frakX^2_\good(M)\quad\text{and}\quad\rmd_\Pi\dot\Pi_0=0.
		\end{equation*}
\end{lemma}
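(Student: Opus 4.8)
The plan is to differentiate the two defining conditions of $\RegPoiss^{2k}(M)$ along the path and evaluate at $t=0$. I split the claim into its two parts: showing $\dot\Pi_0\in\frakX^2_\good(M)$ (the linearized rank condition) and showing $\rmd_\Pi\dot\Pi_0=0$ (the linearized integrability condition).

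For the cocycle condition, I would start from the fact that each $\Pi_t$ is Poisson, i.e.~$[\Pi_t,\Pi_t]_{\sf SN}=0$ for all $t$. Differentiating at $t=0$ and using the graded Leibniz/bilinearity of the Schouten-Nijenhuis bracket gives $[\dot\Pi_0,\Pi]_{\sf SN}+[\Pi,\dot\Pi_0]_{\sf SN}=0$, which by graded symmetry of $[-,-]_{\sf SN}$ on bivectors reduces to $[\Pi,\dot\Pi_0]_{\sf SN}=0$, that is $\rmd_\Pi\dot\Pi_0=0$. This part is essentially formal once one knows the bracket is smooth in its entries and $\rmd_\Pi=[\Pi,-]_{\sf SN}$.

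The part I expect to be the main obstacle is showing $\dot\Pi_0\in\frakX^2_\good(M)$, i.e.~$\iota_\alpha\iota_\beta\dot\Pi_0=0$ for all $\alpha,\beta\in\Gamma(T^\circ\calF)$, because here the annihilator $T^\circ\calF$ is fixed (it is the annihilator of the \emph{fixed} foliation $\calF=\calF_0$), while the characteristic distribution $T\calF_t=\im\Pi_t^\sharp$ may move with $t$. The key input is that every $\Pi_t$ has the \emph{same} rank $2k$; this constant-rank hypothesis is what forces the derivative to stay good. Concretely, fix a point $p\in M$ and covectors $\alpha,\beta\in T_p^\circ\calF$; I would extend them to local sections of $T^\circ\calF$ and consider the smooth function $t\mapsto (\iota_\alpha\iota_\beta\Pi_t)(p)=\Pi_t(\alpha,\beta)(p)$. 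At $t=0$ this vanishes since $T^\circ\calF=\ker\Pi^\sharp$, so $\iota_\beta\Pi=0$ in particular $\Pi(\alpha,\beta)=0$; the subtlety is that $\iota_\alpha\iota_\beta\Pi_t$ need not vanish for $t\neq0$, so the naive term-by-term differentiation does not immediately give zero. To overcome this I would argue via the rank: since $\rank\Pi_t^\sharp\equiv 2k$ is maximal and locally constant, one can locally choose, by the constant-rank theorem, a frame adapted to the (moving) image distribution, or equivalently use that the annihilator $\ker\Pi_t^\sharp$ varies smoothly as a rank-$(\dim M-2k)$ subbundle with $\ker\Pi_0^\sharp=T^\circ\calF$. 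Differentiating $\Pi_t^\sharp(\eta_t)=0$ for a smooth path $\eta_t\in\Gamma(\ker\Pi_t^\sharp)$ with $\eta_0=\alpha$, and pairing against $\beta$, isolates $\dot\Pi_0(\alpha,\beta)$ and shows it vanishes; the crucial point exploited is that the zeroth-order terms drop out because $\Pi^\sharp(\alpha)=\Pi^\sharp(\beta)=0$.

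Alternatively, and perhaps more cleanly, I would phrase the second part using the image description of goodness provided in Remark~\ref{rem:good}: with a fixed complement $G$ to $T\calF$ one has the splitting $\frakX^2_\good(M)=\frakX^2(\calF)\oplus\Gamma(T\calF\otimes G)$, so a bivector is good precisely when its ``$G\wedge G$-component'' vanishes, and the condition $\iota_\alpha\iota_\beta W=0$ with $\alpha,\beta\in\Gamma(T^\circ\calF)$ is exactly the statement that this component is zero. The constant-rank condition ensures that along the path the $G\wedge G$-component of $\Pi_t$ stays second-order (it is forced to vanish to first order by the fixed rank), whence its derivative at $t=0$ vanishes. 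Whichever route I take, the heart of the argument is extracting the linearized rank constraint from the hypothesis that all $\Pi_t$ share the rank $2k$ and that $\Pi_0$ annihilates $T^\circ\calF$; the Poisson/cocycle half is then a short bracket computation.
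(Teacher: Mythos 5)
Your proposal is correct and follows essentially the same route as the paper: the cocycle half is the identical bracket computation, and for goodness the paper likewise constructs a smooth path of covectors $\alpha_t\in\ker\Pi_t^\sharp$ with $\alpha_0=\alpha$ and differentiates $\Pi_t(\alpha_t,\beta_t)=0$, the zeroth-order terms dropping out precisely because $\alpha,\beta\in\ker\Pi^\sharp$. The one step you assert rather than prove---that $\ker\Pi_t^\sharp$ varies smoothly so that such a path $\eta_t$ exists---is exactly what the paper makes precise by forming the constant-rank distribution $D_{(p,t)}=\im(\Pi_t^\sharp)_p\oplus\langle\partial_t\rangle$ on $M\times(-\epsilon,\epsilon)$, taking its annihilator $D^0$, and extending $\alpha$ from the properly embedded submanifold $M\times\{0\}$.
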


\begin{proof}
	Differentiating the identity $[\Pi_t,\Pi_t]_{\sf SN}=0$, at time $t=0$, one gets that $\rmd_\Pi\dot\Pi_0=0$ as follows:
	\begin{equation*}
	0=\left.\tfrac{\rmd}{\rmd t}\right|_{t=0}[\Pi_t,\Pi_t]_{\sf SN}=[\dot\Pi_0,\Pi]_{\sf SN}+[\Pi,\dot\Pi_0]_{\sf SN}=2[\Pi,\dot\Pi_0]_{\sf SN}=2\rmd_\Pi\dot\Pi_0.
	\end{equation*}
	{Next, we claim that if $\alpha\in\Gamma(T^\circ\calF)$, then one can construct a smooth path $\alpha_t$ in $\Omega^1(M)$ with $\alpha_0=\alpha$ and $\iota_{\alpha_t}\Pi_t=0$. To see this, note that the product manifold $M\times(-\epsilon,\epsilon)$ has a smooth distribution $D$ defined by
	\[
	D_{(p,t)}=\im(\Pi_t^{\sharp})_{p}\oplus\langle\partial_t\rangle.
	\]
	Denote by $D^{0}$ its annihilator, which is a vector bundle over $M\times (-\epsilon,\epsilon)$.
	Since $\alpha\in\Gamma(D^{0}|_{M\times\{0\}})$ and the submanifold $M\times\{0\}\subset M\times(-\epsilon,\epsilon)$ is properly embedded, one can extend $\alpha$ to a global section $\widetilde{\alpha}\in\Gamma(D^{0})$. Setting $\alpha_t:=\widetilde{\alpha}|_{M\times\{t\}}$ proves the claim.}
	
Now we can show that $\dot\Pi_0\in\frakX^2_\good(M)$. Take $\alpha,\beta\in\Gamma(T^\circ\calF)$ and let $\alpha_t,\beta_t$ be paths as constructed above.
	Differentiating the identity $\Pi_t(\alpha_t,\beta_t)=0$ one gets:
	\begin{equation*}
	0=\tfrac{\rmd}{\rmd t}|_{t=0}(\Pi_t(\alpha_t,\beta_t))
	=\dot\Pi_0(\alpha,\beta)+\Pi(\dot\alpha_0,\beta)+ \Pi(\alpha,\dot\beta_0)=\dot\Pi_0(\alpha,\beta).\qedhere
	\end{equation*}
\end{proof}

	Lemma~\ref{lem:formal_tangent_space} leads to the following description for the formal tangent space to $\RegPoiss^{2k}(M)$ at a point $\Pi$: 
	\begin{equation*}
	T_\Pi\big(\RegPoiss^{2k}(M)\big)=Z^2\big(\frakX^\bullet_\good(M),\rmd_\Pi\big):=\ker\big\{\rmd_\Pi:\frakX^2_\good(M)\to\frakX^3_\good(M)\big\}.
	\end{equation*}

\section{\textsf{Parametrizing Nearby Regular Bivector Fields}}
\label{sec:regular_bivector_fields}

In this section, using tools from Dirac geometry, we discuss the constant rank condition on bivector fields close to a given regular Poisson structure.
Since we postpone the discussion of integrability to \S~\ref{sec:L_infty-algebra_regular_Poisson}, everything boils down to Dirac linear algebra. {We {freely use} notions and notations from Dirac geometry, for which we refer the reader to Appendix~\ref{app:Courant_algebroids}.}
We first introduce in \S\ref{subsec:nearbybivf} a local parametrization of bivector fields close to the given regular Poisson structure, and then in \S\ref{subsec:nearbyregbivf} we take into account the constant rank condition.

The first idea for parametrizing small deformations of a rank $2k$ regular Poisson structure $\Pi$, with characteristic foliation $\calF$, would be to use those small bivector fields $Z$ such that $\Pi+Z$ is still a rank $2k$ regular Poisson structure.
From the perspective of deformation theory of Dirac structures (see~Lemma~\ref{lem:2.6second}), this approach amounts to deforming the Dirac structure $\gr\Pi$ (the graph of $\Pi$) using $TM$ as a complementary Dirac structure.
However, the inconvience of this approach is that it does not translate the constant rank condition into a linear condition.

In this section and in \S~\ref{sec:L_infty-algebra_regular_Poisson} we use a better way of parametrizing small deformations of the regular Poisson structure $\Pi$, still based on the deformation theory of Dirac structures, which works as follows.
First, one fixes a distribution $G$ on $M$ complementary to $T\calF$, and then one uses the complementary almost Dirac structure
$G\oplus T^\ast\calF$ to parametrize the small deformations of the Dirac structure $\gr\Pi$.
As we will find out below (see Lemma~\ref{lem:nearby_regular_2-vectors}), the constant rank condition turns out to be a linear condition once we adopt this parametrization.

{In the whole body of this paper, unless stated explicitly, we assume to following set-up:
\bigskip
\noindent
\begin{mdframed}
\begin{itemize}
\item 
$(\calF,\omega)$ is a rank $2k$ symplectic foliation on a manifold $M$. 
\item $\Pi$ is the corresponding rank $2k$ regular Poisson structure. 
\item $G$ is a  distribution  on $M$ such that $TM=G\oplus T\calF$. 
\item $\gamma$ is the unique $2$-form on $M$ defined by
\begin{equation*}
\gamma^\flat|_{T\calF}=\omega^\flat\qquad\text{and}\qquad\gamma^\flat|_G=0.
\end{equation*}
\end{itemize}
\end{mdframed}
}
\bigskip{}
\noindent  We denote by $\pr_{T\calF}:TM\to T\calF$ and $pr_G:TM\to G$ the projections induced by the above splitting of $TM$. Dually we have  $T^{*}M=T^{*}\calF\oplus G^{*}$, identifying $T^{*}\calF\cong G^{0}$ and $G^{*}\cong T\calF^{0}$. 

\subsection{Parametrizing Nearby Bivector Fields}\label{subsec:nearbybivf}

{Assume the set-up outlined in the box above.} In this subsection, we will parametrize bivector fields close to $\Pi$ by means of the so-called \emph{Dirac exponential map} associated with $G$ and $\Pi$. We now introduce this map, which is constructed in two steps. 

\subsubsection{{\underline{\smash{The gauge transformation by $\gamma$}}}}
The two-form $\gamma\in\Omega^{2}(M)$ determines an orthogonal transformation of the generalized tangent bundle $(\bbT M,\ldab-,-\rdab)$ given by
\begin{equation*}
\calR_\gamma:\bbT M\to\bbT M,\ X+\alpha\mapsto X+\alpha+\iota_X\gamma.
\end{equation*}
For any bivector field $Z\in\frakX^2(M)$, if the almost Dirac structure $\calR_\gamma\gr(Z)$ is still transverse to $TM$, then it is the graph of a (unique) bivector field, denoted by $Z^\gamma$ and called~\cite[\S3]{SW} the \emph{gauge transform of $Z$ by $\gamma$}. 
In order to describe when $Z^\gamma$ exists, let us introduce the open neighborhood $\calI_\gamma$ of the zero section in $\wedge^2TM$ given by
\begin{equation}
\label{eq:def:I_gamma}
\calI_\gamma:=\sqcup_{x\in M}\{Z_x\in\wedge^2T_xM\mid \id_{T^\ast_xM}+\gamma^\flat_x\circ Z_x^\sharp:T_x^\ast M\to T_x^\ast M\ \text{is invertible}\}.
\end{equation}
Then one can easily check that the open neighborhood $\Gamma(\calI_\gamma)$ of $0$ in $\frakX^2(M)$ w.r.t. the $C^0$-topology consists exactly of those bivector fields whose gauge transform by $\gamma$ is a well-defined bivector field.

\begin{lemma}
	\label{lem:existence_gauge_trasform}
	For any $Z\in\frakX^2(M)$, the following conditions are equivalent:
	\begin{enumerate}
		\item there is a (unique) bivector field $Z^\gamma$ such that $\calR_\gamma\gr(Z)=\gr(Z^\gamma)$,
		\item the almost Dirac structure $\gr(Z)$ is transverse to $\gr(-\gamma)$,
		\item the bivector field takes values in $\calI_\gamma$, i.e.~$Z\in\Gamma(\calI_\gamma)$.
	\end{enumerate} 
\end{lemma}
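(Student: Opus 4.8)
The plan is to note first that all three conditions are pointwise in $x\in M$, so it suffices to prove the equivalence fiberwise, working in a single generalized tangent space $\bbT_x M = V\oplus V^*$ with $V:=T_xM$. I abbreviate $Z:=Z_x\in\wedge^2 V$ and $\gamma:=\gamma_x\in\wedge^2 V^*$, and I use the descriptions $\gr(Z)=\{Z^\sharp\alpha+\alpha:\alpha\in V^*\}$ and $\gr(\gamma)=\{X+\gamma^\flat(X):X\in V\}$. I will also repeatedly invoke the elementary fact that a maximal isotropic subspace $L\subset\bbT_x M$ is the graph of a (unique, automatically skew) bivector exactly when $L$ is transverse to the tangent summand, i.e.~$L\cap TM=0$; this handles both existence and uniqueness in (1).

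The core of the argument is the equivalence (1) $\Leftrightarrow$ (3), which I would settle by a direct computation of $\calR_\gamma\gr(Z)$. Applying $\calR_\gamma$ to the generic element $Z^\sharp\alpha+\alpha$ yields
\[
\calR_\gamma\bigl(Z^\sharp\alpha+\alpha\bigr)=Z^\sharp\alpha+\bigl(\id+\gamma^\flat\circ Z^\sharp\bigr)\alpha,
\]
so the projection of $\calR_\gamma\gr(Z)$ to the cotangent summand $V^*$ is precisely $\id+\gamma^\flat\circ Z^\sharp$. Since $\calR_\gamma$ is orthogonal, $\calR_\gamma\gr(Z)$ is again maximal isotropic, hence is the graph of a bivector $Z^\gamma$ iff this projection is an isomorphism, that is, iff $\id+\gamma^\flat\circ Z^\sharp$ is invertible, which is exactly the condition $Z_x\in\calI_\gamma$ of (3).

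For (1) $\Leftrightarrow$ (2) I would avoid further computation and argue structurally. The key identity is $\gr(-\gamma)=\calR_\gamma^{-1}(TM)$: indeed $\calR_\gamma$ sends the tangent summand $X\in TM$ to $X+\iota_X\gamma\in\gr(\gamma)$, so its inverse $\calR_{-\gamma}$ carries $TM$ onto $\gr(-\gamma)$. As $\calR_\gamma$ is a linear automorphism of $\bbT_x M$ it preserves transversality, whence
\[
\gr(Z)\pitchfork\gr(-\gamma)\iff\calR_\gamma\gr(Z)\pitchfork\calR_\gamma\gr(-\gamma)=TM.
\]
By the elementary fact recalled above, the right-hand condition says exactly that $\calR_\gamma\gr(Z)$ is a bivector graph, i.e.~condition (1). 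Combining with (1) $\Leftrightarrow$ (3) closes the loop and proves all three equivalences.

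Since the statement reduces to Dirac linear algebra, no step is genuinely hard; the one place demanding care is the bookkeeping of signs and conventions, in particular verifying $\gr(-\gamma)=\calR_\gamma^{-1}(TM)$ so that transporting transversality by $\calR_\gamma$ lands precisely on $TM$ (and not on some other Lagrangian complement). Once that identification is fixed, the remaining verifications are immediate.
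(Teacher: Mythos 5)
Your proof is correct and takes essentially the same approach as the paper: the equivalence $(1)\Leftrightarrow(2)$ by transporting transversality along the automorphism $\calR_\gamma$ together with the identity $\calR_{-\gamma}TM=\gr(-\gamma)$, and $(1)\Leftrightarrow(3)$ by applying $\calR_\gamma$ to the generic element $Z^\sharp\alpha+\alpha$ of $\gr(Z)$. The only cosmetic difference is that you detect transversality to $TM$ via invertibility of the projection of $\calR_\gamma\gr(Z)$ onto $T^\ast M$, while the paper computes the intersection $\calR_\gamma\gr(Z)\cap TM$ and shows its vanishing is equivalent to $\ker(\id+\gamma^\flat\circ Z^\sharp)=0$; these are the same linear algebra.
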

\begin{proof}
{
Clearly $(1)$ and $(2)$ are equivalent, since $\calR_\gamma\gr(Z)\pitchfork TM$ iff $\gr(Z)\pitchfork\calR_{-\gamma}TM$, and  $\calR_{-\gamma}TM=\gr(-\gamma)$. We now check that $(1)$ and $(3)$ are equivalent. As $\calR_\gamma\gr(Z)=\{Z^{\sharp}(\alpha)+\alpha+\gamma^{\flat}(Z^{\sharp}(\alpha)):\ \alpha\in T^{*}M\}$, we have
\[
\calR_\gamma\gr(Z)\cap TM=\big\{Z^{\sharp}(\alpha):\ \alpha\in\ker\big(\id+\gamma^{\flat}\circ Z^{\sharp}\big)\big\}.
\]
It follows that $\calR_\gamma\gr(Z)\pitchfork TM$ iff $\ker(\id+\gamma^{\flat}\circ Z^{\sharp})\subset\ker(Z^{\sharp})$, which in turn is equivalent with $\ker(\id+\gamma^{\flat}\circ Z^{\sharp})$ being trivial. This shows that $(1)$ and $(3)$ are equivalent.
}
\end{proof}

The orthogonal transformation $\calR_\gamma:\bbT M\to\bbT M$ induces a bijection
\begin{equation}
\label{eq:exp_G:first_component}
\begin{tikzcd}
\{\calL\ \text{almost Dirac}\mid\calL\pitchfork TM\ \&\ \calL\pitchfork\gr(-\gamma)\}\arrow[r, "\calR_\gamma"]&\{\calL\ \text{almost Dirac}\mid\calL\pitchfork\gr(\gamma)\ \&\ \calL\pitchfork TM\}.
\end{tikzcd}
\end{equation}
By Lemma \ref{lem:existence_gauge_trasform}, identifying bivector fields with their graphs, this bijection comes from a fiber preserving diffeomorphism
\begin{equation*}
F_\gamma:\label{eq:fiber-preserving_diffeo:1}
\calI_\gamma\overset{\sim}{\longrightarrow}\calI_{-\gamma},\ Z\longmapsto Z^\gamma.
\end{equation*}

\noindent
Clearly, we have that $F_\gamma(0)=0$ and $F_\gamma^{-1}=F_{-\gamma}$. The bivector field $Z^\gamma$ is explicitly characterized by 
\begin{equation}
\label{eq:Z^gamma}
(Z^\gamma)^\sharp=Z^\sharp\circ(\id_{T^\ast M}+{\gamma^\flat}{{}\circ{}}{Z^\sharp})^{-1}.
\end{equation}

\subsubsection{{\underline{\smash{The Dirac exponential map}}}}

The bivector field $\Pi\in\mathfrak{X}(M)$ also determines an orthogonal transformation of  $(\bbT M,\ldab-,-\rdab)$, given by
\begin{equation*}
\calR_\Pi:\bbT M\longrightarrow\bbT M,\ X+\alpha\longmapsto X+\iota_\alpha\Pi+\alpha.
\end{equation*}
Since $\im\gamma^\flat=T^\ast\calF$ and $\id_{TM}+\Pi^\sharp\circ\gamma^\flat=\pr_{G}$, it is clear that
\begin{equation}
\label{eq:preliminary:exp_G:second_component}
\calR_\Pi TM=TM\qquad\text{and}\qquad\calR_\Pi\gr(\gamma)=G\oplus T^\ast\calF.
\end{equation}
Consequently, the orthogonal transformation $\calR_\Pi:\bbT M\to\bbT M$ induces a bijection
\begin{equation}
\label{eq:exp_G:second_component}
\begin{tikzcd}
\{\calL\ \text{almost Dirac}\mid\calL\pitchfork\gr(\gamma)\ \&\ \calL\pitchfork TM\}\arrow[r, "\calR_\Pi"]&\{\calL\ \text{almost Dirac}\mid\calL\pitchfork G\oplus T^\ast\calF\ \&\ \calL\pitchfork TM\}
\end{tikzcd}
\end{equation}
which, identifying bivector fields with their graphs, corresponds with a fiber preserving diffeomorphism
\begin{equation*}
\label{eq:fiber-preserving_diffeo:2}
\calI_{-\gamma}\longrightarrow \Pi+\calI_{-\gamma},\ Z_x\longmapsto\Pi_x+Z_x.
\end{equation*}
Composing the maps~\eqref{eq:exp_G:first_component} and~\eqref{eq:exp_G:second_component}, we get that the orthogonal transformation $\calR_\Pi\calR_\gamma$ induces a bijection
\begin{equation*}
\label{eq:exp_G:almost_Dirac}
\begin{tikzcd}
\{\calL\ \text{almost Dirac}\mid\calL\pitchfork TM\ \&\ \calL\pitchfork\gr(-\gamma)\}\arrow[r, "\calR_\Pi\calR_\gamma"]&\{\calL\ \text{almost Dirac}\mid\calL\pitchfork G\oplus T^\ast\calF\ \&\ \calL\pitchfork TM\}.
\end{tikzcd}
\end{equation*}

{After identifying  bivector fields with  their graphs, we obtain 
the desired parametrization of bivector fields close to $\Pi$. }

\begin{definition}
The \emph{Dirac exponential map} $\exp_G$ associated with $G$ and $\Pi$ is defined by
\begin{equation}\label{dir-exp}
	\exp_G\colon\calI_\gamma\longrightarrow\Pi+\calI_{-\gamma},\ \ Z\longmapsto\exp_G(Z):=\Pi+Z^\gamma,
\end{equation}
Its action on sections $Z\in\Gamma(\calI_\gamma)$ is given by 
\begin{equation}\label{eq:grexp}
 \gr(\exp_G(Z))=\calR_\Pi\calR_\gamma\gr(Z), 
\end{equation}
 or equivalently,
\[(\exp_G(Z))^\sharp=\Pi^\sharp+Z^\sharp\circ(\id_{T^\ast M}+\gamma^\flat\circ Z^\sharp)^{-1}. 
\]
\end{definition}

\subsection{Parametrizing Nearby Regular Bivector Fields}
\label{subsec:nearbyregbivf}

The following key lemma  proves that parametrizing bivector fields close to $\Pi$ by means of the Dirac exponential map $\exp_G$ turns the constant rank condition into a linear condition. Namely, the parameter is required to belong to the subspace of good bivector fields (see Definition~\ref{def:good_multi-vector_fields}).
\begin{lemma}
	\label{lem:nearby_regular_2-vectors}
	For any bivector field $Z\in\frakX^2(M)$, the following two conditions are equivalent:
	\begin{enumerate}
		\item $Z\in\Gamma(\calI_\gamma)$ is a good bivector field,
		\item $\exp_G(Z)\in\Gamma(\Pi+\calI_{-\gamma})$ is a rank $2k$ regular bivector field.
	\end{enumerate}
\end{lemma}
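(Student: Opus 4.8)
The plan is to reduce the statement to fibrewise linear algebra and to compute the kernel of $(\exp_G(Z))^\sharp$ explicitly, extracting the rank as a function of $Z$. Throughout I would work at an arbitrary point $x\in M$, writing $T\calF,G,\dots$ for the fibres, and I would use that the defining conditions of both (1) and (2) presuppose $Z\in\Gamma(\calI_\gamma)$, so that $B:=\id_{T^\ast M}+\gamma^\flat\circ Z^\sharp$ is pointwise invertible and $\exp_G(Z)$ is defined. Recall from Definition~\ref{def:good_multi-vector_fields} that $Z$ is good exactly when $Z^\sharp(T^\circ\calF)\subseteq T\calF$, and from the set-up that $\ker\Pi^\sharp=T^\circ\calF$ and $\im\Pi^\sharp=T\calF$.

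First I would rewrite the kernel equation $(\exp_G(Z))^\sharp\xi=0$ using the explicit formula $(\exp_G(Z))^\sharp=\Pi^\sharp+Z^\sharp\circ B^{-1}$ together with the substitution $\eta:=B^{-1}\xi$, i.e.\ $\xi=\eta+\gamma^\flat(Z^\sharp\eta)$. This turns the equation into
\[
\Pi^\sharp\eta+\Pi^\sharp\gamma^\flat(Z^\sharp\eta)+Z^\sharp\eta=0.
\]
At this point I would invoke the key identity $\id_{TM}+\Pi^\sharp\circ\gamma^\flat=\pr_{G}$ recorded above, equivalently $\Pi^\sharp\circ\gamma^\flat=-\pr_{T\calF}$. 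Applying it to $Z^\sharp\eta$ collapses the middle two terms and leaves the strikingly simple equation
\[
\Pi^\sharp\eta=-\pr_{G}(Z^\sharp\eta).
\]

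The decisive step is then to read off this equation through the splitting $TM=G\oplus T\calF$. Since $\Pi^\sharp\eta\in T\calF$ while $\pr_{G}(Z^\sharp\eta)\in G$ and $G\cap T\calF=0$, both sides must vanish separately, giving $\eta\in\ker\Pi^\sharp=T^\circ\calF$ together with $\pr_{G}(Z^\sharp\eta)=0$, i.e.\ $Z^\sharp\eta\in T\calF$. Because $\xi\mapsto\eta=B^{-1}\xi$ is an isomorphism, this identifies $\ker(\exp_G(Z))^\sharp_x$ with $B(\ker\phi_x)$, where $\phi_x:=\pr_{G}\circ Z^\sharp|_{T^\circ_x\calF}\colon T^\circ_x\calF\to G_x$. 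Counting dimensions, using $\dim T^\circ_x\calF=\dim M-2k$, yields
\[
\rank(\exp_G(Z))^\sharp_x=2k+\rank(\phi_x).
\]

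From this formula both implications are immediate: $\exp_G(Z)$ has constant rank $2k$ if and only if $\phi_x=0$ for every $x$, i.e.\ if and only if $Z^\sharp(T^\circ\calF)\subseteq T\calF$ pointwise, which is precisely the goodness of $Z$. I expect the main obstacle to be organizing the change of variables $\eta=B^{-1}\xi$ and correctly applying $\Pi^\sharp\circ\gamma^\flat=-\pr_{T\calF}$ so that the kernel equation decouples along $G\oplus T\calF$; once that decoupling is in place, the rank count and the equivalence follow at once. A minor point to treat carefully is that both (1) and (2) tacitly require $Z\in\Gamma(\calI_\gamma)$, which is exactly what guarantees the invertibility of $B$ and makes the pointwise argument valid uniformly in $x$.
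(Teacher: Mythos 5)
Your proof is correct, and it arrives at exactly the same fibrewise statement as the paper — namely that, for $Z\in\Gamma(\calI_\gamma)$, the kernel of $(\exp_G(Z))^\sharp$ is isomorphic to $\{\alpha\in T^\circ\calF : Z^\sharp\alpha\in T\calF\}$ — but by a more elementary route. The paper stays inside the Dirac formalism: it uses the identity $\ker W^\sharp = T^\ast M\cap\gr(W)$, transports $T^\ast M$ backwards through the orthogonal transformations (computing $\calR_{-\gamma}\calR_{-\Pi}T^\ast M = T\calF\oplus G^\ast$), and then reads off the rank condition from the explicit description of the intersection $(T\calF\oplus G^\ast)\cap\gr(Z)$. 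You instead work directly with the formula $(\exp_G(Z))^\sharp = \Pi^\sharp + Z^\sharp\circ B^{-1}$, and your change of variables $\eta = B^{-1}\xi$ is the hands-on counterpart of undoing the gauge transformation $\calR_\gamma$; the identity $\Pi^\sharp\circ\gamma^\flat = -\pr_{T\calF}$ then decouples the kernel equation along $TM = G\oplus T\calF$. Your version needs no graphs or Courant-algebroid machinery, and it has the small bonus of producing the explicit pointwise rank formula $\rank(\exp_G(Z))_x = 2k + \rank\bigl(\pr_G\circ Z^\sharp|_{T^\circ_x\calF}\bigr)$, which shows in particular that the rank of $\exp_G(Z)$ can never drop below $2k$. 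The paper's version is shorter because the invariance of graphs under the transformations $\calR_\Pi\calR_\gamma$ does automatically the bookkeeping that your substitution carries out by hand; but the two arguments are fully equivalent in content.
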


\begin{proof}
	First note that $\calR_{-\gamma}\calR_{-\Pi}T^\ast M=T\calF\oplus G^\ast$, as follows
	from $\im\Pi^\sharp=T\calF$ and $\id_{T^\ast M}+\gamma^\flat\circ\Pi^\sharp=\pr_{G^\ast}$. 
	So for each $Z\in\Gamma(\calI_\gamma)$, one can easily compute
	\begin{equation*}
	\ker(\exp_G(Z))^{\sharp}=T^\ast M\cap\gr(\exp_G(Z))=\calR_\Pi\calR_\gamma((T\calF\oplus G^\ast)\cap\gr(Z)).
	\end{equation*}
	This tells us that $\exp_G(Z)$ has constant rank $2k$ iff the fibers of the subbundle $(T\calF\oplus G^\ast)\cap\gr(Z)\subset\bbT M$ have constant rank equal to the one of $\ker\Pi^{\sharp}=T^\circ\calF\simeq G^\ast$.
Since
	\begin{equation*}
	(T\calF\oplus G^\ast)\cap \gr(Z)=\{\iota_\alpha Z+\alpha\mid\alpha\in G^\ast\ \&\  \iota_\alpha Z\in T\calF \},
	\end{equation*}
	this happens iff $Z^\sharp(G^\ast)\subset T\calF$, i.e.~iff $Z$ is a good bivector field.
\end{proof}

{When restricted to good bivector fields, the Dirac exponential map parametrizes regular bivector fields that are close to $\Pi$, in the sense that they are still transverse to the complement $G$. This is shown in the next lemma.}

\begin{lemma}
	\label{lem:nearby_regular_2-vectors:bis}
	For any rank $2k$ regular bivector field $W\in\frakX^2(M)$, the following conditions are equivalent:
	\begin{enumerate}[label=(\arabic*)]
		\item
		\label{enumitem:lem:nearby_regular_2-vectors:bis:1}
		the bivector field $W$ takes values in $\Pi+\calI_{-\gamma}$, i.e.~$W\in\Gamma(\Pi+\calI_{-\gamma})$,
		\item
		\label{enumitem:lem:nearby_regular_2-vectors:bis:2}
		the almost Dirac structure $\gr(W)\subset\bbT M$ is transverse to $G\oplus T^\ast\calF$,
		\item
		\label{enumitem:lem:nearby_regular_2-vectors:bis:3}
		the distribution $\im W^\sharp\subset TM$ is transverse to $G$.
	\end{enumerate}
\end{lemma}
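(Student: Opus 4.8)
The plan is to prove the two equivalences (1)$\iff$(2) and (2)$\iff$(3) separately. The first is purely formal and rests on the orthogonal-transformation machinery already assembled in this subsection; the second reduces to fibrewise linear algebra with annihilators. Since all three conditions are pointwise, I would work fibre by fibre over $M$, recording once and for all the dimension counts $\dim\gr(W)=\dim(G\oplus T^\ast\calF)=n$ (inside the rank-$2n$ bundle $\bbT M$) and $\dim\im W^\sharp+\dim G=n=\dim TM$, so that in each case transversality is the same as trivial intersection.

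For (1)$\iff$(2), I would put $Z:=W-\Pi$, so that condition (1) reads exactly $Z\in\Gamma(\calI_{-\gamma})$. By Lemma~\ref{lem:existence_gauge_trasform} applied with the two-form $-\gamma$ in place of $\gamma$, this is equivalent to $\gr(Z)\pitchfork\gr(\gamma)$. Now $\calR_\Pi$ is an orthogonal automorphism of $\bbT M$, hence preserves both intersections and transversality, and a direct computation gives $\calR_\Pi\gr(Z)=\gr(\Pi+Z)=\gr(W)$ because $\calR_\Pi(Z^\sharp\alpha+\alpha)=(\Pi+Z)^\sharp\alpha+\alpha$. Therefore $\gr(Z)\pitchfork\gr(\gamma)$ holds iff $\gr(W)\pitchfork\calR_\Pi\gr(\gamma)$, and by the identity $\calR_\Pi\gr(\gamma)=G\oplus T^\ast\calF$ from~\eqref{eq:preliminary:exp_G:second_component} this is precisely condition (2). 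This half uses neither the rank nor the regularity of $W$.

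For (2)$\iff$(3), I would first identify the intersection in (2). Using $T^\ast\calF\cong G^0$, an element $W^\sharp\alpha+\alpha$ of $\gr(W)$ lies in $G\oplus T^\ast\calF$ iff $\alpha\in G^0$ and $W^\sharp\alpha\in G$; as the cotangent component recovers $\alpha$, the assignment $\alpha\mapsto W^\sharp\alpha+\alpha$ identifies $\gr(W)\cap(G\oplus T^\ast\calF)$ with $K:=\{\alpha\in G^0:W^\sharp\alpha\in G\}$, so (2) amounts to $K=\{0\}$. On the other hand (3) means $\im W^\sharp\cap G=\{0\}$, which, via the skew-symmetry identity $\ker W^\sharp=(\im W^\sharp)^0$, the annihilator rule $(A\cap B)^0=A^0+B^0$, and matching dimensions, is equivalent to $\ker W^\sharp\cap G^0=\{0\}$. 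The inclusion $\ker W^\sharp\cap G^0\subseteq K$ is immediate (since $0\in G$), which yields (2)$\Rightarrow$(3); conversely, if (3) holds then any $\alpha\in K$ satisfies $W^\sharp\alpha\in\im W^\sharp\cap G=\{0\}$, hence $\alpha\in\ker W^\sharp\cap G^0=\{0\}$, so $K=\{0\}$ and (2) follows.

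I expect no serious obstacle: the content is elementary once the transversality--direct-sum dictionary is in place. The one step that genuinely requires care is the fibrewise equivalence $\im W^\sharp\cap G=\{0\}\iff\ker W^\sharp\cap G^0=\{0\}$, which is the only place where the constant rank $2k$ of $W$ is used (it enters through $\dim\ker W^\sharp=n-2k$, needed to pass from $\ker W^\sharp+G^0=T^\ast M$ back to a trivial intersection). Everything else is bookkeeping with the orthogonal transformations $\calR_\gamma$ and $\calR_\Pi$ built earlier in the section.
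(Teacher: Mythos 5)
Your proof is correct and takes essentially the same approach as the paper's: the equivalence (1)$\Leftrightarrow$(2) is obtained exactly as in the paper, from Lemma~\ref{lem:existence_gauge_trasform} (applied with $-\gamma$), the identity $\calR_\Pi\gr(\gamma)=G\oplus T^\ast\calF$ of~\eqref{eq:preliminary:exp_G:second_component}, and the fact that the orthogonal transformation $\calR_\Pi$ preserves transversality, while (2)$\Leftrightarrow$(3) is the paper's argument identifying $\gr(W)\cap(G\oplus T^\ast\calF)$ with $\{\alpha\in G^{0}\mid W^\sharp\alpha\in G\}$ and using the rank-$2k$ hypothesis to pass between $\im W^\sharp\cap G=0$ and $\ker W^\sharp\cap T^\ast\calF=0$. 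The only difference is cosmetic: you spell out the annihilator-and-dimension count that the paper declares immediate.
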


\begin{proof}
	First notice that  conditions~\ref{enumitem:lem:nearby_regular_2-vectors:bis:1} and~\ref{enumitem:lem:nearby_regular_2-vectors:bis:2} are equivalent for an arbitrary bivector field $W\in\frakX^2(M)$.
	Indeed, using equation~\eqref{eq:preliminary:exp_G:second_component} {and Lemma~\ref{lem:existence_gauge_trasform}}, one gets that
	\begin{equation*}
	\gr W\pitchfork G\oplus T^\ast\calF\Longleftrightarrow \gr W\pitchfork\calR_\Pi\gr(\gamma)\Longleftrightarrow\gr(W-\Pi)\pitchfork\gr(\gamma)\Longleftrightarrow W-\Pi\in\Gamma(\calI_{-\gamma}).
	\end{equation*}
	
	Let us continue by proving that  conditions~\ref{enumitem:lem:nearby_regular_2-vectors:bis:2} and~\ref{enumitem:lem:nearby_regular_2-vectors:bis:3} are equivalent for any rank $2k$ regular bivector field $W\in\frakX^2(M)$.
	Since $TM=G\oplus T\calF$ and $W$ is regular with $\rank W=2k=\rank T\calF$, one gets immediately that
	\begin{equation*}
	G\pitchfork\im W^\sharp\Longleftrightarrow T^\ast\calF\pitchfork\ker W^\sharp.
	\end{equation*}
	Assume now that condition~\ref{enumitem:lem:nearby_regular_2-vectors:bis:2} holds.
	Then one can compute
	\begin{equation*}
	T^\ast\calF\cap\ker W^\sharp\subset\{W^\sharp\alpha+\alpha\mid\alpha\in T^\ast\calF\ \&\ W^\sharp\alpha\in G\}=(G\oplus T^\ast\calF)\cap\gr W=0,
	\end{equation*}
	so that $T^\ast\calF\pitchfork\ker W^\sharp$, and this shows that condition~\ref{enumitem:lem:nearby_regular_2-vectors:bis:3} holds.
	Conversely, assume that condition~\ref{enumitem:lem:nearby_regular_2-vectors:bis:3} holds.
	Then one can easily see that
	\begin{equation*}
	(G\oplus T^\ast\calF)\cap\gr W=\{W^\sharp\alpha+\alpha\mid\alpha\in T^\ast\calF\ \&\ W^\sharp\alpha\in G\}\subset\ker W^\sharp\cap T^\ast\calF=0,
	\end{equation*}
	and therefore condition
	~\ref{enumitem:lem:nearby_regular_2-vectors:bis:2} holds.
	This concludes the proof.
\end{proof}

\begin{remark}
	\label{rem:nearby_regular_bivectors}
	For any $W\in\frakX^2(M)$, it is straightforward that $W\in\frakX^2_\good(M)$ iff $-W\in\frakX^2_\good(M)$. Furthermore, 
	\begin{equation*}
	W\in\Gamma(\calI_{-\gamma})\Longleftrightarrow -W\in\Gamma(\calI_\gamma)\hspace{0.5cm}\text{and}\hspace{0.5cm} (-W)^\gamma=-(W^{-\gamma}).
	\end{equation*}
	{ Notice that $(-W)^{\gamma}\neq -(W^{\gamma})$,
reflecting the fact that  $W\mapsto W^{\gamma}$ is not a linear operation.}
\end{remark}

{Combining Lemmas~\ref{lem:nearby_regular_2-vectors} and~\ref{lem:nearby_regular_2-vectors:bis}, we obtain the desired parametrization of regular bivector fields close to $\Pi$.}

\begin{theorem}
	\label{theor:nearby_regular_bivectors}
	The fiber preserving diffeomorphism given by the
	  Dirac exponential map as in eq.~\eqref{dir-exp}  
	\begin{equation*}
	\exp_G\colon\calI_{\gamma}\overset{\sim}{\longrightarrow}\Pi+\calI_{-\gamma},\ \ Z\longmapsto\exp_G(Z):=\Pi+Z^\gamma,
	\end{equation*}
induces the following bijection at the level of sections:
	\begin{equation*}
	\Gamma(\calI_{\gamma})\cap\frakX_\good^2(M)\overset{\sim}{\longrightarrow}\{W\in\frakX^2_{\text{reg-2k}}(M)\mid \im W^\sharp\pitchfork G\},\ \ Z\longmapsto\exp_G(Z).
	\end{equation*}
\end{theorem}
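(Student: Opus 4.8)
The plan is to obtain the statement by assembling Lemmas~\ref{lem:nearby_regular_2-vectors} and~\ref{lem:nearby_regular_2-vectors:bis}, since the genuine fiberwise content has already been settled there. The starting observation is that the Dirac exponential map of eq.~\eqref{dir-exp} is, by construction, a fiber preserving diffeomorphism
\[
\exp_G\colon\calI_\gamma\overset{\sim}{\longrightarrow}\Pi+\calI_{-\gamma}
\]
between the total spaces of two open subbundles of $\wedge^2TM$. Post-composition with $\exp_G$ therefore induces a bijection $\Gamma(\calI_\gamma)\overset{\sim}{\longrightarrow}\Gamma(\Pi+\calI_{-\gamma})$, $Z\mapsto\exp_G(Z)$, at the level of smooth sections, and injectivity of the map in the statement is inherited from this. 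Thus the only real task is to check that this bijection restricts to one between the subset $\Gamma(\calI_\gamma)\cap\frakX^2_\good(M)$ of the source and the subset $\{W\in\frakX^2_{\text{reg-2k}}(M)\mid\im W^\sharp\pitchfork G\}$ of the target.

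First I would verify that the image lands inside the target. Given $Z\in\Gamma(\calI_\gamma)\cap\frakX^2_\good(M)$, Lemma~\ref{lem:nearby_regular_2-vectors} in the direction $(1)\Rightarrow(2)$ tells me that $W:=\exp_G(Z)$ lies in $\Gamma(\Pi+\calI_{-\gamma})$ and is a rank $2k$ regular bivector field. Since $W\in\Gamma(\Pi+\calI_{-\gamma})$, the equivalence of conditions $(1)$ and $(3)$ in Lemma~\ref{lem:nearby_regular_2-vectors:bis} then yields $\im W^\sharp\pitchfork G$, so $W$ indeed belongs to the target.

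Conversely, to prove surjectivity onto the target, I would start from a rank $2k$ regular bivector field $W$ with $\im W^\sharp\pitchfork G$. By the implication $(3)\Rightarrow(1)$ in Lemma~\ref{lem:nearby_regular_2-vectors:bis}, such a $W$ lies in $\Gamma(\Pi+\calI_{-\gamma})$, so the section-level bijection above furnishes a unique $Z\in\Gamma(\calI_\gamma)$ with $\exp_G(Z)=W$. As $W$ is regular of rank $2k$, Lemma~\ref{lem:nearby_regular_2-vectors} in the direction $(2)\Rightarrow(1)$ forces $Z$ to be a good bivector field, i.e.~$Z\in\Gamma(\calI_\gamma)\cap\frakX^2_\good(M)$. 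Together with the previous paragraph, this pins down the image exactly and finishes the identification.

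I do not expect a substantive obstacle here: once the two preceding lemmas are in hand, the argument is pure bookkeeping. The one point I would be careful about is the double description of the target. What is written as $\{W\in\frakX^2_{\text{reg-2k}}(M)\mid\im W^\sharp\pitchfork G\}$ has to be recognized as $\frakX^2_{\text{reg-2k}}(M)\cap\Gamma(\Pi+\calI_{-\gamma})$, which is exactly the equivalence of conditions $(1)$ and $(3)$ in Lemma~\ref{lem:nearby_regular_2-vectors:bis}. This is what allows the nonlinear, pointwise constant-rank condition describing the target to be matched, through $\exp_G$, with the \emph{linear} good-bivector condition cutting out the domain.
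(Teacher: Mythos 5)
Your proposal is correct and follows exactly the paper's route: the paper derives Theorem~\ref{theor:nearby_regular_bivectors} precisely by combining Lemma~\ref{lem:nearby_regular_2-vectors} (goodness of $Z$ $\Leftrightarrow$ rank $2k$ regularity of $\exp_G(Z)$) with Lemma~\ref{lem:nearby_regular_2-vectors:bis} (membership in $\Gamma(\Pi+\calI_{-\gamma})$ $\Leftrightarrow$ $\im W^\sharp\pitchfork G$ for regular $W$), exactly as you do. Your write-up merely makes explicit the bookkeeping that the paper leaves implicit, including the identification of the target with $\frakX^2_{\text{reg-2k}}(M)\cap\Gamma(\Pi+\calI_{-\gamma})$.
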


{Here $\frakX^2_{\text{reg-2k}}(M)$ denotes the space of bivector fields on $M$ that are regular of rank $2k$.}

\begin{remark}\label{rem:expmap}
{We can view $\exp_G$ (at the level of sections) as a submanifold chart for $\frakX^2_{\text{reg-2k}}(M)$ nearby $\Pi$, as depicted in Figure \ref{fig:chart} below. The name ``exponential map'' is justified by the fact that the derivative $d_0\exp_G$ is the identity on ${\frakX_\good^2(M)}$. Indeed, given a smooth curve $Z_t$ in $\frakX_\good^2(M)$ with $Z_0=0$, we have 
$$\left.\frac{\rmd}{\rmd t}\right|_{t=0}\exp_G(Z_t)=\left.\frac{\rmd}{\rmd t}\right|_{t=0}Z_t,$$
as follows immediately applying Eq.~\eqref{eq:Z^gamma} to each $Z_t$ and taking the time derivative.
}
\end{remark}

  \begin{center}
\begin{figure}[h!]
\begin{tikzpicture}[scale=1.2]
\draw(-5,2.6)-- (5,2.6) -- (5,-1.8) -- (-5,-1.8)--(-5,2.6);
\node [above] at (-4.5,2.6) {$\mathfrak{X}^2(M)$};

\draw[dashed]  (-2,0)  -- (2,2) ;
\node  at (-0,1) {$\bullet$}; \node [above] at (-0,1) {$\Pi$};
\node [left] at (-2,0) {$\mathfrak{X}^2_{\good}(M)$};

  \draw[blue] (-0,1) to [out=30,in=165] (2.5,0.9);
  \draw[blue] (-0,1) to [out=30+180,in=30] (-1.5,-1);  
  \node [left] at (-1.5,-1) {\textcolor{blue}{$\mathfrak{X}^2_{\text{reg-2k}}(M)$}};
  
 \draw[thick,magenta,-stealth]  (-0,1)  -- (1.5,1.75); 
 \node [left] at (1.5,2) {\textcolor{magenta}{$Z$}};
 \node  at (1.7,1.11) {\textcolor{magenta}{$\bullet$}}; \node [right] at (1.8,1.2){\textcolor{magenta}{$\Pi+ Z^{\gamma}$}};
\end{tikzpicture}
\caption{A submanifold chart for $\frakX^2_{\text{reg-2k}}(M)$. 
}
\label{fig:chart}
\end{figure}
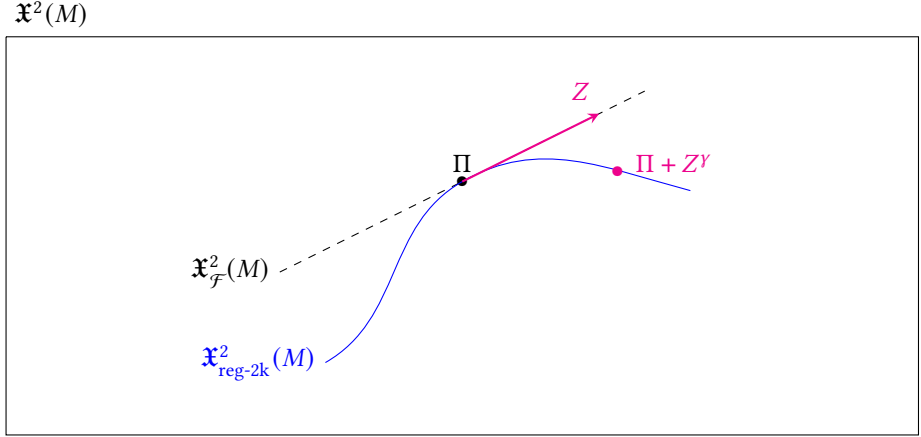
\end{center}

 \begin{remark}[An alternative characterization of the Dirac exponential map]
Recall that any regular bivector field can be equivalently described in terms of a pair, consisting of a  distribution $D$ (the image of its sharp-map) together  
with a section of $\wedge^2D^*$ which is non-degenerate at every point. For instance, in the notation above, the regular Poisson structure $\Pi$ corresponds to the distibution $T\mathcal{F}$ and the leaf-wise symplectic form $\omega$.

We provide an alterative characterization of $\exp_G(Z)$, for $Z$ a   good bivector field lying in $\Gamma(\calI_{\gamma})$.
Decompose $Z$ as \[Z=Z_1+Z_2\in\Gamma(\wedge^{2}T\mathcal{F})\oplus\Gamma(T\mathcal{F}\otimes G).\]  
Then the regular bivector field $\exp_G(Z)$ corresponds to the following pair:
\begin{itemize}
    \item $D:=\text{Gr}(-Z_2^{\sharp}\circ \omega^{\flat}\colon T\mathcal{F}\to G)$,
    \item $pr^*(\omega-(\wedge^2\omega^{\flat})Z_1)$, 
\end{itemize}
where $pr\colon D \to T\mathcal{F}$ is the restriction of the projection $TM\to T\mathcal{F}$ with kernel $G$.
This can be checked as in the proof of the later Proposition \ref{prop:strict_morphism:MC_elements}.
Hence the deformation $\exp_G(Z)$ of $\Pi$ can be described as follows: the component $Z_2$ deforms the  distribution $T\mathcal{F}$, while $Z_1$ deforms of the foliated symplectic form $\omega$.
\end{remark}

\section{\textsf{The Local Deformation Space of a Symplectic Foliation}}
\label{sec:L_infty-algebra_regular_Poisson}

This section  describes the local deformation space of a rank $2k$ symplectic foliation $(\calF,\omega)$
via an associated $L_\infty[1]$-algebra (see Proposition~\ref{prop:good_L_infty_algebra}).
This $L_\infty[1]$-algebra structure lives on the shifted space $\frakX^\bullet_\good(M)[2]$ of good multivector fields, and it controls the deformations of the rank $2k$ regular Poisson structure $\Pi$ corresponding with $(\calF,\omega)$.
Indeed, as stated in the main result of this section (see Theorem~\ref{theor:deformation_theory regular_Poisson}),  \emph{small} Maurer--Cartan (MC) elements of the $L_\infty[1]$-algebra $\frakX_\good^\bullet(M)[2]$ parametrize a neighborhood of $\Pi$ in $\RegPoiss^{2k}(M)$.
The construction of this $L_\infty[1]$-algebra structure  relies on standard results from deformation theory of Dirac structures (see Appendix~\ref{app:Deformations_Dirac_Structures}). {We use freely the notation from Appendix~\ref{app:Courant_algebroids}.}

In \S\ref{sec:Koszul_algebra} we display an $L_\infty[1]$-algebra whose MC elements parametrize arbitrary Dirac structures nearby $\Pi$ (see Proposition \ref{prop:Koszul_algebra:MC_elements}). In \S\ref{sec:deformations_symplectic_foliations} we restrict this  $L_\infty[1]$-algebra  to obtain Theorem~\ref{theor:deformation_theory regular_Poisson}.

\subsection{Deformation Theory of the Dirac Structure $\mathbf{\text{Gr}}(\boldsymbol{\Pi})$}
\label{sec:Koszul_algebra}

\subsubsection{{\underline{\smash{The general Poisson case}}}}
Let $\Pi$ be a Poisson structure on a manifold $M$.
	The generalized tangent bundle $\bbT M=TM\oplus T^\ast M$ admits the following direct sum decomposition
	\begin{equation*}
	\bbT M=TM\oplus \gr\Pi,
	\end{equation*}
	where both $TM$ and $\gr\Pi=\{\iota_\alpha\Pi+\alpha\mid\alpha\in T^\ast M\}\subset\bbT M$ are Dirac structures.
	Applying Lemma~\ref{lem:2.6first} and Remark~\ref{rem:decalage} 2) to the current situation, where $E=\bbT M$ is the standard Courant algebroid, $A=\gr \Pi$ and $B=TM$, one gets that the associated $L_\infty$-algebra $\big(\Omega^\bullet(\gr\Pi)[1],\{m_k^{TM}\}\big)$ reduces to a dgLa.
	The latter admits an equivalent description that is {better-known}, as we now show.

\begin{lemma}\label{ex:deformation_theory_Poisson_structures}

Denote by $\calR_\Pi$ the orthogonal transformation of $(\bbT M,\ldab-,-\rdab)$ determined by $\Pi$. 

a) This map gives an isomorphism of dgLa's  
 \begin{equation}\label{eq:isoGrPi}
\wedge^\bullet(\calR_\Pi|_{T^\ast M})^\ast:(\Omega^\bullet(\gr\Pi)[1],m_1^{TM},m_2^{TM})\longrightarrow(\frakX^\bullet(M)[1],\rmd_\Pi, [-,-]_{SN}).
\end{equation}

b) The associated map between MC elements recovers the fact that under the relation
\[
P=\Pi+Z,
\]
Poisson structures $P\in\mathfrak{X}^{2}(M)$ correspond with MC elements $Z$ of the dgLa $(\frakX^\bullet(M)[1],\rmd_\Pi,[-,-]_{SN})$.
\end{lemma}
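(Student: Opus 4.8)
The plan is to prove both statements by exploiting the fact that the orthogonal transformation $\calR_\Pi$ is an automorphism of the standard Courant algebroid $\bbT M$ that maps $\gr\Pi$ isomorphically onto $T^\ast M$ (viewed as a Dirac structure) and fixes $TM$. Since $\calR_\Pi$ intertwines the complementary pairs $(\gr\Pi, TM)$ and $(T^\ast M, TM)$, it must intertwine the $L_\infty[1]$-algebra structures built from these pairs via the construction recalled in Lemma~\ref{lem:2.6first}. The structure attached to the pair $(T^\ast M, TM)$ is, by Remark~\ref{rem:decalage} 2) and the standard identification $\Omega^\bullet(T^\ast M)\cong\frakX^\bullet(M)$, precisely the Koszul dgLa $(\frakX^\bullet(M)[1],\rmd_\Pi,[-,-]_{SN})$. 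So the isomorphism~\eqref{eq:isoGrPi} should follow from functoriality of the deformation construction under Courant automorphisms preserving the complementary splitting.

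For part a), first I would recall that $\calR_\Pi:\bbT M\to\bbT M$ is orthogonal (i.e.~preserves $\ldab-,-\rdab$) and a Courant algebroid automorphism, since it is a gauge-type transformation by the bivector $\Pi$; being a Courant automorphism fixing $TM$ and sending $\gr\Pi\to T^\ast M$, it carries the bracket and anchor data of one splitting to the other. Concretely, the restriction $\calR_\Pi|_{T^\ast M}:T^\ast M\to\gr\Pi$, $\alpha\mapsto\iota_\alpha\Pi+\alpha$, is the natural isomorphism $T^\ast M\xrightarrow{\sim}\gr\Pi$, so the pullback $\wedge^\bullet(\calR_\Pi|_{T^\ast M})^\ast$ gives a graded vector space isomorphism $\Omega^\bullet(\gr\Pi)\xrightarrow{\sim}\Omega^\bullet(T^\ast M)=\frakX^\bullet(M)$. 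I would then invoke that the $L_\infty[1]$-brackets $\{m_k^B\}$ are defined purely in terms of the Courant bracket and pairing restricted to $A$ and the identification of deformations with $\Hom(A,B)$; since $\calR_\Pi$ is a Courant automorphism preserving both $A$-direction and $B=TM$, it induces a strict isomorphism of the resulting $L_\infty[1]$-algebras. Finally I would identify the image structure on $\frakX^\bullet(M)[1]$ with $(\rmd_\Pi,[-,-]_{SN})$: this is exactly the content of the well-known computation that the deformation dgLa of $T^\ast M$ inside $\bbT M$, using $TM$ as complement, is the Koszul complex, which can either be cited or verified on generators (degrees $0$ and $1$) since both $\rmd_\Pi$ and $[-,-]_{SN}$ are determined by their action there as derivations.

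For part b), the statement about Maurer--Cartan elements follows formally once a) is established: an isomorphism of dgLa's induces a bijection on MC elements, and under the geometric identifications a MC element $Z\in\frakX^2(M)$ of the Koszul dgLa (i.e.~$\rmd_\Pi Z+\tfrac12[Z,Z]_{SN}=0$) corresponds to the Dirac structure $\calR_\Pi(\gr(\sigma_Z))$ transverse to $TM$, where $\sigma_Z\in\Omega^2(T^\ast M)\cong\frakX^2(M)$; tracing through the identifications this Dirac structure is exactly $\gr(\Pi+Z)$, so the MC condition on $Z$ is equivalent to $\Pi+Z$ being Poisson. I would make this precise by recalling from Lemma~\ref{lem:2.6first} that MC elements of $(\Omega^\bullet(A)[1],\{m_k^B\})$ parametrize Dirac structures transverse to $B$, and then checking that the map $Z\mapsto\Pi+Z$ is the geometric shadow of $\wedge^\bullet(\calR_\Pi|_{T^\ast M})^\ast$ on degree-$2$ MC loci.

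The main obstacle I expect is not conceptual but bookkeeping: one must be careful with the degree shift $[1]$ and the precise sign and identification conventions relating $\Omega^\bullet(\gr\Pi)$, the transpose $(\calR_\Pi|_{T^\ast M})^\ast$, and $\frakX^\bullet(M)$, so that the differential $m_1^{TM}$ really lands on $\rmd_\Pi=[\Pi,-]_{SN}$ (rather than its negative) and the binary bracket $m_2^{TM}$ matches $[-,-]_{SN}$ exactly. Verifying that $\calR_\Pi$ intertwines the brackets $m_k$ requires unwinding their definition from Appendix~\ref{app:Deformations_Dirac_Structures}; the cleanest route is to argue abstractly that any Courant automorphism fixing the complement $B$ induces a strict $L_\infty[1]$-isomorphism of the deformation algebras, reducing everything to the identification of $\Omega^\bullet(T^\ast M)$ with the Koszul dgLa, which is classical.
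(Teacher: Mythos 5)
Your part~b) and the overall skeleton resemble the paper, but part~a) rests on a false claim: $\calR_\Pi$ is \emph{not} an automorphism of the standard Courant algebroid $(\bbT M,\ldab-,-\rdab,\ldsb-,-\rdsb,\pr_{TM})$. It does not even intertwine the anchor, since $\pr_{TM}(\calR_\Pi(\alpha))=\Pi^\sharp\alpha\neq 0=\pr_{TM}(\alpha)$ for a general $1$-form $\alpha$, and it does not preserve the Dorfman bracket: $\ldsb\alpha,\beta\rdsb=0$ for $1$-forms, whereas $\ldsb\calR_\Pi\alpha,\calR_\Pi\beta\rdsb=\Pi^\sharp[\alpha,\beta]_\Pi+[\alpha,\beta]_\Pi$, which is nonzero in general. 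This is not a sign/bookkeeping issue but a structural one: inside the \emph{standard} Courant algebroid, the Dirac structure $T^\ast M$ is abelian (zero anchor, zero bracket on $1$-forms), so the dgLa that Lemma~\ref{lem:2.6first} attaches to the pair $(T^\ast M,TM)$ there is $(\frakX^\bullet(M)[1],0,[-,-]_{SN})$, with \emph{zero} differential --- not the Koszul dgLa. Since a strict dgLa isomorphism intertwines differentials, it cannot match a zero differential with $\rmd_\Pi\neq 0$; so the dgLa of $(\gr\Pi,TM)$ and the dgLa of $(T^\ast M,TM)$ in the standard structure are genuinely non-isomorphic whenever $\Pi\neq0$, and your functoriality argument, as stated, would prove a false statement. (There is also a minor directional slip: $\calR_\Pi$ maps $T^\ast M$ onto $\gr\Pi$, not $\gr\Pi$ onto $T^\ast M$, consistently with the pullback $\wedge^\bullet(\calR_\Pi|_{T^\ast M})^\ast$ going from $\Omega^\bullet(\gr\Pi)$ to $\frakX^\bullet(M)$.)

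The missing idea, which is exactly how the paper proceeds, is to transport the standard Courant structure \emph{backwards} along $\calR_\Pi$: there is a unique Courant algebroid structure $(\ldsb-,-\rdsb_\Pi,\rho)$ on $(\bbT M,\ldab-,-\rdab)$ making $\calR_\Pi$ a Courant algebroid isomorphism onto the standard one. In this twisted structure, $TM$ is still Dirac with the usual Lie bracket (because $\calR_\Pi|_{TM}=\id_{TM}$), while $T^\ast M$ becomes a Dirac structure whose induced Lie algebroid is precisely the cotangent algebroid $(T^\ast M,[-,-]_\Pi,\Pi^\sharp)$ of the Poisson structure --- this, and only this, is where the Lichnerowicz differential $\rmd_\Pi$ comes from. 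Applying Lemma~\ref{lem:2.6first} and Remark~\ref{rem:decalage}~2) to the pair $(T^\ast M,TM)$ in the \emph{twisted} Courant algebroid produces $(\frakX^\bullet(M)[1],\rmd_\Pi,[-,-]_{SN})$, and $\calR_\Pi$, being by construction a Courant isomorphism matching the two split pairs, induces the strict dgLa isomorphism~\eqref{eq:isoGrPi}. Once a) is repaired in this way, your part~b) --- transporting MC elements and checking on graphs that the corresponding Dirac structure is $\gr(\Pi+Z)$, hence $P=\Pi+Z$ --- is correct and is essentially the paper's argument.
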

\begin{proof}
{a)}	There exist unique Dorfman bracket $\ldsb-,-\rdsb_\Pi$ {and anchor $\rho$} on $\bbT M$,
	such that the following is a Courant algebroid isomorphism
	\begin{equation*}
	\calR_\Pi:(\bbT M,\ldab-,-\rdab,\ldsb-,-\rdsb_\Pi,{\rho})\longrightarrow(\bbT M,\ldab-,-\rdab,\ldsb-,-\rdsb,\pr_{TM}):X+\alpha\mapsto X+\iota_{\alpha}\Pi+\alpha,
	\end{equation*}
	where on the RHS, $\bbT M$ is equipped with its standard Courant algebroid structure.
	Furthermore, the orthogonal transformation $\calR_\Pi$ induces the identity map $\calR_\Pi|_{TM}=\id_{TM}:TM\to TM$ and the Lie algebroid isomorphism:
	\begin{equation*}
	\calR_\Pi|_{T^\ast M}:(T^\ast M,[-,-]_\Pi,\Pi^\sharp)\longrightarrow(\gr\Pi,\ldsb-,-\rdsb,\pr_{TM}),
	\end{equation*}
	where $T^\ast M$ carries the Lie algebroid structure associated with $\Pi$.
	Hence, {Lemma~\ref{lem:2.6first} and Remark~\ref{rem:decalage}} 2)
imply that  {\eqref{eq:isoGrPi}} is a dgLa isomorphism.

{b)}	
  For any $\xi\in\Omega^{2}(\gr\Pi)$, the graph
  $\gr(\xi)$ of the map $\gr\Pi\to (\gr\Pi)^*\cong TM$ induced by $\xi$
  is transverse to $TM$. Hence  $\gr(\xi)$ corresponds with a bivector field $P\in\mathfrak{X}^{2}(M)$. One checks that it is determined by 
	\[
	\wedge^2(\calR_\Pi|_{T^\ast M})^\ast(\xi)=P-\Pi.
	\]
Using Lemma~\ref{lem:2.6second}, we can summarize the situation in the following diagram:
\begin{equation*}
\begin{tikzcd}
MC(\Omega^\bullet(\gr\Pi)[1],m_1^{TM},m_2^{TM})\arrow[d,"\xi\mapsto\gr(\xi)",swap]\arrow[rr, "\wedge^2(\calR_\Pi|_{T^\ast M})^\ast"] &  &MC(\frakX^\bullet(M)[1],\rmd_\Pi,[-,-]_{SN})\\
\{P\in\mathfrak{X}^{2}(M):\ [P,P]_{SN}=0\}& &
\end{tikzcd}.
\end{equation*}

From this diagram, we indeed read off the relation
    \begin{equation}\label{eq:para}
    P=\Pi+Z
	\end{equation}
between bivectors $P\in\frakX^2(M)$ s.t.~$[P,P]_{SN}=0$, and MC elements $Z$ of the dgLa $(\frakX^\bullet(M)[1],\rmd_\Pi,[-,-]_{SN})$.
\end{proof}

\subsubsection{{\underline{\smash{The regular Poisson case: deforming using the complement $G\oplus T^\ast\calF$}}}}
For any Poisson structure $\Pi$ on $M$, Lemma~\ref{ex:deformation_theory_Poisson_structures} shows that, choosing $TM$ as Dirac structure complementary to $\gr\Pi$, 
Lemma~\ref{lem:2.6second} reduces to the fact that the dgLa $(\frakX^\bullet(M)[1],\rmd_\Pi,[-,-]_{\sf SN})$ controls the deformation problem of $\Pi$.
Assuming that $\Pi$ is regular of rank $2k$, {the parametrization~\eqref{eq:para} is not well-suited to single out regular deformations of $\Pi$, since the space of $Z\in\mathfrak{X}^{2}(M)$ for which $\Pi+Z$ has rank $2k$ is not a vector space.}

{We have seen in Section~\ref{sec:regular_bivector_fields} that there is a better choice of almost Dirac structure complementary to $\gr\Pi$, which is compatible with the constant rank condition. Using this complement, we now construct via Lemma~\ref{lem:2.6first} a different $L_{\infty}[1]$-algebra that, by Lemma~\ref{lem:2.6second},  still controls the deformation problem of the Dirac structure $\gr\Pi$. We will see in \S\ref{sec:deformations_symplectic_foliations}  that this $L_{\infty}[1]$-algebra is relevant to the deformation problem of the regular Poisson structure $\Pi$. Below, we assume the setup summarized in the box at the beginning of \S\ref{sec:regular_bivector_fields}.}

\begin{lemma}
	\label{lem:Koszul_algebra:splitting}	
	The almost Dirac structure $G\oplus T^\ast\calF\simeq G\oplus G^\circ$ is complementary to $\gr\Pi$.
\end{lemma}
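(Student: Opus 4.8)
The plan is to show that $G \oplus T^\ast\calF$ is an almost Dirac structure complementary to $\gr\Pi$, and the identification $T^\ast\calF \simeq G^\circ$ reduces the statement to checking that $G \oplus G^\circ$ is an almost Dirac structure transverse to $\gr\Pi$. First I would verify that $G \oplus T^\ast\calF$ is maximally isotropic in $(\bbT M, \ldab-,-\rdab)$. For the rank count, note $\dim(G \oplus T^\ast\calF) = \operatorname{rank} G + \operatorname{rank} T^\ast\calF = (2n - 2k) + 2k = 2n = \dim M$ where $n = \dim M$, which is the correct rank for a maximally isotropic subbundle of $\bbT M$. For isotropy, recall that under the identifications $T^\ast\calF \cong G^\circ$ (the annihilator of $G$) and $G^\ast \cong T\calF^\circ$ set up in the box, a covector in $T^\ast\calF$ is realized as an element of $G^\circ \subset T^\ast M$. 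Then for $X \in G$ and $\alpha \in G^\circ$, the canonical pairing gives $\ldab X + 0, 0 + \alpha\rdab = \tfrac{1}{2}\alpha(X) = 0$ since $\alpha$ annihilates $G$; the pairings within $G$ and within $G^\circ$ vanish because $G \subset TM$ and $G^\circ \subset T^\ast M$ are each isotropic. Hence $G \oplus G^\circ$ is isotropic, and by the dimension count it is maximally isotropic, i.e. an almost Dirac structure.

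Next I would establish transversality $\bbT M = (G \oplus T^\ast\calF) \oplus \gr\Pi$. Since both summands are almost Dirac structures, hence of rank $2n = \dim \bbT M/2$ each, it suffices to check that their intersection is trivial; the dimension count $2n + 2n = 4n = \dim \bbT M$ then forces the sum to be direct and to exhaust $\bbT M$. So the heart of the matter is to show $(G \oplus T^\ast\calF) \cap \gr\Pi = 0$. An element of $\gr\Pi$ has the form $\iota_\alpha\Pi + \alpha = \Pi^\sharp(\alpha) + \alpha$ for $\alpha \in T^\ast M$. For this to lie in $G \oplus G^\circ$, I need the $TM$-component $\Pi^\sharp(\alpha)$ to lie in $G$ and the $T^\ast M$-component $\alpha$ to lie in $G^\circ = T^\ast\calF$. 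But $\im \Pi^\sharp = T\calF$, so $\Pi^\sharp(\alpha) \in T\calF \cap G = 0$ using the splitting $TM = G \oplus T\calF$; hence $\alpha \in \ker \Pi^\sharp = T^\circ\calF = G^\ast$ (identified with $T\calF^\circ$). I must then reconcile $\alpha \in G^\circ = T^\ast\calF$ with $\alpha \in T^\circ\calF$: under the dual splitting $T^\ast M = T^\ast\calF \oplus G^\ast$, a covector lying in both $T^\ast\calF \cong G^\circ$ and $T^\circ\calF \cong G^\ast$ must be zero, since these are complementary summands. Therefore $\alpha = 0$ and the intersection is trivial.

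The step I expect to be the main (though modest) obstacle is bookkeeping the two parallel identifications $T^\ast\calF \cong G^\circ$ and $G^\ast \cong T^\circ\calF$ so that the transversality argument does not become circular. The cleanest way to avoid confusion is to work directly with the concrete subbundles $G \subset TM$ and $G^\circ \subset T^\ast M$ rather than the abstract identifications, reducing everything to the two genuinely complementary splittings $TM = G \oplus T\calF$ and $T^\ast M = G^\circ \oplus (T\calF)^\circ$ (equivalently $T^\ast\calF \oplus G^\ast$) fixed in the standing set-up. Once those splittings are in place, the isotropy computation and the triviality of the intersection with $\gr\Pi$ are both immediate one-line verifications. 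I would therefore organize the proof as: (i) rank count, (ii) isotropy via the annihilator pairing, (iii) triviality of $(G \oplus G^\circ) \cap \gr\Pi$ using $\im\Pi^\sharp = T\calF$ and $\ker\Pi^\sharp = T^\circ\calF$ together with the two splittings, concluding transversality and hence complementarity.
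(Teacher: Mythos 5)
Your proof is correct and takes essentially the same route as the paper: the paper's entire proof is your step (iii), showing $(G\oplus G^\circ)\cap\gr\Pi=0$ from $\im\Pi^\sharp=T\calF$, $T\calF\cap G=0$, and then $\alpha\in\ker\Pi^\sharp=T^\circ\calF$ together with $\alpha\in G^\circ$ forcing $\alpha=0$, with the rank count and isotropy of $G\oplus G^\circ$ left implicit. One notational slip to fix: after setting $n=\dim M$ you write $\operatorname{rank}G=2n-2k$ and conclude $2n=\dim M$ (note $M$ need not be even-dimensional); the correct count is simply $\operatorname{rank}G+\operatorname{rank}T^\ast\calF=(\dim M-2k)+2k=\dim M$, which is all your argument uses.
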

\begin{proof}
{
For any $\alpha\in T^{*}M$, we have $\Pi^{\sharp}(\alpha)\in T\calF$, so requiring that $\Pi^{\sharp}(\alpha)$ lies in $G$ implies that $\Pi^{\sharp}(\alpha)=0$. This means that $\alpha\in T\calF^{0}$, hence also requiring that $\alpha$ lies in $G^{0}$ yields $\alpha=0$.
}
\end{proof}

At this point, it is natural to apply Lemmas~\ref{lem:2.6first} and~\ref{lem:2.6second} to the setting where $E=\bbT M$, $A=\gr\Pi$ and $B=G\oplus T^\ast\calF$, yielding an $L_\infty[1]$-algebra $(\Omega^\bullet(\gr\Pi)[2],\{\mu^{G\oplus T^\ast\calF}_k\})$ which controls the deformation problem of the Dirac structure $\gr\Pi$. Instead of doing so directly, to simplify the computations, we will follow an indirect approach.
{
We first simplify the situation, by transporting the Courant algebroid structure along a map which transforms the splitting $\bbT M=\gr\Pi\oplus(G\oplus T^\ast\calF)$ into $\bbT M=T^{*}M\oplus TM$. Then we apply Lemmas~\ref{lem:2.6first} and~\ref{lem:2.6second}, which yields 
 an $L_\infty[1]$-algebra structure $\{\frakl_k^{G}\}$ on $\frakX^\bullet(M)[2]$. This one is more directly related to the geometry of the symplectic foliation $(\calF,\omega)$ and the chosen splitting $TM=G\oplus T\calF$, 
it is strictly isomorphic to $(\Omega^\bullet(\gr\Pi)[2],\{\mu^{B}_k\})$ (see Proposition~\ref{prop:Koszul_algebra:isomorphism}) and its MC elements encode the Dirac structures close to $\gr\Pi$ w.r.t. $G\oplus T^\ast\calF$ (see Proposition~\ref{prop:Koszul_algebra:MC_elements}).} 
\begin{lemma}
	\label{lem:Koszul_algebra:Courant_iso}
	There exists a unique Courant algebroid structure $(\ldsb-,-\rdsb_G,\rho_G)$ on $(\bbT M,\ldab-,-\rdab)$,  
	such that the orthogonal transformation
	\begin{align}
	\label{eq:lem:Koszul_algebra:Courant_iso}
	&\calR_\Pi\calR_\gamma:(\bbT M,\ldab-,-\rdab,\ldsb-,-\rdsb_G,\rho_G)\overset{\sim}{\longrightarrow}(\bbT M,\ldab-,-\rdab,\ldsb-,-\rdsb,\pr_{TM}),\nonumber\\
	&\hspace{4.05cm}X+\alpha\longmapsto (\pr_GX+\iota_\alpha\Pi)+(\alpha+\iota_X\gamma),
	\end{align}
	is a Courant algebroid isomorphism.
	In particular, the latter induces: 
\begin{itemize}
\item 
	the Lie algebroid isomorphism
	\begin{equation}
	\label{eq:lem:Koszul_algebra:Lie_algbd_iso}
	(\calR_\Pi\calR_\gamma)|_{T^\ast M}:T^\ast M\overset{\sim}{\longrightarrow}\gr\Pi,\ \alpha\longmapsto\iota_\alpha\Pi+\alpha,
	\end{equation}
	where $T^\ast M$ carries the Lie algebroid structure associated with $\Pi$.
\item  the almost Lie algebroid isomorphism
	\begin{equation}
	\label{eq:lem:Koszul_algebra:almost_Lie_algbd_iso}
	(\calR_\Pi\calR_\gamma)|_{TM}:TM\overset{\sim}{\longrightarrow}G\oplus T^\ast\calF,\ X\longmapsto\pr_GX+\iota_X\gamma,
	\end{equation}
	where $TM$ carries the almost Lie algebroid structure $([-,-]_\gamma,\rho_\gamma)$ defined by
	\begin{equation}
	\label{eq:lem:Koszul_algebra:almost_Lie_algbd_structure}
	[X,Y]_\gamma=[\pr_GX,\pr_GY]-\Pi^\sharp(\calL_{\pr_GX}\iota_Y\gamma-\calL_{\pr_GY}\iota_X\gamma),\qquad\rho_\gamma(X)=\pr_GX.
	\end{equation}
	\end{itemize}
\end{lemma}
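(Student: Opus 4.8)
The plan is to obtain both the Courant algebroid structure $(\ldsb-,-\rdsb_G,\rho_G)$ and all of its asserted properties by \emph{transport of structure} along $\calR_\Pi\calR_\gamma$. First I would record that $\calR_\Pi\calR_\gamma$ is an orthogonal transformation of $(\bbT M,\ldab-,-\rdab)$: the maps $\calR_\gamma$ and $\calR_\Pi$ are each orthogonal (one checks $\ldab\calR_\gamma e,\calR_\gamma e\rdab=\ldab e,e\rdab$ using skew-symmetry of $\gamma$, and likewise for $\calR_\Pi$ using skew-symmetry of $\Pi$), hence so is their composite, which moreover is a vector-bundle automorphism covering $\id_M$. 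Consequently there is a \emph{unique} Courant algebroid structure on $(\bbT M,\ldab-,-\rdab)$ making $\calR_\Pi\calR_\gamma$ an isomorphism onto the standard one, namely the pullback $\ldsb e_1,e_2\rdsb_G:=(\calR_\Pi\calR_\gamma)^{-1}\ldsb\calR_\Pi\calR_\gamma e_1,\calR_\Pi\calR_\gamma e_2\rdsb$ together with $\rho_G:=\pr_{TM}\circ\calR_\Pi\calR_\gamma$. The Courant axioms are inherited automatically, and the transported structure is compatible with the \emph{given} pairing $\ldab-,-\rdab$ precisely because $\calR_\Pi\calR_\gamma$ is orthogonal. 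This settles existence and uniqueness, as well as the explicit form~\eqref{eq:lem:Koszul_algebra:Courant_iso}, which is just the composite $\calR_\Pi\circ\calR_\gamma$ simplified via $\id_{TM}+\Pi^\sharp\gamma^\flat=\pr_G$.

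For the induced maps I would observe that under this transport $T^\ast M$ becomes a Dirac structure (its image $\gr\Pi$ is Dirac in the standard Courant algebroid) and $TM$ becomes an almost Dirac structure (its image $G\oplus T^\ast\calF$ is maximal isotropic). By the general theory recalled in Lemma~\ref{lem:2.6first}, the induced structure on a Dirac summand is the genuine Lie algebroid obtained by restricting the bracket, while on a complementary almost Dirac summand it is the almost Lie algebroid obtained by projecting the bracket along the Dirac complement. For the cotangent part, since $\calR_\gamma$ fixes $T^\ast M$ pointwise, one has $\calR_\Pi\calR_\gamma|_{T^\ast M}=\calR_\Pi|_{T^\ast M}$, so~\eqref{eq:lem:Koszul_algebra:Lie_algbd_iso} is \emph{exactly} the Lie algebroid isomorphism of Lemma~\ref{ex:deformation_theory_Poisson_structures}(a), identifying $(T^\ast M,[-,-]_\Pi,\Pi^\sharp)$ with $\gr\Pi$; nothing new is required here.

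The real work lies in the tangent part~\eqref{eq:lem:Koszul_algebra:almost_Lie_algbd_iso}. Writing $\Psi:=\calR_\Pi\calR_\gamma|_{TM}$, so that $\Psi(X)=\pr_G X+\iota_X\gamma\in G\oplus T^\ast\calF$, I would compute the transported almost Lie bracket as $[X,Y]_\gamma=\Psi^{-1}\big(\pr_B\ldsb\Psi X,\Psi Y\rdsb\big)$, where $\pr_B$ is the projection onto $B=G\oplus T^\ast\calF$ along $\gr\Pi$ and $\ldsb-,-\rdsb$ is the standard Dorfman bracket. Expanding $\ldsb\pr_G X+\iota_X\gamma,\pr_G Y+\iota_Y\gamma\rdsb=[\pr_G X,\pr_G Y]+\calL_{\pr_G X}\iota_Y\gamma-\iota_{\pr_G Y}\rmd\,\iota_X\gamma$ and then solving the linear system defining $\pr_B$ (the $\gr\Pi$-component has the form $\Pi^\sharp\beta+\beta$ for a one-form $\beta$ adjusted so the remainder lands in $G\oplus T^\ast\calF$), I expect the anchor to come out immediately as $\rho_\gamma(X)=\pr_G X$, and the bracket to reduce to $[\pr_G X,\pr_G Y]-\Pi^\sharp\sigma$ with $\sigma$ the one-form part above.

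The main obstacle will be verifying that this matches the asserted formula~\eqref{eq:lem:Koszul_algebra:almost_Lie_algbd_structure}, and two cancellations drive this. First, after rewriting $\iota_{\pr_G Y}\rmd\,\iota_X\gamma=\calL_{\pr_G Y}\iota_X\gamma-\rmd\big(\gamma(X,\pr_G Y)\big)$ by Cartan's formula, the function $\gamma(X,\pr_G Y)$ \emph{vanishes}, because $\iota_X\gamma\in\im\gamma^\flat=T^\ast\calF=G^\circ$ annihilates $G\ni\pr_G Y$; this removes the stray exact term. Second, in solving for $\pr_B$ and applying $\Psi^{-1}$, the entire dependence on $\beta$ drops out: here one uses that $\Pi^\sharp$ annihilates $T^\circ\calF\cong G^\ast$ and that $(\omega^\flat)^{-1}=-\Pi^\sharp|_{T^\ast\calF}$, so that the two $\Pi^\sharp\beta$ contributions cancel. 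Granting these two observations, the computation yields precisely~\eqref{eq:lem:Koszul_algebra:almost_Lie_algbd_structure}, completing the proof.
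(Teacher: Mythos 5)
Your proposal is correct, and since the paper omits this proof as ``a straightforward computation,'' your argument supplies exactly that computation along the evident intended route: transport of structure along the orthogonal transformation $\calR_\Pi\calR_\gamma$ for existence and uniqueness, the already-established Lemma~\ref{ex:deformation_theory_Poisson_structures} for the cotangent part, and the projected-bracket computation for the tangent part. Both cancellations you invoke do check out --- $\gamma(X,\pr_G Y)=0$ because $\iota_X\gamma\in G^\circ$ annihilates $G$, and the two $\Pi^\sharp\beta$ contributions cancel via $(\omega^\flat)^{-1}=-\Pi^\sharp|_{T^\ast\calF}$ together with $\Pi^\sharp(T^\circ\calF)=0$ --- so the transported bracket indeed reduces to $[\pr_GX,\pr_GY]-\Pi^\sharp\big(\calL_{\pr_GX}\iota_Y\gamma-\calL_{\pr_GY}\iota_X\gamma\big)$, as claimed.
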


The proof is a straightforward computation, and so we omit it.
  \begin{center}
\begin{figure}[h]
\begin{tikzpicture}[scale=0.95]
		\begin{axis}[axis lines=middle,ticks=none, xlabel style={right},
		ylabel style={above}, xmin=-0.3, xmax=2.1,ymin=-0.3,ymax=2.1,
		xlabel=$TM$,ylabel=$T^\ast M$, samples =2, disabledatascaling, no marks ]
		\addplot[domain=0:2*0.35] {2.414*x} node[above,text=black]{$\text{Gr}\Pi$};
		\addplot[domain=0:2*0.8] {0.414*x} node[above right,text=black] {$G\oplus T^*\calF$};
		\draw [blue, ->] (axis cs:1,0) arc [radius=1,start angle=0,end angle=23] node[below right]{$\calR_\Pi \calR_\gamma$};
		\draw [blue, ->] (axis cs:0,1) arc [radius=1,start angle=90,end angle=67] node[above left]{$\calR_\Pi \calR_\gamma$};
		\end{axis}
		\end{tikzpicture}
\end{figure}
\end{center}

\begin{remark}\label{rem:Courant_tensor_twisted_TM}
  Denote by $\Upsilon_{TM}^G\in\Omega^3(M)$  the Courant tensor (see Remark~\ref{rem:Courant_tensor}) of the almost Dirac structure $TM$ in $(\bbT M,\ldab-,-\rdab,\ldsb-,-\rdsb_G,\rho_G)$, then Lemma~\ref{lem:Koszul_algebra:Courant_iso} implies that $\Upsilon_{TM}^G$ is the pullback along isomorphism~\eqref{eq:lem:Koszul_algebra:almost_Lie_algbd_iso} of the Courant tensor of $G\oplus T^\ast\calF$.
So it is easy to see that, for all $X,Y,Z\in\frakX(M)$,
	\begin{equation*}
	\Upsilon^G_{TM}(X,Y,Z)=\gamma(X,[\pr_G Y,\pr_G Z])+\gamma(Y,[\pr_G Z,\pr_G X])+\gamma(Z,[\pr_G X,\pr_G Y]).
	\end{equation*}	
Additionally, notice that $\Upsilon^G_{TM}\in\Gamma(T^\ast\calF\otimes\wedge^2G^\ast)\subset\Omega^3(M)$ and it vanishes iff $G\subset TM$ is involutive.
\end{remark}

The following remark relates the almost Lie algebroid structure $(TM,[-,-]_\gamma,\rho_\gamma)$  with the existing literature.
\begin{remark}
\begin{enumerate}
\item Notice that $\gamma$ is a constant rank 2-form on $M$, with kernel $G$, whose restriction to the leaves of $T\calF $ is closed. Hence the condition $d\gamma=0$ is equivalent to
\begin{itemize}
\item $(d\gamma)|_{T\calF \wedge G\wedge G}=0$, meaning that $G$ is involutive
\item $(d\gamma)|_{T\calF \wedge T\calF \wedge G}=0$, meaning that $\calL_{X}\gamma=0$ for all $X\in \Gamma(G)$.
\end{itemize}
\item  In full generality, the bracket in \eqref{eq:lem:Koszul_algebra:almost_Lie_algbd_structure} can be rewritten as
\begin{align}\label{eq:brgamma1}
  [X,Y]_\gamma&=\big([\pr_GX,\pr_GY]+\pr_{T\calF} [\pr_GX,\pr_{T\calF} Y]-
\pr_{T\calF} [\pr_GY,\pr_{T\calF} X]\big)\nonumber\\
&\hspace{1cm}
- \Pi^\sharp\left(\iota_{\pr_{T\calF} Y}\iota_{\pr_GX}d\gamma-\iota_{\pr_{T\calF} X}\iota_{\pr_GY}d\gamma\right),
\end{align}
by applying Cartan identities to the term $\calL_{\pr_GX}\iota_Y\gamma$ in Eq.~\eqref{eq:lem:Koszul_algebra:almost_Lie_algbd_structure} and using that $\Pi^\sharp\gamma^{\flat}=-\pr_{T\calF}$.
 
Recall~\cite{MagriYvettePN} (see also~\cite{LieBialgebroidPN})  that an endomorphism $N\colon TM\to TM$ is called \emph{Nijenhuis} if the  tensor $T_N$ vanishes, where $T_N(X,Y)=
[N X, N Y ] - N ([N X, Y ] + [X, N Y ]) + N^2 [X, Y ].$
In that case,  $N$ gives rise to a Lie algebroid structure on $TM$, with anchor $N$ itself and Lie bracket
\[[X,Y]_N :=[NX,Y]+[X,NY]-N[X,Y].\] 
Assume now that $G$ is involutive. Then one can check that $\pr_G\colon TM\to TM$ is a Nijenhuis endomorphism, and that the corresponding Lie algebroid bracket $[-,-]_{\pr_G}$ is the first term in round brackets in Eq.~\eqref{eq:brgamma1}.
Using this, and the first bullet point in item (1) together with the fact that $\Pi\in \wedge^2T\calF $, we see that 
\begin{equation*}
  [X,Y]_\gamma=[X,Y]_{\pr_G}
-\Pi^\sharp\left(\iota_{Y}\iota_{X}d\gamma\right).
\end{equation*}
In other words, when $G$ is involutive, the Lie bracket $[-,-]_\gamma$
equals the Lie bracket associated to the Nijenhuis endomorphism $\pr_G$ 
plus an additional term involving $\Pi$.
 
 \item When the distribution $G$ is involutive, we have $\Upsilon^G_{TM}\equiv 0$ (see Remark~\ref{rem:Courant_tensor_twisted_TM}).
 Hence $(TM,[-,-]_{\gamma})$ is a Lie algebroid, and together with the Lie algebroid $(T^*M,
 [-,-]_{\Pi})$ associated to the Poisson structure it forms a Lie bialgebroid.
The stronger condition $d\gamma=0$ implies that the Nijenhuis endomorphism $\pr_G$ and the Poisson structure $\Pi$ are compatible, i.e. form a Poisson-Nijenhuis structure. In that case the    Lie bialgebroid of the  Poisson-Nijenhuis structure~\cite[\S 3.2]{LieBialgebroidPN} is exactly the Lie bialgebroid mentioned just above.
\end{enumerate}
\end{remark}

We apply Lemma~\ref{lem:2.6first}  to the case where $E=(\bbT M,\ldab-,-\rdab,\ldsb-,-\rdsb_G,\rho_G)$ is the Courant algebroid mentioned in Lemma~\ref{lem:Koszul_algebra:Courant_iso}, with Dirac structure $A=T^\ast M$ and almost Dirac structure $B=TM$.
So the graded space $\frakX^\bullet(M)[2]$ inherits an $L_\infty[1]$-algebra structure $\{\frakl_k^G\}$, described in the following.

\begin{proposition}
	\label{prop:Koszul_algebra}
	The $L_\infty[1]$-algebra associated with the Dirac structure $T^\ast M$ via the complementary almost Dirac structure $TM$ in the Courant algebroid $(\bbT M,\ldab-,-\rdab,\ldsb-,-\rdsb_G,\rho_G)$ 
	consists of the graded vector space $\frakX^\bullet(M)[2]$ with the $L_\infty[1]$-algebra structure $\{\frakl_k^G\}$ whose only non-trivial multibrackets, $\frakl_1^G,\frakl_2^G,\frakl_3^G$, are given by:
	\begin{itemize}
		\item the unary bracket $\frakl_1^G$ is the Poisson differential $\rmd_\Pi$, i.e. for all $P\in\frakX^\bullet(M)$:
		\begin{equation}
		\label{eq:prop:Koszul_algebra:unary_bracket}
		\frakl_1^G(P)=[\Pi,P]_{\sf SN},
		\end{equation}
		\item the binary bracket $\frakl_2^G$ acts as follows on homogeneous $P,Q\in\frakX^\bullet(M)$: 
		\begin{equation}
		\label{eq:prop:Koszul_algebra:binary_bracket}
		\frakl_2^G(P,Q)=(-)^{|P|} [P,Q]_\gamma,
		\end{equation}
	where $[-,-]_\gamma$ denotes the
	extension to an	almost Gerstenhaber bracket 
	(cf.~Remark~\ref{rem:almost_Dirac_structure})
	of the bracket in eq. \eqref{eq:lem:Koszul_algebra:almost_Lie_algbd_structure}.
		\item the ternary bracket $\frakl_3^G$ acts as follows on  homogeneous $P,Q,R\in\frakX^\bullet(M)$:
		\begin{equation}
		\label{eq:prop:Koszul_algebra:ternary_bracket}
		\frakl_3^G(P,Q,R)=(-1)^{|Q|}\big(P^\sharp\wedge Q^\sharp \wedge R^\sharp\big)\Upsilon^G_{TM},
		\end{equation}
		{where $\Upsilon^G_{TM}$  is defined in Remark~\ref{rem:Courant_tensor_twisted_TM}.}
	\end{itemize}
\end{proposition}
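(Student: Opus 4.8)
The plan is to apply the general formula of Lemma~\ref{lem:2.6first} to the Courant algebroid $(\bbT M,\ldab-,-\rdab,\ldsb-,-\rdsb_G,\rho_G)$ constructed in Lemma~\ref{lem:Koszul_algebra:Courant_iso}, with Dirac structure $A=T^\ast M$ and complementary almost Dirac structure $B=TM$. All the geometric input has already been extracted there, so the proof reduces to translating the three structural ingredients that feed Lemma~\ref{lem:2.6first}---the Lie algebroid differential of $A$, the almost Gerstenhaber bracket coming from $B\cong A^\ast$, and the Courant tensor of $B$---into explicit operations on $\frakX^\bullet(M)=\Omega^\bullet(T^\ast M)$, under the canonical identification $\wedge^\bullet(T^\ast M)^\ast=\wedge^\bullet TM$.

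First I would treat the unary bracket. By Lemma~\ref{lem:2.6first}, $\frakl_1^G$ is the Lie algebroid (Chevalley--Eilenberg) differential of $A=T^\ast M$ equipped with the Lie algebroid structure associated with $\Pi$ (see eq.~\eqref{eq:lem:Koszul_algebra:Lie_algbd_iso}). It is standard that, under $\Omega^\bullet(T^\ast M)=\frakX^\bullet(M)$, this differential is exactly the Lichnerowicz differential $\rmd_\Pi=[\Pi,-]_{\sf SN}$: both sides are degree $+1$ derivations of the exterior algebra $\frakX^\bullet(M)$ agreeing on $C^\infty(M)$ and on $\frakX(M)$, which pins them down and yields eq.~\eqref{eq:prop:Koszul_algebra:unary_bracket}. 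For the binary bracket, Lemma~\ref{lem:2.6first} expresses $\frakl_2^G$ through the bracket carried by the complement $B=TM$, which by Lemma~\ref{lem:Koszul_algebra:Courant_iso} is the almost Lie algebroid bracket $[-,-]_\gamma$ of eq.~\eqref{eq:lem:Koszul_algebra:almost_Lie_algbd_structure}. Under $\Gamma(\wedge^\bullet TM)=\frakX^\bullet(M)$ the induced almost Gerstenhaber bracket is the unique biderivation extending $[-,-]_\gamma$ on vector fields and the obvious action on functions (cf.~Remark~\ref{rem:almost_Dirac_structure}), and the shift to $\frakX^\bullet(M)[2]$ produces the décalage sign $(-)^{|P|}$, giving eq.~\eqref{eq:prop:Koszul_algebra:binary_bracket}.

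For the ternary bracket, Lemma~\ref{lem:2.6first} identifies $\frakl_3^G$ with contraction against the Courant tensor of $B=TM$, which is precisely $\Upsilon^G_{TM}$ as computed in Remark~\ref{rem:Courant_tensor_twisted_TM}; tracking the décalage and Koszul signs then gives eq.~\eqref{eq:prop:Koszul_algebra:ternary_bracket}. Finally, the vanishing of all $\frakl_k^G$ with $k\geq 4$ is immediate from Lemma~\ref{lem:2.6first}, since the only nonzero structural data feeding the construction are the differential of $A$, the bracket of $B$, and the Courant $3$-tensor of $B$, and there is nothing left to produce higher operations.

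The part demanding care is the sign bookkeeping: matching the graded-symmetric $L_\infty[1]$ conventions of Lemma~\ref{lem:2.6first} (which carry the degree shift by $2$) against the graded-antisymmetric Gerstenhaber and Schouten brackets requires a careful application of the décalage isomorphism of Remark~\ref{rem:decalage}, and this is what fixes the specific signs $(-)^{|P|}$ and $(-1)^{|Q|}$. The only other point to verify is that the contraction $(P^\sharp\wedge Q^\sharp\wedge R^\sharp)\Upsilon^G_{TM}$ correctly reproduces the pairing of the Courant tensor with three elements of $\Omega^\bullet(A)=\frakX^\bullet(M)$; this is consistent with the degree count $\frakX^p\times\frakX^q\times\frakX^r\to\frakX^{p+q+r-3}$ forced by the shifted grading, with the $3$-form $\Upsilon^G_{TM}$ absorbing exactly one leg of each argument.
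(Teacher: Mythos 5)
Your proposal is correct and follows essentially the same route as the paper: the paper's proof is precisely the one-line observation that the statement is a straightforward consequence of Lemma~\ref{lem:2.6first} applied to the Courant algebroid $(\bbT M,\ldab-,-\rdab,\ldsb-,-\rdsb_G,\rho_G)$ of Lemma~\ref{lem:Koszul_algebra:Courant_iso} with $A=T^\ast M$ and $B=TM$. Your expanded bookkeeping (identifying the Lie algebroid differential of the cotangent algebroid with $\rmd_\Pi$, the complement's bracket with $[-,-]_\gamma$, and the Courant tensor with $\Upsilon^G_{TM}$, with the signs read off from eqs.~\eqref{eq:lem:L_infty-algebra_Dirac:binary_bracket}--\eqref{eq:lem:L_infty-algebra_Dirac:ternary_bracket}) is exactly the content the paper leaves implicit.
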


\begin{proof}
	It is a straightforward consequence of Lemmas~\ref{lem:2.6first} and~\ref{lem:Koszul_algebra:Courant_iso}.
\end{proof}

\begin{remark}\label{rem:Koszul_algebra:dgLA}
We list some remarks concerning the $L_\infty[1]$-algebra $(\frakX^\bullet(M)[2],\{\frakl^G_k\})$ introduced above.
\begin{enumerate}
\item In view of Remarks~\ref{rem:decalage} (2)  and~\ref{rem:Courant_tensor_twisted_TM}, the $L_\infty[1]$-algebra $(\frakX^\bullet(M)[2],\{\frakl^G_k\})$ reduces to a dgL[1]a 
if and only if $G\subset TM$ is involutive.
\item As already pointed out in Remark~\ref{rem:decalage} (3),  
the $L_\infty[1]$-algebra $(\frakX^\bullet(M)[2],\{\frakl_k^G\})$  
is actually a \emph{$G_\infty[1]$-algebra}
{(also called  \emph{$P_\infty[1]$-algebra})},
i.e.~its multibrackets are graded algebra derivations of $\frakX^\bullet(M)$. 
\end{enumerate}
\end{remark}

The $L_\infty[1]$-algebra $(\frakX^\bullet(M)[2],\{\frakl_k^G\})$ is canonically isomorphic to $\big(\Omega^\bullet(\gr\Pi)[2],\big\{\mu_k^{G\oplus T^\ast\calF}\big\}\big)$, as stated below. The proof is a straightforward consequence of Lemma~\ref{lem:Koszul_algebra:Courant_iso}. 

\begin{proposition}
	\label{prop:Koszul_algebra:isomorphism}
	The VB isomorphism~\eqref{eq:lem:Koszul_algebra:Lie_algbd_iso} induces a strict isomorphism of $L_\infty[1]$-algebras
	\begin{equation*}
	\wedge^\bullet(\calR_\Pi\calR_\gamma)|_{T^\ast M}^\ast:\big(\Omega^\bullet(\gr\Pi)[2],\big\{\mu_k^{G\oplus T^\ast\calF}\big\}\big)\overset{\sim}{\longrightarrow}(\frakX^\bullet(M)[2],\{\frakl_k^G\}),
	\end{equation*}
	with inverse  $\wedge^\bullet (\pr_{T^\ast M}|_{\gr\Pi}^\ast):(\frakX^\bullet(M)[2],\{\frakl_k^G\})\overset{\sim}{\longrightarrow}\big(\Omega^\bullet(\gr\Pi)[2],\big\{\mu_k^{G\oplus T^\ast\calF}\big\}\big)$.
\end{proposition}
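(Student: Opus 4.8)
The plan is to deduce the statement from the functoriality of the deformation construction of Lemma~\ref{lem:2.6first} with respect to Courant algebroid isomorphisms. Recall that the two $L_\infty[1]$-algebras at hand are instances of that construction: $(\Omega^\bullet(\gr\Pi)[2],\{\mu_k^{G\oplus T^\ast\calF}\})$ is obtained from the standard Courant algebroid $(\bbT M,\ldab-,-\rdab,\ldsb-,-\rdsb,\pr_{TM})$ with Dirac structure $\gr\Pi$ and complementary almost Dirac structure $G\oplus T^\ast\calF$, while $(\frakX^\bullet(M)[2],\{\frakl_k^G\})$ is obtained from the twisted Courant algebroid $(\bbT M,\ldab-,-\rdab,\ldsb-,-\rdsb_G,\rho_G)$ with Dirac structure $T^\ast M$ and complement $TM$. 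By Lemma~\ref{lem:Koszul_algebra:Courant_iso}, the orthogonal transformation $\calR_\Pi\calR_\gamma$ is an isomorphism between these two Courant algebroids, and by eqs.~\eqref{eq:lem:Koszul_algebra:Lie_algbd_iso} and~\eqref{eq:lem:Koszul_algebra:almost_Lie_algbd_iso} it carries $T^\ast M$ onto $\gr\Pi$ and $TM$ onto $G\oplus T^\ast\calF$.

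First I would isolate the following naturality principle: if $\Phi\colon E\to E'$ is an isomorphism of Courant algebroids mapping a Dirac structure $A$ onto a Dirac structure $A'$ and a complementary almost Dirac structure $B$ onto $B'$, then $\Phi|_A\colon A\to A'$ is a Lie algebroid isomorphism and the induced pullback $\wedge^\bullet(\Phi|_A)^\ast\colon\Omega^\bullet(A')[2]\to\Omega^\bullet(A)[2]$ is a strict isomorphism of $L_\infty[1]$-algebras intertwining $\{\mu_k^{B'}\}$ with $\{m_k^{B}\}$. This holds because the multibrackets of Lemma~\ref{lem:2.6first} are assembled solely from the Courant bracket, anchor and pairing together with the splitting $E=A\oplus B$ and the identification $B\cong A^\ast$ induced by the pairing, all of which are preserved by a Courant algebroid isomorphism respecting the splitting. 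Applying this principle to $\Phi=\calR_\Pi\calR_\gamma$ with $A=T^\ast M$, $B=TM$, $A'=\gr\Pi$ and $B'=G\oplus T^\ast\calF$ yields at once the claimed strict isomorphism $\wedge^\bullet(\calR_\Pi\calR_\gamma)|_{T^\ast M}^\ast$. For the inverse, I would observe that $(\calR_\Pi\calR_\gamma)|_{T^\ast M}\colon\alpha\mapsto\iota_\alpha\Pi+\alpha$ has inverse $\gr\Pi\to T^\ast M$ equal to $\pr_{T^\ast M}|_{\gr\Pi}$; since pullback is contravariant and functorial, the inverse of $\wedge^\bullet(\calR_\Pi\calR_\gamma)|_{T^\ast M}^\ast$ is $\wedge^\bullet(\pr_{T^\ast M}|_{\gr\Pi})^\ast$.

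The main obstacle is the status of the naturality principle: whether it is already recorded in Appendix~\ref{app:Deformations_Dirac_Structures} and can be quoted as a black box, or whether it must be verified by hand. Should the latter be necessary, I would instead check directly, degree by degree, that $\wedge^\bullet(\calR_\Pi\calR_\gamma)|_{T^\ast M}^\ast$ intertwines the brackets using the explicit formulas of Proposition~\ref{prop:Koszul_algebra}. The unary case amounts to the fact that pullback along the Lie algebroid isomorphism $(\calR_\Pi\calR_\gamma)|_{T^\ast M}\colon(T^\ast M,[-,-]_\Pi,\Pi^\sharp)\to\gr\Pi$ commutes with the Lie algebroid differentials; the binary and ternary cases follow from the compatibility of $\calR_\Pi\calR_\gamma$ with the almost Lie algebroid structure of eq.~\eqref{eq:lem:Koszul_algebra:almost_Lie_algbd_structure} on $TM$ and with the Courant tensor $\Upsilon^G_{TM}$, respectively. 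As all higher brackets $m_k$ with $k\geq 4$ vanish identically on both sides, no further verifications would be required.
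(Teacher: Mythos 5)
Your proposal is correct and takes essentially the same approach as the paper: the paper's entire proof is the remark that the proposition is ``a straightforward consequence of Lemma~\ref{lem:Koszul_algebra:Courant_iso}'', i.e.\ exactly the naturality of the construction of Lemma~\ref{lem:2.6first} under a Courant algebroid isomorphism that respects the splittings, which is what you spell out. Your identification of the inverse, via $\pr_{T^\ast M}|_{\gr\Pi}=\big((\calR_\Pi\calR_\gamma)|_{T^\ast M}\big)^{-1}$ and contravariance of pullback, likewise matches the statement.
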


Turning to MC elements, we now show that the $L_\infty[1]$-algebra $(\frakX^\bullet(M)[2],\{\frakl^G_k\})$ encodes Dirac structures that are close to $\gr\Pi$ w.r.t. $G\oplus T^{*}\mathcal{F}$.

\begin{proposition}
	\label{prop:Koszul_algebra:MC_elements}
	Let $(\calF,\omega)$ be a symplectic foliation on $M$, with corresponding regular Poisson structure $\Pi$.
	For any splitting $TM=T\calF\oplus G$, the relation 
	\begin{equation*}
	L=\calR_\Pi\calR_\gamma\gr(Z)
	\end{equation*}
	establishes a canonical one-to-one correspondence between:
	\begin{itemize}
		\item MC elements $Z$ of the $L_\infty[1]$-algebra $(\frakX^\bullet(M)[2],\{\frakl^G_k\})$,
		\item Dirac structures $L\subset\bbT M$   
		transverse to $G\oplus T^\ast\calF$.
	\end{itemize}
	\end{proposition}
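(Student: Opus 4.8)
The plan is to obtain the stated correspondence as the concatenation of two canonical bijections: the generic Maurer--Cartan/Dirac dictionary of Appendix~\ref{app:Deformations_Dirac_Structures} applied to the $G$-twisted Courant algebroid, followed by transport along the Courant algebroid isomorphism $\calR_\Pi\calR_\gamma$. Equivalently, this repackages the strict isomorphism of Proposition~\ref{prop:Koszul_algebra:isomorphism} on the level of Maurer--Cartan loci.

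First I would recall, via Proposition~\ref{prop:Koszul_algebra}, that $(\frakX^\bullet(M)[2],\{\frakl^G_k\})$ is precisely the $L_\infty[1]$-algebra produced by Lemma~\ref{lem:2.6first} for the Dirac structure $A=T^\ast M$ with complementary almost Dirac structure $B=TM$ inside the Courant algebroid $(\bbT M,\ldab-,-\rdab,\ldsb-,-\rdsb_G,\rho_G)$ of Lemma~\ref{lem:Koszul_algebra:Courant_iso}. Under the pairing $\ldab-,-\rdab$ one identifies $B=TM\cong(T^\ast M)^\ast$, so that a degree-$2$ element $Z\in\frakX^2(M)=\wedge^2(T^\ast M)^\ast$ is exactly the bivector field whose graph over $A=T^\ast M$ is $\gr(Z)=\{\alpha+Z^\sharp(\alpha):\alpha\in T^\ast M\}$. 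Lemma~\ref{lem:2.6second} then yields a canonical bijection between MC elements $Z$ of $(\frakX^\bullet(M)[2],\{\frakl^G_k\})$ and the Dirac structures of the $G$-twisted Courant algebroid that are transverse to $TM$, implemented by $Z\mapsto\gr(Z)$.

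Next I would push this bijection through $\calR_\Pi\calR_\gamma$. Being an isomorphism onto the standard Courant algebroid, it maps Dirac structures of the $G$-twisted structure bijectively onto Dirac structures of the standard structure; moreover, by Lemma~\ref{lem:Koszul_algebra:Courant_iso} (see~\eqref{eq:lem:Koszul_algebra:almost_Lie_algbd_iso}) it carries the complement $TM$ onto $G\oplus T^\ast\calF$, so the two transversality conditions match. Composing the two bijections gives
\[
Z\longmapsto\gr(Z)\longmapsto\calR_\Pi\calR_\gamma\,\gr(Z)=L,
\]
which is exactly the asserted relation $L=\calR_\Pi\calR_\gamma\gr(Z)$; note this is consistent with~\eqref{eq:grexp}, where the same $L$ appears as $\gr(\exp_G(Z))$ for good $Z$.

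The one point requiring genuine care is the bookkeeping in the middle step: one must verify that the graph convention implicit in Lemma~\ref{lem:2.6second}, which realizes deformations of $A=T^\ast M$ as graphs of skew maps $T^\ast M\to TM$, agrees exactly with the identification of degree-$2$ elements of $\frakX^\bullet(M)[2]$ with bivector fields $Z$, including the signs coming from the d\'ecalage (Remark~\ref{rem:decalage}) and from the brackets $\{\frakl^G_k\}$ of Proposition~\ref{prop:Koszul_algebra}. Once this compatibility of conventions is fixed, the remainder is formal, since a strict isomorphism of Courant algebroids preserves the correspondence between MC elements and transverse Dirac structures; this is the content already encoded by Proposition~\ref{prop:Koszul_algebra:isomorphism}.
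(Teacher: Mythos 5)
Your proposal is correct and follows essentially the same route as the paper's proof: Proposition~\ref{prop:Koszul_algebra} identifies $(\frakX^\bullet(M)[2],\{\frakl^G_k\})$ as the $L_\infty[1]$-algebra of $A=T^\ast M$, $B=TM$ in the twisted Courant algebroid, Lemma~\ref{lem:2.6second} gives the bijection $Z\mapsto\gr(Z)$ onto Dirac structures transverse to $TM$, and the Courant algebroid isomorphism $\calR_\Pi\calR_\gamma$ (which carries $TM$ to $G\oplus T^\ast\calF$) transports this to the stated correspondence. The convention-checking you flag at the end is reasonable diligence but is already built into the paper's setup, so no further work is needed.
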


\begin{proof}[Proof of Proposition~\ref{prop:Koszul_algebra:MC_elements}]
We know that $(\frakX^\bullet(M)[2],\{\frakl_k^G\})$ is the $L_\infty[1]$-algebra associated with the Dirac structure $T^\ast M$ and the complementary almost Dirac structure $TM$ in $(\bbT M,\ldab-,-\rdab,\ldsb-,-\rdsb_G,\rho_G)$, by Proposition~\ref{prop:Koszul_algebra}. Lemma~\ref{lem:2.6second} provides the following bijection:
	\begin{equation*}
	\begin{aligned}
	\operatorname{MC}(\frakX^\bullet(M)[2],\{\frakl_k^G\})&\overset{\sim}{\longrightarrow}\{L\subset\bbT M\ \text{Dirac structure wrt}\ (\ldsb-,-\rdsb_G,\rho_G)\mid L\pitchfork TM\},\\
	Z&\longmapsto\gr(Z).
	\end{aligned}
	\end{equation*}
	
	Since $\calR_\Pi\calR_\gamma:(\bbT M,\ldab-,-\rdab,\ldsb-,-\rdsb_G,\rho_G)\overset{\sim}{\longrightarrow}(\bbT M,\ldab-,-\rdab,\ldsb-,-\rdsb,\pr_{TM})$ is a Courant algebroid isomorphism (see Lemma~\ref{lem:Koszul_algebra:Courant_iso}), with $\calR_\Pi\calR_\gamma(TM)=G\oplus T^\ast\calF$, it induces the following bijection:
	\begin{equation*}
	\label{eq:prop:Koszul_algebra:MC_elements:Dirac_structures_1-1}
	\begin{aligned}
	\{L\subset\bbT M\ \text{Dirac structure w.r.t.}\ (\ldsb-,-\rdsb_G,\rho_G)\mid L\pitchfork TM\}&\overset{\sim}{\longrightarrow}\{L\subset\bbT M\ \text{Dirac structure}\mid L\pitchfork G\oplus T^\ast\calF\},\\
	L&\longmapsto\calR_\Pi\calR_\gamma(L).
	\end{aligned}
	\end{equation*}	
Composing the two bijections, the proposition is proven.
\end{proof}

	In summary, {using also Proposition~\ref{prop:Koszul_algebra:isomorphism},} we can draw the following commutative square of bijective maps: 
	\begin{equation*}
	\begin{tikzcd}
	\{L\subset\bbT M\ \text{Dirac structure wrt}\ (\ldsb-,-\rdsb_G,\rho_G)\mid L\pitchfork TM\}\arrow[rr, hook, two heads, "\calR_\Pi\calR_\gamma"]&&\{L\subset\bbT M\ \text{Dirac structure}\mid L\pitchfork G\oplus T^\ast\calF\}\\
	\operatorname{MC}(\frakX^\bullet(M)[2],\{\frakl_k^G\})\arrow[rr, hook, two heads, swap, "\wedge^2(\pr_{T^\ast M}|_{\gr\Pi}^\ast)"]\arrow[u, "\gr(\ast)"]&&\operatorname{MC}\big(\Omega^\bullet(\gr\Pi)[2],\big\{\mu_k^{G\oplus T^\ast\calF}\big\}\big)\arrow[u, swap,  "\gr(\ast)"]
	\end{tikzcd}
	\end{equation*}

\subsubsection{Comparison with the literature: horizontally non-degenerate Dirac structures}\label{rem:Rui}

As pointed   out to us by Rui Loja Fernandes, 
Proposition~\ref{prop:Koszul_algebra:MC_elements} is reminiscent of a correspondence that appeared in~\cite{Vorobjev}\cite[\S5]{CrFeStab} for the Poisson case, and in~\cite{BrahicFernPoissonFibrations}~\cite{WadeCouplingDirac} for the Dirac case. 
 First we recall this correspondence, following the interpretation given by M\u{a}rcu\c{t} \cite[\S4.2]{MarcutThesis}. Then we describe the relation to Proposition~\ref{prop:Koszul_algebra:MC_elements} and Theorem~\ref{theor:deformation_theory regular_Poisson} below, summarizing our conclusions in Corollaries \ref{cor:concl1} and \ref{cor:1''}.
 
Fix a surjective submersion $p\colon E\to S$ with connected fibers, and denote by $V:=\ker(p_*)\subset TE$ the vertical bundle. A Dirac structure on $E$ is called \emph{horizontally non-degenerate} if it is transverse to $V\oplus V^{\circ}$. We denote
{
\begin{itemize}
    \item  by
$\mathfrak{X}^{\bullet}_V(E):=\Gamma(\wedge^{\bullet} V)$ the space of vertical multivector fields, \item  by $\mathfrak{X}_P(E)$ the space of vector fields on $E$ which are $p$-projectable,
\item by $\widetilde{\Omega}_E$ the space of differential forms on $S$ with values in  $\mathfrak{X}^{\bullet}_V(E)+\mathfrak{X}_P(E)$,
 \item   by  $\Omega_E:=\Gamma(\wedge^{\bullet}(V\oplus V^{\circ}))$ the subspace of differential forms on $S$ with values in
 $\mathfrak{X}^{\bullet}_V(E)$. 
\end{itemize}
Notice that $\widetilde{\Omega}_E$ carries a natural bigrading, and that Ehresmann connections on $E$ form an affine subspace of $\widetilde{\Omega}^{(1,1)}_E$.
It turns out that $\widetilde{\Omega}_E[1]$
 has a graded Lie bracket   $[-,-]_{\ltimes}$, making it into a graded Lie algebra, and that 
 $\Omega_E[1]$ is a graded Lie subalgebra.
 }

A Maurer Cartan (MC) element of $({\widetilde{\Omega}_E[1]},[-,-]_{\ltimes})$ is an
    element $\beta\in \widetilde{\Omega}^{(0,2)+(1,1)+(2,0)}_E$ such that
    $[\beta,\beta]_{\ltimes}=0$.
It turns out that  there is a bijection~\cite[Prop. 4.2.5]{MarcutThesis} between
\begin{itemize}
    \item[(1)] horizontally non-degenerate  Dirac structures on $E$
    \item[(2)] MC  elements of $(\widetilde{\Omega}_E[1],[-,-]_{\ltimes})$ whose $(1,1)$-component is an Ehresmann connection.
\end{itemize}
This bijection restricts~\cite[Prop. 4.2.10]{MarcutThesis} to a bijection between
\begin{itemize}
    \item[(1'\,)]   horizontally non-degenerate  Poisson structures 
\item[(2'\,)] MC elements of $(\widetilde{\Omega}_E[1],[-,-]_{\ltimes})$ for which additionally the $(2,0)$-component\footnote{This is a 2-form on $S$ with values in $C^{\infty}(E)$.} is non-degenerate.
\end{itemize}
 Further it restricts to a bijection between  
 \begin{itemize}
     \item[({1'}'\,)] 
 horizontally non-degenerate Poisson structures of constant rank equal to $\dim(S)$
 \item[({2'}'\,)] MC  elements of $(\widetilde{\Omega}_E[1],[-,-]_{\ltimes})$ for which, in addition to the previous two requirements, the $(0,2)$-component\footnote{This is a vertical bivector-field on $E$.} vanishes.
  \end{itemize}

We now relate the bijection (1)-(2) with our Proposition~\ref{prop:Koszul_algebra:MC_elements},  and the last bijection with Theorem~\ref{theor:deformation_theory regular_Poisson} below.
Take
a Dirac structure on $M$, and  fix an embedded leaf $S$. A choice of tubular neighborhood embedding provides a submersion $p\colon E\to S$, defined on a neighborhood of $S$ in $M$, {and we can assume that  the Dirac structure is horizontally-nondegenerate on $E$ (shrinking $E$ if necessary)}.
Denote by $\beta\in \widetilde{\Omega}^{(0,2)+(1,1)+(2,0)}_E$ the Maurer Cartan element of $\widetilde{\Omega}_E$ corresponding to the Dirac structure.
The twist by $\beta$ of the graded Lie algebra $(\widetilde{\Omega}_E[1],[-,-]_{\ltimes})$ is a dgLa with
 the same binary bracket and with
differential $d_{\beta}:=[\beta,-]_{\ltimes}$. It is immediate to check that 
an element $\beta'$ is a MC element of the  graded Lie algebra $(\widetilde{\Omega}_E[1],[-,-]_{\ltimes})$ if{f} $\beta'-\beta$ is a MC element of the above dgLa. Therefore the assignment $\beta'\mapsto \beta'-\beta$  yields a bijection between (2) above and 
\begin{itemize}
 \item[(3)] MC  elements of $(\Omega_E[1],d_{\beta} ,[-,-]_{\ltimes})$,
  \end{itemize}
using the fact that the difference of two Ehresmann connections is a 1-form on $S$ with values in $\Gamma(V)$, {i.e. an element of $\Omega_E^{(1,1)}$}. {Now assume that $\beta$ corresponds to a regular Poisson structure. We will show that the dgLa $(\Omega_E[1],d_{\beta} ,[-,-]_{\ltimes})$ is isomorphic with the dgLa obtained shifting degrees in $(\frakX^\bullet(E)[2],\{\frakl^G_k\})$, where $G$ is the involutive distribution $V$. Note that the latter dgLa is given by $(\frakX^\bullet(E)[1],d_{\Pi},[-,-]_{\gamma})$.}

\begin{lemma}\label{lem:corr}
Suppose that $\beta$ corresponds to a regular Poisson structure. The map
\[
f:TE\rightarrow V\oplus V^{0}:X\mapsto\pr_{V}X+\iota_{X}\gamma
\]
induces an isomorphism of dgLa's
$$
\wedge^{\bullet}f:(\frakX^\bullet(E)[1],d_{\Pi},[-,-]_{\gamma})\overset{\sim}{\rightarrow}(\Omega_E[1],d_{\beta} ,[-,-]_{\ltimes}).
$$
\end{lemma}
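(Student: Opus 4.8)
The plan is to prove that $\wedge^\bullet f$ is a well-defined isomorphism of graded vector spaces and then check it intertwines the differentials and the brackets. First I would verify that $f$ is a vector bundle isomorphism $TE \to V\oplus V^0$: since $\gamma^\flat$ restricts to an isomorphism $T\calF \to T^*\calF = V^0$ (using $\im\gamma^\flat = V^0$, as $G=V$ here) and $\pr_V$ is the identity on $V$, the map $f(X) = \pr_V X + \iota_X\gamma$ is exactly the restriction $(\calR_\Pi\calR_\gamma)|_{TM}$ from eq.~\eqref{eq:lem:Koszul_algebra:almost_Lie_algbd_iso} in Lemma~\ref{lem:Koszul_algebra:Courant_iso}, hence an isomorphism. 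Dualizing and taking exterior powers, $\wedge^\bullet f$ gives a graded isomorphism between $\frakX^\bullet(E) = \Gamma(\wedge^\bullet TE)$ and $\Omega_E = \Gamma(\wedge^\bullet(V\oplus V^0))$, where the identification of the latter with forms on $S$ valued in $\frakX^\bullet_V(E)$ is the standard one.

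The heart of the matter is to match the two dgLa structures. For this I would exploit Lemma~\ref{lem:Koszul_algebra:Courant_iso} directly rather than computing both sides from scratch. That lemma already tells us that $\calR_\Pi\calR_\gamma$ is a Courant algebroid isomorphism carrying the twisted bracket $\ldsb-,-\rdsb_G$ to the standard Dorfman bracket, and that its restriction to $TM$ is precisely the almost Lie algebroid isomorphism onto $G\oplus T^*\calF = V\oplus V^0$. Since $G = V$ is involutive, Remark~\ref{rem:Koszul_algebra:dgLA}(1) guarantees that $(\frakX^\bullet(E)[2],\{\frakl^G_k\})$ reduces to a genuine dgL[1]a, i.e. $\frakl_3^G$ vanishes and $(\frakX^\bullet(E)[1],d_\Pi,[-,-]_\gamma)$ is an honest dgLa. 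The plan is then to identify the target dgLa $(\Omega_E[1],d_\beta,[-,-]_\ltimes)$ with the dgLa attached to the almost Dirac structure $V\oplus V^0$ inside the \emph{standard} Courant algebroid via Lemma~\ref{lem:2.6first}, twisted by the Maurer--Cartan element $\beta$ corresponding to $\gr\Pi$. Concretely, $V\oplus V^0$ is a complement to $\gr\Pi$ (this is Lemma~\ref{lem:Koszul_algebra:splitting}), so deforming the Dirac structure $V\oplus V^0$ relative to its complement $\gr\Pi$ produces a dgLa on $\Omega^\bullet(V\oplus V^0)[2] \cong \Omega_E[2]$, and the twist by $\beta$ recenters this deformation problem at $\gr\Pi$; I would argue that this recentred bracket is exactly M\u{a}rcu\c{t}'s $[-,-]_\ltimes$ restricted to $\Omega_E$ and that the differential $d_\beta$ agrees.

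The key computational step, and the one I expect to be the main obstacle, is to check that the binary bracket $[-,-]_\gamma$ on $\frakX^\bullet(E)$ is transported by $\wedge^\bullet f$ to the bracket $[-,-]_\ltimes$ on $\Omega_E$. In degree one this amounts to comparing two Lie brackets of sections: the bracket in eq.~\eqref{eq:lem:Koszul_algebra:almost_Lie_algbd_structure}, which by the discussion following Proposition~\ref{prop:Koszul_algebra} is the Nijenhuis bracket $[-,-]_{\pr_V}$ corrected by a $\Pi$-term, versus M\u{a}rcu\c{t}'s bracket on horizontal-plus-vertical data, which is assembled from the Ehresmann connection underlying $\beta$. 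Since $f = (\calR_\Pi\calR_\gamma)|_{TM}$ and $\calR_\Pi\calR_\gamma$ is a Courant isomorphism intertwining $\ldsb-,-\rdsb_G$ with the standard Dorfman bracket, the bracket $[-,-]_\gamma$ is by construction the pullback of the Dorfman bracket restricted to $V\oplus V^0$, so the required identity should reduce to recognizing that M\u{a}rcu\c{t}'s $[-,-]_\ltimes$ is nothing but the Dorfman bracket on $V\oplus V^0$ expressed in the $(p\text{-horizontal},\,\text{vertical})$ bigrading. I would carry this out by evaluating both brackets on a spanning set — vertical multivector fields and projectable vector fields — and using the explicit formula for $\Upsilon^G_{TM}$ from Remark~\ref{rem:Courant_tensor_twisted_TM} (which vanishes here) to confirm there is no cubic discrepancy. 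The differential comparison $d_\Pi \leftrightarrow d_\beta$ then follows from the unary part of Lemma~\ref{lem:2.6first} together with the identification of $\Pi$ with the $(2,0)$-component of $\beta$, completing the proof that $\wedge^\bullet f$ is a dgLa isomorphism.
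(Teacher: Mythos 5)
Your treatment of the brackets is essentially the paper's own argument: by Lemma~\ref{lem:Koszul_algebra:Courant_iso}, the bracket $[-,-]_{\gamma}$ is precisely the Lie algebroid bracket of $V\oplus V^{0}$ transported along $f$ and then extended to all multivector fields, while $[-,-]_{\ltimes}$ restricted to $\Omega_E$ is by definition the analogous extension of that same Lie algebroid bracket; hence $\wedge^{\bullet}f$ intertwines the two. (The paper cites \cite[p.~119]{MarcutThesis} for this description of $[-,-]_{\ltimes}$, where you propose instead a verification on a spanning set; that substitution is harmless.)

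There are, however, two problems with the rest of your plan. First, your scaffolding is backwards: applying Lemma~\ref{lem:2.6first} to the Dirac structure $A=V\oplus V^{0}$ with complement $B=\gr\Pi$ produces an algebra on $\Omega^{\bullet}(V\oplus V^{0})=\Gamma(\wedge^{\bullet}(V\oplus V^{0})^{\ast})\cong\Gamma(\wedge^{\bullet}\gr\Pi)$, \emph{not} on $\Omega_E=\Gamma(\wedge^{\bullet}(V\oplus V^{0}))$; to land on $\Omega_E$ one must take $A=\gr\Pi$ and $B=V\oplus V^{0}$, which is exactly the route of Proposition~\ref{prop:Koszul_algebra:isomorphism}. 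Relatedly, ``twisting by the MC element $\beta$ corresponding to $\gr\Pi$'' does not parse in your setup: MC elements of the complex of $V\oplus V^{0}$ relative to the complement $\gr\Pi$ correspond to Dirac structures transverse to $\gr\Pi$, and $\gr\Pi$ is not transverse to itself. Second---and this is the genuine gap---the differential comparison is not established by what you invoke. Computing $d_{\beta}=[\beta,-]_{\ltimes}$ on $\Omega_E$ requires knowing $[-,-]_{\ltimes}$ on the larger space $\widetilde{\Omega}_E$, because $\beta$ itself does not lie in $\Omega_E$: its $(1,1)$-component is an Ehresmann connection. The horizontal part of the resulting differential arises from bracketing this connection component against elements of $\Omega_E$, and equating the outcome with the unary bracket of Lemma~\ref{lem:2.6first} (i.e.\ with $d_{\Pi}$ under $\wedge^{\bullet}f$) is precisely the statement to be proved, not an input; the identification of $\Pi$ with the $(2,0)$-component of $\beta$ is an ingredient but nowhere near sufficient. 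The paper closes this step by citing \cite[Prop.~4.2.11]{MarcutThesis}; without that citation, or an explicit computation of $[\beta,-]_{\ltimes}$ from M\u{a}rcu\c{t}'s definition of the bracket on $\widetilde{\Omega}_E$, your argument for the differential half is circular.
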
 
\begin{proof}
{The fact that $\wedge^{\bullet}f$ matches the differentials $d_{\Pi}$ and $d_{\beta}$ is proved in~\cite[Prop. 4.2.11]{MarcutThesis}.
As for the graded Lie brackets, first recall that the bracket $[-,-]_{\ltimes}$ on $\Omega_E=\Gamma(\wedge^{\bullet}(V\oplus V^{0}))$ is defined extending the Lie bracket of the Lie algebroid $V\oplus V^{0}$ (see~\cite[p. 119]{MarcutThesis}). On the other hand, eq. \eqref{eq:lem:Koszul_algebra:almost_Lie_algbd_iso} in Lemma 
\ref{lem:Koszul_algebra:Courant_iso} shows that the bracket $[-,-]_{\gamma}$ is defined by transporting the Lie bracket of $V\oplus V^{0}$ under the map $f$, and then extending to all multivector fields. This implies that $\wedge^{\bullet}f$ intertwines the graded Lie brackets $[-,-]_{\gamma}$ and $[-,-]_{\ltimes}$.}
\end{proof} 
Consequently we conclude:
\begin{corollary}\label{cor:concl1}
{The bijection between (1) and (3) above recovers Proposition~\ref{prop:Koszul_algebra:MC_elements} in a tubular neighborhood of the leaf $S$ for
 the choice of complement $G=V$.} 
\end{corollary}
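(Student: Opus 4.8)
The plan is to show that the two bijections coincide by recognizing both as the \emph{same} graph parametrization of Dirac structures transverse to the complement $V\oplus V^{\circ}$, transported through the isomorphism $\wedge^\bullet f$ of Lemma~\ref{lem:corr}. Throughout, I work on the tubular neighbourhood $E$ and take $G=V$ in the setup of \S\ref{sec:regular_bivector_fields}; this is legitimate because $V=\ker(p_*)$ is involutive and, after shrinking $E$, transverse to the characteristic distribution $T\calF$ of the rank-$\dim S$ regular Poisson structure, so that $TE=V\oplus T\calF$.

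First I would pin down the objects on both sides. Under the identification $T^\ast\calF\cong G^{\circ}$, taking $G=V$ gives $G\oplus T^\ast\calF=V\oplus V^{\circ}$; hence the Dirac structures transverse to $G\oplus T^\ast\calF$ in Proposition~\ref{prop:Koszul_algebra:MC_elements} are exactly the horizontally non-degenerate Dirac structures, i.e.~the objects in item (1). On the algebraic side, the involutivity of $G=V$ makes $(\frakX^\bullet(E)[2],\{\frakl_k^{G}\})$ reduce to the dgLa $(\frakX^\bullet(E)[1],d_\Pi,[-,-]_\gamma)$ by Remark~\ref{rem:Koszul_algebra:dgLA}(1), and Lemma~\ref{lem:corr} identifies its Maurer--Cartan elements bijectively with (3) via $\wedge^\bullet f$. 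So the statement to prove is the commutativity of the triangle formed by $\wedge^\bullet f$, the bijection of Proposition~\ref{prop:Koszul_algebra:MC_elements}, and the literature bijection (1)--(3).

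Next I would exploit the fact that $f=(\calR_\Pi\calR_\gamma)|_{TM}$, which follows by comparing the formula for $f$ in Lemma~\ref{lem:corr} with eq.~\eqref{eq:lem:Koszul_algebra:almost_Lie_algbd_iso}. Thus the Courant algebroid isomorphism $\calR_\Pi\calR_\gamma$ of Lemma~\ref{lem:Koszul_algebra:Courant_iso} carries the pair (Dirac structure $T^\ast M$, complement $TM$) in the twisted Courant algebroid to the pair (Dirac structure $\gr\Pi$, complement $V\oplus V^{\circ}$) in the standard one, and restricts to $f$ on the complement. Since a Courant isomorphism preserves the pairing, it intertwines the graph parametrizations of Lemma~\ref{lem:2.6second}: for each $Z$ one has $\calR_\Pi\calR_\gamma\,\gr(Z)=\gr(\wedge^\bullet f(Z))$, where on the right $\gr$ denotes the graph over $V\oplus V^{\circ}$ inside $\bbT E$ (with reference Dirac structure $\gr\Pi$). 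In other words, the bijection of Proposition~\ref{prop:Koszul_algebra:MC_elements} sends $Z$ to precisely this Dirac structure.

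Finally I would match this with the literature side. The remaining point is that M\u{a}rcu\c{t}'s bijection (1)--(3), after the twist by $\beta$, is itself the graph parametrization over $V\oplus V^{\circ}$ in $\bbT E$ with reference Dirac structure $\gr\Pi$: an element $\beta''\in(3)$ corresponds to the Dirac structure whose $\widetilde{\Omega}_E$-datum is $\beta+\beta''$, and by~\cite[Prop.~4.2.5, 4.2.10, 4.2.11]{MarcutThesis} this is exactly the graph of $\beta''$ over $V\oplus V^{\circ}$. Combining this with the previous step shows that the Dirac structure assigned by Proposition~\ref{prop:Koszul_algebra:MC_elements} to $Z$ agrees with the one assigned by the literature bijection to $\wedge^\bullet f(Z)$, so the triangle commutes. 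I expect the main obstacle to be precisely this last identification: translating M\u{a}rcu\c{t}'s description, which is phrased through the bigrading of $\widetilde{\Omega}_E$ and the Ehresmann connection sitting in the $(1,1)$-component, into the clean graph-over-$V\oplus V^{\circ}$ statement, and checking that the identification $(\gr\Pi)^\ast\cong V\oplus V^{\circ}$ implicit there is the one induced by $f$, equivalently by the Courant pairing.
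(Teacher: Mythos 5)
Your proposal is correct and follows essentially the same route as the paper: the paper deduces this corollary directly from Lemma~\ref{lem:corr} (the dgLa isomorphism $\wedge^{\bullet}f$), offering no argument beyond ``Consequently we conclude''. Your extra steps---observing that $f=(\calR_\Pi\calR_\gamma)|_{TE}$, that a Courant algebroid isomorphism preserving the pairing intertwines the graph parametrizations of Lemma~\ref{lem:2.6second}, and that M\u{a}rcu\c{t}'s correspondence is itself the graph parametrization over $V\oplus V^{\circ}$ relative to $\gr\Pi$---merely make explicit the identification of the two bijections (rather than the mere existence of a bijection between the same two sets) which the paper treats as immediate.
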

 
{ This specialises to a similar statement about Theorem~\ref{theor:deformation_theory regular_Poisson} below:
\begin{corollary}\label{cor:1''}
By restricting the bijection (1)-(3) to ({1'}'\,), we recover Theorem~\ref{theor:deformation_theory regular_Poisson}  in a tubular neighborhood of $S$ for the choice of complement $G=V$.
\end{corollary}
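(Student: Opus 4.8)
The plan is to build on Corollary~\ref{cor:concl1}, which already identifies the full bijection (1)-(3) with Proposition~\ref{prop:Koszul_algebra:MC_elements} (for $G=V$) through the dgLa isomorphism $\wedge^\bullet f$ of Lemma~\ref{lem:corr}. Under this identification the bijection reads $Z\mapsto\calR_\Pi\calR_\gamma\gr(Z)$, and by eq.~\eqref{eq:grexp} the resulting Dirac structure equals $\gr(\exp_G(Z))$ whenever $\exp_G(Z)$ is defined, so the bijection map is $Z\mapsto\exp_G(Z)$. Since Theorem~\ref{theor:deformation_theory regular_Poisson} is obtained from Proposition~\ref{prop:Koszul_algebra:MC_elements} precisely by restricting to those $Z$ that are good and lie in $\Gamma(\calI_\gamma)$, it suffices to verify that, under the dictionary of Lemma~\ref{lem:corr}, restricting side (1) to the subclass (1'') corresponds exactly to imposing these two conditions on $Z$. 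In other words, the proof amounts to matching the two conditions that cut out (1'') inside (1) against the conditions \emph{good} and $\Gamma(\calI_\gamma)$.

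First I would read off the bidegree behaviour of $f$. Since $\gamma^\flat|_G=0$ and $\im\gamma^\flat=T^\ast\calF=V^0$, one has $f(T\calF)\subset V^0$ and $f(V)=V$; placing $V^0$ in bidegree $(1,0)$ and $V$ in bidegree $(0,1)$, the map $\wedge^2 f$ therefore sends the three summands $\Gamma(\wedge^2 T\calF)$, $\Gamma(T\calF\otimes V)$, $\Gamma(\wedge^2 V)$ of $\frakX^2(E)$ to bidegrees $(2,0)$, $(1,1)$, $(0,2)$ respectively. By Remark~\ref{rem:good} (with $G=V$) a bivector field is good exactly when its $\wedge^2 V$-component vanishes; hence $Z$ is good if and only if the $(0,2)$-component of $\wedge^2 f(Z)$ vanishes. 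As the base point $\beta$ corresponds to the rank $2k=\dim S$ Poisson structure $\Pi$, it has vanishing $(0,2)$-component, so the $(0,2)$-component of $\beta'=\beta+\wedge^2 f(Z)$ vanishes precisely when $Z$ is good. This is exactly M\u{a}rcu\c{t}'s condition singling out (2'') inside (2'), i.e.~the ``constant rank equal to $\dim S$'' requirement.

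Next I would match the Poisson condition. A horizontally non-degenerate Dirac structure $L=\calR_\Pi\calR_\gamma\gr(Z)$ is the graph of a bivector field — hence a Poisson structure — exactly when $L\pitchfork TM$; since $\calR_\Pi TM=TM$ by eq.~\eqref{eq:preliminary:exp_G:second_component}, this is equivalent to $\calR_\gamma\gr(Z)\pitchfork TM$, i.e.~to $\gr(Z)\pitchfork\gr(-\gamma)$, i.e.~to $Z\in\Gamma(\calI_\gamma)$ by Lemma~\ref{lem:existence_gauge_trasform}. On the other hand, in M\u{a}rcu\c{t}'s picture this very Poisson condition is the non-degeneracy of the $(2,0)$-component of $\beta'$, which is how (2') is carved out of (2). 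Combining the two matchings, the restriction of the bijection (1)-(3) to (1'') corresponds to the MC elements $Z$ that are simultaneously good and lie in $\Gamma(\calI_\gamma)$; by Theorem~\ref{theor:nearby_regular_bivectors} these are exactly the $Z$ for which $\exp_G(Z)$ is a rank $2k$ regular Poisson structure with $\im(\exp_G(Z))^\sharp\pitchfork G=V$. Together with the identification of the bijection map with $Z\mapsto\exp_G(Z)$, this is precisely Theorem~\ref{theor:deformation_theory regular_Poisson} over the tubular neighborhood.

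The main obstacle is not the bidegree computation, which is routine once $f$ is written out, but rather confirming that M\u{a}rcu\c{t}'s analytic characterizations of the Poisson and constant-rank conditions — non-degeneracy of the $(2,0)$-component and vanishing of the $(0,2)$-component of the Maurer-Cartan element — genuinely coincide with our geometric ones, $Z\in\Gamma(\calI_\gamma)$ and $Z$ good. This requires unwinding the dictionary of~\cite[\S4.2]{MarcutThesis} relating the bigraded components of a Maurer-Cartan element to the geometric data of its Dirac structure, and checking that the conventions are compatible (in particular that $\dim S=2k$ and that $\wedge^\bullet f$ respects the two bigradings as used above).
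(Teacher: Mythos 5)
Your proposal is correct, and its overall skeleton coincides with the paper's: identify the bijection (1)--(3) with $Z\mapsto\exp_G(Z)$ via Corollary~\ref{cor:concl1} and eq.~\eqref{eq:grexp}, and then match the two conditions cutting out $(1'')$ against ``$Z$ good'' and ``$Z\in\Gamma(\calI_\gamma)$''. The paper packages this matching into a dedicated lemma with exactly the two equivalences you state: vanishing of the $(0,2)$-component of $\beta'$ corresponds to goodness, and non-degeneracy of the $(2,0)$-component corresponds to membership in $\calI_\gamma$. For the first equivalence your bidegree argument (run through $f$ rather than $f^{-1}$) is the same as the paper's. For the second you genuinely diverge: the paper proves it by a direct pointwise computation, writing $\id+\gamma^\flat\circ Z^\sharp$ in block form with respect to $T^*E=T\calF^0\oplus V^0$ and invoking M\u{a}rcu\c{t}'s explicit formula for $\beta^{(2,0)}$ (the inverse of $\Pi^\sharp\colon V^0\to\Pi^\sharp(V^0)$) to show that $\id_{V^0}+\gamma^\flat\circ Z^\sharp|_{V^0}=\bigl((\beta')^{(2,0)}\bigr)^\flat\circ\Pi^\sharp|_{V^0}$; you instead argue indirectly that both conditions characterize the same geometric property, namely that the Dirac structure $\calR_\Pi\calR_\gamma\gr(Z)$ is the graph of a bivector field --- on one side via $\calR_\Pi TM=TM$ and Lemma~\ref{lem:existence_gauge_trasform}, on the other by quoting M\u{a}rcu\c{t}'s restriction statement $(1')$--$(2')$, i.e.~\cite[Prop.~4.2.10]{MarcutThesis}, as a black box. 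Both arguments are sound. Yours is shorter and computation-free, and since only Maurer--Cartan elements enter the corollary, the equivalence you obtain (valid for integrable/MC data) suffices; the paper's computation is self-contained modulo the formula for $\beta^{(2,0)}$ and yields a slightly stronger, pointwise linear-algebra statement that holds for arbitrary elements, not only Maurer--Cartan ones.
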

}

{Corollary \ref{cor:1''} is a consequence of the following result, which uses the dgLa isomorphism $\wedge^{\bullet}f$ from the previous lemma.
\begin{lemma}
A MC element $\beta'$ of the graded Lie algebra $(\widetilde{\Omega}_E[1],[-,-]_{\ltimes})$ corresponds with the MC element $\wedge^{2}f^{-1}(\beta'-\beta)$ of the dgLa $(\frakX^\bullet(E)[1],d_{\Pi},[-,-]_{\beta})$. Under this correspondence, we have:
\begin{itemize}
    \item the $(0,2)$-component of $\beta'$ vanishes iff $\wedge^{2}f^{-1}(\beta'-\beta)$ belongs to $\frakX_\good^{2}(E)$,
    \item the $(2,0)$-component of $\beta'$ is non-degenerate iff $\wedge^{2}f^{-1}(\beta'-\beta)$ belongs to the neighborhood $\calI_{\gamma}$.
\end{itemize}
\end{lemma}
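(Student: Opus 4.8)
The plan is to reduce the statement to the explicit form of the isomorphism $f$ and to the chain of bijections already assembled above. The very first assertion, that a Maurer--Cartan element $\beta'$ corresponds to $Z:=\wedge^{2}f^{-1}(\beta'-\beta)$, requires no new work: it is the composition of the twisting bijection between $(2)$ and $(3)$ with the inverse of the dgLa isomorphism $\wedge^{\bullet}f$ of Lemma~\ref{lem:corr}, so that $Z$ is a Maurer--Cartan element of $(\frakX^\bullet(E)[1],d_{\Pi},[-,-]_\gamma)$. Hence the content lies in the two bullet points, and the first thing I would do is record the explicit inverse of $f$ (recall that here $G=V$). Since $\gamma$ has kernel $G=V$ and restricts to $\omega$ on $T\calF$, while $\Pi^\sharp\gamma^\flat=-\pr_{T\calF}$, a direct check gives
\[
f^{-1}\colon V\oplus V^{0}\longrightarrow TE,\qquad v+\xi\longmapsto v-\Pi^\sharp\xi\qquad(v\in V,\ \xi\in V^{0}).
\]
Feeding the three bigraded pieces of $\beta'-\beta$ through $\wedge^{2}f^{-1}$ then shows that $\Gamma(\wedge^{2}V)$ is sent into $\Gamma(\wedge^{2}G)$, that $\Gamma(V\otimes V^{0})$ is sent into $\Gamma(T\calF\otimes G)$, and that $\Gamma(\wedge^{2}V^{0})$ is sent into $\Gamma(\wedge^{2}T\calF)$; in particular the three summands land in the three distinct summands of $\frakX^{2}(E)=\Gamma(\wedge^{2}T\calF)\oplus\Gamma(T\calF\otimes G)\oplus\Gamma(\wedge^{2}G)$.

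For the first bullet I would observe that $\beta$ itself has vanishing $(0,2)$-component: since $\Pi$ is regular of rank $2k=\dim S$, the element $\beta$ lies in the image of the bijection between $(1'')$ and $(2'')$, whose defining extra condition is exactly the vanishing of the $(0,2)$-component. Therefore the $(0,2)$-components of $\beta'$ and of $\beta'-\beta$ agree, and by the previous paragraph this component equals precisely the $\Gamma(\wedge^{2}G)$-summand of $Z$. By the description of good bivector fields recalled in Remark~\ref{rem:good}, one has $Z\in\frakX^{2}_\good(E)$ if and only if its $\Gamma(\wedge^{2}G)$-summand vanishes; this yields the desired equivalence.

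For the second bullet I would argue geometrically rather than compute. By Corollary~\ref{cor:concl1} the Dirac structure $L$ attached to $\beta'$ is the image of $Z$ under Proposition~\ref{prop:Koszul_algebra:MC_elements}, namely $L=\calR_\Pi\calR_\gamma\gr(Z)$. As $\calR_\Pi TE=TE$, transversality $L\pitchfork TE$ is equivalent to $\calR_\gamma\gr(Z)\pitchfork TE$, which by Lemma~\ref{lem:existence_gauge_trasform} holds if and only if $Z\in\Gamma(\calI_\gamma)$. On the other hand a Dirac structure is the graph of a (necessarily Poisson) bivector field exactly when it is transverse to $TE$, so $L\pitchfork TE$ says that $L$ is a horizontally non-degenerate Poisson structure; by the bijection between $(1')$ and $(2')$ this is equivalent to the non-degeneracy of the $(2,0)$-component of $\beta'$. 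Chaining the two equivalences proves the second bullet.

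The routine but slightly delicate points are the computation of $f^{-1}$ and the bookkeeping showing that $\wedge^{2}f^{-1}$ respects the splittings as stated, including the sign in $f^{-1}(\xi)=-\Pi^\sharp\xi$. The only genuinely conceptual step is in the second bullet: one must recognise ``non-degenerate $(2,0)$-component'' as the assertion ``$L$ is Poisson, i.e.\ $L\pitchfork TE$'' in order to match it with the transversality condition defining $\calI_\gamma$; once this identification is in place, Lemma~\ref{lem:existence_gauge_trasform} and Corollary~\ref{cor:concl1} do the rest.
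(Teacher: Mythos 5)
Your proposal is correct, and it splits naturally into two halves. For the first bullet your argument coincides with the paper's: both compute $f^{-1}(X+\alpha)=X-\Pi^{\sharp}\alpha$, observe that $f^{-1}$ is the identity on $V$ and carries $V^{0}$ into $T\calF$, identify the $\Gamma(\wedge^{2}V)$-part of $\beta'-\beta$ with the $\Gamma(\wedge^{2}G)$-summand of $Z$, and use that $\beta^{(0,2)}=0$ because $\beta$ encodes a regular Poisson structure. For the second bullet, however, you take a genuinely different route. The paper proves it by direct linear algebra: it notes that $Z\in\calI_{\gamma}$ reduces (via a block-triangular decomposition of $\id+\gamma^{\flat}\circ Z^{\sharp}$ with respect to $T^{*}E=T\calF^{0}\oplus V^{0}$) to invertibility of the restriction to $V^{0}$, expands that restriction using $f^{-1}|_{V^{0}}=-\Pi^{\sharp}$ and $\gamma^{\flat}\circ\Pi^{\sharp}=-\pr_{V^{0}}$, and invokes M\u{a}rcu\c{t}'s formula $\beta^{(2,0)}=\bigl(\Pi^{\sharp}|_{V^{0}}\bigr)^{-1}$ to conclude that the map in question equals $\bigl((\beta')^{(2,0)}\bigr)^{\flat}\circ\Pi^{\sharp}|_{V^{0}}$. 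You instead argue geometrically: non-degeneracy of $(\beta')^{(2,0)}$ means, via the restriction of the bijection $(1)$--$(2)$ to $(1')$--$(2')$, that the Dirac structure $L$ attached to $\beta'$ is Poisson, i.e.\ $L\pitchfork TE$; by Corollary~\ref{cor:concl1} one has $L=\calR_\Pi\calR_\gamma\gr(Z)$, and since $\calR_\Pi TE=TE$ this transversality is equivalent, via Lemma~\ref{lem:existence_gauge_trasform}, to $Z\in\Gamma(\calI_\gamma)$. Your version is conceptually cleaner — it exposes the equivalence as ``transverse to $TE$ $=$ graph of a Poisson bivector $=$ non-degenerate $(2,0)$-part'' — but it is less self-contained: it leans on Corollary~\ref{cor:concl1}, whose own justification in the paper is only sketched, and on the fact that M\u{a}rcu\c{t}'s Poisson bijection is the restriction of his Dirac bijection; the paper's computation avoids both of these and needs only the explicit formula for $\beta^{(2,0)}$. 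Since Corollary~\ref{cor:concl1} is established before this lemma (which is used only for Corollary~\ref{cor:1''}), there is no circularity in your argument.
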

\begin{proof}
We first remark that the inverse of $f$ is given by
\[
f^{-1}:V\oplus V^{0}\rightarrow TE:X+\alpha\mapsto X-\Pi^{\sharp}\alpha.
\]
In particular, $f^{-1}$ restricts to the identity map on $V$ and takes $V^{0}$ to $T\calF$. We now prove the two statements.
\begin{itemize}
\item The bivector field $\wedge^{2}f^{-1}(\beta'-\beta)$ belonging to $\frakX_\good^{2}(E)$ means that it has no component in $\Gamma(\wedge^{2}V)$. By the previous observation about $f^{-1}$, this is equivalent with $\beta'-\beta$ having no component in $\Gamma(\wedge^{2}V)$, i.e. its $(0,2)$-component being zero. Since $\beta$ corresponds with a regular Poisson structure, its $(0,2)$-component is zero; hence the last statement is equivalent with the $(0,2)$-component of $\beta'$ being zero. \item Recall that a bivector field $Z\in\mathfrak{X}^{2}(E)$ belongs to $\calI_{\gamma}$ iff the map $\id+\gamma^{\flat}\circ Z^{\sharp}:T^{*}E\rightarrow T^{*}E$ is invertible. Writing this map with respect to the decomposition $T^{*}E=T\calF^{0}\oplus V^{0}$ yields a triangular block matrix for which a diagonal block is the identity; as a consequence, it is equivalent to require that the restriction $\id_{V^{0}}+\gamma^{\flat}\circ Z^{\sharp}|_{V^{0}}:V^{0}\rightarrow V^{0}$ is invertible.
We expand
\begin{align}\label{eq:expand}
id_{V^{0}}+\gamma^{\flat}\circ\big(\wedge^{2}f^{-1}(\beta'-\beta)\big)^{\sharp}|_{V^{0}}&=id_{V^{0}}+\gamma^{\flat}\circ\big(\wedge^{2}f^{-1}(\beta'-\beta)^{(2,0)}\big)^{\sharp}|_{V^{0}}\nonumber\\
&=id_{V^{0}}-\gamma^{\flat}\circ\Pi^{\sharp}\circ\big((\beta'-\beta)^{(2,0)}\big)^{\flat}\circ\Pi^{\sharp}|_{V^{0}}.
\end{align}
In the second equality, we use that $f^{-1}$ agrees with $-\Pi^{\sharp}$ on $V^{0}$, which implies that 
\[
\big(\wedge^{2}f^{-1}(\beta'-\beta)^{(2,0)}\big)^{\sharp}=\big(\wedge^{2}\Pi^{\sharp}(\beta'-\beta)^{(2,0)}\big)^{\sharp}=-\Pi^{\sharp}\circ\big((\beta'-\beta)^{(2,0)}\big)^{\flat}\circ\Pi^{\sharp}.
\]
Further using that $\gamma^{\flat}\circ\Pi^{\sharp}=-\pr_{V^{0}}$, the right hand side of~\eqref{eq:expand} becomes
\[
id_{V^{0}}+\big((\beta'-\beta)^{(2,0)}\big)^{\flat}\circ\Pi^{\sharp}|_{V^{0}}.
\]
Since $\beta$ corresponds with the Poisson structure $\Pi$, its $(2,0)$-component $\beta^{(2,0)}$ is given by the inverse of $\Pi^{\sharp}:V^{0}\rightarrow\Pi^{\sharp}(V^{0})$ (see~\cite[p. 130]{MarcutThesis}). Therefore, we conclude that the above expression  equals
$$id_{V^{0}}+\big((\beta')^{(2,0)}\big)^{\flat}\circ\Pi^{\sharp}|_{V^{0}}-\big((\beta)^{(2,0)}\big)^{\flat}\circ\Pi^{\sharp}|_{V^{0}}
=\big((\beta')^{(2,0)}\big)^{\flat}\circ\Pi^{\sharp}|_{V^{0}}.
$$
Since $\Pi^{\sharp}|_{V^{0}}$ is injective, this shows that $\wedge^{2}f^{-1}(\beta'-\beta)$ belongs to $\calI_{\gamma}$ iff $(\beta')^{(2,0)}$ is non-degenerate.
\end{itemize}
\end{proof}
}

\subsection{Deformation Theory of a Symplectic Foliation}
\label{sec:deformations_symplectic_foliations}

{We remain in the setup described by the box at the beginning of \S\ref{sec:regular_bivector_fields}.}
In \S\ref{sec:Koszul_algebra} we constructed the $L_\infty[1]$-algebra $(\frakX^\bullet(M)[2],\{\frakl_k^G\})$, which encodes as its MC elements those Dirac structures that are close to $\gr\Pi$ w.r.t. $G$, in the sense that they are still transverse to $G\oplus T^\ast\calF$.
We now single out an $L_\infty[1]$-subalgebra (see Proposition~\ref{prop:good_L_infty_algebra}) which actually controls the deformation problem of the symplectic foliation $(\calF,\omega)$ (see Theorem~\ref{theor:deformation_theory regular_Poisson}).

\subsubsection{{\underline{\smash{An algebraic tool}}}}
We first need to introduce the notion of \emph{strongly homotopy Lie--Rinehart algebras}, or $LR_\infty[1]$-algebras.
We will essentially adopt the same terminology as in~\cite{vitagliano2014LieRinehart} (cf.~also~\cite{Kjeseth2001Rinehart} for another version of this notion).

\begin{definition}[{\cite[Definition~7]{vitagliano2014LieRinehart}}]
	\label{def:LR_infinity_algebra}
	Let $\calA$ be a graded commutative algebra.
	An \emph{$LR_\infty[1]$-algebra over $\calA$} consists of a graded $\calA$-module $\calQ$ equipped with an $L_\infty[1]$-algebra structure $\{\mu_k\}$ and a family of \emph{anchor maps}
	\begin{equation*}
	\rho_k:\calQ^{\times(k-1)}\times\calA\to\calA,\ (q_1,\ldots,q_{k-1},a)\mapsto\rho_k(q_1,\ldots,q_{k-1}|a)
	\end{equation*}
	of degree $1$ which are $\calA$-linear in the first $k-1$ entries and a derivation in the last one, such that
	\begin{itemize}
		\item for all $k\in\bbN$ and all homogeneous $a\in\calA$, $q_1,\ldots,q_k\in\calQ$, the following Leibniz-like rule holds:
		\begin{equation}
		\label{eq:def:LR_infinity_algebra:Leibniz}
		\mu_k(q_1,\ldots,aq_k)=\rho_k(q_1,\ldots,q_{k-1}|a)q_k+(-)^{|a|(1+|q_1|+\ldots+|q_{k-1}|)}a\mu_k(q_1,\ldots,q_k),
		\end{equation}
		\item for all $k\in\bbN$ and all homogeneous $v_1,\ldots,v_k\in\calQ\oplus\calA$, the following extended Jacobi identity holds:
		\begin{equation}
		\label{eq:def:LR_infinity_algebra:Jacobi}
		{0=\sum_{\substack{i+j=k+1\\i,j\geq 1}}\sum_{\sigma\in S_{(i,k-i)}}\epsilon(\sigma,\mathbf{v})\{\{v_{\sigma(1)},\ldots,v_{\sigma(i)}\}_{i},v_{\sigma(i+1)},\ldots,v_{\sigma(k)}\}_{j}.}
		\end{equation}
		Here $\epsilon(\sigma;\mathbf{v})$ denotes the symmetrical Koszul sign determined by $\sigma$ and $\mathbf{v}=(v_1,\ldots,v_k)$ (cf.~\cite{fiorenza2007cones}) and, for each $i\in\bbN$, the bracket $\{-,\ldots,-\}_i\colon(\calQ\oplus\calA)^{\times i}\to\calQ\oplus\calA$ is the unique graded symmetric multi $\bbR$-linear map which extends $\mu_i$ and $\rho_i$ and which vanishes if more than one entry belongs to $\calA$.
	\end{itemize}
\end{definition}

\begin{remark}
	If $(\calQ,\{\mu_k\},\{\rho_k\})$ is an $LR_\infty[1]$-algebra over $\calA$, then in particular, $\rho_1$ is a degree $1$ graded algebra derivation of $\calA$, and $\mu_1$ is a degree $1$ graded module derivation of $\calQ$ with symbol given by $\rho_1$.
\end{remark}

Notice that the splitting $TM=G\oplus T\calF$ endows the graded algebra $\frakX^\bullet(M)$ with an additional $\bbN\times\bbN$ grading defined as follows:
	\begin{equation}\label{bigrading}
	\frakX^\bullet(M)=\bigoplus_{q,p\in\bbN}\frakX^{(p,q)}(M),\quad\text{where}\quad\frakX^{(p,q)}(M):=\Gamma(\wedge^pT\calF\oplus\wedge^q G).
	\end{equation}
In particular, $\frakX^n(M)=\bigoplus_{p+q=n}\frakX^{(p,q)}(M)$,
and we additionally get
	\begin{equation*}
	\frakX^\bullet(\calF)=\bigoplus_{p\in\bbN}\frakX^{(p,0)}(M)\qquad\text{and}\qquad\frakX^\bullet_\good(M)=\bigoplus_{\genfrac{}{}{0pt}{}{p\in\bbN}{q=0,1}}\frakX^{(p,q)}(M).
	\end{equation*}

\begin{lemma}
	\label{lem:multibrackets:NxN_grading}
	The non-trivial multibrackets of $(\frakX^\bullet(M)[2],\{\frakl_k^G\})$ behave as follows w.r.t. the $\bbN\times\bbN$ grading:
	\begin{enumerate}[label=\arabic*)]
		\item the unary bracket $\frakl_1^G=\rmd_\Pi$ has two components of bi-degree $(1,0)$ and $(2,-1)$, namely
		\begin{equation*}
		\frakl_1^{G}(\frakX^{(p,q)}(M))\subset\frakX^{(p+1,q)}(M)\oplus\frakX^{(p+2,q-1)}(M),
		\end{equation*}
		\item the binary bracket $\frakl_2^G$ has two components of bi-degree $(0,-1)$ and $(1,-2)$, namely
		\begin{equation*}
		\frakl_2^G(\frakX^{(p_1,q_1)}(M)\times\frakX^{(p_2,q_2)}(M))\subset\frakX^{(\sum_i p_i,\sum_iq_i-1)}(M)\oplus\frakX^{(\sum_ip_i+1,\sum_iq_i-2)}(M),
		\end{equation*}
		\item the ternary bracket $\frakl_3^G$ has bi-degree $(-1,-2)$, namely
		\begin{equation*}
		\frakl_3^G(\frakX^{(p_1,q_1)}(M)\times\frakX^{(p_2,q_2)}(M)\times\frakX^{(p_3,q_3)}(M))\subset\frakX^{(\sum_ip_i-1,\sum_iq_i-2)}(M).
		\end{equation*}
	\end{enumerate}
\end{lemma}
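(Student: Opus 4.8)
The plan is to exploit that, by Remark~\ref{rem:Koszul_algebra:dgLA}(2), each multibracket $\frakl_k^G$ is a graded derivation of $\frakX^\bullet(M)$ in every one of its arguments, i.e.\ a multiderivation of the bigraded commutative algebra $\frakX^\bullet(M)=\bigoplus_{p,q}\frakX^{(p,q)}(M)$. Since the wedge product is additive for the $\bbN\times\bbN$ grading~\eqref{bigrading}, the space of such multiderivations is itself $\bbN\times\bbN$-graded: any multiderivation $D$ splits as $D=\sum_{(r,s)}D_{(r,s)}$, where $D_{(r,s)}$ raises bidegree by $(r,s)$ and is again a multiderivation (projecting the output onto a fixed bidegree commutes with the Leibniz rule). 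Moreover a multiderivation is determined by its values on tuples of algebra generators, and $\frakX^\bullet(M)$ is generated over $\bbR$ by $\frakX^{(0,0)}(M)=C^\infty(M)$ together with $\frakX^{(1,0)}(M)=\Gamma(T\calF)$ and $\frakX^{(0,1)}(M)=\Gamma(G)$. Hence it suffices to evaluate each $\frakl_k^G$ on generators and record which bidegree shifts $(r,s)$ occur; any shift not appearing corresponds to a component $D_{(r,s)}$ that vanishes on all generator tuples, hence vanishes identically.

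First I would treat $\frakl_1^G=\rmd_\Pi$. On a function $f$ one has $\rmd_\Pi f=\Pi^\sharp(\rmd f)\in\Gamma(T\calF)=\frakX^{(1,0)}(M)$, a shift of $(1,0)$. On $X\in\Gamma(T\calF)$, Lemma~\ref{lem:good_multivector_fields} gives $\rmd_\Pi X\in\frakX^2(\calF)=\frakX^{(2,0)}(M)$, again a shift of $(1,0)$. Finally, for $X\in\Gamma(G)$ the computation in the proof of Lemma~\ref{lem:good_multivector_fields} shows that the $\Gamma(\wedge^2 G)$-component of $\rmd_\Pi X$ vanishes (pairing $\rmd_\Pi X$ with $\alpha,\beta\in\Gamma(T^\circ\calF)=\Gamma(G^\ast)$ detects precisely that component), so $\rmd_\Pi X\in\frakX^{(1,1)}(M)\oplus\frakX^{(2,0)}(M)$, contributing the shifts $(1,0)$ and $(2,-1)$. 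This establishes item 1).

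Next I would treat $\frakl_2^G=(-)^{|\cdot|}[-,-]_\gamma$ using the explicit formula~\eqref{eq:lem:Koszul_algebra:almost_Lie_algbd_structure} together with the anchor $\rho_\gamma=\pr_G$. On two functions the bracket vanishes; on a function and a vector field $X$ it equals $\pm(\pr_G X)(f)\in\frakX^{(0,0)}(M)$, which is zero for $X\in\Gamma(T\calF)$ and is a shift of $(0,-1)$ for $X\in\Gamma(G)$. For a pair of vector fields: if $X,Y\in\Gamma(T\calF)$ then $\pr_G X=\pr_G Y=0$ and $[X,Y]_\gamma=0$; if $X\in\Gamma(T\calF)$, $Y\in\Gamma(G)$, only the $\Pi^\sharp$-term survives and lands in $\im\Pi^\sharp=T\calF=\frakX^{(1,0)}(M)$, a shift of $(0,-1)$. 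The decisive case is $X,Y\in\Gamma(G)$, where $[X,Y]_\gamma=[X,Y]-\Pi^\sharp(\calL_X\iota_Y\gamma-\calL_Y\iota_X\gamma)$: splitting the ordinary Lie bracket as $[X,Y]=\pr_{T\calF}[X,Y]+\pr_G[X,Y]$ gives a $\Gamma(G)$-part $\pr_G[X,Y]\in\frakX^{(0,1)}(M)$ (shift $(0,-1)$) and a $\Gamma(T\calF)$-part $\pr_{T\calF}[X,Y]-\Pi^\sharp(\cdots)\in\frakX^{(1,0)}(M)$ (shift $(1,-2)$). Collecting these shifts yields exactly $(0,-1)$ and $(1,-2)$, proving item 2). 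I expect this bookkeeping for $\Gamma(G)\times\Gamma(G)$ --- separating the two parts of $[X,Y]$ and absorbing the $\Pi^\sharp$-correction into the $T\calF$-part --- to be the main obstacle, since it is the only place where both bidegree shifts of $\frakl_2^G$ occur simultaneously and where the signs must be tracked.

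Finally, for $\frakl_3^G$ I would use that, being the tensorial contraction~\eqref{eq:prop:Koszul_algebra:ternary_bracket} with $\Upsilon^G_{TM}$, it is $C^\infty(M)$-multilinear and therefore vanishes as soon as one argument is a function; thus it is determined by its values on triples of vector fields, where $\frakl_3^G(X,Y,Z)=\pm\Upsilon^G_{TM}(X,Y,Z)\in C^\infty(M)$. By Remark~\ref{rem:Courant_tensor_twisted_TM} we have $\Upsilon^G_{TM}\in\Gamma(T^\ast\calF\otimes\wedge^2 G^\ast)$, so $\Upsilon^G_{TM}(X,Y,Z)$ is nonzero only when exactly one of $X,Y,Z$ lies in $\Gamma(T\calF)$ and the other two in $\Gamma(G)$; the input bidegree is then $(1,2)$ and the output bidegree $(0,0)$, a shift of $(-1,-2)$. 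This yields item 3) and, together with the preceding steps, completes the proof.
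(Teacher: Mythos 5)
Your proposal is correct and follows essentially the same route as the paper: the paper likewise records the action of $\frakl_1^G,\frakl_2^G,\frakl_3^G$ on the generators $C^\infty(M)$, $\Gamma(T\calF)$, $\Gamma(G)$ and then invokes the multiderivation property (Remark~\ref{rem:Koszul_algebra:dgLA}(2)) together with the compatibility $\frakX^{(h,k)}(M)\cdot\frakX^{(p,q)}(M)\subset\frakX^{(h+p,k+q)}(M)$ to conclude. Your write-up merely makes explicit the formal steps (bihomogeneous decomposition of multiderivations and determination on generators) that the paper leaves as ``follows immediately.''
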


\begin{proof}
	Equations~\eqref{eq:prop:Koszul_algebra:unary_bracket}--\eqref{eq:prop:Koszul_algebra:ternary_bracket} (see also Equation~\eqref{eq:lem:Koszul_algebra:almost_Lie_algbd_structure} defining the almost Lie bracket $[-,-]_\gamma$ on $TM$) impose the following constraints on the action of $\frakl^G_1,\ \frakl^G_2,\ \frakl^G_3$ on the generators of the graded algebra $\frakX^\bullet(M)$:
	\begin{equation}
	\label{eq:proof:lem:multibrackets:NxN_grading}
	\begin{gathered}
	\frakl_1^G(C^\infty(M))\subset\Gamma(T\calF),\quad\frakl_1^G(\Gamma(T\calF))\subset\Gamma(\wedge^2T\calF),\quad\frakl_1^G(\Gamma(G))\subset\Gamma(\wedge^2T\calF)\oplus\Gamma(G\otimes T\calF),\\
	\frakl_2^G(C^\infty(M),\Gamma(T\calF))=\frakl_2^G(\Gamma(T\calF),\Gamma(T\calF))=0,\quad\frakl_2^G(\Gamma(T\calF),\Gamma(G))\subset\Gamma(T\calF),\\
	\frakl_3^G(\Gamma(T\calF),\Gamma(T\calF),\Gamma(T\calF))=\frakl_3^G(\Gamma(T\calF),\Gamma(T\calF),\Gamma(G))=\frakl_3^G(\Gamma(G),\Gamma(G),\Gamma(G))=0.
	\end{gathered}
	\end{equation}
	Since the multibrackets are multiderivations of the graded algebra $\frakX^\bullet(M)$ (see Remark~\ref{rem:Koszul_algebra:dgLA} (2)) and the $\bbN\times\bbN$ grading is compatible with the algebra structure, i.e.~$\frakX^{(h,k)}(M)\cdot\frakX^{(p,q)}(M)\subset\frakX^{(h+p,k+q)}(M)$,  the statement follows immediately from the relations~\eqref{eq:proof:lem:multibrackets:NxN_grading}.
\end{proof}

\subsubsection{{\underline{\smash{Parametrizing
deformations of a symplectic foliation}}}}

{We now single out a suitable $L_\infty[1]$-subalgebra of the $L_\infty[1]$-algebra $(\frakX^\bullet(M)[2],\{\frakl_k^G\})$ constructed in Proposition~\ref{prop:Koszul_algebra}.} Recall that the space of good multivector fields $\frakX^\bullet_\good(M)$ was introduced in Definition \ref{def:good_multi-vector_fields}.

\begin{proposition}
	\label{prop:good_L_infty_algebra}
	The multibrackets $\frakl_k^G$ map $\frakX_\good^\bullet(M)[2]\subset\frakX^\bullet(M)[2]$ to itself.
	So $\frakX_\good^\bullet(M)[2]$ inherits an $L_\infty[1]$-algebra structure, still denoted by $\{\frakl_k^G\}$, which we call the \emph{$L_\infty[1]$-algebra associated with $(\calF,\omega)$ and $G$}.
	Actually, $(\frakX_\good^\bullet(M)[2],\{\frakl_k^G\})$ turns out to be an $LR_\infty[1]$-algebra over the graded algebra $\frakX^\bullet(\calF)$.
\end{proposition}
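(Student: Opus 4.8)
The plan is to establish the three assertions in order, letting the bidegree bookkeeping of Lemma~\ref{lem:multibrackets:NxN_grading} carry the first two and most of the third. Recall from eq.~\eqref{bigrading} and Remark~\ref{rem:good} that the good multivector fields are precisely those whose second (i.e.~$G$-)degree $q$ in the bigrading is at most $1$. To prove that the $\frakl_k^G$ preserve $\frakX^\bullet_\good(M)$, I would simply count: by Lemma~\ref{lem:multibrackets:NxN_grading} each component of $\frakl_1^G$, $\frakl_2^G$, $\frakl_3^G$ drops the total second degree of its inputs by $0$ or $1$, by $1$ or $2$, and by $2$ respectively. Since $k$ inputs of second degree $\le 1$ have total second degree $\le k$, in each case the output has second degree $\le 1$ (a formally negative second degree denoting the zero multivector field). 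This is the first claim. The second claim is then immediate: a graded subspace closed under all brackets of an $L_\infty[1]$-algebra is an $L_\infty[1]$-subalgebra, the higher Jacobi identities being inherited from the ambient algebra of Proposition~\ref{prop:Koszul_algebra}.

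For the $LR_\infty[1]$-structure I would take $\calA=\frakX^\bullet(\calF)$, with its natural grading, acting on $\calQ=\frakX^\bullet_\good(M)[2]$ by the wedge product. This action is well defined because wedging with an element of $\wedge^\bullet T\calF$ (second degree $0$) does not raise the second degree, and by Lemma~\ref{lem:good_multivector_fields} it exhibits $\calQ$ as the $\calA$-module generated by $C^\infty(M)\oplus\frakX(M)$. I would then define the anchor maps as the brackets whose last slot is fed by $\calA$, setting $\rho_k(q_1,\dots,q_{k-1}\mid a)$ equal to $\frakl_k^G(q_1,\dots,q_{k-1},a)$ up to the sign prescribed by the $L_\infty[1]$ conventions. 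The essential point, once more from Lemma~\ref{lem:multibrackets:NxN_grading}, is that an input of second degree $0$ in the last slot forces the output to have second degree $\le 0$, hence to lie in $\calA$; thus the $\rho_k$ really do take values in $\calA$, and a bookkeeping of the shift $[2]$ shows they have degree $1$.

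It then remains to verify the two axioms of Definition~\ref{def:LR_infinity_algebra}, and both reduce to facts already in hand. The Leibniz rule~\eqref{eq:def:LR_infinity_algebra:Leibniz} is exactly the assertion that each $\frakl_k^G$ is a derivation in its last entry, which holds because $(\frakX^\bullet(M)[2],\{\frakl_k^G\})$ is a $P_\infty[1]$-algebra (Remark~\ref{rem:Koszul_algebra:dgLA}(2)); expanding $\frakl_k^G(q_1,\dots,q_{k-1},a\wedge q_k)$ by this derivation property splits it into the anchor term $\rho_k(q_1,\dots,q_{k-1}\mid a)\,q_k$ and the module term $a\,\frakl_k^G(q_1,\dots,q_k)$. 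The $\calA$-linearity of each $\rho_k$ in its first $k-1$ arguments, the consistency of the combined brackets $\{-,\dots,-\}_i$, and the extended Jacobi identity~\eqref{eq:def:LR_infinity_algebra:Jacobi} all hinge on a single vanishing statement: $\frakl_k^G$ is zero as soon as two of its arguments lie in $\calA$ (equivalently, have second degree $0$), which I would read off directly from the relations~\eqref{eq:proof:lem:multibrackets:NxN_grading} appearing in the proof of Lemma~\ref{lem:multibrackets:NxN_grading}. Granting this, the combined bracket extending $\mu_i$ and $\rho_i$ is nothing but the restriction of $\frakl_i^G$ to $(\calQ\oplus\calA)^{\times i}$, so~\eqref{eq:def:LR_infinity_algebra:Jacobi} is literally the higher Jacobi identity of $\{\frakl_k^G\}$ evaluated on elements of $\calQ\oplus\calA\subset\frakX^\bullet(M)$. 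I expect the only genuine obstacle to be the Koszul-sign and degree-shift bookkeeping needed to match the derivation expansion and the extended Jacobi identity precisely against the normalizations in Definition~\ref{def:LR_infinity_algebra}.
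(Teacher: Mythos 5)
Your proof is correct and follows essentially the same route as the paper's: closure of $\frakX_\good^\bullet(M)$ and the fact that the anchors $\rho_k(\cdot\,|\,\cdot)=\frakl_k^G(\cdot,\ldots,\cdot)$ land in $\frakX^\bullet(\calF)$ are read off from the bidegree behaviour in Lemma~\ref{lem:multibrackets:NxN_grading}, while the Leibniz rule and extended Jacobi identity are inherited from the multiderivation ($G_\infty[1]$) property and the Jacobi identity of the ambient algebra $(\frakX^\bullet(M)[2],\{\frakl_k^G\})$. The only difference is one of detail: you spell out the degree counting and isolate the key vanishing statement (that $\frakl_k^G$ dies when two arguments lie in $\frakX^\bullet(\calF)$), steps the paper compresses into ``one can check.''
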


\begin{proof}
As proved in Lemma~\ref{lem:multibrackets:NxN_grading}, the multibrackets $\frakl_k^G$ behave as follows w.r.t. the $\bbN\times\bbN$ bi-grading of $\frakX^\bullet(M)$:
\begin{enumerate}[label=\arabic*)]
	\item the unary bracket $\frakl_1^G=\rmd_\Pi$ has two components of bi-degree $(1,0)$ and $(2,-1)$,
	\item the binary bracket $\frakl_2^G$ has two components of bi-degree $(0,-1)$ and $(1,-2)$,
	\item the ternary bracket $\frakl_3^G$ has bi-degree $(-1,-2)$.
\end{enumerate}
Since $\frakX^\bullet(\calF)=\bigoplus_{p\geq 0}\frakX^{(p,0)}(M)$ and $\frakX_\good^\bullet(M)=\bigoplus_{\genfrac{}{}{0pt}{}{p\geq 0}{q=0,1}} \frakX^{(p,q)}(M)$, one can check that for each $k\in\bbN$:
\begin{itemize}
	\item the graded $\frakX^\bullet(\calF)$-submodule $\frakX_\good^\bullet(M)$ of $\frakX^\bullet(M)$ is closed under $\frakl^G_k$,
	\item the multibracket $\frakl^G_k:\frakX^\bullet(M)^{\times k}\to\frakX^\bullet(M)$ induces a map
	\begin{align*}
	\rho_k:\frakX^\bullet_\good(M)^{\times(k-1)}\times\frakX^\bullet(\calF)\longrightarrow\frakX^\bullet(\calF),\ (u_1,\ldots,u_{k-1},\alpha)\longmapsto\frakl^G_k(u_1,\ldots,u_{k-1},\alpha),
	\end{align*}
	which is graded $\frakX^\bullet(\calF)$-linear in the first $k-1$ entries and a graded algebra derivation in the last entry.
\end{itemize}
Further, the Leibniz rule~\eqref{eq:def:LR_infinity_algebra:Leibniz} and Jacobi identity~\eqref{eq:def:LR_infinity_algebra:Jacobi} for an $LR_\infty[1]$-algebra hold because of the Leibniz rule and the Jacobi identity for the $G_\infty[1]$-algebra structure $\{\frakl_k^G\}$ on $\frakX^\bullet(M)[2]$.
This concludes the proof.
\end{proof}

{Restricting to good multivector fields, we obtain an $L_\infty[1]$-algebra that governs the deformation problem of the symplectic foliation $(\calF,\omega)$. The proof relies on results shown in Section~\ref{sec:regular_bivector_fields}, and we use the same notation as established there.} In particular, recall that the neighborhood $I_{\gamma}$   and the Dirac exponential map $\exp_G$ were introduced in \S\ref{subsec:nearbybivf}.

\begin{theorem}{\bf (Main theorem)}
	\label{theor:deformation_theory regular_Poisson}
	Let $(\calF,\omega)$ be a rank $2k$ symplectic foliation on a manifold $M$, with corresponding rank $2k$ regular Poisson structure $\Pi$.
	Fix a distribution $G$ complementary to $T\calF$ on $M$.
	Then the relation 
	\begin{equation*}
	{-}\widetilde\omega^{-1}=\exp_G(Z)
	\end{equation*}
	establishes a canonical one-to-one correspondence between
	\begin{enumerate}
		\item MC elements $Z$ of the $L_\infty[1]$-algebra $(\frakX_\good^\bullet(M)[2],\{\frakl^G_k\})$ 
		such that $Z\in\calI_{\gamma}$,
		\item rank $2k$ symplectic foliations $(\widetilde\calF,\widetilde\omega)$ on $M$ 
		such that $T\widetilde\calF\pitchfork G$. 
	\end{enumerate}
\end{theorem}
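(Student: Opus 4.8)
The plan is to assemble the desired correspondence from three results already established, using the fact that $\exp_G$ simultaneously linearizes the constant rank condition (Theorem~\ref{theor:nearby_regular_bivectors}) and turns Poisson integrability into the Maurer--Cartan equation (via Proposition~\ref{prop:Koszul_algebra:MC_elements}). The only real content is to verify that these two characterizations are carried by one and the same map. First I would use Proposition~\ref{prop:regular_symplectic_foliation} to rephrase item (2): via $\widetilde\Pi=-\widetilde\omega^{-1}$, the rank $2k$ symplectic foliations $(\widetilde\calF,\widetilde\omega)$ with $T\widetilde\calF\pitchfork G$ are in rank-preserving bijection with the rank $2k$ regular Poisson structures $\widetilde\Pi$ satisfying $\im\widetilde\Pi^\sharp\pitchfork G$, since $T\widetilde\calF=\im\widetilde\Pi^\sharp$. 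It then suffices to show that $Z\mapsto\exp_G(Z)$ restricts to a bijection between good MC elements $Z\in\calI_\gamma$ and such $\widetilde\Pi$.

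The core step is to identify, for $Z\in\Gamma(\calI_\gamma)$, the MC condition with the Poisson condition on $\exp_G(Z)$. Since $\frakX_\good^\bullet(M)[2]$ is an $L_\infty[1]$-subalgebra carrying the same multibrackets (Proposition~\ref{prop:good_L_infty_algebra}), a good bivector field $Z\in\frakX_\good^2(M)$ is an MC element of the subalgebra exactly when it is an MC element of the full $L_\infty[1]$-algebra $(\frakX^\bullet(M)[2],\{\frakl^G_k\})$. By Proposition~\ref{prop:Koszul_algebra:MC_elements}, the latter holds iff $L:=\calR_\Pi\calR_\gamma\gr(Z)$ is a Dirac structure (automatically transverse to $G\oplus T^\ast\calF$). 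Next I would observe that the hypothesis $Z\in\calI_\gamma$ is precisely what forces $L$ to be transverse to $TM$: indeed $L\pitchfork TM$ iff $\gr(Z)\pitchfork\calR_{-\gamma}\calR_{-\Pi}TM=\gr(-\gamma)$, which by Lemma~\ref{lem:existence_gauge_trasform} is equivalent to $Z\in\Gamma(\calI_\gamma)$. Hence, for $Z\in\calI_\gamma$, the Dirac structure $L$ is the graph of a bivector field, and by eq.~\eqref{eq:grexp} that bivector field is exactly $\exp_G(Z)$. Since a graph $\gr(W)$ is a Dirac structure iff $W$ is Poisson, I conclude: for $Z\in\Gamma(\calI_\gamma)\cap\frakX_\good^2(M)$, the element $Z$ is a good MC element iff $\exp_G(Z)$ is a Poisson structure.

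Finally I would combine this with Theorem~\ref{theor:nearby_regular_bivectors}, which already furnishes a bijection $Z\mapsto\exp_G(Z)$ from $\Gamma(\calI_\gamma)\cap\frakX_\good^2(M)$ onto the rank $2k$ regular bivector fields $W$ with $\im W^\sharp\pitchfork G$. Restricting both sides to where the equivalent conditions ``$Z$ is MC'' and ``$\exp_G(Z)$ is Poisson'' hold, this bijection cuts down to one between good MC elements $Z\in\calI_\gamma$ and rank $2k$ regular Poisson structures $\widetilde\Pi=\exp_G(Z)$ with $\im\widetilde\Pi^\sharp\pitchfork G$; composing with the first step yields the stated correspondence. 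I expect the only genuinely delicate point to be the compatibility in this last step: one must ensure that the regularity/transversality parametrization of Theorem~\ref{theor:nearby_regular_bivectors} and the integrability parametrization of Proposition~\ref{prop:Koszul_algebra:MC_elements} are realized by the \emph{same} map $\exp_G$, so that the pair of conditions (good, MC) on the source matches the pair (regular of rank $2k$, Poisson) on the target with neither over- nor under-counting. Everything else reduces to the transversality bookkeeping already carried out in Lemma~\ref{lem:existence_gauge_trasform} and Lemma~\ref{lem:nearby_regular_2-vectors:bis}, together with the observation that the two transversality conditions $\gr(W)\pitchfork G\oplus T^\ast\calF$ and $\im W^\sharp\pitchfork G$ agree for regular rank $2k$ bivector fields.
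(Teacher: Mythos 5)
Your proposal is correct and follows essentially the same route as the paper's proof: both assemble the bijection by combining Theorem~\ref{theor:nearby_regular_bivectors} (regularity), Proposition~\ref{prop:Koszul_algebra:MC_elements} together with $\gr(\exp_G(Z))=\calR_\Pi\calR_\gamma\gr(Z)$ (integrability versus the MC equation), Lemma~\ref{lem:nearby_regular_2-vectors:bis} to match the two transversality conditions, and Proposition~\ref{prop:regular_symplectic_foliation} to pass between regular Poisson structures and symplectic foliations. The only differences are cosmetic, namely the order of the steps and your slightly more explicit check (via Lemma~\ref{lem:existence_gauge_trasform}) that $Z\in\calI_\gamma$ is exactly what makes the Dirac structure $\calR_\Pi\calR_\gamma\gr(Z)$ transverse to $TM$, a point the paper absorbs into its notation.
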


\begin{proof}
Theorem~\ref{theor:nearby_regular_bivectors} establishes a bijection
\begin{equation}
	\label{eq:proof:theor:deformation_theory regular_Poisson:bijection2}
	\begin{aligned}
	\Gamma(\calI_{\gamma})\cap\frakX^2_\good(M)\overset{\sim}{\longrightarrow}\{W\in\frakX^2_{\text{reg-$2k$}}(M)\mid\im W^\sharp\pitchfork G\},\ \ Z\longmapsto\exp_G(Z).
	\end{aligned}
\end{equation}
Using Proposition~\ref{prop:Koszul_algebra:MC_elements} and the fact that $\calR_\Pi\calR_\gamma\gr(Z)=\gr(\exp_G(Z))$ (see eq.~\eqref{eq:grexp}), we also have a bijection
\begin{equation}
	\label{eq:proof:theor:deformation_theory regular_Poisson:bijection1}
	\begin{aligned}
	\operatorname{MC}(\frakX^\bullet(M)[2],\{\frakl_k^G\})\overset{\sim}{\longrightarrow}\{L\subset\bbT M\ \text{Dirac structure}\mid L\pitchfork G\oplus T^\ast\calF\},\ \ Z\longmapsto\gr(\exp_G(Z)).
	\end{aligned}
	\end{equation}
For any $W\in\frakX^2(M)$, we have that $\gr(W)\subset\bbT M$ is Dirac iff $W$ is Poisson, and if $W$ is regular then $\im W^{\sharp}\pitchfork G$ iff $\text{Gr}(W)\pitchfork G\oplus T^\ast\calF$ by Lemma~\ref{lem:nearby_regular_2-vectors:bis}. Hence, the equations~\eqref{eq:proof:theor:deformation_theory regular_Poisson:bijection2} and~\eqref{eq:proof:theor:deformation_theory regular_Poisson:bijection1} lead to the following bijection: 
	\begin{equation*}
	\begin{aligned}
	\Gamma(\calI_{\gamma})\cap\operatorname{MC}(\frakX^\bullet_\good(M)[2],\{\frakl_k^G\})\overset{\sim}{\longrightarrow}\{W\in\RegPoiss^{2k}(M)\mid\im W^\sharp\pitchfork G\},\ \ Z\longmapsto\exp_G(Z).
	\end{aligned}
	\end{equation*}	
Finally, the canonical one-to-one correspondence between rank $2k$ symplectic foliations and rank $2k$ regular Poisson structures on $M$ (as in Proposition~\ref{prop:regular_symplectic_foliation}) completes the proof.	
\end{proof}

\begin{remark}[The corank one case]\label{rem:corank-one}
In case the regular Poisson structure $\Pi$ is of corank one, some simplifications occur. 
On one hand, every complement $G$ to $T\mathcal{F}$ is automatically involutive and $\mathfrak{X}^{\bullet}_\good(M)$ coincides with $\mathfrak{X}^{\bullet}(M)$. Consequently, the deformation problem of the regular Poisson structure $\Pi$ is governed by the dgL[1]a $(\frakX^\bullet(M)[2],\{\frakl^G_k\})$, because of Theorem~\ref{theor:deformation_theory regular_Poisson}.

On the other hand, assuming $M$ is compact, Poisson structures $\mathcal{C}^{0}$-close to $\Pi$ are automatically of corank one, hence it is equivalent to deform $\Pi$ as a Poisson structure (without the rank condition). Consequently, the deformation problem of the regular Poisson structure $\Pi$ is also governed by the usual dgLa  $(\mathfrak{X}^{\bullet}(M)[1],d_{\Pi},[-,-]_{\sf SN})$. The corresponding dgL[1]a is indeed isomorphic with $(\frakX^\bullet(M)[2],\{\frakl^G_k\})$, because of the following. By Lemma~\ref{ex:deformation_theory_Poisson_structures}, it is (strictly) isomorphic with the dgL[1]a $(\Omega^{\bullet}(\gr\Pi)[2],\mu_1^{TM},\mu_2^{TM})$ defined in terms of the splitting $\bbT M=\gr\Pi\oplus TM$. By a result proved in~\cite{GMS}, the latter is $L_{\infty}[1]$-isomorphic with the dgL[1]a $(\Omega^{\bullet}(\gr\Pi)[2],\mu_1^{G\oplus T^{*}\mathcal{F}},\mu_2^{G\oplus T^{*}\mathcal{F}})$, which is defined in terms of the splitting $\bbT M=\gr\Pi\oplus (G\oplus T^{*}\mathcal{F})$. Finally, by Proposition~\ref{prop:Koszul_algebra:isomorphism}, the latter is (strictly) isomorphic with $(\frakX^\bullet(M)[2],\{\frakl^G_k\})$.
\end{remark}

\begin{remark}
{
Given any
$L_\infty[1]$-algebra, there is a natural equivalence relation   on the set of MC elements,  induced by the  elements of degree $-1$.
For the $L_\infty[1]$-algebra
$(\frakX_\good^\bullet(M)[2],\{\frakl^G_k\})$,  
the degree $-1$ elements are the vector fields on $M$. Under the bijection stated in Theorem \ref{theor:deformation_theory regular_Poisson}, the induced equivalence relation on rank $2k$ symplectic foliations transverse to $G$ is given by isotopies. More precisely, and assuming that $M$ is compact: $(\widetilde\calF,\widetilde\omega)$ and $(\calF',\omega')$ are equivalent if{f} there is an isotopy $(\psi_t)_{t\in [0,1]}$ of $M$ mapping the former to the latter, and so that $(\psi_t)_*\widetilde\calF\pitchfork G$ for all $t$. We will show this in \cite{DefSFalg}.}

{
Further, since the 
differential $\frakl_1^G$ in 
the $L_\infty[1]$-algebra
$(\frakX_\good^\bullet(M)[2],\{\frakl^G_k\})$ is the Poisson differential $d_{\Pi}$, 
the degree $-1$ cocycles  are the Poisson vector fields on $(M,\Pi)$, in agreement with the general interpretation of such elements as infinitesimal symmetries. The degree $-1$ coboundaries  are the Hamiltonian vector fields.
}
\end{remark}

\subsection{Deforming by leaf-wise differential forms}
The leaf-wise multivector fields 
$\frakX^\bullet(\calF)$ are contained in the good multivector fields  $\frakX_\good^\bullet(M)$. 
Here we point out that it is easy to describe explicitly the deformation of $(\calF,\omega)$ induced (via Theorem \ref{theor:deformation_theory regular_Poisson})
by a small bivector field in $\frakX^{2}(\calF)$: it is just the gauge transform by the corresponding leaf-wise 2-form.

\begin{lemma}
	\label{lem:leafwise_PL_complex}
	The 
	{cochain complex} 
	$(\frakX^\bullet(\calF)[2],\rmd_\Pi)$ is canonically embedded in  the $L_\infty[1]$-algebra $(\frakX_\good^\bullet(M)[2],\{\frakl_k^G\})$.
\end{lemma}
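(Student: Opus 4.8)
The plan is to reduce everything to degree bookkeeping in the $\bbN\times\bbN$ bigrading of $\frakX^\bullet(M)$ introduced in~\eqref{bigrading}. Under this bigrading the leafwise multivector fields are exactly the ``horizontal'' part, $\frakX^\bullet(\calF)=\bigoplus_{p\geq 0}\frakX^{(p,0)}(M)$, i.e.~the elements whose second grading vanishes ($q=0$). Since an inclusion of an $L_\infty[1]$-subalgebra is automatically a strict $L_\infty[1]$-morphism, the whole lemma amounts to showing that $\frakX^\bullet(\calF)[2]$ is closed under all the $\frakl_k^G$ and that only $\frakl_1^G=\rmd_\Pi$ survives on it. Lemma~\ref{lem:multibrackets:NxN_grading} does essentially all the work, so I would invoke it directly.

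First I would check closedness and the vanishing of the higher brackets by inspecting how each $\frakl_k^G$ shifts the second grading $q$ on inputs with $q=0$. By Lemma~\ref{lem:multibrackets:NxN_grading}, the binary bracket $\frakl_2^G$ has $q$-shift components $-1$ and $-2$, so on leafwise multivector fields it would land in second grading $-1$ or $-2$, both empty; hence $\frakl_2^G$ vanishes identically on $\frakX^\bullet(\calF)$. Likewise $\frakl_3^G$ has $q$-shift $-2$, landing in second grading $-2$, and so also vanishes there. The unary bracket $\frakl_1^G=\rmd_\Pi$ has two components, of bidegree $(1,0)$ and $(2,-1)$; applied to an element of $\frakX^{(p,0)}(M)$, its $(2,-1)$-component would land in second grading $-1$ and thus drops out, while the $(1,0)$-component sends $\frakX^{(p,0)}(M)$ into $\frakX^{(p+1,0)}(M)\subset\frakX^\bullet(\calF)$.

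I would then conclude that $\frakX^\bullet(\calF)[2]$ is an $L_\infty[1]$-subalgebra of $(\frakX_\good^\bullet(M)[2],\{\frakl_k^G\})$ on which every bracket of arity at least $2$ vanishes; hence the inherited $L_\infty[1]$-structure reduces to the single differential $\frakl_1^G=\rmd_\Pi$, which is precisely the cochain complex $(\frakX^\bullet(\calF)[2],\rmd_\Pi)$. To justify the word \emph{canonical}, I would finally remark that the subspace $\frakX^\bullet(\calF)$ is defined intrinsically from $\calF$ and that the restricted differential $\rmd_\Pi=[\Pi,-]_{\sf SN}$ is the $G$-independent bracket $\frakl_1^G$ of Proposition~\ref{prop:Koszul_algebra}, so neither the subcomplex nor the inclusion depends on the auxiliary choice of complement $G$.

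There is no genuinely hard step here: the entire content is the vertical-degree counting already packaged in Lemma~\ref{lem:multibrackets:NxN_grading}. The only point meriting a moment's attention is confirming that the $(2,-1)$-component of $\rmd_\Pi$ really does drop out on leafwise multivector fields — that is, that the Lichnerowicz differential does not push an element of $\frakX^\bullet(\calF)$ out of $\frakX^\bullet(\calF)$ — but this is immediate since such a component would require a nonexistent second grading $-1$ (equivalently, it restates $\rmd_\Pi\frakX^\bullet(\calF)\subset\frakX^{\bullet+1}(\calF)$, already used in the proof of Lemma~\ref{lem:good_multivector_fields}).
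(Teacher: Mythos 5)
Your proof is correct and follows exactly the paper's argument: both use the $\bbN\times\bbN$ bigrading together with Lemma~\ref{lem:multibrackets:NxN_grading} to see that $\frakl_1^G$ preserves $\frakX^\bullet(\calF)=\bigoplus_p\frakX^{(p,0)}(M)$ while $\frakl_2^G$ and $\frakl_3^G$ vanish on it, so the inclusion is a strict $L_\infty[1]$-morphism. Your version merely spells out the vertical-degree bookkeeping (and the canonicity remark) a bit more explicitly than the paper does.
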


\begin{proof}
	Since $\frakX^\bullet(\calF)=\bigoplus_{p\in\bbN}\frakX^{(p,0)}(M)$, Lemma~\ref{lem:multibrackets:NxN_grading} implies that $\frakl_1^G=\rmd_\Pi$ preserves $\frakX^\bullet(\calF)$, while $\frakl_2^G$ and $\frakl_3^G$ vanish on $\frakX^\bullet(\calF)$.
	This proves that the natural embedding $\frakX^\bullet(\calF)\lhook\joinrel\rightarrow\frakX_\good^\bullet(M)$ gives a strict morphism of $L_\infty[1]$-algebras $(\frakX^\bullet(\calF)[2],\rmd_\Pi)\longrightarrow(\frakX_\good^\bullet(M)[2],\{\frakl_k^G\})$.
\end{proof}

{ 
Note that the non-degenerate Poisson structure $\Pi$ on the leaves induces an isomorphism of Lie algebroids $\Pi^{\sharp}$ 
between the cotangent Lie algebroid $T^*\calF$ of the   Poisson structure  $\Pi$ and the tangent Lie algebroid $T\calF$.
By pullback, this gives an isomorphism of complexes $\frakX^{\bullet}(\calF)\cong \Omega^{\bullet}(\calF)$.
In particular, an element $Z\in \frakX^2(\calF)$ is $d_{\Pi}$-closed iff the corresponding element $\beta_Z\in\Omega^2(\calF)$ is a closed leafwise form.
\begin{lemma}\label{lem:gaugeB}
The deformation of  $\Pi$  associated to a MC element  $Z\in \frakX^2(\calF)$ lying in $\calI_{\gamma}$, as in Theorem  \ref{theor:deformation_theory regular_Poisson}, is  the gauge transformation of $\Pi$ by  $\tilde{\beta}_{Z}$, where $\tilde{\beta}_{Z}$ is  any extension of  $\beta_{Z}$  to a 2-form on $M$. In particular, it is a regular Poisson structure with same underlying foliation   as $\Pi$.
\end{lemma}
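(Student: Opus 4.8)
The plan is to prove the asserted equality of bivector fields by showing that their graphs in $\bbT M$ coincide. Recall from~\eqref{eq:grexp} that $\gr(\exp_G(Z))=\calR_\Pi\calR_\gamma\gr(Z)$, whereas the gauge transform $\Pi^{\tilde\beta_Z}$ of $\Pi$ by $\tilde\beta_Z$ is by definition the bivector field with $\gr(\Pi^{\tilde\beta_Z})=\calR_{\tilde\beta_Z}\gr(\Pi)$. Hence it suffices to establish the equality of almost Dirac subbundles
\[
\calR_\Pi\calR_\gamma\gr(Z)=\calR_{\tilde\beta_Z}\gr(\Pi);
\]
since the left-hand side is the graph of the Poisson bivector $\exp_G(Z)$, this will at once show that the right-hand side is a graph and identify it with $\exp_G(Z)$. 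I would compute both sides explicitly on a generic element.

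For the left-hand side, I use that $Z\in\frakX^2(\calF)=\Gamma(\wedge^2T\calF)$, so $\im Z^\sharp\subset T\calF$ and $Z^\sharp$ annihilates $T^\circ\calF$. Applying $\calR_\gamma$ and then $\calR_\Pi$ to $Z^\sharp\alpha+\alpha\in\gr(Z)$, the covector part becomes $\alpha+\gamma^\flat Z^\sharp\alpha$, while the vector part is $Z^\sharp\alpha+\Pi^\sharp\alpha+\Pi^\sharp\gamma^\flat Z^\sharp\alpha$. Invoking $\Pi^\sharp\gamma^\flat=-\pr_{T\calF}$ together with $Z^\sharp\alpha\in T\calF$, the first and third terms cancel, leaving $\Pi^\sharp\alpha$. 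Thus
\[
\calR_\Pi\calR_\gamma\gr(Z)=\{\Pi^\sharp\alpha+\alpha+\gamma^\flat Z^\sharp\alpha:\alpha\in T^\ast M\}.
\]
The crux is now the identity $\gamma^\flat Z^\sharp=\beta_Z^\flat\Pi^\sharp$ as maps $T^\ast M\to T^\ast\calF$. This is precisely where the definition of $\beta_Z$ enters: unwinding that $\beta_Z$ is the image of $Z$ under the isomorphism $\frakX^\bullet(\calF)\cong\Omega^\bullet(\calF)$ induced by the Lie algebroid isomorphism $\Pi^\sharp$ gives $Z^\sharp=-\Pi^\sharp\beta_Z^\flat\Pi^\sharp$; combining this with $\gamma^\flat|_{T\calF}=\omega^\flat$ and $\omega^\flat\Pi^\sharp=-\pr_{T^\ast\calF}$ (equivalently $\id+\gamma^\flat\Pi^\sharp=\pr_{G^\ast}$) yields the claim once the two minus signs cancel. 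I expect this bookkeeping — reconciling the $\Pi^\sharp$-pullback normalization of $\beta_Z$ with the convention $\Pi=-\omega^{-1}$ — to be the only delicate point of the argument.

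Finally I compute $\calR_{\tilde\beta_Z}\gr(\Pi)=\{\Pi^\sharp\alpha+\alpha+\tilde\beta_Z^\flat\Pi^\sharp\alpha:\alpha\in T^\ast M\}$. Because $\tilde\beta_Z$ restricts to $\beta_Z$ on $T\calF\times T\calF$, the discrepancy $E:=(\tilde\beta_Z^\flat-\beta_Z^\flat)\circ\Pi^\sharp$ takes values in $T^\circ\calF=\ker\Pi^\sharp$. Using the identity to rewrite the left-hand side as $\{\Pi^\sharp\alpha+\alpha+\beta_Z^\flat\Pi^\sharp\alpha\}$, I match the two subbundles: the element $\Pi^\sharp\alpha+\alpha+\beta_Z^\flat\Pi^\sharp\alpha+E\alpha$ of $\calR_{\tilde\beta_Z}\gr(\Pi)$ equals the element of $\calR_\Pi\calR_\gamma\gr(Z)$ with parameter $\alpha+E\alpha$, since $\Pi^\sharp(\alpha+E\alpha)=\Pi^\sharp\alpha$ (as $E\alpha\in\ker\Pi^\sharp$) and likewise $\beta_Z^\flat\Pi^\sharp(\alpha+E\alpha)=\beta_Z^\flat\Pi^\sharp\alpha$. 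This reparametrization, which shifts $\alpha$ only by an element of $\ker\Pi^\sharp$, is a bijection of parameters, so the two subbundles coincide; as it works for any $E$ valued in $\ker\Pi^\sharp$, the result is manifestly independent of the chosen extension $\tilde\beta_Z$. The concluding assertions follow immediately: $\calR_{\tilde\beta_Z}$ leaves vector parts untouched, so the leaf distribution of $\Pi^{\tilde\beta_Z}$ is $\pr_{TM}\gr(\Pi)=\im\Pi^\sharp=T\calF$, giving the same underlying foliation as $\Pi$; and the outcome is a regular Poisson structure because $Z\in\calI_\gamma$ is a Maurer--Cartan element (equivalently $\beta_Z$ is leafwise closed), so by Theorem~\ref{theor:deformation_theory regular_Poisson} $\exp_G(Z)$ is one.
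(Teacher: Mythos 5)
Your proof is correct and takes essentially the same approach as the paper: both compare graphs inside $\bbT M$, compute $\gr(\exp_G(Z))=\calR_\Pi\calR_\gamma\gr(Z)=\{\Pi^\sharp\alpha+\alpha+\gamma^\flat Z^\sharp\alpha \mid \alpha\in T^\ast M\}$ using $\Pi^\sharp\circ\gamma^\flat=-\pr_{T\calF}$, and then match this with $\calR_{\tilde\beta_Z}\gr(\Pi)$ via $\gamma^\flat\circ\Pi^\sharp=-\pr_{T^\ast\calF}$. The only (welcome) difference is that you treat an arbitrary extension $\tilde\beta_Z$ directly, via the identity $\gamma^\flat Z^\sharp=\beta_Z^\flat\Pi^\sharp$ and the nilpotent reparametrization $\alpha\mapsto\alpha+E\alpha$ with $E$ valued in $\ker\Pi^\sharp$, thereby proving the extension-independence that the paper only checks for the extension annihilating $G$ and then asserts in its final sentence.
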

\begin{proof}
We have to show that $\exp_G(Z)=\Pi^{\tilde{\beta}_Z}$, and we do so by checking that their graphs are equal. 
The Dirac structure $\gr(\exp_G(Z))= \calR_\Pi\calR_\gamma\gr(Z)$ is given by
\begin{equation}\label{eq:graphgauge}
  \left\{(\Pi^{\sharp}\xi, \xi{+}\gamma^{\flat}  Z^{\sharp}\xi):\xi \in T^*M\right\},
\end{equation}
  using the fact that $\im(Z^{\sharp})\subset T\calF$ and $\Pi^{\sharp}\circ\gamma^{\flat}=-pr_{ T\calF}$.
Next, we note that  $(\beta_{Z})^{\flat}={-}\omega^{\flat}\circ Z^{\sharp}\circ\omega^{\flat}$. Take the extension of $\beta_{Z}$ to the 2-form $\tilde{\beta}_Z$ on $M$ that annihilates $G$ (its flat-map is ${-}\gamma^{\flat}\circ Z^{\sharp}\circ\gamma^{\flat}$).
Then $\gr(\Pi^{\tilde{\beta}_Z})=\calR_{\tilde{\beta}_Z}(\gr(\Pi))$
 is
equal to~\eqref{eq:graphgauge}, as can be seen using $\gamma^{\flat}\circ\Pi^{\sharp}=-pr_{ T^*\calF}$. Any other extension of $\beta_{Z}$ yields the same gauge transformation of $\Pi$.
\end{proof}
}

\section{\textsf{Relation with Deformations of Foliations}}\label{sec:four}

Forgetting the leafwise symplectic structure and keeping only the underlying foliation, one defines a rank-preserving map from the space of symplectic foliations on $M$ to the space of foliations on $M$, denoted by
\begin{equation}
\label{eq:forgetful_functor}
\textsf{q}:\SymplFol^\bullet(M)\longrightarrow\Fol^\bullet(M),\ \ (\widetilde{\calF},\widetilde{\omega})\longmapsto\widetilde{\calF}.
\end{equation}
This section aims at finding an algebraic interpretation of the latter.
Indeed, in Proposition~\ref{prop:strict_morphism:MC_elements}, we will see that it arises from a strict morphism of $L_\infty[1]$-algebras (cf.~Proposition~\ref{prop:strict_L_infty-algebra_morphism}) going from the $L_\infty[1]$-algebra of the symplectic foliation $(\calF,\omega)$ (cf.~\S\ref{sec:deformations_symplectic_foliations}) to the $L_\infty[1]$-algebra of the foliation $\calF$ (cf.~\S\ref{sec:deformations_foliations}).

\subsection{{Review: Deformations of Foliations}}
\label{sec:deformations_foliations}

We reconstruct here the $L_\infty[1]$-algebra controlling the deformations of a foliation (cf.~\cite{Huebsch,vitagliano2014LieRinehart}),
using the deformation theory of Dirac structures (cf.~Appendix~\ref{app:Deformations_Dirac_Structures}). 

Let $\calF$ be a rank $d$ foliation on a manifold $M$.
Denote the normal bundle to $T\calF$ in $TM$ by $N\calF$ and the quotient VB morphism by $TM\to N\calF:=TM/T\calF,\ X\mapsto\overline{X}$.
Then the Lie algebroid $T\calF\Rightarrow M$ has a natural representation $\nabla$ on $N\calF$, also called the \emph{Bott connection}, which is defined by
\begin{equation*}
\nabla_X\overline{Y}=\overline{[X,Y]},
\end{equation*}
for all $X\in\Gamma(T\calF)$ and $Y\in\frakX(M)$, and one can introduce the de Rham complex $(\Omega^\bullet(\calF;N\calF),\rmd_\nabla)$ of the Lie algebroid $T\calF$ with coefficients in the representation $N\calF$.

Fix a distribution $G$ on $M$ complementary to $T\calF$.
The splitting $TM=G\oplus T\calF$ induces the identifications $T^\circ\calF\simeq G^\ast$, $G^\circ\simeq T^\ast\calF$, $G\simeq N\calF$, and the following splitting of the generalized tangent bundle
\begin{equation*}
\bbT M=(T\calF\oplus T^\circ\calF)\oplus(G\oplus G^\circ),
\end{equation*}
where $T\calF\oplus T^\circ\calF\subset\bbT M$ is a Dirac structure while $G\oplus G^\circ\subset\bbT M$ is an almost Dirac structure.
So, applying Lemma~\ref{lem:2.6first}, one gets the following.
\begin{lemma}
	\label{lem:G_infty_algebra:foliation1}
	There exists a unique $L_\infty[1]$-algebra structure $\{\frakn_k\}$ on $\Gamma(\wedge^\bullet(T^\ast\calF\oplus N\calF))[2]$ that makes $\Gamma(\wedge^\bullet(T^\ast\calF\oplus N\calF))[2]$ into a $G_\infty[1]$-algebra, whose only non-trivial brackets $\frakn_1$, $\frakn_2$, $\frakn_3$ are determined by:
	\begin{itemize}
		\item $\frakn_1$ is the de Rham differential of the Lie algebroid $T\calF\oplus T^\circ\calF\Rightarrow M$, i.e.~it satisfies
		\begin{equation*}
		\frakn_1(f)=\rmd_\calF f,\quad\frakn_1\eta=\rmd_\calF\eta,\quad\frakn_1\overline{X}=\rmd_\nabla\overline{X},
		\end{equation*}
		for all $f\in C^\infty(M)$, $\eta\in\Omega^1(\calF)$ and $X\in\frakX(M)$.
		\item $\frakn_2$ is the almost Gerstenhaber bracket of the almost Lie algebroid $G\oplus G^\circ\to M$ up to a  sign, i.e.~it satisfies  
		\begin{equation*}
		{-}\frakn_2(\eta+\overline{X},f)=X(f),\quad{-}\frakn_2(\eta_1+\overline{X}_1,\eta_2+\overline{X}_2)=\text{pr}_G[X_1,X_2]+\text{pr}_{T^\ast\calF}(\calL_{X_1}\eta_2-\calL_{X_2}\eta_1),
		\end{equation*}
		for all $f\in C^\infty(M)$, $X,X_1,X_2\in\Gamma(G)$ and $\eta,\eta_1,\eta_2\in\Omega^1(\calF)$.
		\item $\frakn_3$ satisfies the following:
		\begin{equation*}
		\frakn_3(\eta_1+\overline{X}_1,\eta_2+\overline{X}_2,\eta_3+\overline{X}_3)={-}\left(\eta_1[X_2,X_3]+\eta_2[X_3,X_1]+\eta_3[X_1,X_2]\right),
		\end{equation*}
		for all $\eta_1,\eta_2,\eta_3\in\Omega^1(\calF)$ and $X_1,X_2,X_3\in\Gamma(G)$.
	\end{itemize}
\end{lemma}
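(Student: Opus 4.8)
The plan is to obtain this statement as a direct application of Lemma~\ref{lem:2.6first}, in exactly the same way that Proposition~\ref{prop:Koszul_algebra} was derived, the only difference being the choice of Dirac structure and its complement. Concretely, I would take the standard Courant algebroid $E=\bbT M$, the Dirac structure $A=T\calF\oplus T^\circ\calF$, and the complementary almost Dirac structure $B=G\oplus G^\circ$, which is precisely the splitting $\bbT M=(T\calF\oplus T^\circ\calF)\oplus(G\oplus G^\circ)$ recorded just before the statement.

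First I would verify the hypotheses of Lemma~\ref{lem:2.6first}. Both $A$ and $B$ are maximal isotropic: for $A$, any $\xi\in T^\circ\calF$ annihilates $T\calF$, so $\ldab X+\xi,X+\xi\rdab=\xi(X)=0$, and $\rank A=\rank T\calF+\rank T^\circ\calF=\dim M$; the argument for $B$ is identical. The structure $A$ is involutive precisely because $\calF$ is a foliation: $\Gamma(T\calF)$ is closed under the Lie bracket and $T^\circ\calF$ is invariant under $\calL_{\Gamma(T\calF)}$, so $A$ is closed under the Dorfman bracket and is a genuine Dirac structure, while $B$ is only almost Dirac since $G$ need not be involutive. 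Complementarity $A\cap B=0$ follows from $T\calF\cap G=0$ and $T^\circ\calF\cap G^\circ=0$, and a dimension count gives $A\oplus B=\bbT M$.

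Applying Lemma~\ref{lem:2.6first} then produces the desired $G_\infty[1]$-algebra structure on $\Omega^\bullet(A)[2]$, with uniqueness and with only $\frakn_1,\frakn_2,\frakn_3$ nontrivial (as in Proposition~\ref{prop:Koszul_algebra}); here $\frakn_1$ is the Chevalley--Eilenberg differential of the Lie algebroid $A$, $\frakn_2$ is the almost Gerstenhaber bracket induced up to a sign by the almost Lie algebroid $B$, and $\frakn_3$ is built from the Courant tensor of $B$ (cf.~Remark~\ref{rem:Courant_tensor}). To match this with the stated space $\Gamma(\wedge^\bullet(T^\ast\calF\oplus N\calF))$, I would use the Courant pairing to identify $A^\ast\cong B$: it pairs $G\subset B$ nondegenerately with $T^\circ\calF\subset A$ and $G^\circ\subset B$ with $T\calF\subset A$, so that, under the splitting identifications $G\cong N\calF$ and $G^\circ\cong T^\ast\calF$, the $T^\ast\calF$-summand of $A^\ast$ is dual to $T\calF$ and the $N\calF$-summand is dual to $T^\circ\calF$. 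This yields $\Omega^\bullet(A)\cong\Gamma(\wedge^\bullet(T^\ast\calF\oplus N\calF))$.

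It then remains to transport the three abstract brackets through these identifications and read off the stated formulas; since the brackets are multiderivations, it suffices to evaluate them on the generators $C^\infty(M)$, $\Omega^1(\calF)$, $\Gamma(N\calF)$, which also accounts for uniqueness. On functions and leafwise one-forms, $\frakn_1$ reduces to $\rmd_\calF$ directly from the anchor and the tangent Lie algebroid of $\calF$. The step I expect to be the main obstacle is verifying that $\frakn_1$ restricted to the $N\calF$-summand equals the Bott connection differential $\rmd_\nabla$: this requires unwinding the action of $\Gamma(T\calF)$ on $T^\circ\calF$ inside the Dorfman bracket and dualizing it to $N\calF\cong(T^\circ\calF)^\ast$, checking that it reproduces $\nabla_X\overline{Y}=\overline{[X,Y]}$. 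The formulas for $\frakn_2$ and $\frakn_3$ follow by computing the Dorfman bracket of sections of $B$, projected back to $B$ along $A$, and the Courant tensor $\ldab\ldsb b_1,b_2\rdsb,b_3\rdab$; the terms $\pr_G[X_1,X_2]$, $\pr_{T^\ast\calF}(\calL_{X_1}\eta_2-\calL_{X_2}\eta_1)$ and $-(\eta_1[X_2,X_3]+\eta_2[X_3,X_1]+\eta_3[X_1,X_2])$ then emerge, with the signs being the only delicate point. These are the same sign-sensitive but otherwise routine computations already carried out for Proposition~\ref{prop:Koszul_algebra}.
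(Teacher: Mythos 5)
Your proposal is correct and follows essentially the same route as the paper: the paper obtains this lemma precisely by applying Lemma~\ref{lem:2.6first} to the standard Courant algebroid with the Dirac structure $A=T\calF\oplus T^\circ\calF$ and complementary almost Dirac structure $B=G\oplus G^\circ$, using the identifications $G\simeq N\calF$ and $G^\circ\simeq T^\ast\calF$. The verifications you spell out (maximal isotropy, involutivity of $A$, complementarity, and the evaluation of the brackets on generators, including the Bott connection check) are exactly the details the paper leaves implicit.
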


	The graded algebra $\Gamma(\wedge^\bullet(T^\ast\calF\oplus N\calF))$ has a natural $\bbN\times\bbN$ bigrading, where the bi-degree $(p,q)$ component is given by $\Gamma(\wedge^p T^\ast\calF \otimes \wedge^q N\calF)$.
	It is easy to see that the brackets $\frakn_k$ are compatible with this additional $\bbN\times\bbN$ bigrading; specifically we have
	\begin{equation*}
	\text{bi-degree}(\frakn_1)=(1,0),\quad\text{bi-degree}(\frakn_2)=(0,-1),\quad\text{bi-degree}(\frakn_3)=(-1,-2).
	\end{equation*}
	Consequently, one can easily check that, for all $k$,
	\begin{itemize}
		\item the graded $\Omega^\bullet(\calF)$-submodule $\Omega^\bullet(\calF;N\calF)$ of $\Gamma(\wedge^\bullet(T^\ast\calF\oplus N\calF))$ is closed under $\frakn_k$,
		\item the multibracket $\frakn_k:\Gamma(\wedge^\bullet(T^\ast\calF\oplus N\calF))^{\times k}\to\Gamma(\wedge^\bullet(T^\ast\calF\oplus N\calF))$ induces a map
		\begin{equation*}
		\Omega^\bullet(\calF;N\calF)^{k-1}\times\Omega^\bullet(\calF)\longrightarrow\Omega^\bullet(\calF),\ (u_1,\ldots,u_{k-1},\alpha)\longmapsto\frakn_k(u_1,\ldots,u_{k-1},\alpha),
		\end{equation*}
		which is additionally graded $\Omega^\bullet(\calF)$-linear in the first $k-1$ entries.
	\end{itemize}
	This means that $\Omega^\bullet(\calF;N\calF)[1]$ inherits from $\Gamma(\wedge^\bullet(T^\ast\calF\oplus N\calF))[2]$ an $L_\infty[1]$-algebra structure, whose multibrackets we denote by $\{\frakv_k\}$, which is additionally an $LR_\infty[1]$-algebra over the graded algebra $\Omega^\bullet(\calF)$ (cf. Definition \ref{def:LR_infinity_algebra}).
	So we get the following reformulation of a result first obtained in~\cite[Theorem 29]{vitagliano2014LieRinehart}, using different techniques,   {and later in \cite[Theorem 5.5]{GMS} using the same techniques as ours.}
	
\begin{proposition}
	\label{lem:G_infty_algebra:foliation}
	Let $\calF$ be a foliation on a manifold $M$. 
	There exists a unique $L_\infty[1]$-algebra structure $\{\frakv_k\}$ on $\Omega^\bullet(\calF;N\calF)[1]$ that makes $\Omega^\bullet(\calF;N\calF)[1]$ into an $LR_\infty[1]$-algebra over $\Omega^\bullet(\calF)$, whose only non-trivial brackets $\frakv_1$, $\frakv_2$, $\frakv_3$ are given by:
	\begin{itemize}
		\item $\frakv_1$ is the de Rham differential $\rmd_\nabla$ of the Lie algebroid $T\calF\Rightarrow M$ with coefficients in its representation $N\calF$,
		\item $\frakv_2$ is given by the following formula, for all $\alpha,\beta\in\Omega^\bullet(\calF)$, $X,Y\in\Gamma(G)$,
		\begin{equation*}
		\frakv_2(\alpha\otimes \overline{X},\beta\otimes \overline{Y})={-}{(-)^{|\alpha|}}\left(\alpha\wedge{\pr_{T^{*}\mathcal{F}}}\calL_X\beta\otimes \overline{Y}-{\pr_{T^{*}\mathcal{F}}\calL_Y\alpha}\wedge\beta\otimes \overline{X}+\alpha\wedge\beta\otimes\overline{[X,Y]}\right),
		\end{equation*}
		\item $\frakv_3$ is given by the following formula, for all $\alpha,\beta,\gamma\in\Omega^\bullet(\calF)$ and $X,Y,Z\in\Gamma(G)$,
		\begin{align*}
		 \frakv_3(\alpha\otimes\overline{X},\beta\otimes\overline{Y},\gamma\otimes\overline{Z})=&(-)^{|\beta|+1}\Big((\iota_{[Y,Z]}\alpha)\wedge\beta\wedge\gamma\otimes\overline{X}-(-)^{|\alpha||\beta|}(\iota_{[X,Z]}\beta)\wedge\alpha\wedge\gamma\otimes\overline{Y}\\&+(-)^{|\beta||\gamma|+|\alpha||\gamma|}(\iota_{[X,Y]}\gamma)\wedge\alpha\wedge\beta\otimes\overline{Z}\Big).
		\end{align*}
		\end{itemize}
Its anchor maps are given by the multibrackets $\frakn_k$ of the $L_{\infty}[1]$-algebra appearing in Lemma~\ref{lem:G_infty_algebra:foliation1}.
\end{proposition}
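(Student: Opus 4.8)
The plan is to build $\{\frakv_k\}$ by restricting the $G_\infty[1]$-algebra $(\Gamma(\wedge^\bullet(T^\ast\calF\oplus N\calF))[2],\{\frakn_k\})$ of Lemma~\ref{lem:G_infty_algebra:foliation1} to a bigraded subpiece, in exact analogy with the way Proposition~\ref{prop:good_L_infty_algebra} cuts $(\frakX^\bullet(M)[2],\{\frakl^G_k\})$ down to the good multivector fields. Writing $\calA:=\Omega^\bullet(\calF)$ for the bi-degree $(\ast,0)$ part and $\calQ:=\Omega^\bullet(\calF;N\calF)$ for the bi-degree $(\ast,1)$ part, their internal direct sum $\calA\oplus\calQ$ is precisely the $N\calF$-degree $\leq 1$ component of $\Gamma(\wedge^\bullet(T^\ast\calF\oplus N\calF))$. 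The shift $[1]$ on $\calQ$ is exactly the restriction to $N\calF$-degree $1$ of the $[2]$-shift on the ambient space, so the restricted brackets $\frakv_k$ will automatically carry degree $+1$.

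First I would carry out the bi-degree bookkeeping. Since $\frakn_1,\frakn_2,\frakn_3$ have bi-degrees $(1,0)$, $(0,-1)$, $(-1,-2)$ and $\frakn_k=0$ for $k\geq 4$, feeding $\frakn_k$ only inputs of $N\calF$-degree $1$ produces output of $N\calF$-degree $1$ for $k\leq 3$, so that $\calQ$ is closed and inherits $\frakv_k:=\frakn_k|_{\calQ^{\times k}}$; feeding $k-1$ inputs of degree $1$ together with one input of degree $0$ produces output of degree $0$, defining the anchors $\rho_k:=\frakn_k|_{\calQ^{\times(k-1)}\times\calA}$ valued in $\calA$; and any input with at least two $\calA$-entries forces the output into negative $N\calF$-degree, hence zero, so the combined bracket $\{-,\dots,-\}_i$ of Definition~\ref{def:LR_infinity_algebra} is just $\frakn_i$ restricted. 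The $\calA$-linearity of $\rho_k$ in its first $k-1$ slots and the derivation property in the last slot follow from the fact that each $\frakn_k$ is a graded multiderivation of $\Gamma(\wedge^\bullet(T^\ast\calF\oplus N\calF))$ (the $G_\infty[1]$ property, cf.~Remark~\ref{rem:Koszul_algebra:dgLA}).

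Next I would verify the $LR_\infty[1]$ axioms of Definition~\ref{def:LR_infinity_algebra}. The Leibniz rule~\eqref{eq:def:LR_infinity_algebra:Leibniz} is exactly the graded multiderivation property of $\frakn_k$, now split according to which entry lies in $\calA$; and the extended Jacobi identity~\eqref{eq:def:LR_infinity_algebra:Jacobi} is the restriction to the closed subspace $\calA\oplus\calQ$ of the $L_\infty[1]$-relations already satisfied by $\{\frakn_k\}$. No new identity has to be proved, since closure under the $\frakn_k$ is all the argument of Proposition~\ref{prop:good_L_infty_algebra} uses, with $N\calF$ now playing the role of $G$.

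The only genuine computation is the derivation of the closed formulas for $\frakv_2$ and $\frakv_3$ on decomposables $\alpha\otimes\overline{X}$: these follow by extending the generator values of $\frakn_2,\frakn_3$ recorded in Lemma~\ref{lem:G_infty_algebra:foliation1} across wedge products via the graded-derivation property, and the main obstacle is purely the Koszul-sign bookkeeping produced when a bi- or tri-derivation is pushed past the leafwise forms $\alpha,\beta,\gamma$ — the same kind of accounting as in Lemma~\ref{lem:multibrackets:NxN_grading}. Uniqueness is then immediate: an $L_\infty[1]$-structure is nothing but the list of its multibrackets, and these have all been prescribed ($\frakv_k=0$ for $k\geq 4$), so no freedom remains.
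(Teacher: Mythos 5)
Your proposal is correct and follows essentially the same route as the paper: the paper also obtains $\{\frakv_k\}$ by restricting the $G_\infty[1]$-algebra $(\Gamma(\wedge^\bullet(T^\ast\calF\oplus N\calF))[2],\{\frakn_k\})$ of Lemma~\ref{lem:G_infty_algebra:foliation1}, using the $\bbN\times\bbN$ bigrading and the bi-degrees $(1,0)$, $(0,-1)$, $(-1,-2)$ of $\frakn_1,\frakn_2,\frakn_3$ to see that $\Omega^\bullet(\calF;N\calF)$ is closed under the brackets and that the anchors are the induced maps with last entry in $\Omega^\bullet(\calF)$, exactly in parallel with Proposition~\ref{prop:good_L_infty_algebra}. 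Your additional remarks on the shift bookkeeping, the vanishing of the mixed brackets with two or more $\Omega^\bullet(\calF)$-entries, and uniqueness are all consistent with the paper's (largely implicit) verification.
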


Further, if one specializes also Lemma~\ref{lem:2.6second} to this setting, then one obtains the following. 
\begin{lemma}\label{lem:vita}
	The relation $\calL=\gr(Z)$ establishes a one-to-one correspondence between:
	\begin{enumerate}
		\item MC elements $Z$ of the $L_\infty[1]$-algebra $(\Gamma(\wedge^\bullet(T^\ast\calF\oplus N\calF))[2],\{\frakn_k\})$,
		\item Dirac structures $\calL$ on $M$ close to $T\calF\oplus T^\circ\calF$ in the sense that $\calL\pitchfork G\oplus G^\circ$.
	\end{enumerate}
	Here $\gr(Z)$ denotes the graph of $Z$ viewed as a map $T\calF\oplus T^\circ\calF\to T^\ast\calF\oplus N\calF\cong G\oplus G^\circ$.
\end{lemma}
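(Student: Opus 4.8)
The plan is to obtain this statement as a direct specialization of the general correspondence in Lemma~\ref{lem:2.6second}, applied to exactly the same data used to build the $L_\infty[1]$-algebra in Lemma~\ref{lem:G_infty_algebra:foliation1}. Concretely, I would take $E=\bbT M$ with its standard Courant algebroid structure, the Dirac structure $A=T\calF\oplus T^\circ\calF$, and the complementary almost Dirac structure $B=G\oplus G^\circ$.

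First I would record the identifications that make the two $L_\infty[1]$-algebras literally coincide. Since $A$ and $B$ are complementary maximal isotropic subbundles of $\bbT M$, the pairing $\ldab-,-\rdab$ restricts to a perfect pairing between them, so $A^\ast\cong B$. Using the splitting $TM=G\oplus T\calF$ and the induced identifications $G\cong N\calF$ and $G^\circ\cong T^\ast\calF$, this gives $A^\ast\cong T^\ast\calF\oplus N\calF$ and hence $\Omega^\bullet(A)=\Gamma(\wedge^\bullet A^\ast)=\Gamma(\wedge^\bullet(T^\ast\calF\oplus N\calF))$. This is exactly the underlying graded space of the $L_\infty[1]$-algebra $\{\frakn_k\}$; in fact the brackets $\frakn_k$ of Lemma~\ref{lem:G_infty_algebra:foliation1} were defined to be the brackets produced by Lemma~\ref{lem:2.6first} for this very triple $(E,A,B)$, so no further compatibility of the brackets needs to be checked.

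Next I would invoke Lemma~\ref{lem:2.6second} for $(E,A,B)$. A degree-$0$ element of $\Omega^\bullet(A)[2]$ is an element $Z\in\Gamma(\wedge^2 A^\ast)$, i.e.\ a skew bundle map $A\to A^\ast\cong B$; under the identifications above this is precisely a map $T\calF\oplus T^\circ\calF\to T^\ast\calF\oplus N\calF\cong G\oplus G^\circ$, matching the graph description in the statement. Lemma~\ref{lem:2.6second} then asserts that $Z$ is an MC element of $(\Omega^\bullet(A)[2],\{\frakn_k\})$ if and only if its graph $\gr(Z)=\{a+Z(a):a\in A\}\subset\bbT M$ is a Dirac structure, and that $Z\mapsto\gr(Z)$ is a bijection onto the Dirac structures transverse to $B=G\oplus G^\circ$. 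This is exactly the claimed correspondence, with ``closeness to $T\calF\oplus T^\circ\calF$'' being the transversality $\calL\pitchfork G\oplus G^\circ$.

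Since the genuine content is entirely contained in the general Lemmas~\ref{lem:2.6first} and~\ref{lem:2.6second}, there is no substantive obstacle here: the only care needed is bookkeeping, namely tracking the degree shift (so that MC elements are the degree-$2$ cochains $\Gamma(\wedge^2 A^\ast)$) and the chain of identifications $A^\ast\cong B\cong T^\ast\calF\oplus N\calF$, so that the abstract graph $\gr(Z)\subset A\oplus B$ is correctly read as the graph of a map $T\calF\oplus T^\circ\calF\to T^\ast\calF\oplus N\calF$ as in the statement.
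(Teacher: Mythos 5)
Your proposal is correct and matches the paper's own argument exactly: the paper also obtains this lemma by specializing Lemma~\ref{lem:2.6second} to the triple $E=\bbT M$, $A=T\calF\oplus T^\circ\calF$, $B=G\oplus G^\circ$, using the identifications $T^\circ\calF\simeq G^\ast$, $G^\circ\simeq T^\ast\calF$, $G\simeq N\calF$ already fixed when constructing the brackets $\{\frakn_k\}$ via Lemma~\ref{lem:2.6first}. Your bookkeeping of the degree shift and of the chain $A^\ast\cong B\cong T^\ast\calF\oplus N\calF$ is precisely the content the paper leaves implicit.
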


	Recall that the relation $\calL=D\oplus D^\circ$ establishes a one-to-one correspondence between  distributions $D$ on $M$ and those almost Dirac structures $\calL$ on $M$ such that $\calL=\pr_{TM}\calL{{}\oplus{}}{\pr_{T^\ast M}\calL}$.
	If $D$ and $\calL$ correspond to each other in this way, then $D$ is integrable iff $\calL$ is Dirac.
	Further, for any $Z\in\Gamma(\wedge^2(T^\ast\calF\oplus N\calF))$, it is straightforward to see that $\gr(Z)=\pr_{TM}\gr(Z){{}\oplus{}}\pr_{T^\ast M}\gr(Z)$ if and only if $Z\in\Omega^1(\calF;N\calF)$.
	Consequently, Lemma \ref{lem:vita} yields the following reformulation of the result obtained in~\cite[Theorem 6.2.20]{ji2013thesis} in the context of deformation theory of Lie subalgebroids. 
	
\begin{proposition}
	\label{prop:foliations_DC-elements}
	The relation $T\calF^\prime=\gr(\eta)$ establishes a one-to-one correspondence between: 
	\begin{enumerate}
		\item MC elements $\eta$ of the $L_\infty[1]$-algebra $(\Omega^\bullet(\calF;N\calF)[1],\{\frakv_k\})$, and
		\item rank $d$ foliations $\calF^\prime$ on $M$ close to $\calF$ in the sense that $TM=T\calF^\prime\oplus G$.
	\end{enumerate}
\end{proposition}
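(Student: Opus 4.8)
The plan is to deduce this from Lemma~\ref{lem:vita} together with the two elementary observations recorded just above the statement, which relate split Dirac structures $D\oplus D^\circ$ to distributions $D$.

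First I would identify Maurer--Cartan elements on the two sides. By construction (see the discussion preceding Proposition~\ref{lem:G_infty_algebra:foliation}), the $L_\infty[1]$-algebra $(\Omega^\bullet(\calF;N\calF)[1],\{\frakv_k\})$ sits inside $(\Gamma(\wedge^\bullet(T^\ast\calF\oplus N\calF))[2],\{\frakn_k\})$ as the $L_\infty[1]$-subalgebra supported on the bidegree-$(\bullet,1)$ summand, the $\frakv_k$ being the restrictions of the $\frakn_k$; matching the two shifts, an element of $\Omega^p(\calF;N\calF)$ has degree $p-1$ on the left and maps into $\wedge^{p+1}(T^\ast\calF\oplus N\calF)$, of degree $(p+1)-2=p-1$ on the right, so the inclusion is a degree-preserving strict morphism. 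Consequently its degree-$0$ part is exactly $\Omega^1(\calF;N\calF)$, the bidegree-$(1,1)$ summand of the degree-$0$ space $\wedge^2(T^\ast\calF\oplus N\calF)$ of the ambient algebra, and the Maurer--Cartan equation for the $\frakv_k$ coincides with that for the $\frakn_k$. Hence $\eta$ is an MC element of $(\Omega^\bullet(\calF;N\calF)[1],\{\frakv_k\})$ if and only if $Z:=\eta$ is an MC element of $(\Gamma(\wedge^\bullet(T^\ast\calF\oplus N\calF))[2],\{\frakn_k\})$ lying in $\Omega^1(\calF;N\calF)$.

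Next I would feed this into Lemma~\ref{lem:vita}, which puts MC elements $Z$ of the ambient algebra in bijection with Dirac structures $\calL=\gr(Z)\subset\bbT M$ transverse to $G\oplus G^\circ$. Restricting to $Z=\eta\in\Omega^1(\calF;N\calF)$ and invoking the recalled fact that $\gr(Z)=\pr_{TM}\gr(Z)\oplus\pr_{T^\ast M}\gr(Z)$ precisely when $Z\in\Omega^1(\calF;N\calF)$, these MC elements correspond to Dirac structures of the split form $\calL=D\oplus D^\circ$ with $D:=\pr_{TM}\gr(\eta)$, the identity $\pr_{T^\ast M}\gr(\eta)=D^\circ$ being forced by maximal isotropy of $\gr(\eta)$. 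By the recalled equivalence ``$D$ integrable $\Leftrightarrow$ $D\oplus D^\circ$ Dirac'', such $\calL$ correspond bijectively to integrable distributions $D$. Finally I would translate the transversality condition: since $\calL$ and $G\oplus G^\circ$ both split along $TM\oplus T^\ast M$ and are both Lagrangian, $\calL\pitchfork G\oplus G^\circ$ is equivalent to $\calL\cap(G\oplus G^\circ)=(D\cap G)\oplus(D^\circ\cap G^\circ)=0$, i.e. to $D\cap G=0$ and $(D+G)^\circ=D^\circ\cap G^\circ=0$, which by the dimension count $\dim D+\dim G=n$ both amount to $TM=D\oplus G$. Thus integrable rank-$d$ distributions $D$ with $TM=D\oplus G$, equivalently rank-$d$ foliations $\calF'$ transverse to $G$, correspond bijectively to MC elements $\eta$. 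To match the statement it remains to observe, by a short computation unwinding the $\bbT M$-graph of $\eta\in\Gamma(T^\ast\calF\otimes N\calF)$ under the identifications $N\calF\cong G$, $T^\ast\calF\cong G^\circ$, $T^\circ\calF\cong G^\ast$, that $D=\pr_{TM}\gr(\eta)$ is independent of the $T^\circ\calF$-direction and equals $\{v+\eta(v):v\in T\calF\}$, the graph of $\eta$ viewed as a map $T\calF\to N\calF\cong G$; hence $T\calF'=\gr(\eta)$, as asserted.

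The main obstacle I expect is purely organizational rather than substantive: keeping the degree shift straight so that the inclusion of the foliation algebra into the ambient Dirac algebra is genuinely strict and MC-preserving, and carefully tracking the four linear-algebra identifications so that the ambient $\bbT M$-graph of $\eta$ projects onto the $TM$-graph named in the statement. Once these identifications are fixed, all the analytic and algebraic content is already packaged in Lemma~\ref{lem:vita} and the two recalled facts, and the argument is a chain of bijections.
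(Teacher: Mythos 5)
Your proposal is correct and follows essentially the same route as the paper: the paper's own argument is precisely the paragraph preceding the statement, which combines Lemma~\ref{lem:vita} with the two recalled facts (split almost Dirac structures $D\oplus D^\circ$ correspond to distributions, with integrability matching the Dirac condition, and $\gr(Z)$ splits iff $Z\in\Omega^1(\calF;N\calF)$). Your write-up merely makes explicit the details the paper leaves implicit — the degree bookkeeping showing the $\frakv_k$-MC equation is the restriction of the $\frakn_k$-MC equation, the translation of $\calL\pitchfork G\oplus G^\circ$ into $TM=D\oplus G$, and the identification $\pr_{TM}\gr(\eta)=\{v+\eta(v):v\in T\calF\}$ — all of which check out.
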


\subsection{A strict morphism of $L_\infty[1]$-algebras}
\label{sec:strict_morphism}

Let $\Pi$ be a rank $2k$ regular Poisson structure on $M$ with corresponding symplectic foliation $(\calF,\omega)$, and fix a distribution $G$ on $M$ complementary to $T\calF$.
{We construct a strict morphism between the $L_\infty[1]$-algebras governing the deformations of $\Pi$ and of $\mathcal{F}$}, which we introduced in Proposition \ref{prop:good_L_infty_algebra} and Lemma 
	\ref{lem:G_infty_algebra:foliation}
respectively.

First, the map $\omega^\flat:T\calF\to T^\ast\calF$ determines a degree $0$ graded algebra isomorphism 
$\underline{\smash{\phi}}:\frakX^\bullet(\calF)\to \Omega^\bullet(\mathcal{F})$ which acts as follows:
\begin{equation}
\label{eq:rem:expression:underline_phi}
\underline{\smash\phi}(P)(X_1,\ldots,X_{\ell})={(-1)^{\ell}}P(\omega^\flat X_1,\ldots,\omega^\flat X_{\ell}),
\end{equation}
for all $\ell\geq 0$, $P\in\frakX^{\ell}(\calF)$ and $X_1,\ldots,X_{\ell}\in\Gamma(T\calF)$.
We then proceed by defining a degree $0$ graded module morphism $\phi:\frakX^\bullet_\good(M)[2]\to\Omega^\bullet(\calF;N\calF)[1]$, along the graded algebra isomorphism $\underline{\smash\phi}:\frakX^\bullet(\calF)\to \Omega^\bullet(\mathcal{F})$, as the   composition
\begin{equation*}
\frakX^\bullet_\good(M)[2]=\Gamma(\wedge^{\bullet+2} T\calF)\oplus\Gamma(\wedge^{\bullet+1} T\calF\otimes G)\to\Gamma(\wedge^{\bullet+1} T\calF\otimes G) \to\Gamma(\wedge^{\bullet+1} T^\ast\calF\otimes G)\simeq \Omega^\bullet(\calF;N\calF)[1],
\end{equation*}
where the first map is the projection and the second one is $\underline{\smash\phi}\otimes\id_G$. 
Explicitly, $\phi$ acts as follows, for all $\ell\geq 0$, $P\in\frakX^{\ell}_\good(M)$, $X_1,\ldots,X_{\ell-1}\in\Gamma(T\calF)$ and $\beta\in\Gamma(N^\ast\calF)\simeq\Gamma(G^\ast)$:
\begin{equation}
\label{eq:rem:expression:phi}
\beta\big(\phi(P)(X_1,\ldots,X_{\ell-1})\big)={(-1)^{\ell-1}}P(\omega^\flat X_1,\ldots,\omega^\flat X_{\ell-1},\beta).
\end{equation}

\begin{proposition}
	\label{prop:strict_L_infty-algebra_morphism}
	The degree $0$ graded linear map $\phi:\frakX_\good^\bullet(M)[2]\to\Omega^\bullet(\calF;N\calF)[1]$ induces a strict morphism of $L_\infty[1]$-algebras 
	\begin{equation*}
	\begin{tikzcd}
	(\frakX_\good^\bullet(M)[2],\{\frakl^G_k\})\arrow[rr, "\phi"]&&(\Omega^\bullet(\calF;N\calF)[1],\{\frakv_k\}).
	\end{tikzcd}
	\end{equation*}
\end{proposition}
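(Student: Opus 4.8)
The plan is to verify directly that $\phi$ satisfies the defining equations of a strict $L_\infty[1]$-morphism. Since the only non-vanishing multibrackets on either side are $\frakl_1^G,\frakl_2^G,\frakl_3^G$ and $\frakv_1,\frakv_2,\frakv_3$, being strict means exactly that the degree $0$ map $\phi$ obeys
\[
\phi\circ\frakl_k^G=\frakv_k\circ\phi^{\otimes k}\qquad(k=1,2,3),
\]
all higher identities holding trivially. First I would record how $\phi$ acts on low degrees using \eqref{eq:rem:expression:phi}: it vanishes on $C^\infty(M)$ and on all of $\frakX^\bullet(\calF)$, because $\beta\in\Gamma(T^\circ\calF)$ annihilates $T\calF$, whereas a transverse vector $X\in\Gamma(G)$ is sent to its class $\overline X\in\Gamma(N\calF)$. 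Thus $\phi$ only sees the $\Gamma(\wedge^{\bullet-1}T\calF\otimes G)$-component of a good multivector field and transports its leafwise legs through $\omega^\flat$.

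Next I would reduce the three identities to algebra generators. Both $(\frakX_\good^\bullet(M)[2],\{\frakl_k^G\})$ and $(\Omega^\bullet(\calF;N\calF)[1],\{\frakv_k\})$ are $LR_\infty[1]$-algebras (Proposition~\ref{prop:good_L_infty_algebra} and Proposition~\ref{lem:G_infty_algebra:foliation}), so their multibrackets are graded multiderivations over $\frakX^\bullet(\calF)$, resp.\ $\Omega^\bullet(\calF)$, and $\phi$ is a module morphism along the graded algebra isomorphism $\underline{\smash{\phi}}$. Because of this, the validity of each bracket identity propagates through multiplication by $\frakX^\bullet(\calF)$ in every slot, provided one also checks that $\underline{\smash{\phi}}$ intertwines the corresponding anchor maps (the $\rho_k$ of Definition~\ref{def:LR_infinity_algebra}). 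Since every good multivector field is a sum of wedge products of the generators $C^\infty(M)$, $\Gamma(T\calF)$ and $\Gamma(G)$ with at most one factor in $\Gamma(G)$, it suffices to verify the identities when all arguments are such generators. As $\phi$ kills $C^\infty(M)$ and $\Gamma(T\calF)$, and Lemma~\ref{lem:multibrackets:NxN_grading} shows the brackets send the excluded slots back into $\frakX^\bullet(\calF)$, all mixed combinations vanish on both sides; the only substantive cases have all arguments transverse, i.e.\ in $\Gamma(G)$.

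It then remains to carry out three explicit computations, using Proposition~\ref{prop:Koszul_algebra} for $\frakl_k^G$ (in particular \eqref{eq:lem:Koszul_algebra:almost_Lie_algbd_structure} for $[-,-]_\gamma$ and Remark~\ref{rem:Courant_tensor_twisted_TM} for $\Upsilon^G_{TM}$) and Proposition~\ref{lem:G_infty_algebra:foliation} for $\frakv_k$. For $k=2$, using $\iota_X\gamma=0$ on $\Gamma(G)$ one gets $[X,Y]_\gamma=[X,Y]$, so $\phi(\frakl_2^G(X,Y))=-\overline{[X,Y]}$, which matches $\frakv_2(\overline X,\overline Y)$ after discarding the leafwise part. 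For $k=3$ one compares $\phi(\frakl_3^G(X,Y,Z))$, governed by the Courant tensor $\Upsilon^G_{TM}$, with $\frakv_3(\overline X,\overline Y,\overline Z)$, governed by the terms $\iota_{[Y,Z]}\alpha$ etc.; both encode the failure of $G$ to be involutive and agree under the identifications $T^\ast\calF\cong G^\circ$, $G\cong N\calF$. For $k=1$ I would check $\phi(d_\Pi X)=d_\nabla\overline X$ on $X\in\Gamma(G)$: writing $d_\Pi X=-\calL_X\Pi$, extracting its $\Gamma(T\calF\otimes G)$-component, applying $\omega^\flat$ on the leafwise leg, and comparing with $(d_\nabla\overline X)(U)=\overline{[U,X]}$ for $U\in\Gamma(T\calF)$ via $\Pi=-\omega^{-1}$.

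The main obstacle will be precisely this $k=1$ case, i.e.\ showing $\phi$ is a chain map into the normal-bundle-valued de Rham complex: it is where the interplay of the Poisson differential $d_\Pi$, the splitting $TM=G\oplus T\calF$, and the Bott connection $\nabla$ is genuinely used, and it demands careful bookkeeping of the shifts $[2]$ versus $[1]$ and the Koszul signs in \eqref{eq:rem:expression:phi}. Throughout, a useful consistency check is that at the level of Maurer--Cartan elements $\phi$ must reproduce the forgetful map $\textsf{q}$ of \eqref{eq:forgetful_functor}, in agreement with the fact that the distribution underlying $\exp_G(Z)$ depends only on the $\Gamma(T\calF\otimes G)$-component of $Z$.
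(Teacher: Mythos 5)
Your overall strategy (direct verification, exploiting the $LR_\infty[1]$-module structures over $\frakX^\bullet(\calF)$ and $\Omega^\bullet(\calF)$ to reduce to algebra generators) is the same as the paper's, and your two genuinely multibracket checks, $\frakl_1^G$ on $\Gamma(G)$ and $\frakl_2^G$ on $\Gamma(G)\times\Gamma(G)$, coincide with the paper's computations. However, your reduction step has a genuine gap. Since the multibrackets are not $\frakX^\bullet(\calF)$-multilinear but multiderivations whose symbols are the anchors, propagating an identity from generators to the whole module via the Leibniz rule \eqref{eq:def:LR_infinity_algebra:Leibniz} requires, \emph{in addition} to $\phi\circ\frakl^G_k=\frakv_k\circ\phi^{\otimes k}$ on generators, the anchor-compatibility identities
\[
\underline{\smash\phi}\big(\rho_k(P_1,\ldots,P_{k-1}|X)\big)=\frakn_k\big(\phi(P_1),\ldots,\phi(P_{k-1})|\underline{\smash\phi}(X)\big),
\qquad X\in C^\infty(M)\oplus\Gamma(T\calF),
\]
where $\frakn_k$ are the anchors of the target. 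You state this proviso but never discharge it: your claim that ``all mixed combinations vanish on both sides'' is true only for the $\phi$-identities (both sides are indeed killed because $\phi$ annihilates $\frakX^\bullet(\calF)$), whereas the required mixed-argument identities involve the algebra \emph{isomorphism} $\underline{\smash\phi}$, which kills nothing. These are nontrivial computations: for instance $\underline{\smash\phi}(\rho_1(f))=\rmd_\calF f$, $\underline{\smash\phi}(\rho_2(Y|X))=-\pr_{T^{*}\calF}\calL_Y\gamma^\flat(X)$, and $\underline{\smash\phi}(\rho_3(Y_1,Y_2|X))=-\gamma(X,[Y_1,Y_2])$ for $Y,Y_i\in\Gamma(G)$ and $X\in\Gamma(T\calF)$; they constitute roughly half of the paper's proof, and without them the extension from generators fails (already for $\phi(\rmd_\Pi(fY))$ versus $\frakv_1(f\,\phi(Y))$ one needs $\underline{\smash\phi}(\rmd_\Pi f)=\rmd_\calF f$).

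Relatedly, you misplace where the Courant tensor enters. On three transverse arguments both sides of the ternary identity vanish identically: $\Upsilon^G_{TM}$ lies in $\Gamma(T^\ast\calF\otimes\wedge^2G^\ast)$ (Remark~\ref{rem:Courant_tensor_twisted_TM}), so it annihilates triples from $\Gamma(G)$, and $\frakv_3$ vanishes when all entries have form-degree zero. Hence your proposed $k=3$ comparison of the two ``encodings of the non-involutivity of $G$'' is a comparison of $0$ with $0$. The Courant tensor genuinely appears only in the anchor identity $\underline{\smash\phi}(\rho_3(Y_1,Y_2|X))=\frakn_3(\phi(Y_1),\phi(Y_2)|\underline{\smash\phi}(X))$ with one leafwise argument --- precisely one of the mixed cases your argument discards. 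Once the anchor identities are added and verified (your $k=1$ and $k=2$ checks can stand as they are), the proof closes along the paper's lines.
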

We defer the proof of  Proposition \ref{prop:strict_L_infty-algebra_morphism} to Appendix \ref{app:proofstrict}. There we also
 provide a conceptual argument under the assumption that the 2-form
$\gamma$ is closed,  showing that in that case $\phi$ is obtained from the  pullback by a Courant algebroid isomorphism.

{\begin{remark}\label{rem:ses}
Lemma~\ref{lem:leafwise_PL_complex}
 and Prop.~\ref{prop:strict_L_infty-algebra_morphism}  show that there is a short exact sequence of $L_{\infty}[1]$-algebras and strict morphisms
 \begin{equation}
 \{0\} \to
  (\frakX^\bullet(\calF)[2],\rmd_\Pi) \to
  (\frakX_\good^\bullet(M)[2],\{\frakl^G_k\})\overset{\phi}{\to}(\Omega^\bullet(\calF;N\calF)[1],\{\frakv_k\})
 \to  \{0\}. 
\end{equation}
This short exact sequence reflects the fact that one obtains deformations of regular Poisson structures by deforming both the leaf-wise symplectic form (see Lemma~\ref{lem:gaugeB}) and the underlying foliation (see Prop. \ref{prop:foliations_DC-elements}).
\end{remark}}

In ~\eqref{eq:forgetful_functor} we displayed a natural geometric map from the space of rank $2k$ Poisson structures to the space of rank $2k$ foliations on $M$. The next proposition states that it ``lifts'' to the strict morphism of $L_\infty[1]$-algebras $\phi:(\frakX_\good^\bullet(M)[2],\{\frakl_k^{G}\})\longrightarrow(\Omega^\bullet(\calF;N\calF)[1],\{\frakv_k\})$ we just constructed, i.e.
that it is  exactly the map of MC elements induced by $\phi$.

\begin{proposition}
	\label{prop:strict_morphism:MC_elements}
	The following diagram commutes 
	\begin{equation}
	\label{eq:prop:strict_morphism:MC_elements}
	\begin{tikzcd}
	\Gamma(\calI_{\gamma})\cap\operatorname{MC}\big(\frakX_\good^\bullet(M)[2],\{\frakl_k^{G}\}\big)\arrow[rr, "\phi"]\arrow[d, hook, swap, "Z\mapsto\exp_G(Z)"]&&\operatorname{MC}\big(\Omega^\bullet(\calF;N\calF)[1],\{\frakv_k\}\big)\arrow[d, hook, "\gr"]\\
	\RegPoiss^{2k}(M)\arrow[rr, swap, "\mathsf{q}"]&&\Fol^{2k}(M)
	\end{tikzcd}.
	\end{equation}
\end{proposition}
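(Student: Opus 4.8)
The plan is to reduce the commutativity of the diagram to a single pointwise statement about distributions. On the lower route, $\mathsf{q}$ sends a regular Poisson structure to its characteristic foliation, so $\mathsf{q}(\exp_G(Z))=\im(\exp_G(Z))^\sharp$. On the upper route, since $\phi$ is a strict $L_\infty[1]$-morphism (Proposition~\ref{prop:strict_L_infty-algebra_morphism}), $\phi(Z)$ is a Maurer--Cartan element of $(\Omega^\bullet(\calF;N\calF)[1],\{\frakv_k\})$ lying in $\Omega^1(\calF;N\calF)$, and by Proposition~\ref{prop:foliations_DC-elements} the arrow $\gr$ sends it to the foliation $\gr(\phi(Z))$, where $\phi(Z)$ is viewed as a bundle map $T\calF\to N\calF\cong G$. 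Hence it suffices to establish the equality of distributions
\begin{equation*}
\im(\exp_G(Z))^\sharp=\gr(\phi(Z))\subset TM.
\end{equation*}
Throughout I would use the splitting $TM=G\oplus T\calF$ with its induced identifications $N\calF\cong G$, $T^\circ\calF\cong G^\ast$, $T^\ast\calF\cong G^\circ$, together with the decomposition $Z=Z_1+Z_2\in\Gamma(\wedge^2 T\calF)\oplus\Gamma(T\calF\otimes G)$ of the good bivector field $Z$ (Remark~\ref{rem:good}).

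First I would compute the right-hand side from the explicit formula~\eqref{eq:rem:expression:phi}. For $X\in\Gamma(T\calF)$ and $\beta\in\Gamma(N^\ast\calF)\cong\Gamma(T^\circ\calF)$ it gives $\beta(\phi(Z)(X))=-Z(\omega^\flat X,\beta)=-\beta(Z^\sharp(\omega^\flat X))$. Since $\omega^\flat X\in T^\ast\calF\cong G^\circ$ annihilates $G$ and $\beta\in T^\circ\calF$ annihilates $T\calF$, the term coming from $Z_1\in\wedge^2 T\calF$ drops out, and $Z_2^\sharp(\omega^\flat X)=\iota_{\omega^\flat X}Z_2$ already lies in $G$; thus $\phi(Z)(X)=-Z_2^\sharp(\omega^\flat X)$ in $G\cong N\calF$. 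Therefore $\phi(Z)=-Z_2^\sharp\circ\omega^\flat:T\calF\to G$, and $\gr(\phi(Z))=\{Y-Z_2^\sharp(\omega^\flat Y):Y\in T\calF\}$.

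Next I would compute the left-hand side. Using $\im W^\sharp=\pr_{TM}(\gr W)$ for any bivector $W$, together with $\gr(\exp_G(Z))=\calR_\Pi\calR_\gamma\gr(Z)$ from~\eqref{eq:grexp}, I would project the generic element $\calR_\Pi\calR_\gamma(Z^\sharp\alpha+\alpha)$ onto $TM$. Expanding the two orthogonal transformations and invoking $\Pi^\sharp\circ\gamma^\flat=-\pr_{T\calF}$ from the standing set-up, the $TM$-component simplifies to $\pr_G(Z^\sharp\alpha)+\Pi^\sharp\alpha$. Writing $\alpha=\alpha_1+\alpha_2$ with $\alpha_1\in G^\circ\cong T^\ast\calF$ and $\alpha_2\in G^\ast\cong T^\circ\calF$, one has $\Pi^\sharp\alpha=\Pi^\sharp\alpha_1=-(\omega^\flat)^{-1}\alpha_1=:Y\in T\calF$, so $\alpha_1=-\omega^\flat Y$; meanwhile the $Z_1$-contribution and the $\alpha_2$-contribution to $\pr_G(Z^\sharp\alpha)$ vanish by the same annihilation reasons as above, leaving $\pr_G(Z^\sharp\alpha)=Z_2^\sharp\alpha_1=-Z_2^\sharp(\omega^\flat Y)$. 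As $\alpha$ ranges over $T^\ast M$, $Y$ ranges over all of $T\calF$ (since $\Pi^\sharp$ is surjective onto $T\calF$), whence $\im(\exp_G(Z))^\sharp=\{Y-Z_2^\sharp(\omega^\flat Y):Y\in T\calF\}=\gr(\phi(Z))$, the desired equality.

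I expect the main obstacle to be purely organizational: keeping the four identifications $N\calF\cong G$, $T^\circ\calF\cong G^\ast$, $T^\ast\calF\cong G^\circ$, and $\Pi^\sharp|_{T^\ast\calF}=-(\omega^\flat)^{-1}$ mutually consistent, and tracking the signs dictated by the conventions in~\eqref{eq:rem:expression:phi} and in the definition of $\gr$ in Proposition~\ref{prop:foliations_DC-elements}. Once these are pinned down, both sides of the claimed identity collapse to the single bundle map $-Z_2^\sharp\circ\omega^\flat$, completing the proof; I note that this same computation simultaneously verifies the alternative description of $\exp_G(Z)$ recorded in the remark following Theorem~\ref{theor:nearby_regular_bivectors}, which is exactly why that remark defers its justification to the present proposition.
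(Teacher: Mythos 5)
Your proposal is correct and follows essentially the same route as the paper's proof: decompose $Z=Z_1+Z_2\in\Gamma(\wedge^2T\calF)\oplus\Gamma(T\calF\otimes G)$, compute $\gr(\exp_G(Z))=\calR_\Pi\calR_\gamma\gr(Z)$, project to $TM$ using $\Pi^\sharp\circ\gamma^\flat=-\pr_{T\calF}$, and identify the resulting distribution $\{Y-Z_2^\sharp(\omega^\flat Y)\mid Y\in T\calF\}$ with $\gr(\phi(Z))$. The only difference is presentational (you make the intermediate formula $\phi(Z)=-Z_2^\sharp\circ\omega^\flat$ and the splitting $\alpha=\alpha_1+\alpha_2$ explicit, where the paper compresses these into one displayed computation), so there is nothing to correct.
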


\begin{proof}
	An arbitrary $Z\in\Gamma(\calI_{\gamma})\cap\frakX^2_\good(M)$ can be uniquely decomposed as $Z=Z_1+Z_2$, where $Z_1\in\Gamma(\wedge^2T\calF)$ and $Z_2\in\Gamma(T\calF\otimes G)$.
	From a straightforward computation, it follows that
	\begin{equation*}
	\gr(\exp_G(Z))=\calR_\Pi\calR_\gamma(\gr (Z))=\{\Pi^\sharp\alpha{+}Z_2^\sharp\alpha+\alpha+\beta{+}\gamma^\flat Z_1^\sharp\alpha{+}\gamma^\flat Z_2^\sharp\beta\mid\alpha+\beta\in T^\ast\calF\oplus G^\ast\}.
	\end{equation*}
	Further, since $\phi(Z)=\phi(Z_2)=(\underline{\smash\phi}\otimes\id_G)Z_2$, one easily gets that
	\begin{equation*}
	\im((\exp_G(Z))^\sharp)=\operatorname{pr}_{TM}(\gr(\exp_G(Z)))=\{\Pi^\sharp\alpha{+}Z_2^\sharp\alpha\mid\alpha\in T^\ast\calF\}=\{V{-}Z_2^\sharp({\gamma}^{\flat}V)\mid V\in T\calF\}=\gr(\phi(Z)),
	\end{equation*}
	and this proves in particular that the diagram~\eqref{eq:prop:strict_morphism:MC_elements} commutes.
\end{proof}

\section{\textsf{Infinitesimal Deformations of Symplectic Foliations and Obstructions}}
\label{sec:obstructedness}
 In this section, we take a closer look at infinitesimal deformations of regular Poisson structures, showing that in general there exist obstructed infinitesimal deformations.
First in \S\ref{sec:regunob} we single out a class of infinitesimal deformations that are always unobstructed.
Then in \S\ref{sec:regob}, using the Kuranishi criterion,  we discuss an example of a regular Poisson structure with obstructed infinitesimal deformations.

Fix a rank $2k$ regular Poisson structure $\Pi$ on a manifold $M$. 

\begin{definition}
	A \emph{smooth deformation} of $\Pi$ is a smooth path  of bivector fields $\Pi_t$ lying in $\RegPoiss^{2k}(M)$ with $\Pi_0=\Pi$.
\end{definition}

Let $\calF$ denote the characteristic foliation of $\Pi$ and fix a distribution $G$ on $M$ complementary to $T\calF$, 
so that one can construct the associated $L_\infty[1]$-algebra $(\frakX^\bullet_\good(M)[2],\{\frakl_k^{G}\}_{k\in\mathbb{N}})$ as in Proposition \ref{prop:good_L_infty_algebra}.
Without loss of generality, one can assume that all smooth deformations of $\Pi$ come, via the map $\exp_G$, from smooth $1$-parameter families $Z_t$ of MC elements of $(\frakX^\bullet_\good(M)[2],\{\frakl_k^{G}\}_{k\in\mathbb{N}})$, with $Z_0=0$.
If $Z_t$ is such a family, then differentiating the MC equation at $t=0$ one obtains
\begin{equation*}
0=\left.\frac{\rmd}{\rmd t}\right|_{t=0}\left(\rmd_\Pi Z_t+\frac{1}{2}\frakl_2^{G}(Z_t,Z_t)+\frac{1}{6}\frakl_3^{G}(Z_t,Z_t,Z_t)\right)=\rmd_\Pi\left(\left.\frac{\rmd}{\rmd t}\right|_{t=0} Z_t\right),
\end{equation*}
Since the tangent map to $\exp_G:\calI_\gamma\to\Pi+\calI_{-\gamma}$ is the identity map along the points of the zero section of $\wedge^2TM$, we recover the content of Lemma~\ref{lem:formal_tangent_space}, namely that a smooth deformation of $\Pi$ gives rise infinitesimally to a $2$-cocycle in $(\frakX^\bullet_\good(M),\rmd_\Pi)$. This justifies the following definition.

\begin{definition}
	An \emph{infinitesimal deformation} (or \emph{first order deformation}) of $\Pi$ is a $2$-cocycle in the complex $(\frakX_\good^\bullet(M),\rmd_\Pi)$
\end{definition}

As seen above, taking the derivative at time $t=0$, each smooth deformation gives rise to an infinitesimal deformation of $\Pi$. 
Infinitesimal deformations of $\Pi$ that do not arise in this way are called \emph{obstructed}. If such an infinitesimal deformation exists, then the deformation problem of the regular Poisson structure $\Pi$ is said to be \emph{obstructed}. We show in Example~\ref{ex:obstructed_example} below that obstructions can occur,
{implying that the space of rank $2k$ regular Poisson structures may fail to be smooth around $\Pi$.}

\subsection{Unobstructed deformations}
\label{sec:regunob}
An arbitrary infinitesimal deformation $Z\in\mathfrak{X}^{2}_\good(M)$
of $\Pi$ decomposes as \[Z=Z_1+Z_2\in\Gamma(\wedge^{2}T\mathcal{F})\oplus\Gamma(T\mathcal{F}\otimes G).\]
Infinitesimal deformations lying in the first summand $\Gamma(\wedge^{2}T\mathcal{F})$ can be turned into closed elements of $\Gamma(\wedge^{2}T^{*}\mathcal{F})$ using the leafwise symplectic form $\omega$; they are the infinitesimal shadows of gauge transformations of the Poisson structure $\Pi$, which leave the underlying foliation unchanged. In particular, any such infinitesimal deformation is unobstructed, as we show in the next Proposition~\ref{prop:gauge_unobs}.

In constrast, an infinitesimal deformation with nonzero component in $\Gamma(T\mathcal{F}\otimes G)$ projects to a nonzero infinitesimal deformation of the underlying foliation $\mathcal{F}$ via the strict $L_{\infty}[1]$-morphism in Prop.~\ref{prop:strict_L_infty-algebra_morphism}. Consequently, a path of regular Poisson structures that prolongs such an infinitesimal deformation necessarily changes the foliation $\calF$.

\begin{proposition}\label{prop:gauge_unobs}
Assume that $M$ is compact and let $\Pi$ be a regular Poisson structure on $M$ with corresponding symplectic foliation $(\calF,\omega)$. Any infinitesimal deformation $Z\in\Gamma(\wedge^{2}T\calF)$ of $\Pi$ is unobstructed.
\end{proposition}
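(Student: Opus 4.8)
The plan is to exploit the fact that on the leafwise sector the $L_\infty[1]$-structure degenerates to its linear part. Concretely, by Lemma~\ref{lem:leafwise_PL_complex} the subspace $\frakX^\bullet(\calF)=\bigoplus_p\frakX^{(p,0)}(M)$ is closed under all the multibrackets $\frakl_k^G$, and on it both $\frakl_2^G$ and $\frakl_3^G$ vanish, so the only surviving bracket is $\frakl_1^G=\rmd_\Pi$. Consequently, for any $W\in\frakX^2(\calF)$ the full Maurer--Cartan equation $\rmd_\Pi W+\tfrac12\frakl_2^G(W,W)+\tfrac16\frakl_3^G(W,W,W)=0$ collapses to the linear condition $\rmd_\Pi W=0$. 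Since by hypothesis $Z\in\frakX^2(\calF)\subset\frakX^2_\good(M)$ is an infinitesimal deformation, it is a $\rmd_\Pi$-cocycle, hence already a Maurer--Cartan element; the same holds for every rescaling $tZ$, because $\frakX^2(\calF)$ is a linear subspace and $\rmd_\Pi(tZ)=t\,\rmd_\Pi Z=0$.

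First I would promote this observation into a genuine path. Consider the linear family $Z_t:=tZ$ in $\frakX^2(\calF)\subset\frakX^2_\good(M)$, with $Z_0=0$; by the previous paragraph each $Z_t$ is a Maurer--Cartan element of $(\frakX_\good^\bullet(M)[2],\{\frakl_k^G\})$. Because $M$ is compact, $\gamma^\flat\circ Z^\sharp$ is a bounded bundle endomorphism, so $\id_{T^\ast M}+t\,\gamma^\flat\circ Z^\sharp$ is invertible everywhere for all sufficiently small $|t|$; equivalently, recalling that $\Gamma(\calI_\gamma)$ is a $C^0$-open neighbourhood of $0$ and that $tZ\to 0$ in the $C^0$-topology as $t\to 0$, we have $Z_t\in\Gamma(\calI_\gamma)$ for small $t$. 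Then I would apply the Main Theorem~\ref{theor:deformation_theory regular_Poisson}: for such $t$ the bivector field $\Pi_t:=\exp_G(Z_t)$ is a rank $2k$ regular Poisson structure, with $\Pi_0=\exp_G(0)=\Pi$, so $(\Pi_t)$ is a smooth deformation of $\Pi$ inside $\RegPoiss^{2k}(M)$. Differentiating at $t=0$ and invoking Remark~\ref{rem:expmap}, which gives $\frac{\rmd}{\rmd t}\big|_{t=0}\exp_G(tZ)=\frac{\rmd}{\rmd t}\big|_{t=0}(tZ)=Z$, I conclude $\dot\Pi_0=Z$. Thus $Z$ is tangent to an honest path of regular Poisson structures, i.e.~unobstructed. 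If one wants the deformation described explicitly, Lemma~\ref{lem:gaugeB} identifies $\Pi_t$ as the gauge transform of $\Pi$ by $t\tilde\beta_Z$, a regular Poisson structure sharing the foliation $\calF$ with $\Pi$.

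I do not expect a serious analytic obstacle, and this is precisely the point worth emphasising: the difficulty of the general deformation problem lives in the quadratic and cubic terms $\frakl_2^G,\frakl_3^G$ of the Maurer--Cartan equation, and these are exactly the terms that vanish on the leafwise directions $\frakX^2(\calF)$, so the deformation problem restricted to this sector is linear. The only place where genuine care is required is the \emph{uniform} smallness of $tZ$ over all of $M$, which is where compactness enters (to land $tZ$ in $\Gamma(\calI_\gamma)$ for a single interval of $t$); on a non-compact $M$ one would instead restrict to $Z$ that is bounded, or shrink the admissible range of $t$ only pointwise. The remaining ingredients — the vanishing of the higher brackets and the equality $d_0\exp_G=\id$ — have already been established in Lemma~\ref{lem:leafwise_PL_complex} and Remark~\ref{rem:expmap}, so no further computation is needed.
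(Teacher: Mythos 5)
Your proof is correct, but it is not the route the paper's own proof takes. The paper argues directly at the level of bivector fields: since $\rmd_\Pi Z=0$, the leafwise $2$-form $\beta:=\wedge^2\omega^\flat(Z)$ is leafwise closed; one extends it arbitrarily to $\tilde\beta\in\Omega^2(M)$, uses compactness to invert $\id_{T^\ast M}+t\,\tilde\beta^\flat\circ\Pi^\sharp$ for small $t$, and verifies by differentiating the defining identity that the gauge-transformed path $\Pi^{t\tilde\beta}$ (which is regular Poisson with the same underlying foliation) has velocity $Z$ at $t=0$. Your argument instead runs through the $L_\infty[1]$-machinery: the vanishing of $\frakl_2^G$ and $\frakl_3^G$ on $\frakX^\bullet(\calF)$ (Lemma~\ref{lem:leafwise_PL_complex}) makes $tZ$ a path of Maurer--Cartan elements, compactness places it in $\Gamma(\calI_\gamma)$ for small $t$, Theorem~\ref{theor:deformation_theory regular_Poisson} converts it into a path of regular Poisson structures, and Remark~\ref{rem:expmap} identifies the velocity as $Z$. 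This is precisely the rephrasing that the paper itself records in the remark immediately following its proof, where it also notes, via Lemma~\ref{lem:gaugeB}, that the two constructions produce the same path, $\exp_G(tZ)=\Pi^{t\tilde\beta}$. What the paper's proof buys is self-containedness: it needs no choice of complement $G$, no $L_\infty[1]$-algebra, and exhibits the deforming path explicitly as a gauge transformation. What your route buys is conceptual economy: unobstructedness is exposed as a structural fact --- the Maurer--Cartan equation becomes linear on the leafwise sector --- rather than as the outcome of a computation, at the price of invoking the full deformation apparatus together with the auxiliary complement $G$.
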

\begin{proof}
Since $d_{\Pi}Z=0$, the foliated two-form $\beta:=\wedge^{2}\omega^{\flat}(Z)\in\Gamma(\wedge^{2}T^{*}\mathcal{F})$ is leafwise closed. Let $\tilde{\beta}\in\Omega^{2}(M)$ be any extension of $\beta$. Compactness of $M$ implies that there exists $\epsilon>0$ small enough such that the bundle map 
\[
\text{Id}+t\left(\tilde{\beta}\right)^{\flat}\circ\Pi^{\sharp}:T^{*}M\rightarrow T^{*}M
\]
is invertible for all $t\in(-\epsilon,\epsilon)$. By gauge transforming $\Pi$ with $t\tilde{\beta}$ for $t\in(-\epsilon,\epsilon)$, we obtain a path of regular Poisson structures $\Pi^{t\tilde{\beta}}$ characterised by
	\[
	\left(\Pi^{t\tilde{\beta}}\right)^{\sharp}=\Pi^{\sharp}\circ\left(\text{Id}+t(\tilde{\beta})^{\flat}\circ\Pi^{\sharp}\right)^{-1}.
	\]
We claim that the path $\Pi^{t\tilde{\beta}}$ is a prolongation of the infinitesimal deformation $Z$. 
To prove this, note that
\[
\left(\Pi^{t\tilde{\beta}}\right)^{\sharp}\circ\left(\text{Id}+t(\tilde{\beta})^{\flat}\circ\Pi^{\sharp}\right)=\Pi^{\sharp},
\]
and differentiating this equality at time $t=0$, we obtain
	\[
	\left.\frac{d}{dt}\right|_{t=0}\left(\Pi^{t\tilde{\beta}}\right)^{\sharp}=-\Pi^{\sharp}\circ(\tilde{\beta})^{\flat}\circ\Pi^{\sharp}=\left(\wedge^{2}\Pi^{\sharp}(\tilde{\beta})\right)^{\sharp}.
	\]
	This shows that
	\[
	\left.\frac{d}{dt}\right|_{t=0}\Pi^{t\tilde{\beta}}=\wedge^{2}\Pi^{\sharp}(\tilde{\beta})=\wedge^{2}\Pi^{\sharp}(\wedge^{2}\omega^{\flat}(Z))=Z.
	\]
\end{proof}
 
\begin{remark}
We can rephrase the proof of Prop.~\ref{prop:gauge_unobs} making use of Lemmas~\ref{lem:leafwise_PL_complex} and~\ref{lem:gaugeB}. Observe that $Z$ is a MC element of the $L_{\infty}[1]$-subalgebra $(\frakX^\bullet(\calF)[2],\rmd_\Pi)\subset(\frakX^\bullet(M)[2],\{\frakl_k^G\})$. Compactness of $M$ ensures that for small enough times $t$, the path of MC elements $t\mapsto tZ$ stays inside the neighborhood $I_{\gamma}$ introduced in~\eqref{eq:def:I_gamma}. 
So Theorem~\ref{theor:deformation_theory regular_Poisson} gives us a curve $t\mapsto\exp_G(tZ)$ of regular Poisson structures which prolongs the infinitesimal deformation $Z$. By Lemma~\ref{lem:gaugeB}, this curve agrees with the one constructed in the proof above, i.e. $\exp_G(tZ)=\Pi^{t\tilde{\beta}}$. 
\end{remark}

\begin{example}
[{An unobstructed deformation}]\label{ex:DGLAunobst}
Consider $(\mathbb{T}^{4},\theta_1,\theta_2,\theta_3,\theta_4)$ with rank $2$ regular Poisson structure $\Pi=\partial_{\theta_1}\wedge\partial_{\theta_2}$. As a complement to the characteristic distribution of $\Pi$, we take $G:=\text{Span}\{\partial_{\theta_3},\partial_{\theta_4}\}$. The bivector field
\[
\xi:=\partial_{\theta_1}\wedge\partial_{\theta_3}+\partial_{\theta_2}\wedge\partial_{\theta_4}\in\mathfrak{X}^{2}_\good(\mathbb{T}^{4})
\]
is an infinitesimal deformation of $\Pi$, and we claim that it is unobstructed. Indeed, it is tangent to the path 
\begin{equation}\label{path}
\Pi_{t}:=\partial_{\theta_1}\wedge\partial_{\theta_2}+t(\partial_{\theta_1}\wedge\partial_{\theta_3}+\partial_{\theta_2}\wedge\partial_{\theta_4})+t^{2}\partial_{\theta_3}\wedge\partial_{\theta_4},
\end{equation}
which consists of rank $2$ regular Poisson structures since $\wedge^{2}\Pi_{t}=0$. Notice that the quadratic term of $\Pi_{t}$ cannot be omitted because the resulting path would consist of rank $4$ Poisson structures for $t\neq0$.

To interpret this example, we mention a more general fact. Say that we are given a regular Poisson manifold $(M,\Pi)$ whose characteristic distribution allows an involutive complement $G$. If $\xi\in\mathfrak{X}^{2}_\good(M)$ is an infinitesimal deformation of $\Pi$ satisfying $[\xi,\xi]_{\gamma}=0$, then $\xi$ is in fact a MC element of $(\frakX^\bullet_\good(M)[2],\{\frakl_k^{G}\}_{k\in\mathbb{N}})$, {which in this case reduces to a dgL[1]a}. So $t\mapsto t\xi$ is a curve of MC elements, which implies that $t\mapsto\exp_{G}(t\xi)$ is a path of regular Poisson structures prolonging $\xi$. One can check by direct computation that this procedure gives exactly the path that we found in~\eqref{path}.
\end{example}

\begin{remark}\label{rem:representative}
Assume $M$ is compact and let $Z\in\mathfrak{X}^{2}_\good(M)$ be an unobstructed infinitesimal deformation of $\Pi$. Then any representative of the Poisson cohomology class $[Z]\in H^{2}_{\Pi}(M)$ is an unobstructed infinitesimal deformation of $\Pi$. Indeed, pick an arbitrary representative $Z+d_{\Pi}Y$ for some $Y\in\mathfrak{X}(M)$, and assume that $\Pi_{t}$ is a path of regular Poisson structures that is a prolongation of $Z$. Let $\phi_{t}$ denote the flow of $Y$; it is globally defined by compactness of $M$. Then the path of regular Poisson structures $(\phi_{t})_{*}\Pi_{t}$ is a prolongation of $Z+d_{\Pi}Y$, since
\[
\left.\frac{d}{dt}\right|_{t=0}(\phi_{t})_{*}\Pi_{t}=-\calL_{Y}\Pi+\left.\frac{d}{dt}\right|_{t=0}\Pi_{t}=d_{\Pi}Y+Z.
\]
\end{remark}

\subsection{Obstructed deformations}
\label{sec:regob}
The $L_{\infty}[1]$-algebra $(\frakX^\bullet_\good(M)[2],\{\frakl_k^{G}\}_{k\in\mathbb{N}})$ controlling the deformation problem of the regular Poisson structure $\Pi$ provides a sufficient criterion for the existence of obstructions. 
Indeed, obstructions can be detected by means of the \emph{Kuranishi map}
\begin{equation}
\mathrm{Kur}\colon H^2(\frakX_\good^\bullet(M),\rmd_\Pi) \to H^3(\frakX_\good^\bullet(M),\rmd_\Pi),\quad [Z]\mapsto [\frakl_2^{G}(Z,Z)].
\end{equation}

\begin{proposition}
	\label{prop:kuranishi_criterion}
	Let $Z$ be an infinitesimal deformation of $\Pi$.
	If $\operatorname{Kur}[Z]\neq 0$, then $Z$ is obstructed. 
\end{proposition}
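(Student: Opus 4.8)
The plan is to prove the contrapositive: assuming $Z$ is \emph{unobstructed}, I would show that $\operatorname{Kur}[Z]$ vanishes. By definition of unobstructedness there is a smooth deformation $\Pi_t$ of $\Pi$ with $\dot\Pi_0 = Z$. The first step is to transport this path to the algebraic side via Theorem~\ref{theor:deformation_theory regular_Poisson}: writing $\Pi_t = \exp_G(Z_t)$ yields a smooth curve $Z_t$ of Maurer--Cartan elements of $(\frakX_\good^\bullet(M)[2], \{\frakl_k^G\})$ with $Z_0 = 0$, which is legitimate for $t$ small since $Z_t \in \calI_\gamma$ near $t=0$. Because $d_0\exp_G = \id$ (Remark~\ref{rem:expmap}) and $Z_0 = 0$, differentiating $\Pi_t=\exp_G(Z_t)$ at $t=0$ gives $\dot Z_0 = \dot\Pi_0 = Z$, so the chosen curve has $Z$ as its initial velocity.

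The core of the argument is a second-order expansion of the Maurer--Cartan equation
\[
\rmd_\Pi Z_t + \tfrac{1}{2}\frakl_2^G(Z_t, Z_t) + \tfrac{1}{6}\frakl_3^G(Z_t, Z_t, Z_t) = 0
\]
at $t = 0$. Since $Z_0 = 0$, every term arising when the cubic bracket is differentiated twice retains at least one undifferentiated factor $Z_0=0$ and hence vanishes; the quadratic bracket contributes $2\,\frakl_2^G(\dot Z_0, \dot Z_0) = 2\,\frakl_2^G(Z,Z)$, and the unary term contributes $\rmd_\Pi \ddot Z_0$. Collecting these, the second derivative of the Maurer--Cartan equation reads
\[
\rmd_\Pi \ddot Z_0 + \frakl_2^G(Z, Z) = 0.
\]
Thus $\frakl_2^G(Z,Z) = -\rmd_\Pi \ddot Z_0$ is $\rmd_\Pi$-exact, giving $\operatorname{Kur}[Z] = [\frakl_2^G(Z,Z)] = 0$ and completing the contrapositive.

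I expect no serious obstacle, since this is the standard computation detecting the primary obstruction in a deformation complex. The point requiring the most care is the translation between the geometric notion of ``unobstructed'' (tangent to a path of genuine rank $2k$ regular Poisson structures) and the algebraic one (tangent to a smooth path of Maurer--Cartan elements through the origin): this relies on the bijection of Theorem~\ref{theor:deformation_theory regular_Poisson} together with $d_0\exp_G = \id$, and one should confirm that the resulting curve $Z_t$ is smooth and stays in $\calI_\gamma$ for small $t$ so that the Maurer--Cartan equation holds identically along it. It is also worth noting, although it is already implicit in the statement that $\operatorname{Kur}$ is defined on cohomology, that $\frakl_2^G(Z,Z)$ is $\rmd_\Pi$-closed whenever $\rmd_\Pi Z = 0$; this follows from the quadratic $L_\infty[1]$-relation together with $\frakl_1^G(Z) = 0$, and guarantees that the class $[\frakl_2^G(Z,Z)]$ in $H^3(\frakX_\good^\bullet(M),\rmd_\Pi)$ makes sense.
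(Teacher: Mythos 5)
Your proof is correct and is exactly the standard deformation-theory argument that the paper itself invokes: the paper skips the proof and cites the general Kuranishi criterion (its reference {\cite[Theorem 11.4]{OP}}), which is what you have reproduced via the second-order expansion of the Maurer--Cartan equation along a curve $Z_t$ of MC elements with $Z_0=0$ and $\dot Z_0=Z$. The only point worth making explicit is that the primitive $\ddot Z_0$ automatically lies in $\frakX^2_\good(M)$ (the curve $Z_t$ lives in this linear subspace), so $\frakl_2^G(Z,Z)=-\rmd_\Pi\ddot Z_0$ is exact in the subcomplex $(\frakX_\good^\bullet(M),\rmd_\Pi)$ where $\operatorname{Kur}$ is defined, not merely in $(\frakX^\bullet(M),\rmd_\Pi)$.
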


The proof is a general argument in deformation theory (see, e.g.,~\cite[Theorem 11.4]{OP}) and we skip it.
We now use the Kuranishi map to provide an example of obstructed infinitesimal deformation.

\begin{example}[An obstructed deformation]\label{ex:obstructed_example}
Consider the $3$-torus $\bbT^3=(S^1)^{\times 3}$ with angular cooordinates $\theta_1,\theta_2,\theta_3$.
We equip $\bbT^3$ with the rank $2$ symplectic foliation $(\calF,\omega)$ corresponding to the regular Poisson structure
\begin{equation*}
\Pi:=\partial_{\theta_1}\wedge\partial_{\theta_2}.
\end{equation*}
We consider the splitting of the tangent bundle $T\bbT^3=G\oplus T\calF$, where
\begin{equation*}
T\calF:=\im\Pi^\sharp=\operatorname{Span}\left\{\partial_{\theta_1},\partial_{\theta_2}\right\}\quad\text{and}\quad G:=\operatorname{Span}\left\{\partial_{\theta_3}\right\}.
\end{equation*}
So $G$ is integrable, and the $2$-form $\gamma=\gamma_G\in\Omega^2(\bbT^3)$ is given by $\gamma=\rmd\theta_1\wedge\rmd\theta_2$. 

Further, an arbitrary $P\in\frakX^2(\bbT^3)$ decomposes as $P=\sum_{i<j}f_{ij}\partial_{\theta_i}\wedge\partial_{\theta_j}$
and one can easily compute\footnote{In this example we make use of the dgL[1]a $(\frakX^\bullet(M)[2],\{\frakl^G_k\})$, for the sake of illustration. Since $codim(T\calF)=1$, alternatively we could have used the more familiar dgLa
$(\mathfrak{X}^{\bullet}(M)[1],d_{\Pi},[-,-]_{\sf SN})$, see  Remark~\ref{rem:corank-one}.
}
 its differential $\frakl_1^G(P)\in\frakX^3(\bbT^3)$ as follows
\begin{equation}
\label{eq:example:m_1}
\frakl_1^G(P)=\rmd_\Pi(P)=\left(\partial_{\theta_1}f_{13}+\partial_{\theta_2}f_{23}\right)\partial_{\theta_1}\wedge\partial_{\theta_2}\wedge\partial_{\theta_3}.
\end{equation}
Let $Q\in\frakX^3(\bbT^3)$ be an arbitrary tri-vector field.
For any $\tau_3\in S^1$, define the $2$-torus $\bbT^2(\tau_3)\subset\bbT^3$ as follows
\begin{equation*}
\bbT^2(\tau_3):=\{(\theta_1,\theta_2,\theta_3)\in\bbT^3\mid\theta_3=\tau_3\}.
\end{equation*}
Equation~\eqref{eq:example:m_1} implies immediately that, if $Q$ is $\frakl_1^G$-exact, then the following condition holds for any $\tau_3\in S^1$:
\begin{equation}
	\label{eq:example:constraint}
	\int_{\bbT^2(\tau_3)} Q\left(\rmd\theta_1\wedge\rmd\theta_2\wedge\rmd\theta_3\right)\cdot\rmd\theta_1\wedge\rmd\theta_2=0.
\end{equation}
From now on, let us consider the bivector field $\widetilde{P}$ on $\bbT^3$ given by
\begin{equation*}
\label{eq:example:widetilde_P}
\widetilde{P}=f(\theta_3)\partial_{\theta_1}\wedge\partial_{\theta_3}+g(\theta_3)\partial_{\theta_2}\wedge\partial_{\theta_3}
\end{equation*}
for arbitrary $f,g\in C^\infty(S^1)$.
Equation~\eqref{eq:example:m_1} shows that $\widetilde{P}$ is an infinitesimal deformation, i.e.~$\frakl_1^G\widetilde P=0$. Since
\begin{equation*}
\frakl_2^G(\widetilde{P},\widetilde{P})=2\frakl_2^G{\left(f(\theta_3)\partial_{\theta_1}\wedge\partial_{\theta_3},g(\theta_3)\partial_{\theta_2}\wedge\partial_{\theta_3}\right)}=2\left({-}f^\prime(\theta_3)g(\theta_3){+}f(\theta_3)g^\prime(\theta_3)\right)\partial_{\theta_1}\wedge\partial_{\theta_2}\wedge\partial_{\theta_3},
\end{equation*}
one gets that for all $\tau_3\in S^1$,
\begin{align}
\label{eq:constraint:bis}
\int_{\bbT^2(\tau_3)}(\frakl_2^G(\widetilde{P},\widetilde{P}))(\rmd\theta_1\wedge\rmd\theta_2\wedge\rmd\theta_3)\cdot\rmd\theta_1\wedge\rmd\theta_2=2(2\pi)^2\left({-}f^\prime(\tau_3)g(\tau_3){+}f(\tau_3)g^\prime(\tau_3)\right).
\end{align}

Fix now the functions $f,g\in C^\infty(S^1)$ such that ${-}f^\prime g{+}fg^\prime\neq 0$, for instance set $f(\theta)=\sin\theta$ and $g(\theta)=\cos\theta$.
In this case, Equation~\eqref{eq:constraint:bis} tells us that $Q:=\frakl_2^G(\widetilde{P},\widetilde{P})$ doesn't satisfy Equation~\eqref{eq:example:constraint}.
So $\frakl_2^G(\widetilde{P},\widetilde{P})$ is not $\frakl_1^G$-exact, and $[\widetilde{P}]\in H^2(\frakX^\bullet(\bbT^3),\rmd_\Pi)$ is not killed by the Kuranishi map, i.e.
\begin{equation*}
\operatorname{Kur}([\widetilde{P}])=[\frakl_2^G(\widetilde{P},\widetilde{P})]\neq 0\in H^3(\frakX^\bullet(\bbT^3),\rmd_\Pi).
\end{equation*}
This means that infinitesimal deformation $\widetilde{P}$ of $\Pi$ is obstructed (see Proposition~\ref{prop:kuranishi_criterion}).
\end{example}

\section{Relating Obstructions to Poisson Structures and to Foliations}
\label{sec:relating}

{
In the previous section \S\ref{sec:obstructedness}, we considered infinitesimal deformations $Z$ of a regular Poisson structure $\Pi$, and established some of their properties.
Clearly, by the two forgetful maps in the diagram below,} $Z$ induces both an infinitesimal deformation of $\Pi$ viewed as a Poisson structure (without constraints on the rank), and an infinitesimal deformation of the foliation underlying $\Pi$. In this section, we relate the (un)obstructe{d}ness of $Z$ to that of these two infinitesimal deformations.

\begin{equation}
\xymatrix{
& \text{Regular Poisson structures}\ar[rd]\ar[ld]&  \\
 \text{Poisson structures}&  &  \text{Foliations}}
\end{equation}

\subsection{Obstructedness as a regular Poisson structure vs. obstructedness as a Poisson structure}

In this subsection, we consider the following problem. Say we are given a regular Poisson structure $(M,\Pi)$, a choice of complement $TM=T\mathcal{F}\oplus G$ and an infinitesimal deformation $\xi\in\mathfrak{X}^{2}_\good(M)$ of $\Pi$. 
\begin{itemize}
    \item Is it possible that $\xi$ is tangent to a path of Poisson structures deforming $\Pi$, but that there exists no such path consisting of \emph{regular} Poisson structures? 
    \item That is, can $\xi$ be obstructed in the deformation problem of $\Pi$ as a \emph{regular} Poisson structure, while it is unobstructed when deforming $\Pi$ just as a Poisson structure?
    \end{itemize}
Assuming $M$ is compact, this phenomenon cannot occur when $\Pi$ has corank one: if $\Pi_t$ is a path of Poisson structures with $\Pi_0=\Pi$, then $\Pi_t$ is also of corank one for small enough $t$. We will show that in higher corank however, the answer is positive. 

In \S \ref{subsubsec:comp} we show that when the extension  $\gamma\in \Omega^2(M)$ of the leaf-wise symplectic form  is   closed, the  Kuranishi criterion (see \S\ref{sec:regob}) is not able to distinguish between the two kinds of obstructedness. Therefore in \S\ref{subsubsec:obex} we focus on the case where $\gamma$ is not closed, and using the Kuranishi criterion we exhibit examples in corank 2 showing that the answer to the above questions is positive.

\subsubsection{{\underline{\smash{Comparing the obstructions}}}}\label{subsubsec:comp}
We first compare at the algebraic level the basic obstructions for extending an infinitesimal deformation of $\Pi$, either to a path of Poisson structures or to a path of \emph{regular} Poisson structures.

\begin{lemma}\label{lem:exact}
Let $(M,\Pi)$ be a regular Poisson manifold, and fix a complement $TM=T\mathcal{F}\oplus G$. If $\xi\in\mathfrak{X}_\good^{2}(M)$ is an infinitesimal deformation of $\Pi$, then $[\xi,\xi]_{\gamma}$ is exact in $\left(\mathfrak{X}^{\bullet}(M),d_{\Pi}\right)$ if and only if $[\xi,\xi]_{SN}$ is exact in $\left(\mathfrak{X}^{\bullet}(M),d_{\Pi}\right)$.
\end{lemma}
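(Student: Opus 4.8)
The plan is to reduce the statement to a single cohomological fact: it suffices to prove that the difference $[\xi,\xi]_\gamma-[\xi,\xi]_{\sf SN}$ is $\rmd_\Pi$-exact in $(\frakX^\bullet(M),\rmd_\Pi)$ whenever $\rmd_\Pi\xi=0$. Granting this, exactness of either term forces exactness of the other, since they differ by a coboundary, which is the claimed equivalence. First I would record that $\frakl_2^G(\xi,\xi)=[\xi,\xi]_\gamma$: indeed $\xi\in\frakX^2(M)$ has degree $0$ in $\frakX^\bullet(M)[2]$, so the sign in~\eqref{eq:prop:Koszul_algebra:binary_bracket} is $+$. Thus $[\xi,\xi]_\gamma$ is exactly the quadratic Kuranishi term of the $L_\infty[1]$-algebra $(\frakX^\bullet(M)[2],\{\frakl_k^G\})$, while $[\xi,\xi]_{\sf SN}$ is the quadratic Kuranishi term of the Koszul dgLa $(\frakX^\bullet(M)[1],\rmd_\Pi,[-,-]_{\sf SN})$ of Lemma~\ref{ex:deformation_theory_Poisson_structures}. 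So the lemma asserts precisely that these two Kuranishi classes in $H^3(\frakX^\bullet(M),\rmd_\Pi)$ vanish simultaneously.

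The main approach is conceptual. Both $L_\infty[1]$-algebras govern the deformations of the single Dirac structure $\gr\Pi$, using the complementary (almost) Dirac structures $G\oplus T^\ast\calF$ and $TM$ respectively, and are therefore $L_\infty[1]$-isomorphic by~\cite{GMS} (after shifting the Koszul dgLa to $\frakX^\bullet(M)[2]$). The key point is that the two transport isomorphisms $\Omega^\bullet(\gr\Pi)\to\frakX^\bullet(M)$ of Lemma~\ref{ex:deformation_theory_Poisson_structures} and Proposition~\ref{prop:Koszul_algebra:isomorphism} literally coincide: since $\calR_\gamma$ fixes $T^\ast M$ pointwise one has $\calR_\Pi\calR_\gamma|_{T^\ast M}=\calR_\Pi|_{T^\ast M}$, so both structures carry the \emph{same} differential $\rmd_\Pi$, and the resulting $L_\infty[1]$-isomorphism $\Phi$ can be taken with identity linear part. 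Writing out the quadratic $L_\infty[1]$-relation for $\Phi$ evaluated on a cocycle $\xi$ then yields $[\xi,\xi]_\gamma-[\xi,\xi]_{\sf SN}=\rmd_\Pi\big(\Phi_2(\xi,\xi)\big)$, up to the global d\'ecalage sign (irrelevant for exactness), which is exactly the coboundary required.

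Alternatively, and more explicitly, I would verify this coboundary identity directly through the gauge transform. Setting $W(t):=(t\xi)^\gamma$, formula~\eqref{eq:Z^gamma} expands as $W(t)=t\xi+t^2\eta+O(t^3)$, where $\eta\in\frakX^2(M)$ is defined by $\eta^\sharp=-\xi^\sharp\circ\gamma^\flat\circ\xi^\sharp$. By Theorem~\ref{theor:deformation_theory regular_Poisson} we have $\Pi+W(t)=\exp_G(t\xi)$, so that $\tfrac12[\Pi+W(t),\Pi+W(t)]_{\sf SN}$ and the Maurer--Cartan defect $\rmd_\Pi(t\xi)+\tfrac12\frakl_2^G(t\xi,t\xi)+\tfrac16\frakl_3^G(t\xi,t\xi,t\xi)$ both measure the failure of the common almost Dirac structure $\calR_\Pi\calR_\gamma\gr(t\xi)$ to be integrable, computed in the standard Courant bracket and in the transported bracket $\ldsb-,-\rdsb_G$ of Lemma~\ref{lem:Koszul_algebra:Courant_iso} respectively. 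Using $\rmd_\Pi\xi=0$, their order-$t^2$ coefficients are $\rmd_\Pi\eta+\tfrac12[\xi,\xi]_{\sf SN}$ and $\tfrac12[\xi,\xi]_\gamma$. The hard part is to justify that these leading coefficients agree: this is the covariance of the Courant integrability tensor under the Courant isomorphism $\calR_\Pi\calR_\gamma$, together with the fact that the induced pointwise identification is the identity at $t=0$, i.e. along $\gr\Pi$, by Remark~\ref{rem:expmap}. Granting it gives $[\xi,\xi]_\gamma-[\xi,\xi]_{\sf SN}=2\rmd_\Pi\eta$, completing the proof; this is the same fact as the identity-linear-part claim of the previous paragraph.
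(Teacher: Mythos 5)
Your main argument is correct and is essentially the paper's own proof: both invoke the GMS $L_\infty[1]$-isomorphism between the structures attached to the two complements $TM$ and $G\oplus T^\ast\calF$ of $\gr\Pi$, observe that the two transport maps to $\frakX^\bullet(M)$ coincide because $\calR_\gamma|_{T^\ast M}=\id_{T^\ast M}$, and conclude from the quadratic relation for an $L_\infty[1]$-morphism with identity linear part, evaluated on a $\rmd_\Pi$-cocycle, that $[\xi,\xi]_\gamma$ and $[\xi,\xi]_{\sf SN}$ differ by a $\rmd_\Pi$-coboundary. The alternative Taylor-expansion route in your last paragraph is not needed and, as you yourself note, hinges on an unproved step, so it should be regarded only as a supplementary sketch.
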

\begin{proof}
The two choices of complements $\bbT M=\gr\Pi\oplus TM$ and $\bbT M=\gr\Pi\oplus (G\oplus T^{*}\mathcal{F})$ give rise to $L_{\infty}$-algebras $\left(\Omega^\bullet(\gr\Pi)[1],m_1^{TM},m_2^{TM}\right)$ and $\left(\Omega^\bullet(\gr\Pi)[1],m_1^{G\oplus T^{*}\mathcal{F}},m_2^{G\oplus T^{*}\mathcal{F}},m_3^{G\oplus T^{*}\mathcal{F}}\right)$, as described in Lemma~\ref{lem:2.6first}. As proved in~\cite{GMS}, these are related by a canonical $L_{\infty}$-isomorphism $\{f_n\}_{n\geq 1}$, with $f_1=\text{Id}$. Also using Lemma~\ref{ex:deformation_theory_Poisson_structures} and Proposition~\ref{prop:Koszul_algebra:isomorphism}, we obtain a diagram
\[
\begin{tikzcd}
\left(\Omega^\bullet(\gr\Pi)[1],m_1^{TM},m_2^{TM}\right)\arrow[r,dashed,"Id"]\arrow[d,"\wedge^\bullet(\calR_\Pi|_{T^\ast M})^\ast"] & \left(\Omega^\bullet(\gr\Pi)[1],m_1^{G\oplus T^{*}\mathcal{F}},m_2^{G\oplus T^{*}\mathcal{F}},m_3^{G\oplus T^{*}\mathcal{F}}\right)\arrow[d,"\wedge^\bullet(\calR_\Pi|_{T^\ast M})^\ast"]\\
(\frakX^\bullet(M)[1],\rmd_\Pi,[-,-]_{SN})&  (\frakX^\bullet(M)[1],\{m_k^G\})
\end{tikzcd},
\]
where the vertical maps are strict $L_{\infty}$-isomorphisms and $(\frakX^\bullet(M)[1],\{m_k^G\})$ is the $L_{\infty}$-algebra corresponding with the $L_{\infty}[1]$-algebra $(\frakX^\bullet(M)[2],\{\frakl_k^G\})$. By composition, at the bottom of the above diagram we obtain an $L_{\infty}$-isomorphism from $(\frakX^\bullet(M)[1],\rmd_\Pi,[-,-]_{SN})$ to 
$(\frakX^\bullet(M)[1],\{m_k^G\})$, whose first component is the identity.
As a consequence, the class of $[\xi,\xi]_{SN}$ in $\left(\mathfrak{X}^{\bullet}(M),d_{\Pi}\right)$ coincides with the class of $m_2^G(\xi,\xi)=[\xi,\xi]_{\gamma}$. This proves the lemma.
\end{proof}

However, the obstruction for extending $\xi$ to a path of regular Poisson structures is the class of $[\xi,\xi]_{\gamma}$ in the cohomology of the subcomplex $(\mathfrak{X}_\good^{\bullet}(M),d_{\Pi})\subset(\mathfrak{X}^{\bullet}(M),d_{\Pi})$. Under an additional condition, $[\xi,\xi]_{\gamma}$ is exact in $(\mathfrak{X}_\good^{\bullet}(M),d_{\Pi})$ if and only if it is exact in $(\mathfrak{X}^{\bullet}(M),d_{\Pi})$, as we now show.

\begin{lemma}\label{lem:injective}
Let $(M,\Pi)$ be a regular Poisson manifold, and choose a complement $TM=T\mathcal{F}\oplus G$. If $d\gamma=0$, then the inclusion  $(\mathfrak{X}_\good^{\bullet}(M),d_{\Pi})\hookrightarrow(\mathfrak{X}^{\bullet}(M),d_{\Pi})$ induces an injective map in cohomology.
\end{lemma}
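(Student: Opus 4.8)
The plan is to show that the hypothesis $d\gamma=0$ forces the Lichnerowicz differential $\rmd_\Pi$ to be \emph{homogeneous of bidegree $(1,0)$} with respect to the $\bbN\times\bbN$ grading~\eqref{bigrading}. Once this is known, $(\frakX^\bullet(M),\rmd_\Pi)$ splits as a direct sum of subcomplexes indexed by the $G$-degree $q$, with $\frakX^\bullet_\good(M)$ being the sum of the two lowest summands ($q=0,1$); injectivity in cohomology is then immediate, since the inclusion of a direct summand subcomplex is split injective on cohomology.

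First I would recall from Lemma~\ref{lem:multibrackets:NxN_grading}(1) that $\rmd_\Pi=\frakl_1^G$ decomposes as $\rmd_\Pi=D'+D''$, where $D'$ has bidegree $(1,0)$ and $D''$ has bidegree $(2,-1)$. Since $D'$ and $D''$ are the homogeneous components of a derivation of the bigraded algebra $\frakX^\bullet(M)$, each is itself a derivation, so to prove $D''=0$ it suffices to check it on the algebra generators $C^\infty(M)$, $\Gamma(T\calF)$ and $\Gamma(G)$. On the first two, $\rmd_\Pi$ is already of bidegree $(1,0)$ (as computed in the proof of Lemma~\ref{lem:multibrackets:NxN_grading}), so $D''$ vanishes there. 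Everything thus reduces to showing that, for $X\in\Gamma(G)$, the $\Gamma(\wedge^2T\calF)$-component of $\rmd_\Pi X=-\calL_X\Pi$ vanishes.

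This vanishing is the heart of the argument, and the place where $d\gamma=0$ enters crucially; I expect it to be the main obstacle. Since $\iota_X\gamma=\gamma^\flat X=0$ for $X\in\Gamma(G)$ and $d\gamma=0$, Cartan's formula gives $\calL_X\gamma=0$. I would then differentiate the identity $\gamma^\flat\circ\Pi^\sharp=-\pr_{T^\ast\calF}$ along $X$: the Leibniz rule for the Lie derivative of a composition of musical maps yields $(\calL_X\gamma)^\flat\circ\Pi^\sharp+\gamma^\flat\circ(\calL_X\Pi)^\sharp=-\calL_X(\pr_{T^\ast\calF})$, and the first term drops out because $\calL_X\gamma=0$, leaving $\gamma^\flat\circ(\calL_X\Pi)^\sharp=-\calL_X(\pr_{T^\ast\calF})$. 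Evaluating both sides on $\alpha\in\Gamma(T^\ast\calF)=\Gamma(G^\circ)$, the right-hand side equals $-\pr_{G^\ast}(\calL_X\alpha)$, which lands in $G^\ast$, while the left-hand side lands in $\im\gamma^\flat=T^\ast\calF$; as $T^\ast\calF\cap G^\ast=0$, both vanish. Finally, writing $\calL_X\Pi=W_{(2,0)}+W_{(1,1)}+W_{(0,2)}$ in the bigrading and using $\gamma^\flat|_G=0$, the relation $\gamma^\flat(\iota_\alpha\calL_X\Pi)=0$ collapses to $\gamma^\flat(\iota_\alpha W_{(2,0)})=0$; injectivity of $\gamma^\flat|_{T\calF}$ then gives $\iota_\alpha W_{(2,0)}=0$ for all $\alpha\in\Gamma(T^\ast\calF)$, i.e.~$W_{(2,0)}=0$, which is exactly $D''X=0$.

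With $D''=0$ established, $\rmd_\Pi$ preserves the $G$-degree $q$, so each $\calC_q:=\big(\bigoplus_p\frakX^{(p,q)}(M),\rmd_\Pi\big)$ is a subcomplex and $\frakX^\bullet(M)=\bigoplus_q\calC_q$ as complexes, while $\frakX^\bullet_\good(M)=\calC_0\oplus\calC_1$. Passing to cohomology, $H^\bullet(\frakX^\bullet_\good(M),\rmd_\Pi)=H^\bullet(\calC_0)\oplus H^\bullet(\calC_1)$ sits inside $H^\bullet(\frakX^\bullet(M),\rmd_\Pi)=\bigoplus_q H^\bullet(\calC_q)$ as a direct summand, and the map induced by the inclusion is precisely this summand inclusion, hence injective. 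This completes the plan; the only genuinely computational step is the bidegree-$(2,-1)$ vanishing in the third paragraph, and I do not anticipate difficulties elsewhere.
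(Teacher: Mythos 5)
Your proof is correct, and at the structural level it is the same as the paper's: both hinge on the claim that $d\gamma=0$ forces $\rmd_\Pi$ to be homogeneous of bidegree $(1,0)$ for the bigrading~\eqref{bigrading}, and both then deduce injectivity from the resulting splitting by $G$-degree (the paper writes out your direct-sum-of-subcomplexes argument by hand: it decomposes a primitive $\varphi=\varphi_\good+\varphi_{\mathrm{rest}}$ and observes that $\rmd_\Pi\varphi_{\mathrm{rest}}$, lying in bidegrees $(\bullet,q)$ with $q>1$, must vanish once its sum with $\rmd_\Pi\varphi_\good$ is good). Where you genuinely differ is in the proof of that claim on $\Gamma(G)$. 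The paper chooses, near each point, a local frame $\{Y_1,\dots,Y_l\}$ of $G$ consisting of infinitesimal automorphisms of $\calF$ (this uses involutivity of $T\calF$), proves the stronger fact $\rmd_\Pi Y_i=0$ from $\calL_{Y_i}\omega=r(\calL_{Y_i}\gamma)=r(d\iota_{Y_i}\gamma+\iota_{Y_i}d\gamma)=0$, and then extends to arbitrary sections $\sum_i f_iY_i$ by the Leibniz rule. You instead take an arbitrary $X\in\Gamma(G)$ and extract only the needed vanishing of the $\Gamma(\wedge^2T\calF)$-component of $\calL_X\Pi$ from the tensorial identity obtained by Lie-differentiating $\gamma^\flat\circ\Pi^\sharp=-\pr_{T^\ast\calF}$. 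Both arguments rest on the same input, namely $\calL_X\gamma=d\iota_X\gamma+\iota_X d\gamma=0$ for $X\in\Gamma(G)$; yours is frame-free and global but needs more pointwise bookkeeping, while the paper's is shorter but requires producing projectable local frames. Note also that your computation yields as a by-product $\pr_{G^\ast}(\calL_X\alpha)=0$ for all $\alpha\in\Gamma(G^\circ)$, i.e.~involutivity of $G$; this is consistent rather than suspicious, since the kernel of a closed constant-rank $2$-form is automatically involutive, a fact the paper records right after Remark~\ref{rem:Courant_tensor_twisted_TM}.
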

\begin{proof}
We will use the bi-grading $\frakX^\bullet(M)=\bigoplus_{q,p\in\bbN}\frakX^{(p,q)}(M)$ introduced in~\eqref{bigrading}. While in general $d_\Pi\mathfrak{X}^{(p,q)}(M)\subset\mathfrak{X}^{(p+1,q)}(M)\oplus\mathfrak{X}^{(p+2,q-1)}(M)$ (cf. Lemma~\ref{lem:multibrackets:NxN_grading}), we make the following

\emph{Claim: The assumption that $\gamma$ is closed ensures that $d_\Pi\mathfrak{X}^{(p,q)}(M)\subset\mathfrak{X}^{(p+1,q)}(M)$.}  

\noindent
Since $d_{\Pi}(\Gamma(T\mathcal{F}))\subset\Gamma(\wedge^{2}T\mathcal{F})$, this claim follows if we show that $d_{\Pi}(\Gamma(G))\subset\Gamma(T\mathcal{F}\otimes G)$. 
Since $T\mathcal{F}$ is involutive, around any point there exists a local frame $\{Y_1,\ldots,Y_l\}$ for $G$ consisting of infinitesimal automorphisms of $\mathcal{F}$. Notice that $d_{\Pi}Y_i=0$ for such generators $Y_i\in\Gamma(G)$, since  
\[
\calL_{Y_i}\omega=r(\calL_{Y_i}\gamma)=r(d\iota_{Y_i}\gamma+\iota_{Y_i}d\gamma)=0,
\]
{where we used that $d\gamma=0$.}
Here $\omega\in\Gamma(\wedge^{2}T^{*}\mathcal{F})$ is the leafwise symplectic form and $r:\Omega^{\bullet}(M)\rightarrow\Omega^{\bullet}(\mathcal{F})$ is the restriction map. Consequently, for a local section $\sum_{i}f_{i}Y_{i}$ of $G$ we obtain
\[
d_{\Pi}\left(\sum_{i=1}^{l}f_{i}Y_{i}\right)=\sum_{i=1}^{l}(d_{\Pi}f_i)\wedge Y_i\in\Gamma(T\mathcal{F}\otimes G).
\]
This proves that $d_{\Pi}(\Gamma(G))\subset\Gamma(T\mathcal{F}\otimes G)$, which confirms our claim that $d_\Pi\mathfrak{X}^{(p,q)}(M)\subset\mathfrak{X}^{(p+1,q)}(M)$.

We now prove the lemma. Assume that $\xi\in\mathfrak{X}^{n}_\good(M)=\mathfrak{X}^{(n,0)}(M)\oplus\mathfrak{X}^{(n-1,1)}(M)$ is exact, i.e. $\xi=d_{\Pi}\varphi$ for some $\varphi\in\mathfrak{X}^{n-1}(M)$. We have to show that $\xi$ has a primitive in $\mathfrak{X}_\good^{n-1}(M)$. We decompose $\varphi=\varphi_\good+\varphi_{\text{rest}}$ in the direct sum
\[
\mathfrak{X}^{n-1}(M)=\left(\mathfrak{X}^{(n-1,0)}(M)\oplus\mathfrak{X}^{(n-2,1)}(M)\right)\oplus\left(\oplus_{j>1}\mathfrak{X}^{(n-1-j,j)}(M)\right).
\]
Since $\xi=d_{\Pi}\varphi_\good+d_{\Pi}\varphi_{\text{rest}}$, we have $d_{\Pi}\varphi_{\text{rest}}=\xi-d_{\Pi}\varphi_\good\in\mathfrak{X}_\good^{n}(M)$. But by the claim we just proved, we also know that $d_{\Pi}\varphi_{\text{rest}}$ is contained in $\oplus_{j>1}\mathfrak{X}^{(n-j,j)}(M)$, so that
\[
d_{\Pi}\varphi_{\text{rest}}\in\mathfrak{X}_\good^{n}(M)\cap\left(\oplus_{j>1}\mathfrak{X}^{(n-j,j)}(M)\right)=\{0\}.
\]
In conclusion, $\xi=d_{\Pi}\varphi_\good$, which shows that $\xi$ has a primitive in $\mathfrak{X}_\good^{n-1}(M)$.
\end{proof}

Combining Lemmas~\ref{lem:exact} and~\ref{lem:injective}, we obtain the following result. It states that if $\gamma$ is closed, then the primary obstructions for extending an infinitesimal deformation $\xi$ of $\Pi$ to a path of Poisson structures resp. a path of \emph{regular} Poisson structures are equivalent.
So the Kuranishi criterion cannot distinguish obstructedness in the regular Poisson deformation problem from obstructedness in the Poisson deformation problem.

\begin{corollary}\label{cor:Kuranisheq}
Let $(M,\Pi)$ be a regular Poisson manifold, and choose a complement $TM=T\mathcal{F}\oplus G$. Assume moreover that $d\gamma=0$. If $\xi\in\mathfrak{X}_\good^{2}(M)$ is an infinitesimal deformation of $\Pi$, then $[\xi,\xi]_{\gamma}$ is exact in $\left(\mathfrak{X}_\good^{\bullet}(M),d_{\Pi}\right)$ precisely when $[\xi,\xi]_{SN}$ is exact in $\left(\mathfrak{X}^{\bullet}(M),d_{\Pi}\right)$.
\end{corollary}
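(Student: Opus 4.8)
The plan is to assemble the corollary directly out of the two preceding lemmas, since it is essentially a formal combination of them. Before doing so, I would record the (easy but necessary) fact that $[\xi,\xi]_\gamma$ is a \emph{closed} element of the \emph{good} subcomplex, so that the two exactness statements are genuinely cohomological. By Proposition~\ref{prop:good_L_infty_algebra} the bracket $\frakl_2^G$ preserves $\frakX_\good^\bullet(M)$, and since $\frakl_2^G(\xi,\xi)=(-1)^{|\xi|}[\xi,\xi]_\gamma$ with $\xi$ good, we get $[\xi,\xi]_\gamma\in\frakX_\good^3(M)$. Moreover, because $\xi$ is an infinitesimal deformation we have $\frakl_1^G\xi=\rmd_\Pi\xi=0$, and the quadratic Jacobi identity of the $L_\infty[1]$-algebra $(\frakX^\bullet(M)[2],\{\frakl_k^G\})$ forces $\rmd_\Pi\,\frakl_2^G(\xi,\xi)=0$; hence $[\xi,\xi]_\gamma$ is a $3$-cocycle both in $(\frakX_\good^\bullet(M),\rmd_\Pi)$ and in $(\frakX^\bullet(M),\rmd_\Pi)$.

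For the implication from the subcomplex to the Poisson side, I would argue as follows. If $[\xi,\xi]_\gamma$ is exact in $(\frakX_\good^\bullet(M),\rmd_\Pi)$, then its primitive also lies in $\frakX^\bullet(M)$, so $[\xi,\xi]_\gamma$ is a fortiori exact in the ambient complex $(\frakX^\bullet(M),\rmd_\Pi)$. Lemma~\ref{lem:exact} then transfers this exactness statement to $[\xi,\xi]_{SN}$, which is exactly what is claimed. Note that this direction does not even use the hypothesis $\rmd\gamma=0$; it is the converse that requires it.

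For the converse, suppose $[\xi,\xi]_{SN}$ is exact in $(\frakX^\bullet(M),\rmd_\Pi)$. By Lemma~\ref{lem:exact} this is equivalent to $[\xi,\xi]_\gamma$ being exact in $(\frakX^\bullet(M),\rmd_\Pi)$, i.e. to the vanishing of its class in $H^3(\frakX^\bullet(M),\rmd_\Pi)$. Now I would invoke Lemma~\ref{lem:injective}, which under the standing assumption $\rmd\gamma=0$ guarantees that the inclusion $(\frakX_\good^\bullet(M),\rmd_\Pi)\hookrightarrow(\frakX^\bullet(M),\rmd_\Pi)$ induces an injection on cohomology. Applying this to the closed good cocycle $[\xi,\xi]_\gamma$: its image class in $H^3(\frakX^\bullet(M),\rmd_\Pi)$ vanishes, so by injectivity its class in $H^3(\frakX_\good^\bullet(M),\rmd_\Pi)$ vanishes as well, i.e. $[\xi,\xi]_\gamma$ is exact in the good subcomplex. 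This closes the equivalence.

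I do not expect a genuine obstacle here; the only point demanding care is the bookkeeping that makes the argument well-posed, namely checking that $[\xi,\xi]_\gamma$ is a closed element of $\frakX_\good^\bullet(M)$ so that Lemma~\ref{lem:injective} can be applied to its cohomology class rather than merely to the chain. The entire content of the result is the interplay between the bracket-comparison of Lemma~\ref{lem:exact} and the cohomological injectivity of Lemma~\ref{lem:injective}; once those are in hand the corollary is immediate, and in particular the role of the hypothesis $\rmd\gamma=0$ is confined to guaranteeing that injectivity.
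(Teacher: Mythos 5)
Your proposal is correct and follows exactly the paper's route: the paper proves this corollary simply by combining Lemma~\ref{lem:exact} (equivalence of exactness of $[\xi,\xi]_\gamma$ and $[\xi,\xi]_{SN}$ in the ambient complex) with Lemma~\ref{lem:injective} (injectivity in cohomology of the inclusion of the good subcomplex when $\rmd\gamma=0$), precisely as you do. Your additional verification that $[\xi,\xi]_\gamma$ is a $\rmd_\Pi$-closed element of $\frakX_\good^3(M)$ is sound bookkeeping (it is implicit in the paper via the well-definedness of the Kuranishi map) and does not change the approach.
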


\begin{remark}\label{rem:variation}
    The conclusion of Lemma~\ref{lem:injective} fails in general if $\gamma$ is not closed. Consider the regular Poisson manifold $\big(\mathbb{T}^{4},\Pi=(\sin\theta_4+2)\partial_{\theta_1}\wedge\partial_{\theta_2}\big)$. In this case, the leafwise symplectic form $\omega$ is given by
    \[
    \omega=\frac{1}{\sin\theta_4+2}d\theta_1\wedge d\theta_2.
    \]
    There exists no closed $2$-form on $\mathbb{T}^{4}$ extending $\omega$, since the leafwise variation of $\omega$ {(see \cite[Def. 1.2.14]{Osorno})}
    \[
    var_{\omega}=\left[\left(\frac{-\cos\theta_4}{(\sin\theta_4+2)^{2}}d\theta_1\wedge d\theta_2\right)\otimes d\theta_{4}\right]\in H^{2}(\mathcal{F};N^{*}\mathcal{F})
    \]
    is nonzero. Now consider the trivector field $$W:=\cos\theta_4\partial_{\theta_1}\wedge\partial_{\theta_2}\wedge\partial_{\theta_3}\in\mathfrak{X}_\good^3(\mathbb{T}^{4}).$$ Its Poisson cohomology class $[W]\in H^{3}_{\Pi}(\mathbb{T}^{4})$ is trivial since $d_{\Pi}(\partial_{\theta_3}\wedge\partial_{\theta_4})=W$. 
    But $W$ is not exact in the complex $\big(\mathfrak{X}_\good^{\bullet}(\mathbb{T}^{4}),d_{\Pi}\big)$. Indeed, if it were exact then it would be of the form
    \begin{align*}
    &d_{\Pi}\left(f_{12}\partial_{\theta_1}\wedge\partial_{\theta_2}+f_{13}\partial_{\theta_1}\wedge\partial_{\theta_3}+f_{14}\partial_{\theta_1}\wedge\partial_{\theta_4}+f_{23}\partial_{\theta_2}\wedge\partial_{\theta_3}+f_{24}\partial_{\theta_2}\wedge\partial_{\theta_4}\right)\\
    &\hspace{1cm}=(\sin\theta_4+2)\left(\left(\frac{\partial f_{13}}{\partial_{\theta_1}}+\frac{\partial f_{23}}{\partial_{\theta_2}}\right)\partial_{\theta_1}\wedge\partial_{\theta_2}\wedge\partial_{\theta_3}+\left(\frac{\partial f_{14}}{\partial_{\theta_1}}+\frac{\partial f_{24}}{\partial_{\theta_2}}\right)\partial_{\theta_1}\wedge\partial_{\theta_2}\wedge\partial_{\theta_4}\right),
    \end{align*}
    which would imply that
    \[
    \cos\theta_4=(\sin\theta_4+2)\left(\frac{\partial f_{13}}{\partial_{\theta_1}}+\frac{\partial f_{23}}{\partial_{\theta_2}}\right)=\partial_{\theta_1}\big((\sin\theta_4+2)f_{13}\big)+\partial_{\theta_2}\big((\sin\theta_4+2)f_{23}\big).
    \]
    But then we reach a contradiction: defining $\mathbb{T}^{2}(\tau_3,\tau_4)$ to be the $2$-subtorus obtained fixing the coordinates $\theta_3,\theta_4$ to arbitrary constants $\tau_3,\tau_4\in S^{1}$, we get
    \begin{align*}
    4\pi^{2}\cos\tau_4&=\int_{\mathbb{T}^{2}(\tau_3,\tau_4)}\cos\theta_4 d_{\theta_1}\wedge d\theta_2\\
    &=\int_{\mathbb{T}^{2}(\tau_3,\tau_4)}\big(\partial_{\theta_1}\big((\sin\theta_4+2)f_{13}\big)+\partial_{\theta_2}\big((\sin\theta_4+2)f_{23}\big)\big)d_{\theta_1}\wedge d\theta_2=0
    \end{align*}
    This shows that the inclusion $(\mathfrak{X}_\good^{\bullet}(M),d_{\Pi})\hookrightarrow(\mathfrak{X}^{\bullet}(M),d_{\Pi})$ is not injective in cohomology.
\end{remark}

\subsubsection{{\underline{\smash{Obstructed examples which are unobstructed as Poisson structures}}}}\label{subsubsec:obex}
{By Corollary~\ref{cor:Kuranisheq} and Remark~\ref{rem:variation}}, in order to find a regular Poisson structure $\Pi$ and an infinitesimal deformation of $\Pi$ that is obstructed for the regular Poisson deformation problem, but unobstructed for the Poisson deformation problem, we should look at examples where the foliated symplectic form $\omega$ is not tamed by a closed $2$-form. That is why we will look at examples where $\omega$ has non-zero leafwise variation.

\begin{example}\label{ex:StephaneobstructedReg}
Consider again the regular Poisson manifold $\big(\mathbb{T}^{4},\Pi=(\sin\theta_4+2)\partial_{\theta_1}\wedge\partial_{\theta_2}\big)$, as in Remark~\ref{rem:variation} above. As a complement to the characteristic distribution, we take $G=\text{Span}\{\partial_{\theta_3},\partial_{\theta_4}\}$, so that $\gamma\in\Omega^{2}(\mathbb{T}^{4})$ is given by
    \[
    \gamma=\frac{1}{\sin\theta_4+2}d\theta_1\wedge d\theta_2.
    \]
Consider the infinitesimal deformation $\xi:=\sin\theta_4\partial_{\theta_1}\wedge\partial_{\theta_3}+\partial_{\theta_2}\wedge\partial_{\theta_4}\in\mathfrak{X}_\good^{2}(\mathbb{T}^{4})$ of $\Pi$. If we deform $\Pi$ as a Poisson structure, then $\xi$ is unobstructed since it is tangent to the path of Poisson structures 
\[
\Pi_{t}:=(\sin\theta_4+2)\partial_{\theta_1}\wedge\partial_{\theta_2}+t\left(\sin\theta_4\partial_{\theta_1}\wedge\partial_{\theta_3}+\partial_{\theta_2}\wedge\partial_{\theta_4}\right)+t^{2}\partial_{\theta_3}\wedge\partial_{\theta_4}.
\]
Notice that the path $\Pi_t$ does not deform $\Pi$ as a regular Poisson structure. Indeed, $\wedge^{2}\Pi_{t}=4t^{2}\partial_{\theta_1}\wedge\partial_{\theta_2}\wedge\partial_{\theta_3}\wedge\partial_{\theta_4}$, so that $\Pi_t$ has rank $4$ for $t\neq0$. In fact, we claim that $\xi$ is not tangent to a path of rank $2$ Poisson structures. To see why, we compute
\[
[\xi,\xi]_{\gamma}=-4\frac{\cos\theta_4}{\sin\theta_4+2}\partial_{\theta_1}\wedge\partial_{\theta_2}\wedge\partial_{\theta_3},
\]
which is not exact in $(\mathfrak{X}_\good^{\bullet}(M),d_{\Pi})$. Indeed, if it were exact with primitive $\varphi\in\mathfrak{X}_\good^{2}(\mathbb{T}^{4})$, then we would have that
\[
W:=\cos\theta_4\partial_{\theta_1}\wedge\partial_{\theta_2}\wedge\partial_{\theta_3}=d_{\Pi}\left(-\frac{\sin\theta_4+2}{4}\varphi\right)
\]
is exact in $(\mathfrak{X}_\good^{\bullet}(M),d_{\Pi})$, which violates Remark~\ref{rem:variation}. So $\xi$ is an infinitesimal deformation of $\Pi$, which is tangent to a path of Poisson structures but not to a path of rank $2$ Poisson structures deforming $\Pi$.
\end{example}

We present a family of examples that {generalize} Ex.~\ref{ex:StephaneobstructedReg}.

\begin{example}\label{ex:genexStephane}
Consider   the regular Poisson manifold $\big(\mathbb{T}^{4},\Pi=h(\theta_3,\theta_4)\partial_{\theta_1}\wedge\partial_{\theta_2}\big)$,
where $h$ is nowhere-vanishing. As a complement to the characteristic distribution, we take $G=\text{Span}\{\partial_{\theta_3},\partial_{\theta_4}\}$.
An element of the form
$$\xi:=f(\theta_3,\theta_4)\partial_{\theta_1}\wedge\partial_{\theta_3}+g(\theta_3,\theta_4)\partial_{\theta_2}\wedge\partial_{\theta_4}\in\mathfrak{X}_\good^{2}(\mathbb{T}^{4})$$
is an infinitesimal deformation  of $\Pi$, i.e. $[\Pi,\xi]=0$.

 \emph{Claim: If the infinitesimal deformation $\xi$ can be extended to a smooth path of rank-2 Poisson structures, then   
\begin{equation}\label{eq:extorusrank2}
 g \frac{\partial (f/h)}{\partial \theta_4}=f \frac{\partial (g/h)}{\partial \theta_3}=0.
 \end{equation}}
To prove the claim, we relate $\xi$ to foliations as in Prop.~\ref{prop:strict_morphism:obstructions} below. Recall from Prop.~\ref{prop:strict_L_infty-algebra_morphism} that we have a strict morphism of $L_{\infty}[1]$-algebras
$\phi:(\frakX_\good^\bullet(M)[2],\{\frakl^G_k\})\rightarrow(\Omega^\bullet(\calF;N\calF)[1],\{\frakv_k\})$. 
We have
$$\phi(\xi)={-}\left(d\theta_1\otimes \frac{g}{h} \partial_{\theta_4}-d\theta_2\otimes \frac{f}{h}\partial_{\theta_3}\right),$$  
and we compute
$$\frakv_2(\phi(\xi),\phi(\xi))={-}2d\theta_1\wedge d\theta_2\otimes 
\left(
\frac{g}{h} \frac{\partial (f/h)}{\partial \theta_4}\partial_{\theta_3}
- \frac{f}{h} \frac{\partial (g/h)}{\partial \theta_3}\partial_{\theta_4}
\right).$$
Notice that the two coefficients in the round bracket  are functions of $\theta_3$ and $\theta_4$ only. 
In~\cite[\S6.2]{SZPre} it was shown that for all $\alpha\in \Omega^1(\calF;G)$, we have $\int_{\mathbb{T}^{2}(\tau_3,\tau_4)} \frakv_1 \alpha=0$, where $\mathbb{T}^{2}(\tau_3,\tau_4)$ denotes the 2-subtorus obtained fixing the last two coordinates to arbitrary constants $\tau_3\in S^1$ and $\tau_4\in S^1$. Hence $\frakv_2(\phi(\xi),\phi(\xi))$ is $\frakv_1$-exact  only when it vanishes identically. In other words, $\mathrm{Kur}[\phi(\xi)]= 0$ if{f} Eq.~\eqref{eq:extorusrank2} holds. By Prop.~\ref{prop:strict_morphism:obstructions}, this proves the claim.

We now provide instances in which $\xi$ can be extended to a smooth path of  Poisson structures,
albeit not of constant rank 2.
Notice that $$[\xi,\xi]_{SN}=2\left(f\frac{\partial g}{\partial \theta_3}\partial_{\theta_1}\wedge\partial_{\theta_2}\wedge\partial_{\theta_4}-
g\frac{\partial f}{\partial \theta_4}\partial_{\theta_1}\wedge\partial_{\theta_2}\wedge\partial_{\theta_3}\right).
$$
We consider two cases.

\begin{itemize}
\item[a)] 
Suppose $f=f(\theta_3)$ and $g=g(\theta_4)$. Then we have $[\xi,\xi]_{SN}=0$, so $$\Pi+t\xi$$ is a Poisson structure.  
Further $\Pi+t\xi$ has ``square'' $-2t^2 fg \;\partial_{\theta_1}\wedge\partial_{\theta_2}\wedge\partial_{\theta_3}\wedge\partial_{\theta_4}$, so for $t\neq0$ it has constant rank two  only if $fg=0$ vanishes.

 Notice that Eq.~\eqref{eq:extorusrank2} boils down to $fg\frac{\partial h}{\partial \theta_4}=fg\frac{\partial h}{\partial \theta_3}=0$. So, for instance, for 
any non-constant function $h(\theta_3,\theta_4)$ without zeros
 and  $f=g=1$,  we see that the infinitesimal deformation $\xi$ is unobstructed in the realm of Poisson structures but is obstructed in the realm of regular Poisson structures.

\item[b)]  Suppose $f=f(\theta_4)$, $g=g(\theta_3)$, i.e.~the opposite dependence than in case a).
Set $h:=\frac{1}{a} f g +C$ where $C\in \mathbb{R}$ and $a\in \mathbb{R}\setminus\{0\}$ are constants (we assume that $h$ is nowhere vanishing). Then 
$$\Pi_{t}:=\Pi+t\xi+t^2 X$$
is Poisson, for $X:=a\partial_{\theta_3}\wedge \partial_{\theta_4}$, since one checks easily that $[\xi,X]_{SN}=0$ and $2[\Pi,X]_{SN}=-[\xi,\xi]_{SN}$.
Further $\Pi_t$ has ``square'' $2t^2 Ca\;\partial_{\theta_1}\wedge\partial_{\theta_2}\wedge\partial_{\theta_3}\wedge\partial_{\theta_4}$, so for $t\neq0$  the Poisson structure $\Pi_t$  has rank 2 only if $C=0$.

Now Eq.~\eqref{eq:extorusrank2} boils down to $g\frac{\partial f}{\partial \theta_4}C=f\frac{\partial g}{\partial \theta_3}C=0$. So, for instance, for   $f=\sin(\theta_4)$, $g=1$, $a=1$ and $C=2$ (so that $h=\sin(\theta_4)+2$ ),
 we see that the infinitesimal deformation $\xi$ is unobstructed in the realm of Poisson structures but obstructed in the realm of regular Poisson structures. 
This is exactly Ex.~\ref{ex:StephaneobstructedReg}. 
\end{itemize}
\end{example}

\begin{remark}
For the regular Poisson manifold $\big(\mathbb{T}^{4},\partial_{\theta_1}\wedge\partial_{\theta_2}\big)$,
the leafwise symplectic form clearly admits a closed extension $\gamma\in \Omega^2(\mathbb{T}^{4})$. Therefore, because of  Corollary~\ref{cor:Kuranisheq}, the Kuranishi criterium is not able to establish if an infinitesimal deformation is obstructed in the realm of regular Poisson manifolds while being unobstructed in the realm of  Poisson manifolds.
Hence we are led to modify this regular Poisson structure multiplying it with a nowhere vanishing Casimir function $h$, i.e. to consider 
$\Pi=h(\theta_3,\theta_4)\partial_{\theta_1}\wedge\partial_{\theta_2}$: this is what we do in Ex.~\ref{ex:genexStephane}.

Notice that if the leafwise symplectic form $h^{-1}(\theta_3,\theta_4)d\theta_1\wedge d\theta_2$ admits a closed extension $\gamma\in \Omega^2(\mathbb{T}^{4})$, then $h$ has to be constant. To see this, for all pairs of points $(\tau_3,\tau_4)$ and $(\tau'_3,\tau'_4)\in \mathbb{T}^{2}$,
apply Stokes' theorem to $\gamma$ on a 3-dimensional submanifold of the form 
$\mathbb{T}^{2}\times P$, where $P\subset \mathbb{T}^{2}$ is an arc joining $(\tau_3,\tau_4)$ to 
$(\tau'_3,\tau'_4)$. {Alternatively, one can use the leafwise variation of $h^{-1}(\theta_3,\theta_4)d\theta_1\wedge d\theta_2$, which is given by
\begin{equation}\label{eq:variation}
\left[d\theta_1\wedge d\theta_2\otimes\left(\frac{\partial h^{-1}}{\partial\theta_3}d\theta_3+\frac{\partial h^{-1}}{\partial\theta_4}d\theta_4\right)\right]\in H^{2}(\mathcal{F};N^{*}\mathcal{F}).
\end{equation}
If the leafwise symplectic form $h^{-1}(\theta_3,\theta_4)d\theta_1\wedge d\theta_2$ admits a closed extension, then its variation needs to be zero. 
Since the foliation $\mathcal{F}$ is given by the fibers of $\mathbb{T}^{4}\rightarrow(\mathbb{T}^{2},\theta_3,\theta_4)$
, one has $H^{2}(\mathcal{F};N^{*}\mathcal{F})\cong\Omega^{1}(\mathbb{T}^{2};H^{2}(\mathbb{T}^{2}))$, where the first copy of $\mathbb{T}^{2}$ is the base with coordinates $(\theta_3,\theta_4)$ and the second one is the fiber with coordinates $(\theta_1,\theta_2)$. Hence, the leafwise variation~\eqref{eq:variation} being zero implies that both $(\partial h^{-1}/\partial\theta_3)d\theta_1\wedge d\theta_2$ and $(\partial h^{-1}/\partial\theta_4)d\theta_1\wedge d\theta_2$ are leafwise exact, so that necessarily $h$ is constant.
}
\end{remark}

\subsection{Relating obstructedness with the underlying foliation}\label{subsec:obsfol}

We discuss, and illustrate with examples, how obstructedness of a regular Poisson structure $\Pi$ relates with its characteristic foliation $\calF$.
{The main tool is the strict $L_{\infty}[1]$-morphism $\phi$ introduced in Prop.~\ref{prop:strict_L_infty-algebra_morphism}: if an infinitesimal deformation of $\Pi$ is unobstructed, then the corresponding infinitesimal deformation of $\calF$ is also unobstructed (see Proposition~\ref{prop:strict_morphism:obstructions} below).}

We ask whether the converse  holds:
\begin{itemize}
    \item  Given an infinitesimal deformation $Z$ of $\Pi$ such that  $\phi(Z)$ is tangent to a path of foliations deforming $\calF$, is $Z$ itself tangent to a path of regular Poisson structures?
        \item In other words, does the unobstructedness of   $\phi(Z)$ imply the unobstructedness of  $Z$?
\end{itemize}
In  \S\ref{subsubsec:Unobstructedness results} we 
 present a few situations in which the infinitesimal deformation  $Z$ is unobstructed.
However, as we show in \S\ref{subsubsec:obstructedness results},  in general the answer to the above questions is negative, i.e.  the converse of Proposition~\ref{prop:strict_morphism:obstructions} does not hold. This means that the obstructedness of an infinitesimal deformation of $\Pi$  is not ``due'' exclusively to the obstructedness of the corresponding infinitesimal deformation of $\calF$. Finally, in \S\ref{subsubsec:stab} we present some remarks about a related question, namely 
the stability of symplectic foliations.

\bigskip
Fix a rank $d$ foliation $\calF$ on a manifold $M$.
{We first recall some terminology, in analogy to \S\ref{sec:obstructedness}.}

\begin{definition}
A \emph{smooth deformation} of the rank $d$ foliation $\calF$ is a smooth path $\calF_t$ in $\Fol^d(M)$ with $\calF_0=\calF$.
\end{definition}
Upon choosing a complement $G$ to the distribution $T\calF$, we can construct the $L_{\infty}[1]$-algebra $(\Omega^\bullet(\calF;N\calF)[1],\{\frakv_k\})$ introduced in Prop.~\ref{lem:G_infty_algebra:foliation}, 
which governs the deformation problem of $\calF$.
Being interested in small deformations of the foliation $\calF$, one can assume that all $\calF_t$'s in a smooth deformation of $\calF$ are transverse to $G$. Consequently, in view of Prop.~\ref{prop:foliations_DC-elements}, there is a unique smooth $1$-parameter family $\eta_t$, with $\eta_0=0$, of MC elements of $(\Omega^\bullet(\calF;N\calF)[1],\{\frakv_k\})$, such that $T\calF_t=\gr(\eta_t)$.
Differentiating the MC equation for $\eta_t$ at $t=0$, one gets that $\rmd_\nabla\dot\eta_0=0$.
{Additionally, it is easy to see that the $1$-cocycle $\dot\eta_0$ in $(\Omega^\bullet(\calF;N\calF),\rmd_\nabla)$ coincides exactly with the \emph{infinitesimal deformation} associated by Heitsch to the smooth deformation $\calF_t$ of the foliation $\calF$ (cf.~\cite[Def.~2.7 and Cor.~2.11]{heitsch1975cohomology}).}
This justifies the following definition.

\begin{definition}
An \emph{infinitesimal deformation} of $\calF$ is a $1$-cocycle in the complex $(\Omega^\bullet(\calF;N\calF),\rmd_\nabla)$ .
\end{definition}

So each smooth deformation gives rise, as its derivative at $t=0$, to an infinitesimal deformation.
The converse is generally false: there may exist \emph{obstructed infinitesimal deformations}, i.e.~infinitesimal deformations of $\calF$ which do not arise from smooth deformations. This reflects the fact that the space of rank $d$ foliations may fail to be smooth around $\calF$.
The $L_{\infty}[1]$-algebra $(\Omega^\bullet(\calF;N\calF)[1],\{\frakv_k\})$ controlling the deformation problem of $\calF$ gives a criterion for the existence of obstructions. 
Indeed, obstructions can be detected by means of the \emph{Kuranishi map}
\begin{equation}
\mathrm{Kur}\colon H^1(\Omega^\bullet(\calF;N\calF),\rmd_\nabla) \to H^2(\Omega^\bullet(\calF;N\calF),\rmd_\nabla),\quad [\eta]\mapsto [\frakv_2(\eta,\eta)].
\end{equation}
\begin{proposition}
	\label{prop:kuranishi_criterion:foliations}
	Let $\eta$ be an infinitesimal deformation of $\calF$.
	If $\operatorname{Kur}[\eta]\neq 0$, then $\eta$ is obstructed.
\end{proposition}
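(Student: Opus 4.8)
The plan is to argue by contraposition: assuming that $\eta$ is unobstructed, I will show that $\operatorname{Kur}[\eta]=0$. This is the standard primary-obstruction computation in deformation theory, transported to the $L_\infty[1]$-algebra $(\Omega^\bullet(\calF;N\calF)[1],\{\frakv_k\})$ of Proposition~\ref{lem:G_infty_algebra:foliation}, and is entirely parallel to the regular-Poisson statement in Proposition~\ref{prop:kuranishi_criterion}.

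First I would translate unobstructedness into algebra. By definition, $\eta$ unobstructed means it is the derivative at $t=0$ of a smooth deformation $\calF_t$ of $\calF$; shrinking the parameter interval, we may assume that every $\calF_t$ is transverse to $G$. Proposition~\ref{prop:foliations_DC-elements} then furnishes a smooth $1$-parameter family $\eta_t$ of MC elements of $(\Omega^\bullet(\calF;N\calF)[1],\{\frakv_k\})$ with $\eta_0=0$, $T\calF_t=\gr(\eta_t)$ and $\dot\eta_0=\eta$. Since the only non-vanishing brackets are $\frakv_1=\rmd_\nabla$, $\frakv_2$ and $\frakv_3$, and since $\eta_t$ has degree $0$ in the shifted complex (so that no Koszul signs intervene), the Maurer--Cartan equation reads, for all $t$,
\begin{equation}
\label{eq:MC_fol}
\frakv_1(\eta_t)+\tfrac{1}{2}\frakv_2(\eta_t,\eta_t)+\tfrac{1}{6}\frakv_3(\eta_t,\eta_t,\eta_t)=0.
\end{equation}

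The key step is to differentiate \eqref{eq:MC_fol} twice at $t=0$, using multilinearity of the brackets together with the normalization $\eta_0=0$. The first derivative at $t=0$ leaves only $\frakv_1(\dot\eta_0)=0$, recovering the cocycle condition $\rmd_\nabla\eta=0$. Differentiating once more, every term produced from $\frakv_3$ still carries at least one factor $\eta_0=0$ and hence drops out, while the quadratic term contributes $\frakv_2(\dot\eta_0,\dot\eta_0)$ and the linear term contributes $\rmd_\nabla(\ddot\eta_0)$; one obtains
\begin{equation}
\label{eq:second_der_fol}
\rmd_\nabla(\ddot\eta_0)+\frakv_2(\eta,\eta)=0.
\end{equation}
Thus $\frakv_2(\eta,\eta)=-\rmd_\nabla(\ddot\eta_0)$ is $\rmd_\nabla$-exact, so $\operatorname{Kur}[\eta]=[\frakv_2(\eta,\eta)]=0$. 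By contraposition, $\operatorname{Kur}[\eta]\neq 0$ forces $\eta$ to be obstructed, as claimed.

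The only delicate point is the bookkeeping in the second differentiation: one must check that the cubic bracket $\frakv_3$ contributes nothing to the order-$t^2$ term, which is precisely where $\eta_0=0$ is used, and one must work with honest finite derivatives of the smooth --- not necessarily analytic --- path $\eta_t$ rather than with a formal power series. Verifying that the $L_\infty[1]$ degree and sign conventions make the expansion coefficients in \eqref{eq:MC_fol} equal to $1/k!$, so that \eqref{eq:second_der_fol} comes out with the stated signs, is routine once one notes that the MC element $\eta$ sits in degree $0$ of the shifted complex $\Omega^\bullet(\calF;N\calF)[1]$.
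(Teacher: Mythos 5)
Your proof is correct and is exactly the standard argument the paper relies on: the paper states this proposition without proof, and for the parallel regular-Poisson statement (Proposition~\ref{prop:kuranishi_criterion}) it explicitly skips the proof, citing the general deformation-theoretic argument of~\cite[Theorem 11.4]{OP}, which is precisely your contrapositive differentiation of the Maurer--Cartan equation twice at $t=0$. Your bookkeeping is accurate — every second-derivative term coming from $\frakv_3$ retains a factor $\eta_0=0$ and drops out, leaving $\frakv_2(\eta,\eta)=-\rmd_\nabla\ddot\eta_0$ exact, hence $\operatorname{Kur}[\eta]=0$.
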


From now on, let $\calF$ be a foliation on $M$ that comes from a regular Poisson structure $\Pi\in\mathfrak{X}^{2}(M)$. Recall that the $L_{\infty}[1]$-algebra $(\frakX_\good^\bullet(M)[2],\{\frakl^G_k\})$ governing the deformations of $\Pi$ and the $L_{\infty}[1]$-algebra $(\Omega^\bullet(\calF;N\calF)[1],\{\frakv_k\})$ governing those of $\calF$ are related by a strict $L_{\infty}[1]$-morphism (see Prop.~\ref{prop:strict_L_infty-algebra_morphism})
\begin{equation}\label{eq:morphism}
\phi:(\frakX_\good^\bullet(M)[2],\{\frakl^G_k\})\rightarrow(\Omega^\bullet(\calF;N\calF)[1],\{\frakv_k\}).
\end{equation}
Via this morphism, one can detect obstructed infinitesimal deformations of the regular Poisson structure $\Pi$.

\begin{proposition}
	\label{prop:strict_morphism:obstructions}
	Let $Z$ be an infinitesimal deformation of the regular Poisson structure $\Pi$, i.e.~$Z\in\frakX^2_\good(M)$ and $\rmd_\Pi Z=0$.
	Then $\phi(Z)$ is an infinitesimal deformation of the characteristic foliation $\calF$, i.e.~$\phi(Z)\in\Omega^1(\calF;N\calF)$ and $\rmd_\nabla\phi(Z)=0$.
	Further,
\begin{itemize}
\item $\mathrm{Kur}[\phi(Z)]\neq 0\Longrightarrow\mathrm{Kur}[Z]\neq 0$
\item if  $\phi(Z)$ is obstructed, then $Z$ is obstructed as well.
\end{itemize}	
	\end{proposition}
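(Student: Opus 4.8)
The plan is to exploit that $\phi$ is a \emph{strict} morphism of $L_\infty[1]$-algebras (Proposition~\ref{prop:strict_L_infty-algebra_morphism}), meaning $\phi\circ\frakl_k^G=\frakv_k\circ\phi^{\otimes k}$ for every $k$; in particular $\phi$ intertwines the differentials $\frakl_1^G=\rmd_\Pi$ and $\frakv_1=\rmd_\nabla$, and the binary brackets $\frakl_2^G$ and $\frakv_2$. First I would dispatch the preliminary claim by a degree count: since $\phi$ has degree $0$ and $Z\in\frakX^2_\good(M)$ sits in degree $0$ of $\frakX_\good^\bullet(M)[2]$, its image lies in degree $0$ of $\Omega^\bullet(\calF;N\calF)[1]$, that is $\phi(Z)\in\Omega^1(\calF;N\calF)$; moreover $\rmd_\nabla\phi(Z)=\phi(\rmd_\Pi Z)=\phi(0)=0$, so $\phi(Z)$ is indeed an infinitesimal deformation of $\calF$.

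For the first bullet I would use that, being a chain map, $\phi$ descends to $\phi_*\colon H^\bullet(\frakX_\good^\bullet(M),\rmd_\Pi)\to H^\bullet(\Omega^\bullet(\calF;N\calF),\rmd_\nabla)$. Applying strictness in arity $k=2$ to the (even-degree) element $Z$ yields $\phi(\frakl_2^G(Z,Z))=\frakv_2(\phi(Z),\phi(Z))$, whence
\[
\phi_*\bigl(\mathrm{Kur}[Z]\bigr)=\bigl[\phi(\frakl_2^G(Z,Z))\bigr]=\bigl[\frakv_2(\phi(Z),\phi(Z))\bigr]=\mathrm{Kur}[\phi(Z)].
\]
Thus if $\mathrm{Kur}[\phi(Z)]\neq 0$ then $\phi_*(\mathrm{Kur}[Z])\neq 0$, which forces $\mathrm{Kur}[Z]\neq 0$. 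This is just naturality of the Kuranishi map under a strict morphism; the only point to verify is the sign bookkeeping in the $k=2$ identity, but since $Z$ has even degree and $\phi$ is degree-preserving no signs intervene.

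For the second bullet I would argue by contraposition, assuming $Z$ is unobstructed and producing a prolongation of $\phi(Z)$. By unobstructedness, $Z$ is tangent to a smooth path of rank $2k$ regular Poisson structures; via Theorem~\ref{theor:deformation_theory regular_Poisson} together with $\rmd_0\exp_G=\id$ (Remark~\ref{rem:expmap}), this path corresponds to a smooth family $Z_t$ of MC elements of $(\frakX_\good^\bullet(M)[2],\{\frakl_k^G\})$ with $Z_0=0$ and $\dot Z_0=Z$. Since strict $L_\infty[1]$-morphisms send MC elements to MC elements, $\phi(Z_t)$ is a smooth family of MC elements of $(\Omega^\bullet(\calF;N\calF)[1],\{\frakv_k\})$ with $\phi(Z_0)=0$. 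Invoking Proposition~\ref{prop:foliations_DC-elements}, each $\phi(Z_t)$ determines a foliation $\calF_t$ with $T\calF_t=\gr(\phi(Z_t))$, transverse to $G$ for $t$ small; this is a smooth deformation of $\calF$ whose associated infinitesimal deformation is $\tfrac{\rmd}{\rmd t}\big|_{t=0}\phi(Z_t)=\phi(\dot Z_0)=\phi(Z)$. Hence $\phi(Z)$ is unobstructed, which is the contrapositive.

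The step I expect to be the main obstacle is the second bullet: I must ensure that the passage between geometric paths and paths of MC elements is genuinely smooth in both directions, and that the relevant openness conditions (staying inside $\calI_\gamma$ and transversality to $G$) hold on a possibly shorter time interval so that both Theorem~\ref{theor:deformation_theory regular_Poisson} and Proposition~\ref{prop:foliations_DC-elements} apply. This is handled by shrinking $\epsilon$, since these conditions are open and hold at $t=0$. The remaining ingredients—that strict morphisms preserve MC elements, and that $\tfrac{\rmd}{\rmd t}\big|_{0}\phi(Z_t)$ is precisely the (Heitsch) infinitesimal deformation of $\calF_t$—are standard, the latter being exactly the identification recalled just before Proposition~\ref{prop:kuranishi_criterion:foliations}.
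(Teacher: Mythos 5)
Your proposal is correct and follows essentially the same route as the paper's proof: the first bullet via the naturality identity $[\phi]\circ\mathrm{Kur}=\mathrm{Kur}\circ[\phi]$ coming from strictness of $\phi$, and the second bullet by contraposition, converting an unobstructing path $\Pi_t=\exp_G(Z_t)$ into the family of MC elements $\phi(Z_t)$ and using linearity of $\phi$ together with $\rmd_0\exp_G=\id$ (Remark~\ref{rem:expmap}) to identify its velocity with $\phi(Z)$. Your extra care about smoothness and shrinking the time interval only makes explicit what the paper absorbs into the phrase ``we can assume.''
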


\begin{proof}
	The first part follows immediately from $\phi$ being a strict $L_\infty[1]$-algebra morphism (Prop.~\ref{prop:strict_L_infty-algebra_morphism}) and the consequent identity $[\phi]\circ\mathrm{Kur}=\mathrm{Kur}\circ[\phi]$, where $[\phi]$ denotes the map induced by $\phi$ in cohomology.

	To prove the last statement of the proposition, assume that $Z$ is an unobstructed infinitesimal deformation of $\Pi$, which is tangent to a path of deformations $\Pi_t$.  We can assume that the path $\Pi_t$ comes from a smooth family of MC elements $Z_t$ of $(\frakX_\good^\bullet(M)[2],\{\frakl_k^{G}\})$, via $\Pi_t=\exp_G(Z_t)$. 
	Then $\phi(Z_t)$ is a smooth family of MC elements of $(\Omega^\bullet(\calF;N\calF)[1],\{\frakv_k\})$, 
and satisfies
	\begin{equation*}
	\left.\frac{\rmd}{\rmd t}\right|_{t=0}\phi(Z_t)=\phi\left(\left.\frac{\rmd}{\rmd t}\right|_{t=0}Z_t\right)=\phi(Z).
	\end{equation*}
Here in the first equality we used the linearity of $\phi$, and in the second that
	by Remark~\ref{rem:expmap} we have
	\[
	Z=\left.\frac{d}{dt}\right|_{t=0}\Pi_t=\left.\frac{d}{dt}\right|_{t=0}\exp_G(Z_t)=\left.\frac{d}{dt}\right|_{t=0}Z_t.
	\]
This shows that, if $Z$ is unobstructed, then $\phi(Z)$ is unobstructed as well, finishing the proof.

We remark that, as one might expect,  $\phi(Z)$ is tangent to the path of foliations 
	underlying the regular Poisson structures $\Pi_t$. Indeed
the smooth family  $\phi(Z_t)$   corresponds with the path of foliations $\gr(\phi(Z_t))=q(\Pi_t)$. Here we used Prop.~\ref{prop:strict_morphism:MC_elements}, denoting by $\textsf{q}:\operatorname{\RegPoiss}^{2k}(M)\rightarrow\Fol^{2k}(M)$ the obvious map (see eq.~\ref{eq:forgetful_functor}).
\end{proof}

\begin{remark}\label{rem:folobstr}
	Proposition~\ref{prop:strict_morphism:obstructions} sheds a new light on Example~\ref{ex:obstructed_example}.
	In that example we constructed an obstructed infinitesimal deformation $w$ for a regular Poisson structure $\Pi$ on $\bbT^3$.
	Actually, one can check that $\mathrm{Kur}[\phi(w)]\neq 0$ and so $\phi(w)$ is an obstructed infinitesimal deformation of the characteristic foliation $\calF$ of $\Pi$ on $\bbT^3$ (cf.~Proposition~\ref{prop:kuranishi_criterion:foliations}).
	This and Proposition~\ref{prop:strict_morphism:obstructions} gives another proof of the fact that $\mathrm{Kur}[w]\neq 0$ and so the infinitesimal deformation $w$ is obstructed.
\end{remark}

\subsubsection{\underline{\smash{Unobstructedness results}}}
\label{subsubsec:Unobstructedness results}

 We present a few conditions guaranteeing that if an infinitesimal deformation $Z$ of a  regular Poisson structure is such that the corresponding deformation of the foliation is unobstructed, then $Z$ itself is unobstructed.
 As earlier, $\phi$ denotes the strict $L_{\infty}[1]$-morphism 
of equation~\eqref{eq:morphism}.

\begin{proposition}\label{dim2}
Let $(M,\Pi)$ be a compact regular Poisson manifold with $2$-dimensional symplectic leaves. {If an infinitesimal deformation $Z\in \frakX_\good^2(M)$ of $\Pi$ is such that $\phi(Z)$ is unobstructed, then $Z$ itself is unobstructed.} In particular, if the deformation problem of the underlying foliation $\mathcal{F}$ is unobstructed, then also the deformation problem of $\Pi$ as a rank-$2$ Poisson structure is unobstructed.
\end{proposition}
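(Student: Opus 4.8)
The plan is to exploit the short exact sequence of $L_\infty[1]$-algebras from Remark~\ref{rem:ses},
\[
\{0\}\to(\frakX^\bullet(\calF)[2],\rmd_\Pi)\to(\frakX_\good^\bullet(M)[2],\{\frakl^G_k\})\overset{\phi}{\to}(\Omega^\bullet(\calF;N\calF)[1],\{\frakv_k\})\to\{0\},
\]
and to lift the unobstructing path of foliations through the strict surjection $\phi$. Concretely, unobstructedness of $\phi(Z)$ means (via Proposition~\ref{prop:foliations_DC-elements}) that there is a smooth path $\eta_t$ of MC elements of $(\Omega^\bullet(\calF;N\calF)[1],\{\frakv_k\})$ with $\eta_0=0$ and $\dot\eta_0=\phi(Z)$. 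My goal is to produce a smooth path $Z_t$ of MC elements of $(\frakX_\good^\bullet(M)[2],\{\frakl^G_k\})$ lying in $\Gamma(\calI_\gamma)$ with $\phi(Z_t)=\eta_t$ and $\dot Z_0=Z$; then Theorem~\ref{theor:deformation_theory regular_Poisson} converts $\exp_G(Z_t)$ into a path of rank-$2$ regular Poisson structures, and by Remark~\ref{rem:expmap} its derivative at $t=0$ is $\dot Z_0=Z$, proving $Z$ unobstructed.

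First I would fix a linear section $s$ of the fibrewise surjection $\phi$ (which exists since, by its definition in \S\ref{sec:strict_morphism}, $\phi$ is the projection $\frakX^\bullet_\good(M)[2]\to\Gamma(\wedge^{\bullet+1}T\calF\otimes G)$ followed by an isomorphism), and set
\[
W_t:=s(\eta_t)+t\bigl(Z-s(\phi(Z))\bigr).
\]
A direct check gives $\phi(W_t)=\eta_t$, $W_0=0$, and $\dot W_0=s(\phi(Z))+\bigl(Z-s(\phi(Z))\bigr)=Z$, using $\phi\circ s=\id$ and $\dot\eta_0=\phi(Z)$. This $W_t$ is the candidate lift with the correct initial velocity, but a priori it need not satisfy the Maurer--Cartan equation.

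The crux is to show that $W_t$ is automatically MC, and here the hypothesis on the leaf dimension enters decisively. Because $\phi$ is a \emph{strict} $L_\infty[1]$-morphism (Proposition~\ref{prop:strict_L_infty-algebra_morphism}), it intertwines the Maurer--Cartan curvatures: writing $\MC(W_t):=\frakl_1^G(W_t)+\tfrac12\frakl_2^G(W_t,W_t)+\tfrac16\frakl_3^G(W_t,W_t,W_t)$, one has $\phi(\MC(W_t))=\MC(\eta_t)=0$, so $\MC(W_t)\in\ker\phi$. By Remark~\ref{rem:ses} the kernel is $(\frakX^\bullet(\calF)[2],\rmd_\Pi)$, and since $\MC(W_t)$ has degree $1$ in the shifted grading it lands in the degree-$1$ piece $\frakX^3(\calF)=\Gamma(\wedge^3T\calF)$. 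As the symplectic leaves are $2$-dimensional, $\rank T\calF=2$ forces $\wedge^3T\calF=0$, whence $\MC(W_t)=0$ identically. Thus $W_t$ is genuinely a path of MC elements. I expect this vanishing of the lifting obstruction to be the entire heart of the argument; everything else is formal.

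Finally I would close the loop with a compactness remark: since $W_0=0$ and $\calI_\gamma$ is an open neighborhood of the zero section in $\wedge^2TM$, compactness of $M$ yields $W_t\in\Gamma(\calI_\gamma)$ for $t$ small, so Theorem~\ref{theor:deformation_theory regular_Poisson} applies and $\exp_G(W_t)$ is the desired path of rank-$2$ regular Poisson structures tangent to $Z$. For the ``in particular'' clause, note that if the deformation problem of $\calF$ is unobstructed then $\phi(Z)$ is unobstructed for \emph{every} infinitesimal deformation $Z$ of $\Pi$ (it is an infinitesimal deformation of $\calF$ by Proposition~\ref{prop:strict_morphism:obstructions}), so the first part makes every such $Z$ unobstructed; hence the deformation problem of $\Pi$ as a rank-$2$ Poisson structure is unobstructed.
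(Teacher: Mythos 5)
Your proof is correct, and it takes a genuinely different route from the paper's. The paper argues geometrically: it decomposes $Z=Z_1+Z_2\in\Gamma(\wedge^2T\calF)\oplus\Gamma(T\calF\otimes G)$, takes the path of foliations $\calF_t$ prolonging $\phi(Z)$, and gauge-transforms these foliations by ${-}\gamma+t\tilde{\alpha}$, where $\tilde{\alpha}$ is an extension of $\wedge^2\omega^\flat(Z_1)$; the two-dimensionality of the leaves enters there because any $2$-form restricted to a $2$-dimensional leaf is automatically closed, so each gauge transform $\calF_t^{-\gamma+t\tilde{\alpha}}$ is a rank-$2$ regular Poisson structure, and a direct differentiation shows the resulting path is tangent to $Z$. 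You instead work entirely at the level of the $L_\infty[1]$-algebras: you lift the path of MC elements $\eta_t$ through the strict surjection $\phi$ via a linear section, correct the lift so that its initial velocity is $Z$, and observe that the Maurer--Cartan curvature of the lift lies in $\ker\phi$ in shifted degree $1$, which is $\Gamma(\wedge^3T\calF)=0$ precisely because the leaves are $2$-dimensional. The two arguments invoke the dimension hypothesis at the same essential point ($\wedge^3T\calF=0$, equivalently leafwise $2$-forms are automatically closed), but yours is structural: it shows that with $2$-dimensional leaves $\phi$ is injective in degree $1$, so \emph{any} lift of a path of MC elements is automatically MC, which makes the vanishing of the lifting obstruction transparent and avoids all explicit gauge computations; what you give up is the explicit geometric description of the prolonging path, which the paper exhibits concretely as gauge transformations of the deformed foliations. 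Both proofs use compactness in the same way (to keep the lift, respectively the gauge data, in the admissible neighborhood $\Gamma(\calI_\gamma)$ for small $t$), and your endgame via Theorem~\ref{theor:deformation_theory regular_Poisson} and Remark~\ref{rem:expmap}, as well as your treatment of the ``in particular'' clause, is sound.
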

\begin{proof}
Fix a complement $G$ to $T\mathcal{F}$ and assume that $Z=Z_1+Z_2\in\Gamma(\wedge^{2}T\mathcal{F})\oplus\Gamma(T\mathcal{F}\otimes G)$ is an infinitesimal deformation of $\Pi$ so that $\phi(Z)$ is unobstructed. As before, denote by $\gamma\in\Omega^{2}(M)$ the extension of the leafwise symplectic form $\omega$ by zero on $G$, i.e.
$
\gamma|_{T\mathcal{F}\times T\mathcal{F}}=\omega$ and $ \iota_{Y}\gamma=0\ \ \forall Y\in\Gamma(G).$

By assumption, the infinitesimal deformation $\phi(Z_2)$ of $\mathcal{F}$ is tangent to a path of foliations $T\mathcal{F}_{t}=\text{Graph}(\Phi_t)$, for some $\Phi_t\in\Gamma(T^{*}\mathcal{F}\otimes G)$. For small enough $t$,  since $M$ is compact,  we have that $\gamma$ is still non-degenerate on the leaves of $\mathcal{F}_{t}$, and it is automatically leafwise closed for dimension reasons. 
So we obtain Poisson structures
 $\Pi_t:=\mathcal{F}_{t}^{{-}\gamma}$, defined by gauge transforming $\mathcal{F}_{t}$ by ${-}\gamma$. We also have a foliated two-form $\alpha:=\wedge^{2}\omega^{\flat}(Z_1)$ which is automatically leafwise closed; denote by $\tilde{\alpha}$ any extension of $\alpha$. Gauge transforming $\Pi_t$ with $t\tilde{\alpha}$ for small enough $t$, we obtain a path of 
Poisson structures $$\Pi_{t}^{t\tilde{\alpha}}=\mathcal{F}_{t}^{{-}\gamma+t\tilde{\alpha}}.$$ We claim that this path is a prolongation of the infinitesimal deformation $Z_1+Z_2$.

First, since the Dirac structure $\mathcal{F}_{t}^{{-}\gamma}$ given by
\[
\mathcal{F}_{t}^{{-}\gamma}=\left\{v+\Phi_t(v)+\beta{-}\gamma^{\flat}\left(v+\Phi_t(v)\right):v\in T\mathcal{F}, \beta\in T\mathcal{F}_{t}^{0}\right\}
\]
corresponds with the Poisson structure $\Pi_t$, we have that
\[
{-}\Pi_{t}^{\sharp}\circ\gamma^{\flat}\circ(\text{Id}_{T\mathcal{F}}+\Phi_{t})=\text{Id}_{T\mathcal{F}}+\Phi_t.
\]
{Notice that the L.H.S. is just ${-}\Pi_{t}^{\sharp}\circ\gamma^{\flat}|_{T\mathcal{F}}$. Differentiating at time $t=0$, we get
$$
{-}\left.\frac{d}{dt}\right|_{t=0}\Pi_{t}^{\sharp}\circ\gamma^{\flat}
=\left.\frac{d}{dt}\right|_{t=0}\Phi_{t}.
$$
}

{As we know that $\frac{d}{dt}|_{t=0}\Pi_t$ lies in $\frakX^2_\good(M)$ by Lemma~\ref{lem:formal_tangent_space},
} this proves that 
$\frac{d}{dt}|_{t=0}\Pi_t\in\Gamma(T\mathcal{F}\otimes G)$ and that $$\phi\left(\left.\frac{d}{dt}\right|_{t=0}\Pi_t\right)=\left.\frac{d}{dt}\right|_{t=0}\Phi_{t}=\phi(Z_2).$$ Hence, $\frac{d}{dt}|_{t=0}\Pi_t=Z_2$.
Next, we have by definition that
\[
\left(\Pi_{t}^{t\tilde{\alpha}}\right)^{\sharp}\circ\left(\text{Id}+t\tilde{\alpha}^{\flat}\circ\Pi_{t}^{\sharp}\right)=\Pi_{t}^{\sharp}.
\]
Again differentiating at time $t=0$, we obtain
\begin{align*}
\left.\frac{d}{dt}\right|_{t=0}\left(\Pi_{t}^{t\tilde{\alpha}}\right)^{\sharp}&=\left.\frac{d}{dt}\right|_{t=0}\Pi_{t}^{\sharp}-\Pi_{0}^{\sharp}\circ \left(\wedge^{2}\omega^{\flat}(Z_1)\right)^{\flat}\circ\Pi_{0}^{\sharp}\\
&=\left.\frac{d}{dt}\right|_{t=0}\Pi_{t}^{\sharp}+\left(\wedge^{2}\Pi_{0}^{\sharp}\left(\wedge^{2}\omega^{\flat}(Z_1)\right)\right)^{\sharp}\\
&=Z_{2}^{\sharp}+Z_{1}^{\sharp}.
\end{align*}
This confirms that $\Pi_{t}^{t\tilde{\alpha}}$ is a prolongation of the infinitesimal deformation $Z_1+Z_2$, so the proof is finished.
\end{proof}

{
We present a simple example for the construction in Proposition~\ref{dim2}.
\begin{example}
 On $M=\mathbb{R}^3$, take the rank-2  Poisson structure $\Pi=\partial_x\wedge\partial_y$ and the infinitesimal deformation $Z:=Z_2:=\partial_x\wedge \partial_z$, and extend the leaf-wise symplectic form to $\gamma:=dx\wedge dy\in \Omega^2(M)$.  The image $\phi(Z)=dy\otimes \partial_z$ can be prolonged to the path of MC elements $\{t dy\otimes \partial_z\}$, corresponding to the foliations $\calF_t=\gr(tdy\otimes \partial_z)=\text{Span}\{\partial_x,\partial_y+t\partial_z\}.$
 Then  $\mathcal{F}_{t}^{-\gamma}=\gr(\Pi_t)$ for $\Pi_t:=\partial_x\wedge (\partial_y+t\partial_z)$, which indeed constitutes a path of rank-2  Poisson structure which prolongs $Z$.  
 
 {Notice that, even though $M$ is not compact, $\gamma\in \Omega^2(M)$ happens to be non-degenerate on the leaves of $\calF_t$, for all $t\in \mathbb{R}$.
A compact example can be obtained replacing $\mathbb{R}^3$ by the 3-torus $\mathbb{R}^3/\mathbb{Z}^3$. 
 }
\end{example}
}

The following is a variation of Proposition~\ref{dim2} in which leaves are allowed to have arbitrary dimension, but the hypotheses are more stringent.

\begin{proposition}
\label{prop:extendclosed}
Let $(M,\Pi)$ be a compact regular Poisson manifold. Consider an infinitesimal deformation $Z=Z_1+Z_2\in \frakX_\good^2(M)$ of $\Pi$.
Assume that   $\phi(Z)$ is unobstructed, that $\gamma$ is \emph{closed}, and that the leafwise two-form $\wedge^{2}\omega^{\flat}(Z_1)\in \Omega^2_{closed}(\mathcal{F})$ extends to a \emph{closed} 2-form on $M$.
Then $Z$ itself is unobstructed.
\end{proposition}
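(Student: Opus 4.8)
The plan is to mirror the argument of Proposition~\ref{dim2}, replacing the two places where two-dimensionality of the leaves was used to guarantee leafwise closedness by the two standing hypotheses of the present statement. First I would record that $\phi$ annihilates the component $Z_1\in\Gamma(\wedge^2T\calF)$, so that $\phi(Z)=\phi(Z_2)$ with $Z_2\in\Gamma(T\calF\otimes G)$. Since $\phi(Z)$ is assumed unobstructed, Proposition~\ref{prop:foliations_DC-elements} provides a smooth path of foliations $T\calF_t=\gr(\Phi_t)$ with $\Phi_t\in\Gamma(T^*\calF\otimes G)$, $\Phi_0=0$ and $\left.\tfrac{\rmd}{\rmd t}\right|_{t=0}\Phi_t=\phi(Z_2)$; by compactness we may assume all $\calF_t$ are transverse to $G$.

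Next I would build the candidate prolongation by two successive gauge transformations. Because $M$ is compact, for small $t$ the form $\gamma$ stays non-degenerate on the leaves of $\calF_t$; since $\rmd\gamma=0$, gauge-transforming the Dirac structure $T\calF_t\oplus T\calF_t^\circ$ by $-\gamma$ yields an honest Dirac structure which is transverse to $TM$, hence a Poisson structure $\Pi_t:=\calF_t^{-\gamma}$. This is exactly where closedness of $\gamma$ is needed: in Proposition~\ref{dim2} it came for free from the leaves being surfaces. As gauge transformation preserves the characteristic distribution, $\Pi_t$ is regular of rank $2k$ with characteristic foliation $\calF_t$, and $\Pi_0=\Pi$. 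I would then choose a \emph{closed} extension $\tilde\alpha\in\Omega^2(M)$ of $\wedge^2\omega^\flat(Z_1)$, which exists precisely by hypothesis, and form $\Pi_t^{t\tilde\alpha}$; closedness of $\tilde\alpha$ guarantees this is again Dirac, transverse to $TM$ for small $t$, hence a rank $2k$ regular Poisson structure with $\Pi_0^{0}=\Pi$.

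It remains to verify that $t\mapsto\Pi_t^{t\tilde\alpha}$ prolongs $Z$, and here I would differentiate at $t=0$ exactly as in Proposition~\ref{dim2}. From the defining relation $-\Pi_t^\sharp\circ\gamma^\flat|_{T\calF}=\id_{T\calF}+\Phi_t$ one obtains $-\left.\tfrac{\rmd}{\rmd t}\right|_{0}\Pi_t^\sharp\circ\gamma^\flat|_{T\calF}=\left.\tfrac{\rmd}{\rmd t}\right|_{0}\Phi_t$; since the right-hand side takes values in $G$, the $T\calF$-component of $\left.\tfrac{\rmd}{\rmd t}\right|_{0}\Pi_t$ must vanish, and combining this with Lemma~\ref{lem:formal_tangent_space} and the injectivity of $\phi$ on $\Gamma(T\calF\otimes G)$ gives $\left.\tfrac{\rmd}{\rmd t}\right|_{0}\Pi_t=Z_2$. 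Differentiating $(\Pi_t^{t\tilde\alpha})^\sharp\circ(\id+t\tilde\alpha^\flat\circ\Pi_t^\sharp)=\Pi_t^\sharp$ and using $\wedge^2\Pi^\sharp\circ\wedge^2\omega^\flat=\id$ on $\wedge^2T\calF$ together with $\wedge^2\Pi^\sharp(\tilde\alpha)=\wedge^2\Pi^\sharp(\wedge^2\omega^\flat(Z_1))=Z_1$ then yields $\left.\tfrac{\rmd}{\rmd t}\right|_{0}\Pi_t^{t\tilde\alpha}=Z_2+Z_1=Z$. The main obstacle is not any individual computation but confirming that both gauge transforms genuinely produce \emph{regular} Poisson structures of rank $2k$; this is precisely where the hypotheses $\rmd\gamma=0$ and the existence of a closed extension $\tilde\alpha$ do the essential work, substituting for the dimension count available in Proposition~\ref{dim2}.
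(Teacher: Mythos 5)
Your proposal is correct and follows essentially the same route as the paper: the paper's proof simply invokes the computation from Proposition~\ref{dim2} verbatim (the path $\mathcal{F}_t^{-\gamma+t\tilde{\alpha}}$ and its differentiation at $t=0$), and notes that closedness of $-\gamma+t\tilde{\alpha}$ makes each gauge transform genuinely Poisson — exactly the two points where you substitute the hypotheses $\rmd\gamma=0$ and $\rmd\tilde{\alpha}=0$ for the dimension count. The only cosmetic difference is that the paper opens with a bigrading argument (as in Lemma~\ref{lem:injective}) showing $\rmd_\Pi Z_1=0$, i.e.~that $\wedge^2\omega^\flat(Z_1)$ is automatically leafwise closed; in your reading this is subsumed by the hypothesis, since any leafwise form admitting a closed extension is leafwise closed.
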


\begin{proof}
{We first argue that $\wedge^{2}\omega^{\flat}(Z_1)$ is indeed leafwise closed. As shown in the proof of Lemma~\ref{lem:injective}, the assumption that $d\gamma=0$ ensures that $d_{\Pi}\mathfrak{X}^{(p,q)}(M)\subset\mathfrak{X}^{(p+1,q)}(M)$. So if $Z_1+Z_2\in\Gamma(\wedge^{2}T\mathcal{F})\oplus\Gamma(T\mathcal{F}\otimes G)$ is an infinitesimal deformation of $\Pi$, then
\[
0=d_\Pi Z_1+d_\Pi Z_2\in\Gamma(\wedge^{3}T\mathcal{F})\oplus\Gamma(\wedge^{2}T\mathcal{F}\otimes G),
\]
implying in particular that $d_\Pi Z_1=0$. As a consequence, $\wedge^{2}\omega^{\flat}(Z_1)$ is leafwise closed.}
The proof of
Prop.~\ref{dim2} shows that the path of bivector fields $\mathcal{F}_{t}^{{-}\gamma+t\tilde{\alpha}}$
is a prolongation of the infinitesimal deformation $Z$, for any extension $\tilde{\alpha}$ of $\wedge^{2}\omega^{\flat}(Z_1)$. By assumption we can choose $\tilde{\alpha}$ to be closed. The form ${-}\gamma+t\tilde{\alpha}$ is closed for every $t$, so it restricts to a closed foliated form on $\mathcal{F}_{t}$, showing that $\mathcal{F}_{t}^{{-}\gamma+t\tilde{\alpha}}$ is Poisson.
\end{proof}

\begin{remark}
{Let $(M,\Pi)$ be a compact regular Poisson manifold. Take an infinitesimal deformation $Z\in \frakX_\good^2(M)$ of $\Pi$ such that $\phi(Z)$ is unobstructed. Then $Z$ can be prolonged to a smooth path of regular bivector fields (not necessarily Poisson), each of which spans an
involutive distribution.
Indeed, such a path is provided by $\mathcal{F}_{t}^{{-}\gamma+t\tilde{\alpha}}$ 
as in the proof of
Prop.~\ref{dim2}.
}    
\end{remark}

We now show that all infinitesimal deformations of the regular Poisson structure $\Pi$ are unobstructed if the underlying foliation is infinitesimally rigid.

\begin{proposition}\label{prop:infrig}
	Let $(M,\Pi)$ be a compact regular Poisson manifold with characteristic distribution $T\mathcal{F}$. {Then every infinitesimal deformation $Z\in \frakX_\good^2(M)$ of $\Pi$ such that $[\phi(Z)]=0\in H^{1}(\mathcal{F};N\mathcal{F})$ is unobstructed.} In particular, if $H^{1}(\mathcal{F};N\mathcal{F})=0$, then the deformation problem of $\Pi$ is unobstructed.
\end{proposition}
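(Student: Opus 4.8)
The plan is to exploit the strict $L_\infty[1]$-morphism $\phi$ from Proposition~\ref{prop:strict_L_infty-algebra_morphism}, together with the short exact sequence of Remark~\ref{rem:ses}, whose kernel $\frakX^\bullet(\calF)[2]$ consists precisely of the deformations that keep the foliation fixed and only move the leafwise symplectic form. The key observation is that the hypothesis $[\phi(Z)]=0$ should let me correct $Z$ by a $\rmd_\Pi$-coboundary so that the corrected cocycle lands in this kernel, where unobstructedness is already known from Proposition~\ref{prop:gauge_unobs}. Since correcting an infinitesimal deformation by a coboundary does not affect its obstructedness (Remark~\ref{rem:representative}), this reduction will suffice.

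In detail, I would first use $[\phi(Z)]=0\in H^1(\calF;N\calF)$ to write $\phi(Z)=\rmd_\nabla\sigma$ for some $\sigma\in\Gamma(N\calF)=\Omega^0(\calF;N\calF)$. Next I would lift $\sigma$ to a vector field: since $\phi$ restricts on $\Gamma(G)\subset\frakX(M)=\frakX^1_\good(M)$ to the canonical isomorphism $\Gamma(G)\overset{\sim}{\longrightarrow}\Gamma(N\calF)$, $Y\mapsto\overline{Y}$ (a one-line check from the defining formula~\eqref{eq:rem:expression:phi} with $\ell=1$), there is $Y\in\Gamma(G)$ with $\phi(Y)=\sigma$. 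Because $\phi$ is strict, it intertwines the unary brackets $\frakl_1^G=\rmd_\Pi$ and $\frakv_1=\rmd_\nabla$, so
\[
\phi(\rmd_\Pi Y)=\rmd_\nabla\phi(Y)=\rmd_\nabla\sigma=\phi(Z).
\]
Hence $Z':=Z-\rmd_\Pi Y$ satisfies $\phi(Z')=0$, that is, $Z'\in\ker\phi=\frakX^2(\calF)=\Gamma(\wedge^2 T\calF)$.

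Finally, $Z'$ is $\rmd_\Pi$-closed, since both $Z$ and $\rmd_\Pi Y$ are, so it is an infinitesimal deformation of $\Pi$ lying in $\Gamma(\wedge^2 T\calF)$; by Proposition~\ref{prop:gauge_unobs} it is unobstructed, using compactness of $M$. As $Z=Z'+\rmd_\Pi Y$ differs from $Z'$ by the coboundary of $Y\in\frakX(M)=\frakX^1_\good(M)$, Remark~\ref{rem:representative} (again using compactness) shows that $Z$ is unobstructed as well. The ``in particular'' claim is then immediate: if $H^1(\calF;N\calF)=0$, then every infinitesimal deformation $Z$ of $\Pi$ automatically satisfies $[\phi(Z)]=0$, so all of them are unobstructed.

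I expect no serious obstacle here, as the argument is essentially an assembly of results already established. The one point that requires care is verifying that $\phi$ sends $Y\in\Gamma(G)$ to $\overline{Y}\in\Gamma(N\calF)$, which is what guarantees that the primitive $\sigma$ of $\phi(Z)$ can be realized as $\phi(Y)$ for a genuine global vector field $Y$, and hence that the correction of $Z$ is by an honest $\rmd_\Pi$-coboundary of a good multivector field. Compactness enters the argument only through the two cited results, Proposition~\ref{prop:gauge_unobs} and Remark~\ref{rem:representative}.
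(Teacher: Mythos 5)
Your proof is correct and follows essentially the same route as the paper: both correct $Z$ by the coboundary $\rmd_\Pi Y$, where $Y\in\Gamma(G)$ realizes the primitive of $\phi(Z)$ under the identification $\Gamma(G)\cong\Gamma(N\calF)$, observe that the corrected cocycle lands in $\Gamma(\wedge^2 T\calF)=\ker\phi$, and then conclude via Proposition~\ref{prop:gauge_unobs} and Remark~\ref{rem:representative}. The only difference is presentational (the paper works with the explicit decomposition $Z=Z_1+Z_2$ rather than citing the kernel of $\phi$), so nothing of substance is missing.
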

\begin{proof}
	Fix a complement $G$ to $T\mathcal{F}$, and let $Z=Z_1+Z_2\in\Gamma(\wedge^{2}T\mathcal{F})\oplus\Gamma(T\mathcal{F}\otimes G)$ be an infinitesimal deformation  of $\Pi$ satisfying
	\[
	\varphi(Z_1+Z_2)=\varphi(Z_2)=d_{\nabla}Y
	\]
	for some $Y\in\Gamma(G)$. By Remark~\ref{rem:representative} and Prop.~\ref{prop:gauge_unobs}, the infinitesimal deformation $Z$ is unobstructed as soon as we find a representative $\tilde{Z}$ of the Poisson cohomology class $[Z]\in H^{2}_{\Pi}(M)$ satisfying $\tilde{Z}\in\Gamma(\wedge^{2}T\calF)$. We have
	\[
	\varphi(Z_2{-}d_{\Pi}Y)=d_{\nabla}Y-d_{\nabla}(\varphi(Y))=d_{\nabla}Y-d_{\nabla}Y=0.
	\]
	So we can write $Z_1+Z_2=(Z_1+Z_2{-}d_{\Pi}Y){+}d_{\Pi}Y:=\tilde{Z}{+}d_{\Pi}Y$, where $\tilde{Z}\in\Gamma(\wedge^{2}T\mathcal{F})$. This finishes the proof.
\end{proof}

\begin{remark}
A restatement of the proof of Prop.~\ref{prop:infrig} is as follows.
The short exact sequence (of cochain complexes) given in Remark~\ref{rem:ses} induces a long exact sequence in cohomology. The assumption that $[\phi(Z)]=0$ in $H^{1}(\mathcal{F};N\mathcal{F})$ implies that
 $[Z]\in H^2(\frakX_\good^\bullet(M))$ is represented by some cocycle 
 $W$ lying in $\frakX^2(\calF)$. We can work with $W$ instead of $Z$, by  Remark~\ref{rem:representative}.
Now $W$ is unobstructed by Prop.~\ref{prop:gauge_unobs}.
\end{remark}

The following example is an illustration of the proposition above.

\begin{example}[{All infinitesimal deformations are unobstructed}]
	Consider the manifold $S^{1}\times S^{2}$, and let $\psi$ denote the coordinate on $S^{1}$. Let $\Pi$ be the Poisson structure $\Pi$ on $S^{1}\times S^{2}$ for which the symplectic leaves are given by copies $\{\psi\}\times S^{2}$ endowed with the standard symplectic structure $\omega_{S^{2}}$ on $S^{2}$. Since the characteristic foliation $\mathcal{F}$ is a fibration defined by the closed one-form $d\psi$, we have $H^1(\mathcal{F};N\mathcal{F})\cong H^{1}(\mathcal{F})\cong C^{\infty}\left(S^{1},H^{1}(S^{2})\right)=0.$ So the above proposition ensures that the deformation problem of $\Pi$ is unobstructed. We double-check that this is indeed the case.
	
	Since any Poisson structure close enough to $\Pi$ is also regular of corank one, it is enough to show that any infinitesimal deformation of $\Pi$ is tangent to a path of Poisson structures. First notice that, since $\Pi$ is induced by a cosymplectic structure and $H^{1}(\mathcal{F})=0$, we have an isomorphism~\cite[Thm.~3.2.17]{Osorno}
	\begin{equation}\label{iso}
	\wedge^{2}\Pi^{\sharp}:H^{2}(\mathcal{F})\rightarrow H^{2}_{\Pi}(S^{1}\times S^{2}):[f(\psi)\omega_{S^{2}}]\mapsto[f(\psi)\Pi].
	\end{equation}
	
	By Remark~\ref{rem:representative}, we only need to check that infinitesimal deformations of the form $f(\psi)\Pi$ are unobstructed. This is clearly the case, since a prolongation is given by the path $\Pi+tf(\psi)\Pi=(1+tf(\psi))\Pi$ for small enough $t$. Note indeed that this path consists of Poisson structures since $1+tf(\psi)$ is a Casimir of $\Pi$ for each value of $t$. This confirms that the deformation problem of $\Pi$ is unobstructed.
\end{example}

\subsubsection{\underline{\smash{Obstructed deformations with unobstructed underlying foliations}}}
\label{subsubsec:obstructedness results}

{In Proposition~\ref{prop:strict_morphism:obstructions} we saw that given an
  infinitesimal deformation $Z$ of a regular Poisson structure, if $\varphi(Z)$ is obstructed then $Z$ also is.}
One can wonder if all obstructed infinitesimal deformations $Z$ of a regular Poisson structure   arise
{in this way, i.e. whether the obstructedness of $Z$ is ``due'' exclusively to the obstructedness of the corresponding infinitesimal deformation of the  underlying foliation}. We display some examples, showing that the answer is negative.  Even more, in Example \ref{ex:obs-unobs} we 
display a regular Poisson structure $\Pi$  such that \emph{all} infinitesimal deformations of the characteristic foliation    are unobstructed, whereas the deformation problem of $\Pi$ is obstructed.

The examples concern corank-one Poisson structures of cosymplectic type  on compact manifolds of the form $S^{1}\times N$, and we first prove some statements about obstructedness in this setting. Recall from Remark~\ref{rem:corank-one} that in the corank-one case, the usual dgLa $(\mathfrak{X}^{\bullet}(S^{1}\times N)[1],d_{\Pi},[-,-]_{\sf SN})$ can be used to study deformations of the regular Poisson structure $\Pi$.

\begin{lemma}\label{lem:cosymplectic}
Let $(N,\omega)$ be a compact symplectic manifold. Consider the manifold $S^{1}\times N$ with cosymplectic structure $(d\psi,\omega)$, where $\psi$ denotes the coordinate on $S^{1}$. Correspondingly, there is a corank-one Poisson structure $\Pi$ on $S^{1}\times N$ whose symplectic leaves are $\left(\{\psi\}\times N,\omega\right)$ and for which $\partial_{\psi}$ is a transverse Poisson vector field. Take $G:=\text{Span}\{\partial_{\psi}\}$ as a complement to the characteristic distribution $T\calF=\ker(d\psi)$.
\begin{enumerate}
    \item Denote by $\text{Kur}_{\calF}$ the Kuranishi map of the $L_{\infty}[1]$-algebra $(\Omega^\bullet(\calF;G)[1],\{\frakv_k\})$ governing the deformations of $\calF$. Then $\text{Kur}_{\calF}$ is trivial if{f} $\wedge:H^{1}(N)\times H^{1}(N)\rightarrow H^{2}(N)$ is trivial. 
    \item If $H^{1}(N)$ is one-dimensional, then the deformation problem of $\calF$ is unobstructed.
    \item Denote by $\text{Kur}_{\Pi}$ the Kuranishi map of the dgLa $(\mathfrak{X}^{\bullet}(S^{1}\times N)[1],d_{\Pi},[-,-]_{\sf SN})$ governing the deformations of $\Pi$. Then $\text{Kur}_{\Pi}$ is trivial if{f} both $\wedge:H^{1}(N)\times H^{1}(N)\rightarrow H^{2}(N)$ and $\wedge:H^{1}(N)\times H^{2}(N)\rightarrow H^{3}(N)$ are trivial.
\end{enumerate}
\end{lemma}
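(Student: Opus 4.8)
The plan is to reduce all three statements to cohomological computations on the symplectic fibre $N$, after making the relevant cohomologies explicit. First I would record the structure coming from the product $M=S^1\times N$: since $\partial_\psi$ spans a complement $G$ to $T\calF=\ker(\rmd\psi)$ and $[\partial_\psi,\Gamma(T\calF)]\subset\Gamma(T\calF)$, the Bott representation on $N\calF\cong\langle\partial_\psi\rangle$ is trivial, so $(\Omega^\bullet(\calF;N\calF),\rmd_\nabla)\cong(\Omega^\bullet(\calF),\rmd_\calF)\otimes\partial_\psi$. As $\calF$ is the fibration $S^1\times N\to S^1$, one has $H^\bullet(\calF)\cong C^\infty(S^1,H^\bullet(N))$, so every leafwise class is a smooth $\psi$-family of de Rham classes on $N$. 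Moreover the global $2$-form $\omega$ of the cosymplectic structure satisfies $\iota_{\partial_\psi}\omega=0$ and $\rmd\omega=0$, hence coincides with $\gamma$; in particular $\rmd\gamma=0$. By the computation in the proof of Lemma~\ref{lem:injective} the differential $\rmd_\Pi$ then has pure bidegree $(1,0)$, and together with $\frakX^k(M)=\Gamma(\wedge^kT\calF)\oplus\Gamma(\wedge^{k-1}T\calF\otimes G)$ this yields the splitting $H^k_\Pi(M)\cong H^k(\calF)\oplus H^{k-1}(\calF)\otimes\partial_\psi$, together with the leafwise algebra isomorphism $\wedge^\bullet\omega^\flat\colon\frakX^\bullet(\calF)\xrightarrow{\sim}\Omega^\bullet(\calF)$ intertwining $\rmd_\Pi$ with $\rmd_\calF$ and commuting with $\partial_\psi$.

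For part (1), I would feed a $1$-cocycle $\eta=\alpha\otimes\partial_\psi$ (so $\rmd_\calF\alpha=0$) into the formula for $\frakv_2$ from Proposition~\ref{lem:G_infty_algebra:foliation}. Since $[\partial_\psi,\partial_\psi]=0$ and $\mathrm{pr}_{T^\ast\calF}\calL_{\partial_\psi}\alpha=\partial_\psi\alpha$, it collapses to $\frakv_2(\eta,\eta)=2(\alpha\wedge\partial_\psi\alpha)\otimes\partial_\psi$, so $\mathrm{Kur}_\calF[\eta]$ is the family $\psi\mapsto 2\,[\alpha_\psi]\wedge\partial_\psi[\alpha_\psi]\in H^2(N)$. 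If the cup product $\wedge\colon H^1(N)\times H^1(N)\to H^2(N)$ is trivial this vanishes identically; conversely, given $a\wedge b\neq0$ I would take closed representatives $\alpha_a,\alpha_b$ and set $\alpha=\cos\psi\,\alpha_a+\sin\psi\,\alpha_b$, for which the family equals $2(a\wedge b)\neq0$. For part (2), I would first note that the Maurer--Cartan equation of $(\Omega^\bullet(\calF;N\calF)[1],\{\frakv_k\})$ reads $\rmd_\calF\alpha+\alpha\wedge\partial_\psi\alpha=0$ (the ternary bracket vanishes on $G$-valued forms). When $\dim H^1(N)=1$, every infinitesimal deformation is, up to a leafwise coboundary, of the form $f(\psi)\theta$ with $\theta$ a fixed closed $1$-form on $N$, and then $\alpha(t)=t\,f(\psi)\theta$ solves the MC equation \emph{exactly}, because $\theta\wedge\theta=0$, thereby prolonging it. Passing from this representative to the original cocycle uses that unobstructedness depends only on the cohomology class (the foliation analogue of Remark~\ref{rem:representative}, valid since $M$ compact makes the relevant isotopies complete).

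For part (3), writing $Z=Z_1+W\wedge\partial_\psi$ with $Z_1\in\Gamma(\wedge^2T\calF)$ and $W\in\Gamma(T\calF)$ both $\rmd_\Pi$-closed, I would expand, using the graded Leibniz rule and $[\Pi,\partial_\psi]_{\sf SN}=0$,
\[
[Z,Z]_{\sf SN}=[Z_1,Z_1]_{\sf SN}+2[Z_1,W]_{\sf SN}\wedge\partial_\psi+2\,W\wedge\partial_\psi Z_1+2\,W\wedge\partial_\psi W\wedge\partial_\psi,
\]
and split it into its bidegree $(3,0)$ part $[Z_1,Z_1]_{\sf SN}+2W\wedge\partial_\psi Z_1$ and its bidegree $(2,1)$ part $2[Z_1,W]_{\sf SN}\wedge\partial_\psi+2W\wedge\partial_\psi W\wedge\partial_\psi$. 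For the classes I would use three facts: $[Z_1,W]_{\sf SN}=-\calL_WZ_1$ corresponds under $\wedge^2\omega^\flat$ to $-\calL_Wb=-\rmd\iota_Wb$ (here $b=\wedge^2\omega^\flat Z_1$ is leafwise closed since $W$ is a leafwise Poisson field), hence is leafwise exact; $[Z_1,Z_1]_{\sf SN}$ is leafwise exact because the Gerstenhaber bracket induced on the Poisson cohomology of a symplectic manifold is trivial; and $\wedge^\bullet\omega^\flat$ being an algebra map commuting with $\partial_\psi$ sends $W\wedge\partial_\psi W\mapsto\alpha\wedge\partial_\psi\alpha$ and $W\wedge\partial_\psi Z_1\mapsto\alpha\wedge\partial_\psi b$, with $\alpha=\omega^\flat W$. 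Thus the $(2,1)$-component of $\mathrm{Kur}_\Pi[Z]$ is the family $2[\alpha]\wedge\partial_\psi[\alpha]$ in $H^2(\calF)\otimes\partial_\psi$, and the $(3,0)$-component is $2[\alpha]\wedge\partial_\psi[b]$ in $H^3(\calF)$. Exactly as in part (1), these vanish for all $Z$ if and only if $\wedge\colon H^1(N)\times H^1(N)\to H^2(N)$ and $\wedge\colon H^1(N)\times H^2(N)\to H^3(N)$ are both trivial, which is the assertion.

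The delicate point is part (3): the Schouten-bracket bookkeeping isolating the two bidegree components, and above all the vanishing in cohomology of the symplectic Poisson Gerstenhaber bracket needed to discard $[Z_1,Z_1]_{\sf SN}$. I would sidestep the latter by invoking Lemma~\ref{lem:exact} (legitimate here since $\gamma$ is closed) to replace $[Z,Z]_{\sf SN}$ by $\frakl_2^G(Z,Z)$, whose $Z_1$--$Z_1$ contribution vanishes identically because $\frakl_2^G$ restricts trivially to $\frakX^\bullet(\calF)$ (Lemma~\ref{lem:leafwise_PL_complex}); the surviving brackets are then controlled directly by the bigrading estimates of Lemma~\ref{lem:multibrackets:NxN_grading}, and the identifications of their classes proceed as above.
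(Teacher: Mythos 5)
Your proposal is correct, and in substance it performs the same computations as the paper's proof; the packaging differs in ways worth noting. Part (1) is identical (your $\cos\psi,\sin\psi$ is one admissible choice of a pair $f,g$ with $f'g-fg'\not\equiv 0$, as in the paper). In part (2) the paper prolongs $f(\psi)\alpha\otimes\partial_\psi$ geometrically, as kernels of the integrable one-forms $(1-t)\,\rmd\psi+t f(\psi)\alpha$, whereas you observe that the straight line $t\,f(\psi)\theta\otimes\partial_\psi$ solves the Maurer--Cartan equation of $(\Omega^\bullet(\calF;N\calF)[1],\{\frakv_k\})$ on the nose (since $\rmd_\calF\theta=0$ and $\theta\wedge\theta=0$) and then apply Proposition~\ref{prop:foliations_DC-elements}; these give essentially the same family of graphs, and both treatments dispose of the $\rmd_\nabla$-coboundary part by the same flow argument, for which the paper cites~\cite[Thm.~2.6]{SZequivalences} (this is the precise content of your ``foliation analogue of Remark~\ref{rem:representative}''). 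In part (3) the paper describes $H^2_\Pi(S^1\times N)$ via the short exact sequences of~\cite[Thm.~3.2.17]{Osorno} and computes on basis representatives $\sum_i f_i\wedge^2\Pi^\sharp(\beta_i)+\sum_j g_j\,\Pi^\sharp(\alpha_j)\wedge\partial_\psi$, discarding the unwanted Schouten terms because $\wedge^\bullet\Pi^\sharp$ intertwines Koszul and Schouten brackets and the Koszul bracket of closed forms is exact; you instead obtain the splitting $H^k_\Pi\cong H^k(\calF)\oplus H^{k-1}(\calF)$ internally, from the bidegree-$(1,0)$ property of $\rmd_\Pi$ established in the proof of Lemma~\ref{lem:injective} (valid here since $\gamma$ coincides with the closed form $\omega$), and you discard $[Z_1,Z_1]_{\sf SN}$ and $[Z_1,W]_{\sf SN}\wedge\partial_\psi$ by the same Koszul-type fact (phrased as triviality of the induced bracket on symplectic Poisson cohomology) together with a Cartan-formula argument. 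Your sidestep at the end is sound, and in fact tidier than you indicate: Lemma~\ref{lem:exact} needs no closedness of $\gamma$, and once one passes to $[Z,Z]_\gamma$, not only does $[Z_1,Z_1]_\gamma$ vanish by Lemma~\ref{lem:leafwise_PL_complex}, but also $[Z_1,W]_\gamma=0$ for the same reason, while on the remaining generators one computes $[W,\partial_\psi]_\gamma=-\partial_\psi W$ and $[Z_1,\partial_\psi]_\gamma=-\partial_\psi Z_1$, agreeing with the Schouten values; hence $[Z,Z]_\gamma=2\,W\wedge\partial_\psi Z_1+2\,W\wedge\partial_\psi W\wedge\partial_\psi$ exactly, i.e.\ only the two cup-product terms survive, with no exact remainder to argue away. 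The one ingredient you use tacitly, which the paper's fixed-basis representatives avoid by construction, is that for a smooth family of closed forms on $N$ one has $[\partial_\psi b_\psi]=\partial_\psi[b_\psi]$ in $H^\bullet(N)$ (primitives can be chosen smoothly in $\psi$, e.g.\ by Hodge theory on the compact manifold $N$); either supply this remark or normalize representatives as in the paper.
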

\begin{proof}
{Throughout the proof, we will use that $H^\bullet(\calF;G)\cong H^\bullet(\calF)\otimes \mathbb{R}\partial_{\psi}\cong 
C^{\infty}(S^1,H^\bullet(N))\otimes \mathbb{R}\partial_{\psi}$, where the first isomorphism holds  since $\partial_{\psi}$ provides a trivialization of $G$
for which the Bott connection is trivial. 
}
\begin{enumerate}
    \item Fix a basis $\{[\alpha_1],\ldots,[\alpha_k]\}$ of $H^{1}(N)$, so that
    \[
    H^{1}(\calF;G)=\left\{\left[\sum_{i=1}^{k}f_i(\psi)\alpha_i\otimes\partial_{\psi}\right]:f_i\in C^{\infty}(S^{1})\right\}.
    \]
    Looking at Prop.~\ref{lem:G_infty_algebra:foliation}, we see that the Kuranishi map $\text{Kur}_{\calF}$ is given by the formula
    \begin{align*}
    \text{Kur}_{\calF}\left[\sum_{i=1}^{k}f_i(\psi)\alpha_i\otimes\partial_{\psi}\right]&=\left[\frakv_{2}\left(\sum_{i=1}^{k}f_i(\psi)\alpha_i\otimes\partial_{\psi},\sum_{i=1}^{k}f_i(\psi)\alpha_i\otimes\partial_{\psi}\right)\right]\\
    &={-}\left[\sum_{i,j=1}^{k}(f'_i(\psi)f_{j}(\psi)-f_i(\psi)f'_j(\psi))\alpha_i\wedge\alpha_j\otimes\partial_{\psi}\right]
    \end{align*}
    Consequently, if $\wedge:H^{1}(N)\times H^{1}(N)\rightarrow H^{2}(N)$ is trivial, then also $\text{Kur}_{\calF}$ is trivial. Conversely, assume that there exist $i,j\in\{1,\ldots,k\}$ such that $\alpha_{i}\wedge\alpha_{j}$ is not exact. Fix two functions $f_{i},f_{j}\in C^{\infty}(S^{1})$ such that $f'_i(\psi)f_{j}(\psi)-f_i(\psi)f_{j}'(\psi)$ is not identically zero. Then $\text{Kur}_{\calF}[f_i(\psi)\alpha_{i}\otimes\partial_{\psi}+f_j(\psi)\alpha_j\otimes\partial_{\psi}]$ is nonzero.
    {We remark that using the
    isomorphism 
    $H^\bullet(\calF;G)\cong 
C^{\infty}(S^1,H^\bullet(N))$, the Kuranishi map simply reads $[a]\mapsto -2[ \partial_{\psi}a\wedge a]$.}    
    \item Fixing a generator $[\alpha]$ of $H^{1}(N)$, we have
\[
H^{1}(\mathcal{F},G)=\left\{[f(\psi)\alpha\otimes\partial_{\psi}]:f\in C^{\infty}(S^{1})\right\}.
\]
We first check that an infinitesimal deformation of the form $f(\psi)\alpha\otimes\partial_{\psi}$ is unobstructed. If $f\equiv 0$, there is nothing to prove, so assume that $f$ is not identically zero. Notice that 
\begin{equation}\label{eq:graph}
\text{Graph}(f(\psi)\alpha\otimes\partial_{\psi})=\{v+f(\psi)\alpha(v)\partial_{\psi}:v\in T\mathcal{F}\}=\text{Ker}(f(\psi)\alpha-d\psi).
\end{equation}
Consider the path of one-forms $\beta_t:=(1-t)d\psi+tf(\psi)\alpha.$ They all give rise to a foliation on $S^{1}\times N$ since $\beta_t$ is nowhere zero and 
\[
\beta_t\wedge d\beta_t=\left((1-t)d\psi+tf(\psi)\alpha\right)\wedge t f'(\psi)d\psi\wedge\alpha=0.
\]
The path $\text{Ker}(\beta_t)$ is a prolongation of $f(\psi)\alpha\otimes\partial_{\psi}$ since $\text{Ker}(\beta_0)=\text{Ker}(d\psi)=T\mathcal{F}$ and
\[
\text{Ker}\left(\left.\frac{d}{dt}\right|_{t=0}\beta_t\right)=\text{Ker}(f(\psi)\alpha-d\psi)=\text{Graph}(f(\psi)\alpha\otimes\partial_{\psi}),
\]
using~\eqref{eq:graph} in the last equality. In general, an arbitrary infinitesimal deformation of $\mathcal{F}$ is of the form $f(\psi)\alpha\otimes\partial_{\psi}+d_{\nabla}Y$ for some $Y\in\Gamma(G)$. Denote by $(\varphi_t)$ the flow of $Y$, which is globally defined because $S^{1}\times N$ is compact. As a consequence of~\cite[Thm. 2.6]{SZequivalences}, we know that the path of involutive distributions $(\varphi_t)_{*}(T\mathcal{F})$ is a prolongation of the second summand $d_{\nabla}Y$. By what we showed above, there exists a prolongation $T\mathcal{F}_{t}$ of the first summand $f(\psi)\alpha\otimes\partial_{\psi}$. Then $(\varphi_{t})_{*}(T\mathcal{F}_{t})$ is a prolongation of $f(\psi)\alpha\otimes\partial_{\psi}+d_{\nabla}Y$, since
\[
\left.\frac{d}{dt}\right|_{0}(\varphi_{t})_{*}(T\mathcal{F}_{t})=\left.\frac{d}{dt}\right|_{0}(\varphi_{t})_{*}(T\mathcal{F})+\left.\frac{d}{dt}\right|_{0}T\mathcal{F}_{t}=\text{Graph}(d_{\nabla}Y+f(\psi)\alpha\otimes\partial_{\psi}).
\]
\item Because the Poisson structure $\Pi$ is induced by a cosymplectic structure, we have short exact sequences in cohomology~\cite[Thm. 3.2.17]{Osorno} 
\[
0\rightarrow H^{\bullet}(\mathcal{F})\overset{\wedge^{\bullet}\Pi^{\sharp}}{\longrightarrow} H_{\Pi}^{\bullet}(S^{1}\times N)\rightarrow H^{\bullet-1}(\mathcal{F})\rightarrow 0.
\]
Consequently, choosing bases $\{[\alpha_1],\ldots,[\alpha_k]\}$ of $H^{1}(N)$ and $\{[\beta_1],\ldots,[\beta_l]\}$ for $H^{2}(N)$, we get
\[
H_{\Pi}^{2}(S^{1}\times N)=\left\{\left[\sum_{i=1}^{l}f_{i}(\psi)\wedge^{2}\Pi^{\sharp}(\beta_{i})\right]:f_i\in C^{\infty}(S^{1})\right\}\oplus\left\{\left[\sum_{j=1}^{k}g_{j}(\psi)\Pi^{\sharp}(\alpha_j)\wedge\partial_{\psi}\right]:g_j\in C^{\infty}(S^{1})\right\}.
\]
As a result, the Kuranishi map $\text{Kur}_{\Pi}$ is given by
\begin{align}\label{eq:sch}
&\text{Kur}_{\Pi}\left[\sum_{i=1}^{l}f_{i}(\psi)\wedge^{2}\Pi^{\sharp}(\beta_{i})+\sum_{j=1}^{k}g_{j}(\psi)\Pi^{\sharp}(\alpha_j)\wedge\partial_{\psi}\right]\nonumber\\
&\hspace{0.5cm}=\Big[\sum_{i,j}f_{i}f_{j}\left[\wedge^{2}\Pi^{\sharp}(\beta_i),\wedge^{2}\Pi^{\sharp}(\beta_j)\right]_{SN}+2\sum_{i,j}\left(g_{j}f_{i}\left[\wedge^{2}\Pi^{\sharp}(\beta_i),\Pi^{\sharp}(\alpha_j)\right]_{SN}\wedge\partial_{\psi}+g_{j}f'_{i}\wedge^{3}\Pi^{\sharp}(\alpha_{j}\wedge\beta_{i})\right)\nonumber\\
&\hspace{1.5cm}+\sum_{i,j}(g_{i}g'_{j}-g'_{i}g_{j})\wedge^{2}\Pi^{\sharp}(\alpha_{i}\wedge\alpha_j)\wedge\partial_{\psi}\Big]\nonumber\\
&\hspace{0.5cm}=\Big[\sum_{i,j}f_{i}f_{j}\wedge^{3}\Pi^{\sharp}([\beta_i,\beta_j]_{\Pi})+2\sum_{i,j}\left(g_{j}f_{i}\wedge^{2}\Pi^{\sharp}([\beta_i,\alpha_j]_{\Pi})\wedge\partial_{\psi}+g_{j}f'_{i}\wedge^{3}\Pi^{\sharp}(\alpha_{j}\wedge\beta_{i})\right)\nonumber\\
&\hspace{1.5cm}+\sum_{i,j}(g_{i}g'_{j}-g'_{i}g_{j})\wedge^{2}\Pi^{\sharp}(\alpha_{i}\wedge\alpha_j)\wedge\partial_{\psi}\Big].
\end{align}
Here $[\cdot,\cdot]_{\Pi}$ denotes the Koszul bracket~\cite{SZPre}, which is defined by the rules
\begin{align*}
&[\eta,\xi]_{\Pi}=(-1)^{|\eta|+1}\left(\calL_{\Pi}(\eta\wedge\xi)-(\calL_{\Pi}\eta)\wedge\xi-(-1)^{|\eta|}\eta\wedge\calL_{\Pi}\xi\right),\\
&\calL_{\Pi}=[\iota_{\Pi},d]=\iota_{\Pi}\circ d - d\circ\iota_{\Pi},
\end{align*}
and we used~\cite[Lemma 2.11]{SZPre}, which implies that $\wedge^{\bullet}\Pi^{\sharp}$ intertwines the Koszul bracket and the Schouten bracket. It is clear that the Koszul bracket of closed forms is exact, and therefore the summands in~\eqref{eq:sch} involving $[\cdot,\cdot]_{\Pi}$ are trivial in Poisson cohomology. If $\wedge:H^{1}(N)\times H^{1}(N)\rightarrow H^{2}(N)$ and $\wedge:H^{1}(N)\times H^{2}(N)\rightarrow H^{3}(N)$ are trivial, then also the remaining summands in~\eqref{eq:sch} are zero in cohomology. Conversely, if either $\wedge:H^{1}(N)\times H^{1}(N)\rightarrow H^{2}(N)$ or $\wedge:H^{1}(N)\times H^{2}(N)\rightarrow H^{3}(N)$ is not trivial, then~\eqref{eq:sch} can be arranged to be nonzero in cohomology since $\wedge^{\bullet}\Pi^{\sharp}:H^{\bullet}(\calF)\rightarrow H^{\bullet}_{\Pi}(S^{1}\times N)$ is injective. This finishes the proof.
\end{enumerate}
\end{proof}

The next example features a regular Poisson structure $\Pi$ with obstructed infinitesimal deformations that project to unobstructed infinitesimal deformations of the foliation $\mathcal{F}$ under the strict $L_{\infty}[1]$-morphism~\eqref{eq:morphism}. This shows that Prop.~\ref{prop:strict_morphism:obstructions} does not cover all obstructed infinitesimal deformations of the regular Poisson structure.  Lemma \ref{lem:cosymplectic} is not used in this example, but it will be needed for a variation   just below. 

\begin{example}
[{An obstructed deformation with  unobstructed deformation of the foliation}]
\label{projobs}
	Consider $S^{1}\times\mathbb{T}^{4}$ with coordinates $(\psi,\theta_1,\theta_2,\theta_3,\theta_4)$ and corank-one Poisson structure $\Pi$ given by $\Pi=\partial_{\theta_1}\wedge\partial_{\theta_2}+\partial_{\theta_3}\wedge\partial_{\theta_4}$. As a complement to the characteristic distribution $T\mathcal{F}$, we take $G=\text{Span}\{ \partial_{\psi}\}$. We claim that any bivector field of the form
	\begin{equation}\label{bivector}
		\xi:=f(\psi)\partial_{\theta_1}\wedge\partial_{\theta_2}+\partial_{\theta_3}\wedge\partial_{\psi}
	\end{equation}
	with {$f(\psi)$ non-constant}
	is an obstructed infinitesimal deformation of $\Pi$ that projects to an unobstructed infinitesimal deformation of $\mathcal{F}$. Denoting by $\phi$ the strict $L_{\infty}[1]$-morphism~\eqref{eq:morphism}, one checks that
	$
	\phi(\xi)=d\theta_4\otimes\partial_{\psi}.
	$
	\begin{itemize}
		\item {We first look at the case where $f=c$ is constant. Then the path $\Pi_t$ given by
		\[
		\Pi_t=(1+ct)\partial_{\theta_1}\wedge\partial_{\theta_2}+\partial_{\theta_3}\wedge(\partial_{\theta_4}	+t\partial_{\psi})
		\]
		is a path of Poisson structures with the desired velocity~\eqref{bivector} at time zero. Since 
		\[
		\wedge^{2}\Pi_t=2(1+ct)\partial_{\theta_1}\wedge\partial_{\theta_2}\wedge\partial_{\theta_3}\wedge\partial_{\theta_4}+2t(1+ct)\partial_{\theta_1}\wedge\partial_{\theta_2}\wedge\partial_{\theta_3}\wedge\partial_{\psi},
		\]
		it follows that this path consists of corank-one Poisson structures for small enough $t\in(-\epsilon,\epsilon)$. Moreover, the path $(\Pi_t)_{t\in(-\epsilon,\epsilon)}$ gives rise to a path of foliations
		\[
		\im\Pi_t^{\sharp}=\text{Span}\{\partial_{\theta_1},\partial_{\theta_2},\partial_{\theta_3},\partial_{\theta_4}+t\partial_{\psi}\}=\text{Graph}(td\theta_4\otimes\partial_{\psi})
		\]
		that prolongs the infinitesimal deformation $d\theta_4\otimes\partial_{\psi}$. So if $f$ is constant, this example confirms Proposition~\ref{prop:strict_morphism:obstructions}, showing that unobstructedness of $\xi\in\mathfrak{X}^{2}_{\good}(S^{1}\times\mathbb{T}^{4})$ implies unobstructedness of $\phi(\xi)\in\Omega^{1}(\mathcal{F};G)$.}

		\item Now assume that $f$ is not constant. We show that the bivector field $\xi$ cannot be realized as the velocity at time $0$ of a path of Poisson structures $\Pi_t$ with $\Pi_0=\Pi$. To do so, we compute the Schouten bracket
		\begin{equation}\label{trivector}
			[f(\psi)\partial_{\theta_1}\wedge\partial_{\theta_2}+\partial_{\theta_3}\wedge\partial_{\psi},f(\psi)\partial_{\theta_1}\wedge\partial_{\theta_2}+\partial_{\theta_3}\wedge\partial_{\psi}]_{SN}=2f'(\psi)\partial_{\theta_1}\wedge\partial_{\theta_2}\wedge\partial_{\theta_3},
		\end{equation}
		and by the Kuranishi criteron, it suffices to show that this trivector field defines a non-trivial class in $H^{3}_{\Pi}(S^{1}\times\mathbb{T}^{4})$. Since the Poisson structure $\Pi$ is induced by the cosymplectic structure $(d\psi,d\theta_1\wedge d\theta_2+d\theta_3\wedge d\theta_4)$, it gives an injective map in cohomology
		\[
		\wedge^{3}\Pi^{\sharp}:H^{3}(\mathcal{F})\rightarrow H^{3}_{\Pi}(S^{1}\times\mathbb{T}^{4}),
		\]
		which follows from~\cite[Proposition 1.4.7]{Osorno} and~\cite[Theorem 3.2.17]{Osorno}. Since the class of the trivector field~\eqref{trivector} is the image under this map of the class
		\[
		\left[-2f'(\psi)d\theta_1\wedge d\theta_2\wedge d\theta_4\right]\in H^{3}(\mathcal{F}),
		\]
		it suffices to note that the latter class is non-trivial. This is indeed the case, for if it was trivial then the restriction of that $3$-form to each level set $\mathbb{T}^{4}$ of $\psi$ would be exact, which would imply that $f'(\psi)\equiv 0$. So the infinitesimal deformation $\xi$ of $\Pi$ is obstructed. However, the infinitesimal deformation $\phi(\xi)=d\theta_4\otimes\partial_{\psi}$ of $\mathcal{F}$ is unobstructed: it is tangent to the path $\Phi_{t}=td\theta_{4}\otimes\partial_{\psi}\in\Omega^{1}(\mathcal{F};G)$, which indeed deforms $\mathcal{F}$ because $\text{Graph}(\Phi_{t})=\text{Span}\{\partial_{\theta_1},\partial_{\theta_2},\partial_{\theta_3},\partial_{\theta_4}+t\partial_\psi\}$ is involutive.
	\end{itemize}
\end{example}

In the example above, the foliation $\calF$ still has obstructed infinitesimal deformations, because of Lemma~\ref{lem:cosymplectic} (1). We now alter this example by changing the fiber suitably, which yields an example of a regular Poisson structure $\Pi$ with characteristic foliation $\mathcal{F}$ such that \emph{all} infinitesimal deformations of $\mathcal{F}$ are unobstructed, whereas the deformation problem of $\Pi$ is obstructed. Keeping in mind Lemma~\ref{lem:cosymplectic}, we want the fiber to possess the following properties.

\begin{lemma}\label{mfld}
There exists a compact symplectic manifold $(N,\omega)$ that satisfies the following:
\begin{itemize}
    \item $H^{1}(N)$ is one-dimensional, with generator $[\alpha]$.
    \item There exists a closed $2$-form $\beta\in\Omega^{2}(N)$ such that $\alpha\wedge\beta$ is not exact. That is, the map  $\wedge:H^{1}(N)\times H^{2}(N)\rightarrow H^{3}(N)$ is not trivial.
\end{itemize}
\end{lemma}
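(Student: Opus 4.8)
The plan is to reduce the statement to the existence of an auxiliary compact symplectic manifold with $b_1=1$, and then to manufacture the nontrivial cup product by taking a product with $S^2$. Note first that a symplectic $4$-manifold with $b_1=1$ cannot itself serve as $N$: if $\dim N=4$ and $b_1(N)=1$, then for $\alpha$ generating $H^1(N)$ and any $\beta\in H^2(N)$ Poincaré duality forces $[\alpha]\cup[\beta]=0$, since pairing with the only available degree-one class gives $\int_N\alpha\wedge\beta\wedge\alpha=0$ (as $\alpha\wedge\alpha=0$). Thus the $H^2$-class to be wedged against $\alpha$ must come from extra dimensions, which is exactly what the $S^2$-factor will supply.

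Concretely, I would start from a compact symplectic manifold $(F,\omega_F)$ with $b_1(F)=1$, pick a closed $1$-form $\alpha_F$ generating $H^1(F)$, and set $N:=F\times S^2$ with the product symplectic form $\omega:=\mathrm{pr}_F^*\omega_F+\mathrm{pr}_{S^2}^*\omega_{S^2}$, where $\omega_{S^2}$ is an area form on $S^2$. Since $H^1(S^2)=0$, the Künneth theorem gives $H^1(N)\cong H^1(F)$, so $H^1(N)$ is one-dimensional with generator $\alpha:=\mathrm{pr}_F^*\alpha_F$. Taking the closed $2$-form $\beta:=\mathrm{pr}_{S^2}^*\omega_{S^2}$, the class $[\alpha]\cup[\beta]$ is the external product $[\alpha_F]\times[\omega_{S^2}]$, which lies in the Künneth summand $H^1(F)\otimes H^2(S^2)\subset H^3(N)$ and is nonzero because both tensor factors are. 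Hence $\alpha\wedge\beta$ is closed but not exact, which is the second bullet, while the first bullet has been arranged directly.

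It remains to exhibit the symplectic manifold $F$ with $b_1(F)=1$, and this is the crux. The homogeneous models one reaches for first do not work: tori and nilmanifolds of positive dimension always satisfy $b_1\geq 2$, and a short computation with the Chevalley--Eilenberg differential shows that the unimodularity constraint (needed for a lattice) likewise forces $b_1\geq 2$ for the completely solvable solvmanifolds one writes down by hand. I would therefore invoke Gompf's theorem that every finitely presented group is the fundamental group of a closed symplectic $4$-manifold; applying it with $\pi_1(F)=\mathbb{Z}$ produces a symplectic $4$-manifold with $H_1(F;\mathbb{Z})=\mathbb{Z}$, hence $b_1(F)=1$, completing the construction. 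The hard part is thus entirely this input existence result, the product-and-Künneth argument being routine.
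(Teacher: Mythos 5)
Your proof is correct and follows essentially the same route as the paper: invoke Gompf's theorem to produce a compact symplectic $4$-manifold $F$ with $\pi_1(F)=\mathbb{Z}$ (hence $b_1(F)=1$), then take $N=F\times S^2$ with the product symplectic form and use K\"unneth to verify both bullets, with $\beta=\mathrm{pr}_{S^2}^*\omega_{S^2}$. Your additional Poincar\'e-duality observation explaining why a $4$-dimensional $N$ cannot work is correct but not needed for the statement.
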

\begin{proof}
A theorem by Gompf~\cite{gompf} ensures that every finitely presented group arises as the fundamental group of a compact symplectic $4$-manifold. In particular, there exists a compact symplectic manifold $(M,\omega_M)$ such that $\pi_1(M)=\mathbb{Z}$ and therefore $H^{1}(M)\cong \text{Hom}\left(\pi_1(M),\mathbb{R}\right)\cong\mathbb{R}$. Fix a generator $[\alpha]\in H^{1}(M)$.

Define $N:=M\times S^{2}$ and endow it with the product symplectic form $p_{1}^{*}\omega_M+p_{2}^{*}\omega_{S^{2}}$. By the Künneth formula, we have $H^{1}(M\times S^{2})=\mathbb{R}[p_{1}^{*}\alpha]$ and the closed $2$-form $p_{2}^{*}\omega_{S^{2}}$ is such that $p_{1}^{*}\alpha\wedge p_{2}^{*}\omega_{S^{2}}$ is not exact.
\end{proof}

\begin{example}[{An obstructed deformation with unobstructed foliation}]\label{ex:obs-unobs}
Let $(N,\omega)$ be a compact symplectic manifold as in Lemma~\ref{mfld}. We consider the manifold $S^{1}\times N$ with cosymplectic structure $(d\psi,\omega)$, where $\psi$ denotes the coordinate on $S^{1}$. Correspondingly, there is a Poisson structure $\Pi$ on $S^{1}\times N$ whose symplectic leaves are $\left(\{\psi\}\times N,\omega\right)$ and for which $\partial_{\psi}$ is a transverse Poisson vector field. As a complement to the characteristic distribution $T\mathcal{F}$, we take $G:=\text{Span}(\partial_{\psi})$. 

Lemma~\ref{lem:cosymplectic} (2) implies that the deformation problem of the underlying foliation $\mathcal{F}$ is unobstructed. But the deformation problem of the corank-one Poisson structure $\Pi$ is obstructed, by Lemma~\ref{lem:cosymplectic} (3).
\end{example}

Implicitly, we made sure that the dimension of the fiber $N$ of $S^{1}\times N$ is at least $4$, by imposing the second condition in Lemma~\ref{mfld}. This was indeed necessary in order to obtain an example with the desired properties, because {of} Proposition~\ref{dim2}.

\subsubsection{\underline{\smash{Stability of symplectic foliations}}}  \label{subsubsec:stab}

{
Various kinds of stability questions in Poisson geometry have been addressed in the literature, for instance, in~\cite{CrFeStab} and~\cite{DufourWade}. Here we mention the stability question for symplectic foliations: fix a regular Poisson structure $(M,\Pi)$, i.e. a symplectic foliation $(\mathcal{F},\omega)$. The stability question reads: \emph{does any foliation $\mathcal{F}'$ nearby $\mathcal{F}$ carry a leaf-wise symplectic structure?}
From a geometric perspective it is clear that stability holds in each of the following cases:
\begin{itemize}
\item if $\omega$  extends to a closed 2-form $\Omega$ on $M$, because then one can pull back $\Omega$ to the leaves of  $\mathcal{F}'$
\item if $rank(\mathcal{F})=2$, {since one} can proceed as above with any extension of $\omega$.
\end{itemize}
}
 
{ 
The smooth 1-parameter version of the question reads: 
\emph{for any smooth family of foliations $\{\mathcal{F}_t\}$  with $\mathcal{F}_0=\mathcal{F}$, does there exist a smooth family of leaf-wise symplectic structures $\omega_t$ on  $\mathcal{F}_t$ with $\omega_0=\omega$, for $t$ lying in some open interval around zero?}
}

{
If the answer is positive, then a surjectivity statement for unobstructed infinitesimal deformations holds, namely:
 the map   $\phi:\frakX_\good^\bullet(M)[2]\to\Omega^\bullet(\calF;N\calF)[1]$
of equation~\eqref{eq:morphism}
induces a map of cocycles  
\begin{equation}\label{eq:infstab}
  Z^0(\frakX_\good^\bullet(M)[2])\to Z^0(\Omega^\bullet(\calF;N\calF)[1])
\end{equation} 
with the property that every \emph{unobstructed} cocyle $\alpha$ on the r.h.s. has an \emph{unobstructed} preimage.
This surjectivity statement is related to the unobstructedness of infinitesimal deformations  we addressed in  
\S\ref{subsubsec:Unobstructedness results}, but  
it features a more flexible behaviour. Indeed,
given an unobstructed cocycle on the codomain of the map~\eqref{eq:infstab}, the  surjectivity statement is the \emph{existence} of an unobstructed preimage, while in \S\ref{subsubsec:Unobstructedness results} we provided conditions under which \emph{all} preimages are unobstructed.
 }

\appendix


\section{Courant Algebroids and Dirac Structures}
\label{app:Courant_algebroids}

We review some background material concerning Courant algebroids and Dirac structures. {The latter were introduced by Courant in~\cite{Courant1990Dirac}.}

\begin{definition}
A \emph{Courant algebroid} consists of a vector bundle $E\to M$ equipped with a non-degenerate symmetric pairing $\ldab-,-\rdab:E\otimes E\to\bbR$, a bilinear bracket $\ldsb-,-\rdsb:\Gamma(E)\times\Gamma(E)\to\Gamma(E)$ called the \emph{Dorfman bracket}, and a VB morphism $\rho:E\to M$ over $\id_M$ called the \emph{anchor map}, satisfying the compatibility conditions:
\begin{align*}
	&\ldsb e,\ldsb e^\prime,e^{\prime\prime}\rdsb\rdsb=\ldsb\ldsb e,e^\prime\rdsb,e^{\prime\prime}\rdsb+\ldsb e^\prime,\ldsb e,e^{\prime\prime}\rdsb\rdsb,\\
	&\ldab\ldsb e,e^\prime\rdsb,e^{\prime\prime}\rdab=\ldab e,\ldsb e^\prime,e^{\prime\prime}\rdsb\rdab,\\
	&\rho(e)\ldab e^\prime,e^\prime\rdab=2\ldab\ldsb e,e^\prime\rdsb,e^\prime\rdab,
\end{align*}
for any $e,e^\prime,e^{\prime\prime}\in\Gamma(E)$.
Consequently, the Dorfman bracket and the anchor are related by the Leibniz rule
\begin{equation*}
\ldsb e,fe^\prime\rdsb=(\rho(e)f)e^\prime+f\ldsb e,e^\prime\rdsb,
\end{equation*}
for any $e,e^\prime\in\Gamma(E)$ and $f\in C^\infty(M)$.
See, e.g.,~\cite{Roytenberg1999PhDthesis} for alternative descriptions of Courant algebroids.
\end{definition}

\begin{example}
The \emph{generalized tangent bundle} $\bbT M:=TM\oplus T^\ast M$ is the prototypical example of a Courant algebroid. It is equipped with the pairing $\ldab-,-\rdab$, Dorfman bracket $\ldsb-,-\rdsb$ and anchor $\rho$, which are defined on sections $X+\alpha,Y+\beta\in\Gamma(\bbT M)$ as follows:
\begin{equation*}
\ldab X+\alpha,Y+\beta\rdab=\alpha(Y)+\beta(X),\quad\rho(X+\alpha)=X,\quad\ldsb X+\alpha,Y+\beta\rdsb=[X,Y]+\calL_X\beta-\iota_Y\rmd\alpha.
\end{equation*}
\end{example}

\begin{definition}
Given a Courant algebroid $E$, a subbundle $L\subset E$ is called an \emph{almost Dirac structure} if $L=L^\perp$, where $L^\perp\subset E$ denotes the orthogonal of $L$ w.r.t. the pairing $\ldab-,-\rdab$.
A \emph{Dirac structure} is an almost Dirac structure $L\subset E$ that is additionally involutive w.r.t. the Dorfman bracket $\ldsb-,-\rdsb$, i.e. $\ldsb\Gamma(L),\Gamma(L)\rdsb\subset\Gamma(L)$.
\end{definition}

\begin{remark}
\label{rem:Courant_tensor}
For each almost Dirac structure $L\subset E$, its \emph{Courant tensor} $\Upsilon_L\in\Gamma(\wedge^3 L^{*})$ is defined by
		\begin{equation*}
		\Upsilon_L(\xi_1,\xi_2,\xi_3)=\ldab\xi_1,\ldsb\xi_2,\xi_3\rdsb\rdab,
		\end{equation*}
	for all $\xi_1,\xi_2,\xi_3\in\Gamma(L)$. It is easy to see that $L$ is Dirac if and only if $\Upsilon_L=0$.
\end{remark}

\begin{example}
We show some special classes of (almost) Dirac structures for the generalized tangent bundle.
\begin{itemize}
    \item $2$-forms on $M$ correspond with almost Dirac structures that are transverse to $T^{*}M$, via the identification
    \begin{equation*}
\begin{aligned}
\Omega^2(M)&\overset{\sim}{\longrightarrow}\{\calL\subset\bbT M\ \text{almost Dirac structure}\mid\calL\pitchfork T^{*}M\},\\
\omega&\longmapsto\gr(\omega):=\{X+\iota_X\omega \mid X\in TM\}.
\end{aligned}
\end{equation*}
Moreover, $\omega\in\Omega^{2}(M)$ is closed if and only if $\gr(\omega)\subset\bbT M$ is Dirac.
\item Bivector fields\footnote{{If $Z$ is a non-degenerate bivector field and $\omega$  the corresponding non-degenerate 2-form,
determined by $Z^{\sharp}=-(\omega^{\flat})^{-1}$, then the graphs satisfy $\gr(Z)=\gr(-\omega)$.}}  on $M$ correspond with almost Dirac structures that are transverse to $TM$ as follows:
\begin{equation*}
\begin{aligned}
\frakX^2(M)&\overset{\sim}{\longrightarrow}\{\calL\subset\bbT M\ \text{almost Dirac structure}\mid\calL\pitchfork TM\},\\
Z&\longmapsto\gr(Z):=\{\iota_\alpha Z+\alpha\mid\alpha\in T^\ast M\}.
\end{aligned}
\end{equation*}
Moreover, $Z\in\frakX^2(M)$ is Poisson if and only if $\gr(Z)\subset\bbT M$ is Dirac.
\item Distributions on $M$ correspond with almost Dirac structures $\calL\subset\bbT M$ s.~t.~$\calL=\pr_{TM}\calL{{}\oplus{}}{\pr_{T^\ast M}\calL}$,
\begin{equation*}
\begin{aligned}
\{\text{distributions on $M$}\}&\overset{\sim}{\longrightarrow}\{\calL\subset\bbT M\ \text{almost Dirac structure}\mid\calL=\pr_{TM}\calL{{}\oplus{}}{\pr_{T^\ast M}\calL}\},\\
D&\longmapsto D\oplus D^{0}.
\end{aligned}
\end{equation*}
Moreover, $D$ is involutive if and only if $D\oplus D^{0}\subset\bbT M$ is Dirac.
\end{itemize}
\end{example}

\begin{remark}
	\label{rem:almost_Dirac_structure}
	Let $L\subset E$ be an almost Dirac structure, and assume that $R$ is a complementary subbundle of $E$. Using the projection $\pr_{L}$ with kernel $R$, one can restrict the Dorfman bracket $\ldsb-,-\rdsb$ to an almost Lie bracket $\pr_{L}\circ\ldsb-,-\rdsb$ on $\Gamma(L)$. Along with the restriction of the anchor map $\rho$ to $L$, one obtains an almost Lie algebroid structure $(\rho|_{L},\pr_{L}\circ\ldsb-,-\rdsb)$ on $L$.
The latter allows us to further construct:
\begin{itemize}
	\item a differential $\rmd_L$ on $\Omega^\bullet(L)\!:=\!\Gamma(\wedge^\bullet L^\ast)$, i.e.~the degree $1$ graded algebra derivation $\rmd_L$ of $\Omega^\bullet(L)$ such that
	\begin{align*}
	&\rmd_Lf(\xi)=\rho(\xi)f,\qquad \rmd_L\eta(\xi,\zeta)=\calL_{\rho(\xi)}\iota_\zeta\eta-\calL_{\rho(\zeta)}\iota_\xi\eta-\iota_{\pr_{L}\ldsb\xi,\zeta\rdsb}\eta,
	\end{align*}
	for all $f\in C^\infty(M)=\Omega^0(L)$, $\eta\in\Gamma(L^\ast)=\Omega^1(L)$, and $\xi,\zeta\in\Gamma(L)$.
	\item an almost Gerstenhaber bracket $[-,-]_L$ on $\Gamma(\wedge^\bullet L)$, i.e.~the degree $0$ graded almost Lie bracket on $\Gamma(\wedge^\bullet L)[1]$ determined by
	\begin{equation*}
	[\xi,f]_L=\rho(\xi)f,\quad [\xi,\zeta]_L=\pr_L\ldsb\xi,\zeta\rdsb,\quad[P,Q\wedge R]_L=[P,Q]_L\wedge R+(-)^{|Q|(|P|-1)}Q\wedge[P,R]_L,
	\end{equation*}
	for all $f\in C^\infty(M)$, $\xi,\zeta\in\Gamma(L)$ and homogeneous $P,Q,R\in\Gamma(\wedge^\bullet L)$.
\end{itemize}
If $L$ is Dirac, then it becomes a Lie algebroid (independent of $R$). In that case, $\rmd_L$ is an honest differential, i.e. $\rmd_L^2=0$, and $[-,-]_L$ satisfies the graded Jacobi identity.
\end{remark}

\section{Deformation Theory of Dirac Structures}
\label{app:Deformations_Dirac_Structures}

We will briefly review the classical constructions and results from deformation theory of Dirac structures that are relevant for the main goal of this paper,
{following~\cite{liu1995manin} and~\cite{FZgeo}.}
{The relevant algebraic structure here is the one of
$L_{\infty}$-algebras~\cite{LadaMarkl}, which contain differential graded Lie algebras (dgLa's) as special cases. We will mostly work with the equivalent notion of $L_{\infty}[1]$-algebra (see for instance~\cite[\S2]{fiorenza2007cones}), 
in which all the structure maps are  graded symmetric and of degree $1$.}

Let $E\to M$ be a Courant algebroid and $A\subset E$ be a Dirac structure. {Upon choosing an almost Dirac structure complementary to $A$, the graded vector space $\Omega^{\bullet}(A)[2]$ becomes endowed with $L_{\infty}[1]$-algebra structure, as stated in Lemma~\ref{lem:2.6first} below.}

We first introduce some notation.
Let $A\to M$ be a vector bundle.
For any $\alpha\in\Omega^k(A):=\Gamma(\wedge^{k}A^*)$, we define the VB morphism $\alpha^\sharp:A\to\wedge^{k-1}A^\ast$ by setting
\begin{equation*}
(\alpha^\sharp\xi_1)(\xi_2,\ldots\xi_k)=\alpha(\xi_1,\ldots,\xi_k),
\end{equation*}
for all $x\in M$ and $\xi_1,\ldots,\xi_k\in A_x$.
Clearly, if $k=0$, then $\alpha^\sharp=0$.
Given $\alpha_1\in\Omega^{k_1}(A),\ldots,\alpha_n\in\Omega^{k_n}(A)$, one can define the VB morphism $\alpha_1^\sharp\wedge\ldots\wedge \alpha_n^\sharp:\wedge^n A\to\wedge^{k_1+\ldots+k_n-n}A^\ast$ so that
\begin{equation*}
\label{eq:sharp:property1}
(\alpha_1^\sharp\wedge\ldots\wedge \alpha_n^\sharp)(\xi_1\wedge\ldots\wedge\xi_n)=\sum_{\sigma\in S_n}(-)^\sigma \alpha_1^\sharp(\xi_{\sigma(1)})\wedge\ldots\wedge \alpha_n^\sharp(\xi_{\sigma(n)}),
\end{equation*}
for all $x\in M$ and $\xi_1,\ldots,\xi_n\in A_x$.
Further, for any $\alpha_1\in\Omega^{k_1}(A)$ and $\alpha_2\in\Omega^{k_2}(A)$, we define the VB morphisms $\alpha_1^\sharp\wedge\alpha_2,\ \alpha_1\wedge \alpha_2^\sharp:A\to\wedge^{k_1+k_2-1}A^\ast$ by setting
\begin{equation*}
(\alpha_1^\sharp\wedge\alpha_2)\xi=(\alpha_1^\sharp\xi)\wedge\alpha_2\quad\text{and}\quad(\alpha_1\wedge\alpha_2^\sharp)\xi=\alpha_1\wedge(\alpha_2^\sharp\xi),
\end{equation*}
for any $\xi\in A$.
Consequently, for all homogeneous $\alpha\in\Omega^\bullet(A)$ and $\beta\in\Omega^\bullet(A)$, the following identity holds:
\begin{equation*}
\label{eq:sharp:property2}
(\alpha\wedge\beta)^\sharp=\alpha^\sharp\wedge\beta+(-)^{|\alpha|}\alpha\wedge\beta^\sharp.
\end{equation*}

Using these morphisms along with the notation introduced in Remarks~\ref{rem:Courant_tensor} and~\ref{rem:almost_Dirac_structure}, one can describe the induced $L_{\infty}[1]$-algebra structure on $\Omega^{\bullet}(A)[2]$ as follows. {The following is an adaptation of~\cite[Lemma 2.6]{FZgeo} (see also~\cite[Proposition 3.11]{SZDirac}), in which we take the negative\footnote{An advantage of this convention is that no minus signs appear in Lemma~\ref{lem:2.6second} below.}
 of the binary bracket appearing there.}

\begin{lemma}	\label{lem:2.6first}
	Given a Courant algebroid $E\to M$, let $E=A\oplus B$ be a splitting where $A$ is a Dirac structure and $B$ is a complementary almost Dirac structure. Then
	the graded vector space $\Omega^\bullet(A)[2]$ has an induced $L_\infty[1]$-algebra structure $\{\mu_k^B\}$, whose only non-trivial multibrackets $\mu_1^B,\ \mu_2^B,\ \mu_3^B$ are defined as follows:
	\begin{itemize}
		\item $\mu_1^B$ coincides with the de Rham differential of Lie algebroid $A\Rightarrow M$, i.e.~for all $\alpha\in\Omega^\bullet(A)$:
		\begin{equation}
		\label{eq:lem:L_infty-algebra_Dirac:unary_bracket}
		\mu_1^B(\alpha{[2]})=(\rmd_A\alpha){[2]}.
		\end{equation}
		\item $\mu_2^B$ acts as follows, for all homogeneous $\alpha,\beta\in\Omega^\bullet(A)$:
	
		\begin{equation}
		\label{eq:lem:L_infty-algebra_Dirac:binary_bracket}
		\mu_2^B(\alpha{[2]},\beta{[2]})=(-1)^{|\alpha|}[\alpha,\beta]_B{[2]}.
		\end{equation}
		\item $\mu_3^B$ acts as follows, for all homogeneous $\alpha,\beta,\gamma\in\Omega^\bullet(A)$:
		\begin{equation}
		\label{eq:lem:L_infty-algebra_Dirac:ternary_bracket}
		\mu_3^B(\alpha{[2]}, \beta{[2]}, \gamma{[2]})=(-1)^{|\beta|} (\alpha^\sharp\wedge \beta^\sharp \wedge \gamma^\sharp) \Upsilon_B{[2]}.
		\end{equation}
	\end{itemize}
	In the RHS of equations~\eqref{eq:lem:L_infty-algebra_Dirac:binary_bracket} and~\eqref{eq:lem:L_infty-algebra_Dirac:ternary_bracket}, we use the identification $B\overset{\simeq}{\longrightarrow} A^\ast,\ u\longmapsto\ldab u,-\rdab|_A$.
\end{lemma}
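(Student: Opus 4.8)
The plan is to deduce the statement from its analogue \cite[Lemma 2.6]{FZgeo} (see also \cite[Proposition 3.11]{SZDirac}) by tracking the single sign change recorded in the footnote, and to explain conceptually why this sign change preserves the $L_\infty[1]$-axioms.

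First I would recall the geometric data underlying the brackets. Since $A$ is a Dirac structure, it is a Lie algebroid, so $\Omega^\bullet(A)=\Gamma(\wedge^\bullet A^\ast)$ carries the Lie algebroid de Rham differential $\rmd_A$ of Remark~\ref{rem:almost_Dirac_structure}. Because $A$ and $B$ are complementary and both maximal isotropic (each equals its own orthogonal), the pairing $\ldab-,-\rdab$ restricts to a perfect pairing between $A$ and $B$: if $a\in A$ pairs trivially with $B$ then $a\in B^\perp=B$, forcing $a=0$. Hence $u\mapsto\ldab u,-\rdab|_A$ identifies $B\cong A^\ast$, and correspondingly $\Omega^\bullet(A)\cong\Gamma(\wedge^\bullet B)$. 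Transporting the almost-Gerstenhaber bracket and the Courant tensor of the almost Lie algebroid $B$ (Remarks~\ref{rem:Courant_tensor} and~\ref{rem:almost_Dirac_structure}) along this identification produces the operations $[-,-]_B$ and $\Upsilon_B$ appearing in the statement.

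The conceptual heart of the lemma is that almost Dirac structures transverse to $B$ are exactly the graphs $\gr(\epsilon)=\{\xi+\epsilon^\sharp\xi:\xi\in A\}$ of two-forms $\epsilon\in\Omega^2(A)$, and that $\gr(\epsilon)$ is Dirac precisely when its Courant tensor $\Upsilon_{\gr(\epsilon)}$ vanishes. I would compute $\Upsilon_{\gr(\epsilon)}$ by substituting $\xi_i+\epsilon^\sharp\xi_i$ into the defining formula of Remark~\ref{rem:Courant_tensor} and expanding the Dorfman bracket via bilinearity and the splitting $E=A\oplus B$. Since the Dorfman bracket is a first-order bidifferential operator and $\epsilon^\sharp$ is linear in $\epsilon$, the result is a polynomial of degree at most three in $\epsilon$. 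Collecting terms by their $\epsilon$-degree identifies the linear term with $\rmd_A\epsilon$, the quadratic term with $\tfrac12[\epsilon,\epsilon]_B$, and the cubic term with the contraction of $\Upsilon_B$; this is precisely the Maurer--Cartan equation $\sum_k\tfrac1{k!}\mu_k^B(\epsilon,\dots,\epsilon)=0$, and polarizing the three terms yields the candidate multibrackets $\mu_1^B,\mu_2^B,\mu_3^B$. That these operations satisfy the full $L_\infty[1]$-relations — in particular the arity-four relation coupling $\mu_2^B$ and $\mu_3^B$ — is the content I would import from \cite[Lemma 2.6]{FZgeo}, as the compatibility axioms of the Courant algebroid $E$ are exactly what make the higher Jacobiators close.

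The only point requiring genuine attention, and the main (if mild) obstacle, is the sign discrepancy flagged in the footnote: the binary bracket here is the negative of the one in \cite{FZgeo}, while $\mu_1^B$ and $\mu_3^B$ agree with it. I would dispatch this via a rescaling automorphism. Applying the strict linear endomorphism $-\mathrm{id}$ of the graded vector space $\Omega^\bullet(A)[2]$ transports an $L_\infty[1]$-structure $\{\tilde\mu_k\}$ to the structure $\{(-1)^{k-1}\tilde\mu_k\}$, since a strict morphism with linear part $c\cdot\mathrm{id}$ intertwines $\tilde\mu_k$ with $c^{k-1}\tilde\mu_k$. With $c=-1$ this fixes $\mu_1$ and $\mu_3$ and negates $\mu_2$, which is exactly the passage from the brackets of \cite{FZgeo} to $\{\mu_k^B\}$ (the remaining higher brackets being zero). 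Hence $\{\mu_k^B\}$ is an $L_\infty[1]$-algebra, isomorphic via $-\mathrm{id}$ to the one of \cite{FZgeo}, and the formulas~\eqref{eq:lem:L_infty-algebra_Dirac:unary_bracket}--\eqref{eq:lem:L_infty-algebra_Dirac:ternary_bracket} hold as stated. As the footnote indicates, this normalization is chosen so that the Maurer--Cartan correspondence $\epsilon\mapsto\gr(\epsilon)$ of Lemma~\ref{lem:2.6second} carries no auxiliary sign.
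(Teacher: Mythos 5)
Your proposal is correct and takes essentially the same route as the paper: the paper offers no independent proof of this lemma, deferring---exactly as you do---to \cite[Lemma 2.6]{FZgeo} (see also \cite[Proposition 3.11]{SZDirac}) and recording in a footnote that only the sign of the binary bracket is changed. Your transport-of-structure argument via the strict automorphism $-\mathrm{id}$, which replaces each $\tilde\mu_k$ by $(-1)^{k-1}\tilde\mu_k$ and hence negates precisely $\mu_2$ while fixing $\mu_1$ and $\mu_3$, is a clean way of making rigorous the sign adjustment that the paper leaves implicit.
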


\begin{remark}
	\label{rem:decalage}
	We list some remarks concerning the $L_{\infty}[1]$-algebra $\big(\Omega^\bullet(A)[2],\mu_1^{B},\mu_2^{B},\mu_3^{B}\big)$ introduced above.
	\begin{enumerate}
		\item  If $V$ is a graded vector space, then $L_\infty$-algebra structures $\{m_k\}$ on $V$ correspond bijectively to $L_\infty[1]$-algebra structures $\{\mu_k\}$ on $V[1]$ via the following relation (we follow the convention of~\cite[Remark 1.1]{fiorenza2007cones}, {up to a global minus sign}): 
		\begin{equation*}
		m_k(v_1,\ldots,v_k)={-}(-)^k(-)^{\sum_i(k-i)|v_i|}\mu_k(v_1[1],\ldots,v_k[1]),
		\end{equation*}
		for all $k\in\bbN$ and homogeneous $v_1,\ldots,v_k\in V$.
		In particular, the $L_\infty[1]$-algebra structure $\{\mu_k^B\}$ on $\Omega^\bullet(A)[2]$ described in Lemma~\ref{lem:2.6first} corresponds to an $L_\infty$-algebra structure $\{m_k^B\}$ on $\Omega^\bullet(A)[1]$,  
		whose only non-trivial multibrackets $m_1^B,m_2^B,m_3^B$ are defined as follows:
		\begin{align*}
		&m_1^B(a)=\mu_1^B(a[1]),\\
		&m_2^B(a,b)={-}(-)^{|a|}\mu_2^B(a[1],b[1]),\\
		&m_3^B(a,b,c)=(-)^{|b|}\mu_3^B(a[1],b[1],c[1]),
		\end{align*}
		for homogeneous elements ${a,b,c}\in\Omega^\bullet(A)[1]$. 
		
		\item For any almost Dirac structure $B\subset E$ complementary to the Dirac structure $A\subset E$, the $L_\infty[1]$-algebra structure $\{\mu_k^B\}$ reduces to a dgL[1]a, i.e.~$\mu_k^B=0$ for all $k\geq 3$, if and only if $B$ is Dirac. In this case, the $L_\infty$-algebra $(\Omega^\bullet(A)[1],\{m_k^B\})$ reduces to the dgLa {$(\Omega^\bullet(A)[1], d_A,[-,-]_{A^*})$} used in~\cite{liu1995manin}.
		\item The $L_\infty[1]$-algebra $(\Omega^\bullet(A)[2],\{\mu_k^B\})$ (resp.~$L_\infty$-algebra $(\Omega^\bullet(A)[1],\{m_k^B\})$) is actually a \emph{$G_\infty[1]$-algebra} (resp.~\emph{$G_\infty$-algebra}) since its multibrackets are compatible with the graded algebra structure of $\Omega^\bullet(A)$. That is, the following graded Leibniz rule holds:
		\begin{equation}
		\mu_k^{B}(\alpha_1,\ldots,\alpha_{k-1},\alpha_k\wedge\beta)=\mu_k^{B}(\alpha_1,\ldots,\alpha_k)\wedge\beta+(-)^{|\alpha_k|(1+|\alpha_1|+\ldots+|\alpha_{k-1}|)}\alpha_k\wedge\mu_k^{B}(\alpha_1,\ldots,\alpha_{k-1},\beta),
		\end{equation}
		for all homogeneous $\alpha_1,\ldots,\alpha_k,\beta\in\Omega^\bullet(A)[2]$ (a similar Leibniz rule holds for the graded skew-symmetric multibrackets $m_k^B$).
		For the general definition of \emph{$G_\infty$-algebras}, also called \emph{homotopy Gerstenhaber algebras}
	{or \emph{$P_\infty$-algebras,}}
		 we refer the reader to~\cite{gerstenhaber1995homotopy,tamarkin1998another}.
	\end{enumerate}
\end{remark}

Given a Dirac structure $A\subset E$ and a complementary almost Dirac structure $B\subset E$, we now turn to the geometric information encoded by the Maurer-Cartan elements of the associated $L_\infty[1]$-algebra.
 \begin{definition}
	\label{def:MC_equation:L_infty_algebra:Dirac}
	A \emph{Maurer--Cartan (MC) element} of the $L_\infty[1]$-algebra $(\Omega^\bullet(A)[2],\{\mu_k^B\})$ is a degree $0$ element $\eta$ of $\Omega^\bullet(A)[2]$, i.e.~a $2$-form $\eta\in\Omega^2(A)$, satisfying the \emph{MC equation}
	\begin{equation*}
	\mu_1^B\eta+\frac{1}{2}\mu_2^B(\eta,\eta)+\frac{1}{6}\mu_3^B(\eta,\eta,\eta)=0.
	\end{equation*}
\end{definition}

The pairing $\ldab-,-\rdab$ induces VB isomorphisms $B\overset{\sim}{\longrightarrow}A^\ast$ and $E=A\oplus B\overset{\sim}{\longrightarrow}A\oplus A^\ast$. 
Under these identifications, it is easy to see that the relation 
\begin{equation*}
L=\gr(\eta)=\{\xi{+}\iota_\xi\eta\mid\xi\in A\}\subset A\oplus A^\ast\simeq A\oplus B=E.
\end{equation*}
establishes a one-to-one correspondence between degree $0$ elements $\eta$ of $\Omega^\bullet(A)[2]$, i.e.~$2$-forms $\eta\in\Omega^2(A)$, and almost Dirac structures $L\subset E$ that are \emph{close to $A$ w.r.t. $B$}, in the sense that they are still transverse to $B$.

As proven in~\cite[Theorem 6.1]{liu1995manin} (when the complement $B$ is Dirac) and~\cite[Lemma 2.6]{FZgeo} (in the general case), the MC elements of the $L_\infty[1]$-algebra $(\Omega^\bullet(A)[2],\{\mu_k^B\})$ are exactly those $2$-forms $\eta\in\Omega^2(A)$ for which the corresponding almost Dirac structure $L=\gr(\eta)$  is involutive, i.e.~Dirac. 
\begin{lemma}[{\cite[Theorem 6.1]{liu1995manin} and~\cite[Lemma 2.6]{FZgeo}}]
	\label{lem:2.6second}
	Let $E$ be a Courant algebroid and $A\subset E$ a Dirac structure. Fix an almost Dirac structure $B\subset E$ complementary to $A$.
	Then a canonical one-to-one correspondence between
	\begin{enumerate}
		\item MC elements $\eta$ of the associated $L_\infty[1]$-algebra $(\Omega^\bullet(A)[2],\{\mu^B_k\})$,
		\item Dirac structures $L\subset E$ that are transverse to $B$,
	\end{enumerate}
	is established by the following relation: 
	\begin{equation*}
	L=\gr(\eta):=\{\xi{+}\iota_\xi\eta\mid \xi\in A\}\subset A\oplus A^\ast\simeq A\oplus B=E.
	\end{equation*}
\end{lemma}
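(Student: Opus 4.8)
The plan is to reduce the stated bijection to the single geometric assertion that, for a $2$-form $\eta\in\Omega^2(A)$, the almost Dirac structure $\gr(\eta)$ is involutive exactly when $\eta$ solves the Maurer--Cartan equation; the bookkeeping of the correspondence then follows formally. First I would record the purely linear-algebraic step, already anticipated in the paragraph preceding the statement: the pairing $\ldab-,-\rdab$ identifies $B\cong A^\ast$, hence $E\cong A\oplus A^\ast$, and an almost Dirac structure $L$ transverse to $B$ is the graph of a unique bundle map $A\to A^\ast$. Maximal isotropy $L=L^\perp$ forces this map to be skew, i.e. of the form $\xi\mapsto\iota_\xi\eta$ for a unique $\eta\in\Gamma(\wedge^2A^\ast)=\Omega^2(A)$. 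This gives a bijection between degree-$0$ elements of $\Omega^\bullet(A)[2]$ and almost Dirac structures transverse to $B$, implemented precisely by $L=\gr(\eta)$.

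Next I would detect involutivity through the Courant tensor. By Remark~\ref{rem:Courant_tensor}, $\gr(\eta)$ is Dirac iff $\Upsilon_{\gr(\eta)}\equiv 0$. Using the VB isomorphism $A\overset{\sim}{\to}\gr(\eta)$, $\xi\mapsto\xi+\iota_\xi\eta$, I would pull $\Upsilon_{\gr(\eta)}$ back to an element of $\Omega^3(A)$ and evaluate it on sections $\xi_1,\xi_2,\xi_3\in\Gamma(A)$ by expanding
\begin{equation*}
\Upsilon_{\gr(\eta)}(\xi_1{+}\iota_{\xi_1}\eta,\xi_2{+}\iota_{\xi_2}\eta,\xi_3{+}\iota_{\xi_3}\eta)=\ldab \xi_1{+}\iota_{\xi_1}\eta,\ \ldsb\xi_2{+}\iota_{\xi_2}\eta,\xi_3{+}\iota_{\xi_3}\eta\rdsb\rdab.
\end{equation*}
I would then sort the result by the number of factors $\eta$ appearing, using that both $A$ and $B\cong A^\ast$ are isotropic and that $A$ is involutive. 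The term of order zero in $\eta$ is $\Upsilon_A(\xi_1,\xi_2,\xi_3)$, which vanishes since $A$ is Dirac. The term linear in $\eta$ reproduces the Lie-algebroid de Rham differential $\rmd_A\eta$, via the Leibniz-type formula for $\rmd_A$ recorded in Remark~\ref{rem:almost_Dirac_structure}. The quadratic term reproduces the almost-Gerstenhaber bracket $[\eta,\eta]_B$ of that same remark, and the cubic term reproduces the contraction $(\eta^\sharp\wedge\eta^\sharp\wedge\eta^\sharp)\Upsilon_B$ of the Courant tensor of $B$ by three copies of $\eta^\sharp$.

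Matching these three contributions against the explicit multibrackets of Lemma~\ref{lem:2.6first}, up to the Koszul signs dictated by the degree shift, yields
\begin{equation*}
(\mathrm{iso})^\ast\,\Upsilon_{\gr(\eta)}=\mu_1^B\eta+\tfrac12\mu_2^B(\eta,\eta)+\tfrac16\mu_3^B(\eta,\eta,\eta),
\end{equation*}
so $\gr(\eta)$ is Dirac iff the right-hand side vanishes, i.e. iff $\eta$ is a Maurer--Cartan element. Combined with the first step, this establishes the bijection. The main obstacle I anticipate is the sign and combinatorial bookkeeping in the quadratic and cubic steps: one must verify that symmetrizing over the three arguments, together with the three occurrences of $\eta^\sharp$, produces exactly the factor $\tfrac16$ and the sign $(-1)^{|\beta|}$ of $\mu_3^B$, and likewise the factor $\tfrac12$ and the sign $(-1)^{|\alpha|}$ of $\mu_2^B$. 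These are precisely the places where the shift conventions and the global sign choice flagged in the footnote to Lemma~\ref{lem:2.6first} must be tracked carefully; everything else is formal manipulation of the Dorfman bracket and the pairing.
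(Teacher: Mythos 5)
Your proposal is correct in outline, but note that the paper never proves this lemma at all: it is imported verbatim from the literature, citing \cite[Theorem 6.1]{liu1995manin} for the case where the complement $B$ is Dirac and \cite[Lemma 2.6]{FZgeo} for the general case, with the one-line remark in Appendix~\ref{app:Deformations_Dirac_Structures} that MC elements are exactly the $2$-forms whose graph is involutive. What you have written is therefore a self-contained reconstruction rather than a variant of an argument in the text. Your route --- identify almost Dirac structures transverse to $B$ with $2$-forms on $A$ by skew-symmetry of the graph map, then expand the Courant tensor $\Upsilon_{\gr(\eta)}$ in powers of $\eta$ and match the homogeneous pieces against $\mu_1^B\eta+\tfrac12\mu_2^B(\eta,\eta)+\tfrac16\mu_3^B(\eta,\eta,\eta)$ --- is essentially the Liu--Weinstein--Xu computation extended to a non-involutive complement, where the new cubic term is precisely $\Upsilon_B$ contracted with three copies of $\eta^\sharp$; the factor $\tfrac16$ is exactly absorbed by the $3!$ coming from the definition of $\alpha^\sharp\wedge\beta^\sharp\wedge\gamma^\sharp$ applied to the totally skew tensor $\Upsilon_B$, and similarly for the $\tfrac12$ against the biderivation expansion of $[\eta,\eta]_B$. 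By contrast, the cited proof in \cite{FZgeo} runs through Voronov's higher derived brackets on the graded symplectic realization of $E$, where the MC correspondence appears formally as a twist of the cubic Hamiltonian by $\eta$, with no termwise sign bookkeeping. Your approach buys explicitness and independence from that formalism, at the cost of exactly the combinatorial verification you flag; that verification is genuinely necessary (wrong coefficients would break the equivalence of the two vanishing conditions), but it is routine and does go through, so there is no gap in the strategy.
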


This means that, for each complementary almost Dirac structure $B\subset E$, the associated $L_\infty[1]$-algebra $(\Omega^\bullet(A)[2],\{\mu_k^B\})$ controls the ``corresponding'' deformation problem of the given Dirac structure $A\subset E$.

\section{Proof of Proposition \ref{prop:strict_L_infty-algebra_morphism}}
\label{app:proofstrict}

Proposition \ref{prop:strict_L_infty-algebra_morphism} states that the map  
 	\begin{equation*}
	\begin{tikzcd}
	(\frakX_\good^\bullet(M)[2],\{\frakl^G_k\})\arrow[rr, "\phi"]&&(\Omega^\bullet(\calF;N\calF)[1],\{\frakv_k\})
	\end{tikzcd}
	\end{equation*}
  is a  strict morphism of $L_\infty[1]$-algebras.
  We first prove this proposition by a direct computation. Then in Proposition  \ref{rem:conceptualclosed} 
  we provide a conceptual argument under the assumption that the 2-form
$\gamma$ is closed.

\begin{proof}[Proof of Prop.~\ref{prop:strict_L_infty-algebra_morphism}]
Recall that both $(\frakX_\good^\bullet(M)[2],\{\frakl^G_k\})$ and $(\Omega^\bullet(\calF;N\calF)[1],\{\frakv_k\})$ are in fact $LR_{\infty}[1]$-algebras over $\mathfrak{X}^{\bullet}(\calF)$ and $\Omega^{\bullet}(\calF)$, with anchor maps $\{\rho_k\}$ and $\{\frakn_k\}$, respectively (see Proposition \ref{prop:good_L_infty_algebra} and Lemma
	\ref{lem:G_infty_algebra:foliation}).
Moreover, these anchor maps are derivations in their last entries, and the module morphism $\phi$ is defined along the algebra morphism $\underline{\smash\phi}$. Therefore, we can conclude that $\phi$ is a strict $L_{\infty}[1]$-morphism if we check that the following equalities hold:
\begin{align}
&\phi\big(\frakl^G_k(P_1,\ldots,P_k)\big)=\frakv_k\big(\phi(P_1),\ldots,\phi(P_k)\big),\label{toshow1}\\
&\underline{\smash\phi}\big(\rho_k(P_1,\ldots,P_{k-1}|X)\big)=\frakn_k\big(\phi(P_1),\ldots,\phi(P_{k-1})|\underline{\smash\phi}(X)\big),\label{toshow2}
\end{align}
where $P_1,\ldots,P_k\in\frakX^0_\good(M)\oplus\frakX^1_\good(M)=C^\infty(M)\oplus\Gamma(T\calF)\oplus\Gamma(G)$ and $X\in\frakX^0(\calF)\oplus\frakX^1(\calF)=C^\infty(M)\oplus\Gamma(T\calF).$

	 We now carry out three computations to check that $\phi$ preserves the unary, binary, and ternary brackets, i.e. that equation \eqref{toshow1} is satisfied.
	 
{i)}	Let us start proving that $\phi$ preserves the unary bracket.
	In view of the above, we just have to check that 
	\begin{equation*}
	\label{eq:proof:prop:strict_L_infty-algebra_morphism:unary_brackets}
	\phi(\rmd_\Pi(f))=\rmd_\nabla(\phi(f)),\qquad\phi(\rmd_\Pi(X))=\rmd_\nabla(\phi(X)),\qquad\phi(\rmd_\Pi(Y))=\rmd_\nabla(\phi(Y)),
	\end{equation*}
	for all $f\in C^\infty(M)$, $X\in\Gamma(T\calF)$ and $Y\in\Gamma(G)$.
	Actually, the first two identities are obvious: both the LHS and RHS vanish because $\phi$ kills $\frakX^\bullet(\calF)$. To check the third identity, we compute for $\alpha\in\Gamma(T^{*}\calF)$ and $\beta\in\Gamma(G^{*})$:
	\begin{align*}
	\big\langle \phi(d_{\Pi}Y)(\Pi^{\sharp}\alpha),\beta\big\rangle&={-}d_{\Pi}Y\big(\omega^{\flat}(\Pi^{\sharp}\alpha),\beta\big)\\
	&=d_{\Pi}Y(\alpha,\beta)\\
	&={-}(\calL_{Y}\Pi)(\alpha,\beta)\\
	&={-}\calL_{Y}\big(\Pi(\alpha,\beta)\big){+}\Pi\big(\calL_{Y}\alpha,\beta\big){+}\Pi\big(\alpha,\calL_{Y}\beta\big)\\
	&=\Pi\big(\alpha,\calL_{Y}\beta\big),
	\end{align*}
	using in the last equality that $\beta$ annihilates $T\mathcal{F}=\im\Pi^{\sharp}$. On the other hand, we have
			\begin{align*}
	\big\langle\rmd_\nabla\phi(Y)(\Pi^{\sharp}\alpha),\beta \big\rangle &=\big\langle\nabla_{\Pi^{\sharp}\alpha}\phi(Y),\beta \big\rangle\\
	&=\big\langle[\Pi^{\sharp}\alpha,Y],\beta \big\rangle\\
	&=-\big\langle\calL_{Y}\Pi^{\sharp}\alpha,\beta \big\rangle\\
	&=\big\langle\Pi^{\sharp}\alpha,\calL_{Y}\beta \big\rangle-\calL_{Y}\langle \Pi^{\sharp}\alpha,\beta\rangle\\
	&=\Pi(\alpha,\calL_{Y}\beta).
	\end{align*}

	{ii)} Let us now prove that $\phi$ preserves the binary bracket.
	By degree reasons, we only have to show that
	\begin{gather*}
	\label{eq:proof:prop:strict_L_infty-algebra_morphism:binary_brackets}
	\phi(\frakl_2^{G}(f,X))=\frakv_2(\phi(f),\phi(X)),\qquad\phi(\frakl_2^{G}(f,Y))=\frakv_2(\phi(f),\phi(Y)),\qquad\\
	\phi(\frakl_2^{G}(X_1,X_2))=\frakv_2(\phi(X_1),\phi(X_2)),\quad\phi(\frakl_2^{G}(X,Y))=\frakv_2(\phi(X),\phi(Y)),\quad\phi(\frakl_2^{G}(Y_1,Y_2))=\frakv_2(\phi(Y_1),\phi(Y_2)),
	\end{gather*}
	for all $f\in C^\infty(M)$, $X,X_1,X_2\in\Gamma(T\calF)$ and $Y,Y_1,Y_2\in\Gamma(G)$.
	Except the last one, all the identities above are obvious since both the LHS and RHS vanish.
	The last one can be proved as follows:
	\begin{equation*}
	\phi(\frakl_2^{G}(Y_1,Y_2))=\phi({-}[Y_1,Y_2]_{\gamma})=\phi({-}[Y_1,Y_2])={-}\pr_{G}[Y_1,Y_2]=\frakv_2(Y_1,Y_2)=\frakv_2(\phi(Y_1),\phi(Y_2)).
	\end{equation*}
	 	
{iii)}	We now prove that $\phi$ preserves the ternary bracket.
	By degree reasons, we only have to show that 
	\begin{gather*}
	\label{eq:proof:prop:strict_L_infty-algebra_morphism:ternary_brackets}
	\phi(\frakl_3^{G}(V_1,V_2,V_3))=\frakv_3(\phi(V_1),\phi(V_2),\phi(V_3)),
	\end{gather*}
	for all $V_1,V_2,V_3\in\Gamma(T\calF\oplus G)$.
	Clearly, this identity holds because both sides are zero. This proves~\eqref{toshow1}.
	
	We now proceed by checking compatibility with the anchor maps, as required by~\eqref{toshow2}. 
	
	i) For the first anchor, we need to check that
	\begin{gather*}
	 \underline{\smash\phi}(\rho_1(f))=\frakn_1(\underline{\smash\phi}(f)),\qquad \underline{\smash\phi}(\rho_1(X))=\frakn_1(\underline{\smash\phi}(X)),
	\end{gather*}
	for all $f\in C^{\infty}(M), X\in\Gamma(T\calF)$. The first equality holds because  
	\[
	\underline{\smash\phi}(\rho_1(f))=\underline{\smash\phi}(d_{\Pi}f)=-\underline{\smash\phi}(\Pi^{\sharp}df)={-}\omega^{\flat}(\Pi^{\sharp}df)=d_{\calF}f=\frakn_1(f)=\frakn_1(\underline{\smash\phi}(f)).
	\]
	The second equality holds because one one hand
	\[
	\underline{\smash\phi}(\rho_1(X))=\underline{\smash\phi}(d_{\Pi}X)=-\underline{\smash\phi}\big(d_{\Pi}(\Pi^{\sharp}(\omega^{\flat}X))\big)=\underline{\smash\phi}\big(\wedge^{2}\Pi^{\sharp}(d_{\calF}(\omega^{\flat}X))\big)=\wedge^{2}\omega^{\flat}\big(\wedge^{2}\Pi^{\sharp}(d_{\calF}(\omega^{\flat}X))\big)=d_{\calF}(\omega^{\flat}X)
	\]
	whereas on the other hand
	\[
	\frakn_1(\underline{\smash\phi}(X))=\frakn_1(\omega^{\flat}X)=d_{\calF}(\omega^{\flat}X).
	\]
{Above we made use of the fact that 	$\underline{\smash\phi}(X)=\omega^{\flat}X$ for all $X\in\Gamma(T\calF)$.}
	
	ii) For compatibility with the second anchor, the only non-trivial equalities to check are 
	\begin{gather*}
	\underline{\smash\phi}(\rho_2(Y|f))=\frakn_2(\phi(Y)|\underline{\smash\phi}(f)),\qquad \underline{\smash\phi}(\rho_2(Y|X))=\frakn_2(\phi(Y)|\underline{\smash\phi}(X))
	\end{gather*}
	for $f\in C^{\infty}(M), X\in\Gamma(T\calF), Y\in\Gamma(G)$. Here the first equality holds since
	\[
	\underline{\smash\phi}(\rho_2(Y|f))=\underline{\smash\phi}({-}[Y,f]_{\gamma})={-}[Y,f]_{\gamma}={-}Y(f)=\frakn_2(Y,f)=\frakn_2(\phi(Y)|\underline{\smash\phi}(f)),
	\]
	and also the second equality is true because
	\begin{align*}
	&\underline{\smash\phi}(\rho_2(Y|X))=\underline{\smash\phi}({-}[Y,X]_{\gamma})=\underline{\smash\phi}\big(\Pi^{\sharp}(\calL_{Y}\gamma^{\flat}(X))\big)=\omega^{\flat}\big(\Pi^{\sharp}(\calL_{Y}\gamma^{\flat}(X))\big)=-\pr_{T^{*}\calF}\calL_{Y}\gamma^{\flat}(X),\\
	&\frakn_2(\phi(Y)|\underline{\smash\phi}(X))=\frakn_2(Y,\omega^{\flat}(X))=-\pr_{T^{*}\calF}\calL_{Y}\gamma^{\flat}(X).
	\end{align*}
	
	iii) At last, to check compatibility with the third anchor, the only non-trivial requirement is 
	\[
	\underline{\smash\phi}\big(\rho_3(Y_1,Y_2|X)\big)=\frakn_3\big(\phi(Y_1),\phi(Y_2)|\underline{\smash\phi}(X)\big)
	\]
	for $Y_1,Y_2\in\Gamma(G)$ and $X\in\Gamma(T\calF)$. This equality holds because
	\begin{align*}
	&\underline{\smash\phi}\big(\rho_3(Y_1,Y_2|X)\big)=-\Upsilon^G_{TM}(Y_1,Y_2,X)=-\gamma(X,[Y_1,Y_2]),\\
	&\frakn_3\big(\phi(Y_1),\phi(Y_2)|\underline{\smash\phi}(X)\big)=-\underline{\smash\phi}(X)([Y_1,Y_2])={-}\gamma^{\flat}(X)([Y_1,Y_2])={-}\gamma(X,[Y_1,Y_2]).
	\end{align*}
	We now proved that the relations~\eqref{toshow2} hold, so the proof is finished.
\end{proof}

When $\gamma$ is closed, the strict $L_{\infty}[1]$-morphism $\phi$  from Proposition \ref{prop:strict_L_infty-algebra_morphism} is obtained from the pullback by a Courant algebroid isomorphism, as we now explain.

\begin{proposition}\label{rem:conceptualclosed}
Assume that $\gamma$ is closed. Then $\phi$ is induced by an isomorphism of Courant algebroids $$F:(\bbT M,\ldab-,-\rdab,\ldsb-,-\rdsb_G,\rho_G)\overset{\sim}{\rightarrow}(\bbT M,\ldab-,-\rdab,\ldsb-,-\rdsb,\pr_{TM})$$ which transforms the splitting $\bbT M= T^{*}M\oplus TM$ into $\bbT M=(T\calF\oplus T^\circ\calF)\oplus(G\oplus G^\circ)$.    
\end{proposition}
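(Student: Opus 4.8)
The plan is to construct the Courant algebroid isomorphism $F$ explicitly and then verify that the strict morphism $\phi$ it induces on the deformation complexes agrees with the map from Proposition~\ref{prop:strict_L_infty-algebra_morphism}. First I would set $F := \calR_{-\gamma}\calR_{-\Pi}$, the composition of the orthogonal transformations already introduced in \S\ref{sec:regular_bivector_fields}. Since $\calR_\Pi\calR_\gamma$ is a Courant algebroid isomorphism from $(\bbT M,\ldab-,-\rdab,\ldsb-,-\rdsb_G,\rho_G)$ to the standard $(\bbT M,\ldab-,-\rdab,\ldsb-,-\rdsb,\pr_{TM})$ by Lemma~\ref{lem:Koszul_algebra:Courant_iso}, its inverse runs the other way; what I actually want is an isomorphism in the stated direction, so I would take $F$ to be either $\calR_\Pi\calR_\gamma$ itself or its inverse, whichever matches the orientation of the splittings. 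The key computation is to track the two Dirac/almost-Dirac summands: under $\calR_\Pi\calR_\gamma$ one has $T^*M\mapsto\gr\Pi$ and $TM\mapsto G\oplus T^*\calF$ by eqs.~\eqref{eq:lem:Koszul_algebra:Lie_algbd_iso}--\eqref{eq:lem:Koszul_algebra:almost_Lie_algbd_iso}. The point of the present proposition, however, is that we want $F$ to send the standard splitting into the \emph{foliation} splitting $\bbT M=(T\calF\oplus T^\circ\calF)\oplus(G\oplus G^\circ)$, which is the splitting underlying the foliation $L_\infty[1]$-algebra of \S\ref{sec:deformations_foliations}. So the correct $F$ is the orthogonal transformation carrying $\gr\Pi$ to the Dirac structure $T\calF\oplus T^\circ\calF$ and $G\oplus T^*\calF$ to $G\oplus G^\circ$.

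Concretely, I would define $F$ by composing the gauge/Dirac transformations so that on $T^*M$ it uses $\Pi^\sharp$ to land in $T\calF=\im\Pi^\sharp$ together with its annihilator, and on $TM$ it uses $\pr_G$ together with the identification $G\simeq N\calF$ and $\omega^\flat:T\calF\to T^*\calF\simeq G^\circ$. The closedness hypothesis $\rmd\gamma=0$ is what guarantees $F$ intertwines the Dorfman brackets: by Remark~\ref{rem:Courant_tensor_twisted_TM} the Courant tensor $\Upsilon^G_{TM}$ vanishes when $G$ is involutive, and by the first bullet of the remark following Lemma~\ref{lem:Koszul_algebra:Courant_iso}, $\rmd\gamma=0$ forces $G$ involutive and $\calL_X\gamma=0$ for $X\in\Gamma(G)$. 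Under these conditions the transported Courant structure $(\ldsb-,-\rdsb_G,\rho_G)$ is genuinely a (non-twisted) Courant algebroid, and $F$ restricts to honest Lie algebroid isomorphisms $T^*M\cong T\calF\oplus T^\circ\calF$ (via $\Pi^\sharp$ and $\omega^\flat$) and $TM\cong G\oplus G^\circ$ (via $\pr_G$). I would verify the bracket compatibility by checking it on generators, exactly as in the computation establishing Lemma~\ref{lem:Koszul_algebra:Courant_iso}, now using $\rmd\gamma=0$ to kill the correction terms.

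Finally I would show that the induced pullback $\wedge^\bullet(F|_{T^*M})^*$ on sections of $\wedge^\bullet(\text{Dirac})^*$ reproduces $\phi$. By Lemma~\ref{lem:2.6first}, a Courant algebroid isomorphism preserving the chosen transversal splitting transports the $L_\infty[1]$-algebra $(\Omega^\bullet(\gr\Pi)[2],\{\mu_k^{G\oplus T^*\calF}\})$ strictly isomorphically onto $(\Omega^\bullet(T\calF\oplus T^\circ\calF)[2],\{\frakn_k\})$, which restricts to the foliation $L_\infty[1]$-algebra $(\Omega^\bullet(\calF;N\calF)[1],\{\frakv_k\})$ of Proposition~\ref{lem:G_infty_algebra:foliation}. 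Composing with the strict isomorphism of Proposition~\ref{prop:Koszul_algebra:isomorphism} identifying $(\frakX^\bullet(M)[2],\{\frakl_k^G\})$ with $(\Omega^\bullet(\gr\Pi)[2],\{\mu_k^{G\oplus T^*\calF}\})$, and then restricting to good multivector fields, I expect to recover precisely the map $\phi$ defined by eq.~\eqref{eq:rem:expression:phi}, since $\phi$ is built from the same ingredients $\omega^\flat$ and $\pr_G$. The main obstacle will be bookkeeping: matching the sign conventions and the degree shifts between the three descriptions (the multivector $L_\infty[1]$-algebra, the $\gr\Pi$-description, and the foliation $N\calF$-description), and confirming that the projection onto $\Gamma(\wedge^{\bullet+1}T\calF\otimes G)$ built into $\phi$ is exactly the effect of the splitting $T^*M\cong T\calF\oplus T^\circ\calF$ under $F$. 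Once the splittings and signs are aligned, the agreement of the two strict morphisms is forced, since both are determined by their action on generators.
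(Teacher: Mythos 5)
Your proposal is correct and follows essentially the same route as the paper: the map you describe concretely in your second paragraph --- $\alpha\mapsto\Pi^\sharp\alpha+\pr_{T^\circ\calF}\,\alpha$ on $T^\ast M$ and $X\mapsto\pr_GX+\iota_X\gamma$ on $TM$ --- is precisely the paper's $F$ (your first candidate $\calR_{-\gamma}\calR_{-\Pi}$ is not, since it sends $T^\ast M$ to $\gr\Pi$ rather than to $T\calF\oplus T^\circ\calF$, but you self-correct), and the identification of the induced map with $\phi$ proceeds, as you outline, by transporting the $L_\infty[1]$-structures, restricting to good multivector fields, and projecting. The paper streamlines the two points you leave as computations: bracket-compatibility of $F$ is reduced to the classical fact that $F\circ\calR_{-\gamma}\calR_{-\Pi}=\calR_\gamma$ preserves the standard Dorfman bracket precisely when $\rmd\gamma=0$ (your aside that involutivity of $G$ makes the transported structure ``genuinely non-twisted'' is off --- it is always a genuine Courant algebroid, involutivity only makes $TM$ Dirac inside it), and the ``bookkeeping'' for the projection is a genuine, if routine, bi-degree argument, needed because the full projection $\Gamma(\wedge^\bullet(T^\ast\calF\oplus G))[2]\to\Omega^\bullet(\calF;N\calF)[1]$ does \emph{not} intertwine the brackets; only its restriction to the image of $\frakX^\bullet_\good(M)[2]$ does.
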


 \begin{proof}
 Recall that 
\begin{itemize}
    \item $(\mathfrak{X}_{\good}^\bullet(M)[2],\{\frakl^G_k\})$ is an $L_{\infty}[1]$-subalgebra of $(\mathfrak{X}^\bullet(M)[2],\{\frakl^G_k\})$, and the latter arises {as in Lemma \ref{lem:2.6first}} from the splitting $\bbT M=T^{*}M\oplus TM$ of the Courant algebroid $(\bbT M,\ldab-,-\rdab,\ldsb-,-\rdsb_G,\rho_G)$
(see Lemma~\ref{lem:Koszul_algebra:Courant_iso}),
\item $(\Omega^\bullet(\calF;G)[1],\{\frakv_k\})$ is an $L_{\infty}[1]$-subalgebra of $(\Gamma(\wedge^\bullet(T^\ast\calF\oplus G))[2],\{\frakn_k\})$, and the latter arises from the splitting $\bbT M=(T\calF\oplus T^\circ\calF)\oplus(G\oplus G^\circ)$ of the standard Courant algebroid $(\bbT M,\ldab-,-\rdab,\ldsb-,-\rdsb,\pr_{TM})$ (see Lemma \ref{lem:G_infty_algebra:foliation1}). 
\end{itemize}   

The map $F$ in question is given by the following formula:
\[
F:\bbT M\rightarrow\bbT M:X+\alpha\mapsto (\Pi^{\sharp}\alpha+\pr_GX)+(\iota_{X}\gamma+\pr_{G^{*}}\alpha).
\]

 \emph{Claim: $F$ is a Courant algebroid isomorphism.} 
 
One readily checks that $F$ is an orthogonal transformation of $(\bbT M,\ldab-,-\rdab)$, regardless of $\gamma$ being closed or not. Clearly $F$ matches the anchors $\rho_G$ and $\pr_{TM}$, it takes $T^{*}M$ to $(T\calF\oplus T^\circ\calF)$ and $TM$ to $(G\oplus G^\circ)$. It remains to show that $F$ takes $\ldsb-,-\rdsb_G$ to $\ldsb-,-\rdsb$, and this is where the condition $d\gamma=0$ comes into play. To facilitate this computation, recall from Lemma~\ref{lem:Koszul_algebra:Courant_iso} that $\calR_\Pi\circ\calR_\gamma$ intertwines $\ldsb-,-\rdsb_G$ and $\ldsb-,-\rdsb$, so it is enough to show that $F\circ \calR_{-\gamma}\circ\calR_{-\Pi}$ preserves $\ldsb-,-\rdsb$. This map is easily described:
\[
F\circ \calR_{-\gamma}\circ\calR_{-\Pi}:\bbT M\rightarrow \bbT M:X+\alpha\mapsto X+\alpha+\iota_{X}\gamma,
\]
{i.e. it agrees with the gauge transformation $\calR_{\gamma}$. It is well-known that $\calR_{\gamma}$ preserves the Courant bracket $\ldsb-,-\rdsb$ if{f} $\gamma$ is closed.}
This 
confirms our claim that $F:(\bbT M,\ldab-,-\rdab,\ldsb-,-\rdsb_G,\rho_G)\overset{\sim}{\rightarrow}(\bbT M,\ldab-,-\rdab,\ldsb-,-\rdsb,\pr_{TM})$ is a Courant algebroid isomorphism.
\vspace{0.2cm}

 It follows that we obtain a strict $L_{\infty}[1]$-isomorphism
\begin{equation}\label{eq:F}
\wedge^\bullet F|_{T^\ast M}^\ast:(\Gamma(\wedge^\bullet(T^\ast\calF\oplus G))[2],\{\frakn_k\})\overset{\sim}{\longrightarrow}(\mathfrak{X}^\bullet(M)[2],\{\frakl^G_k\}).
\end{equation}
We will now invert this map and restrict to the suitable $L_{\infty}[1]$-subalgebras. First note that 
\[
F^{-1}|_{T\calF\oplus G^{*}}:T\calF\oplus G^{*}\rightarrow T^{*}M:X+\alpha\mapsto\alpha-\iota_{X}\gamma
\]
and consequently  the dual map reads
\[
F^{-1}|_{T\calF\oplus G^{*}}^\ast:TM\rightarrow T^\ast\calF\oplus G:X\mapsto \ldab X+\iota_{X}\gamma,-\rdab|_{T\calF\oplus G^{*}}. 
\]
Clearly this map takes $T\calF$ to $T^{*}\calF$ and it takes $G$ to itself. So by Remark \ref{rem:good} there is an induced map
\[
\wedge^\bullet F^{-1}|_{T\calF\oplus G^{*}}^\ast:\frakX^{\bullet}_{\good}(M)[2]\rightarrow \Gamma(\wedge^{\bullet}T^{*}\calF)[2]\oplus\Gamma(\wedge^{\bullet}T^{*}\calF\otimes G)[1],
\]
which is a strict $L_{\infty}[1]$-isomorphism since it is a restriction of the inverse of the map  \eqref{eq:F}.

\vspace{0.2cm} 
 
\emph{Claim: The composition of the above map with the projection, 
\begin{equation}\label{eq:strictmorph}
P\circ\wedge^\bullet F^{-1}|_{T\calF\oplus G^{*}}^\ast:(\frakX_{\good}^\bullet(M)[2],\{\frakl^G_k\})\rightarrow(\Gamma(\wedge^{\bullet}T^{*}\calF\otimes G)[1],\{\frakv_k\}),
\end{equation}
is a strict $L_{\infty}[1]$-morphism. }

It suffices to show that the projection
$P: (\Gamma(\wedge^{\bullet}T^{*}\calF)[2]\oplus\Gamma(\wedge^{\bullet}T^{*}\calF\otimes G)[1]\rightarrow\Gamma(\wedge^{\bullet}T^{*}\calF\otimes G)[1]$ intertwines\footnote{We mention as an aside that the full projection 
$\Gamma(\wedge^\bullet(T^\ast\calF\oplus G))[2]\rightarrow\Gamma(\wedge^{\bullet}T^{*}\calF\otimes G)[1]$ does not intertwine the brackets.}
the multibrackets  $\{\frakn_k\}$ and $\{\frakv_k\}$.
  To show this, we make the following remarks:
\begin{itemize}
    \item Because $\frakn_1$ has bi-degree $(1,0)$, it follows that $P(\frakn_1\xi)=\frakv_1(P(\xi))$ for all $\xi$.
    \item Because
    {$\frakn_2$ has bi-degree $(0,-1)$, it follows that 
    \begin{equation}\label{eq:n2v2} P(\frakn_2(\xi,\eta))=\frakv_2(P(\xi),P(\eta)) 
    \end{equation}
 for all
    $\xi,\eta$ in $\Gamma(\wedge^{\bullet}T^{*}\calF)[2]\oplus\Gamma(\wedge^{\bullet}T^{*}\calF\otimes G)[1]$. Explicitly, since}     $\frakn_2(\Gamma(G),\Gamma(T^{*}\calF))\subset\Gamma(T^{*}\calF)$ and $\frakn_2(\Gamma(T^{*}\calF),\Gamma(T^{*}\calF))=0$, we have
    {that both terms in eq. \eqref{eq:n2v2} vanish}
    whenever both $\xi,\eta$ have bi-degree $(\bullet,0)$, or when $\xi$ has bi-degree $(\bullet,0)$ and $\eta$ has bi-degree $(\bullet,1)$. If {both $\xi$ and $\eta$}
    have bi-degree $(\bullet,1)$, then
    \[
    P(\frakn_2(\xi,\eta))=\frakn_2(\xi,\eta)=\frakv_2(\xi,\eta)=\frakv_2(P(\xi),P(\eta)).
    \]
\end{itemize}
These remarks prove the claim that the map~\eqref{eq:strictmorph} is a strict $L_{\infty}[1]$-morphism.

\vspace{0.2cm}

 \emph{Claim: the map $P\circ\wedge^\bullet F^{-1}|_{T\calF\oplus G^{*}}^\ast$ defined in \eqref{eq:strictmorph} coincides with the map $\phi$ defined in   \eqref{eq:rem:expression:phi}.} 
 
 Let $Y_1\wedge\cdots\wedge Y_k\in\mathfrak{X}_{\good}^{k}(M)$ and choose $X_1,\ldots,X_{k-1}\in\Gamma(T\calF)$ and $\beta\in\Gamma(G^{*})$. Then
\begin{align*}
\left\langle (\wedge^k F^{-1}|_{T\calF\oplus G^{*}}^\ast Y_1\wedge\cdots\wedge Y_k)(X_1,\ldots,X_{k-1}),\beta\right\rangle&=\langle (F^{-1})^{*}Y_1\wedge\cdots\wedge(F^{-1})^{*}Y_k,X_1\wedge\cdots\wedge X_{k-1}\otimes\beta\rangle\\
&=\begin{vmatrix}
\langle(F^{-1})^{*}Y_1,X_1\rangle & \cdots & \langle(F^{-1})^{*}Y_1,X_{k-1}\rangle & \langle(F^{-1})^{*}Y_1,\beta\rangle \\
\vdots & & \vdots & \vdots\\
\langle(F^{-1})^{*}Y_k,X_1\rangle & \cdots & \langle(F^{-1})^{*}Y_k,X_{k-1}\rangle & \langle(F^{-1})^{*}Y_k,\beta\rangle 
\end{vmatrix}\\
&=\begin{vmatrix}
\langle\gamma^{\flat}Y_1,X_1\rangle & \cdots & \langle\gamma^{\flat}Y_1,X_{k-1}\rangle & \beta(Y_1)\\
\vdots & & \vdots & \vdots\\
\langle\gamma^{\flat}Y_k,X_1\rangle & \cdots & \langle\gamma^{\flat}Y_k,X_{k-1}\rangle & \beta(Y_k)
\end{vmatrix}\\
&=(-1)^{k-1}\begin{vmatrix}
\langle Y_1,\gamma^{\flat}X_1\rangle & \cdots & \langle Y_1,\gamma^{\flat}X_{k-1}\rangle & \beta(Y_1)\\
\vdots & & \vdots &\vdots\\
\langle Y_k,\gamma^{\flat}X_1\rangle & \cdots & \langle Y_k,\gamma^{\flat}X_{k-1}\rangle & \beta(Y_k)
\end{vmatrix}\\
&=(-1)^{k-1}Y_1\wedge\cdots\wedge Y_k(\gamma^{\flat}X_1,\ldots,\gamma^{\flat}X_{k-1},\beta)\\
&=\left\langle\phi(Y_1\wedge\cdots\wedge Y_k)(X_1,\ldots,X_{k-1}),\beta\right\rangle.
\end{align*}
This shows that $P\circ\wedge^\bullet F^{-1}|_{T\calF\oplus G^{*}}^\ast$ agrees with $\phi$, as claimed. In particular, the latter is a strict $L_{\infty}[1]$-morphism.
 \end{proof}

\bibliographystyle{plain}

\begin{thebibliography}{10}

\bibitem{BrahicFernPoissonFibrations}
O.~Brahic and R.~L. Fernandes.
\newblock Poisson fibrations and fibered symplectic groupoids.
\newblock In {\em Poisson geometry in mathematics and physics}, volume 450 of
  {\em Contemp. Math.}, pages 41--59. Amer. Math. Soc., Providence, RI, 2008.

\bibitem{Courant1990Dirac}
T.~Courant.
\newblock {Dirac manifolds}.
\newblock {\em Trans.~Amer.~Math.~Soc.}, 319(2):631--661, 1990.

\bibitem{CrFeStab}
M.~Crainic and R.L. Fernandes.
\newblock Stability of symplectic leaves.
\newblock {\em Invent. Math.}, 180(3):481--533, 2010.

\bibitem{CM}
M.~Crainic and I.~M\u{a}rcu\c{t}.
\newblock Reeb-{T}hurston stability for symplectic foliations.
\newblock {\em Math. Ann.}, 363(1):217--235, 2015.

\bibitem{DufourWade}
J.-P. Dufour and A.~Wade.
\newblock Stability of higher order singular points of {P}oisson manifolds and
  {L}ie algebroids.
\newblock {\em Ann. Inst. Fourier}, 56(3):545--559, 2006.

\bibitem{hsymplfol}
R.~L. Fernandes and P.~Frejlich.
\newblock {An \(h\)-principle for symplectic foliations.}
\newblock {\em {Int.~Math.~Res.~Not.~IMRN}}, 2012(7):1505--1518, 2012.

\bibitem{fiorenza2007cones}
D.~Fiorenza and M.~Manetti.
\newblock {$L_\infty$ structures on mapping cones}.
\newblock {\em Algebra \& Number Theory}, 1(3):301--330, 2007.

\bibitem{FZgeo}
Y.~Fr{{\'e}}gier and M.~Zambon.
\newblock Simultaneous deformations and {P}oisson geometry.
\newblock {\em Compos.~Math.}, 151(9):1763--1790, 2015.

\bibitem{gerstenhaber1995homotopy}
M.~Gerstenhaber and A.~Voronov.
\newblock {Homotopy $G$-algebras and moduli space operad}.
\newblock {\em Int.~Math.~Res.~Not.~IMRN}, 1995(3):141--153, 1995.

\bibitem{DefSFalg}
S.~Geudens, A.G. Tortorella, and M.~Zambon.
\newblock {Deformations of symplectic foliations: algebraic aspects}.
\newblock {In progress}.

\bibitem{gompf}
R.~Gompf.
\newblock {A new construction of symplectic manifolds}.
\newblock {\em Ann.~Math.}, 142(3):527--595, 1995.

\bibitem{GMS}
M.~Gualtieri, M.~Matviichuk, and G.~Scott.
\newblock {Deformation of Dirac structures via $L_\infty$ algebras}.
\newblock {\em {Int.~Math.~Res.~Not.~IMRN}}, 2020(14):4295--4323, 2020.

\bibitem{heitsch1975cohomology}
J.~L. Heitsch.
\newblock A cohomology for foliated manifolds.
\newblock {\em Comment.~Math.~Helv.}, 50(1):197--218, 1975.

\bibitem{Huebsch}
J.~Huebschmann.
\newblock Higher homotopies and {M}aurer-{C}artan algebras:
  quasi-{L}ie-{R}inehart, {G}erstenhaber, and {B}atalin-{V}ilkovisky algebras.
\newblock In {\em The breadth of symplectic and {P}oisson geometry}, volume 232
  of {\em Progr. Math.}, pages 237--302. Birkh{\"a}user Boston, 2005.

\bibitem{ji2013thesis}
X.~Ji.
\newblock {\em {Deformation problems in Lie algebroids and extended Poisson
  geometry}}.
\newblock PhD thesis, Pennsylvania State University, 2013.

\bibitem{Ji}
X.~Ji.
\newblock Simultaneous deformations of a {L}ie algebroid and its {L}ie
  subalgebroid.
\newblock {\em J. Geom. Phys.}, 84:8--29, 2014.

\bibitem{Kjeseth2001Rinehart}
L.~Kjeseth.
\newblock Homotopy {R}inehart cohomology of homotopy {L}ie-{R}inehart pairs.
\newblock {\em Homology Homotopy Appl.}, 3(1):139--163, 2001.

\bibitem{LieBialgebroidPN}
Y.~Kosmann-Schwarzbach.
\newblock {The Lie bialgebroid of a Poisson-Nijenhuis manifold}.
\newblock {\em Lett.~Math.~Phys.}, 38(4):421--428, 1996.

\bibitem{MagriYvettePN}
Y.~Kosmann-Schwarzbach and F.~Magri.
\newblock {Poisson-Nijenhuis structures}.
\newblock {\em Ann.~Inst.~Henri Poincar\'e, Phys. Th\'eor.}, 53(1):35--81,
  1990.

\bibitem{LadaMarkl}
T.~Lada and M.~Markl.
\newblock Strongly homotopy {L}ie algebras.
\newblock {\em Comm. Algebra}, 23(6):2147--2161, 1995.

\bibitem{liu1995manin}
Z.-J. Liu, A.~Weinstein, and P.~Xu.
\newblock {Manin triples for Lie bialgebroids}.
\newblock {\em J.~Differential Geometry}, 45(3):547--574, 1997.

\bibitem{Mit}
Y.~Mitsumatsu.
\newblock Leafwise symplectic structures on {L}awson’s foliation.
\newblock {\em J. Symplectic Geom.}, 16(3):817--838, 2018.

\bibitem{MarcutThesis}
I.~M\u{a}rcu\c{t}.
\newblock {\em Normal forms in Poisson geometry}.
\newblock PhD thesis, Utrecht University, 2013.

\bibitem{OP}
Y.-G. Oh and J.-S. Park.
\newblock Deformations of coisotropic submanifolds and strong homotopy {L}ie
  algebroids.
\newblock {\em Invent.~Math.}, 161(2):287--360, 2005.

\bibitem{Roytenberg1999PhDthesis}
D.~Roytenberg.
\newblock {\em Courant algebroids, derived brackets and even symplectic
  supermanifolds}.
\newblock PhD thesis, UC Berkeley, 1999.

\bibitem{SZDirac}
F.~Sch{\"a}tz and M.~Zambon.
\newblock {Deformations of pre-symplectic structures: a Dirac geometry
  approach}.
\newblock {\em {SIGMA Symmetry Integrability Geom.~Methods Appl.}},
  14:128--139, 2018.

\bibitem{SZPre}
F.~Sch\"atz and M.~Zambon.
\newblock {Deformations of pre-symplectic structures and the Koszul
  $L_{\infty}$-algebra}.
\newblock {\em {Int.~Math.~Res.~Not.~IMRN}}, 2020(14):4191--4237, 2020.

\bibitem{SZequivalences}
F.~Sch{\"a}tz and M.~Zambon.
\newblock {Gauge equivalences for foliations and pre-symplectic structures}.
\newblock {\em Commun.~Contemp.~Math.}, 0(0):2050067, 2020.

\bibitem{SW}
P.~{\v{S}}evera and A.~Weinstein.
\newblock Poisson geometry with a 3-form background.
\newblock {\em Progr. Theoret. Phys. Suppl.}, 144:145--154, 2001.
\newblock Noncommutative geometry and string theory (Yokohama, 2001).

\bibitem{tamarkin1998another}
D.~Tamarkin.
\newblock {Another proof of M.~Kontsevich formality theorem}.
\newblock {\em e-print
  \href{https://arxiv.org/abs/math/9803025}{arXiv:math/9803025}}, 1998.

\bibitem{Osorno}
B.~Osorno Torres.
\newblock {\em {Codimension-one symplectic foliations: constructions and
  examples}}.
\newblock PhD thesis, Utrecht University, 2015.

\bibitem{vitagliano2014LieRinehart}
L.~Vitagliano.
\newblock On the strong homotopy {L}ie-{R}inehart algebra of a foliation.
\newblock {\em Commun.~Contemp.~Math.}, 16(6):{1450007, 49 pages}, 2014.

\bibitem{Vorobjev}
Y.~Vorobjev.
\newblock Coupling tensors and {P}oisson geometry near a single symplectic
  leaf.
\newblock In {\em Lie algebroids and related topics in differential geometry
  ({W}arsaw, 2000)}, volume~54 of {\em Banach Center Publ.}, pages 249--274.
  Polish Acad. Sci. Inst. Math., Warsaw, 2001.

\bibitem{WadeCouplingDirac}
A.~Wade.
\newblock Poisson fiber bundles and coupling {D}irac structures.
\newblock {\em Ann. Global Anal. Geom.}, 33(3):207--217, 2008.

\end{thebibliography}

\end{document}